\newcommand{\F}{\mathbb{F}} 
\def\RCS$#1: #2 ${\expandafter\def\csname RCS#1\endcsname{#2}}
\DeclareMathOperator{\Frob}{Frob}
 \DeclareMathOperator{\ab}{ab}
\newcommand{\To}{\longrightarrow}\newcommand{\into}{\hookrightarrow}
\newcommand{\m}{\mathfrak{m}}
\newcommand{\psibar}{\overline{\psi}}
\newcommand{\onto}{\twoheadrightarrow}
\newcommand{\isoto}{\stackrel{\sim}{\To}} 
 \newcommand{\rec}{\operatorname{rec}}
\newcommand{\bigO}{\mathcal{O}} 
\newcommand{\G}{\mathcal{G}}
\newcommand{\Z}{\mathbb{Z}} \newcommand{\A}{\mathbb{A}}
\newcommand{\Q}{\mathbb{Q}}
\newcommand{\C}{\mathbb{C}} 
\newcommand{\Gn}{\mathcal{G}_n}
\newcommand{\sq}{\square}
\newcommand{\bb}{\mathbb} 
\newcommand{\mc}{\mathcal}
\newcommand{\wt}{\widetilde} 
\newcommand{\mf}{\mathfrak}
\newcommand{\Uop}{U_{\lambda,\varpi_{\wt{v}}}^{(j)}}
\newcommand{\TUniv}{\bb{T}^{T,\ord}(U(\mf{l}^{\infty}),\mc{O})}
\DeclareMathOperator{\Iw}{Iw}
\DeclareMathOperator{\ord}{ord}  
\DeclareMathOperator{\Art}{Art}
\DeclareMathOperator{\univ}{univ}
\DeclareMathOperator{\red}{red}
\DeclareMathOperator{\sesi}{ss}
\DeclareMathOperator{\id}{id}
\newcommand{\rhobar}{\overline{\rho}} \newcommand{\rbar}{\bar{r}}
\newcommand{\rbarwtv}{\rbar|_{G_{F_{\wt{v}}}}}
\newcommand{\rmbarwtv}{\rbar_{\mf{m}}|_{G_{F_{\wt{v}}}}}
\newcommand{\mubar}{\overline{\mu}}
\newcommand{\Gal}{\operatorname{Gal}}
\newcommand{\GL}{\operatorname{GL}}
\newcommand{\GSp}{\operatorname{GSp}}
\newcommand{\Sp}{\operatorname{Sp}}
 \newcommand{\Qbar}{\overline{\Q}}
 \newcommand{\Qp}{\Q_p}
\newcommand{\Ql}{\Q_l} 
\newcommand{\Qlbar}{\overline{\Q}_{l}}
\newcommand{\Flbar}{\overline{\F}_l} 
\newcommand{\tv}{{\tilde{v}}}
\newcommand{\gr}{\operatorname{gr}}
\newcommand{\Spec}{\operatorname{Spec}}
\newcommand{\ad}{\operatorname{ad}}
\newcommand{\tr}{\operatorname{tr}}
\newcommand{\coker}{\operatorname{coker}}
\newcommand{\End}{\operatorname{End}}
\newcommand{\Hom}{\operatorname{Hom}}
\newcommand{\Fil}{\operatorname{Fil}}
\newcommand{\diag}{\operatorname{diag}}
\newcommand{\Gm}{{\mathbb{G}_m}}
\DeclareMathOperator{\Def}{Def}
\DeclareMathOperator{\cris}{cris}
\newtheorem{thm}{Theorem}[subsection]
\newtheorem{cor}[thm]{Corollary}
 \newtheorem{lemma}[thm]{Lemma}
\newtheorem{lem}[thm]{Lemma} \newtheorem{prop}[thm]{Proposition}
 \theoremstyle{definition}
 \theoremstyle{definition}
\newtheorem{defn}[thm]{Definition} \theoremstyle{remark}
\newtheorem{rem}[thm]{Remark} \numberwithin{equation}{subsection}
\newtheorem{remark}[thm]{Remark} \numberwithin{equation}{subsection}
\theoremstyle{definition}\newtheorem*{notation}{Notation}
\begin{document}
\title[Companion forms for unitary groups]{Companion forms for unitary
  and symplectic groups}

\author{Toby Gee} \email{tgee@math.harvard.edu} \address{Department of
  Mathematics, Harvard University} \author{David Geraghty}
\email{geraghty@math.harvard.edu}\address{Department of Mathematics,
  Harvard University} \thanks{The first author was partially supported
  by NSF grant DMS-0841491.}  \subjclass[2000]{11F33.}
\begin{abstract}We prove a companion forms theorem for ordinary
  $n$-dimensional automorphic Galois representations, by use of
  automorphy lifting theorems developed by the second author, and a
  technique for deducing companion forms theorems due to the first author. We deduce
  results about the possible Serre weights of mod $l$ Galois
  representations corresponding to automorphic representations on
  unitary groups. We then use functoriality to prove similar results for
  automorphic representations of $\GSp_4$ over totally real fields.
\end{abstract}
\maketitle
\tableofcontents
\section{Introduction.}

\subsection{}
The problem of companion forms was first introduced by Serre for
modular forms in his seminal paper \cite{MR885783}. Fix a prime $l$,
algebraic closures $\Qbar$ and $\Qlbar$ of $\Q$ and $\Ql$ respectively,
and an embedding of $\Qbar$ into $\Qlbar$. Suppose that $f$ is a
modular newform of weight $k\ge 2$ which is ordinary at $l$, so that
the corresponding $l$-adic Galois representation $\rho_{f,l}$ becomes
reducible when restricted to a decomposition group $G_{\Ql}$ at $l$. Then the
companion forms problem is essentially the question of determining for
which other weights $k'$ there is an ordinary newform $g$ of weight
$k'\ge 2$ such that the Galois representations $\rho_{f,l}$ and
$\rho_{g,l}$ are congruent modulo $l$. The problem is straightforward
unless the restriction to $G_{\Ql}$ of $\rhobar_{f,l}$ (the reduction mod $l$ of $\rho_{f,l}$) is
split and non-scalar, in which case there are two possible Hida families whose
corresponding Galois representations lift $\rhobar_{f,l}$; the
restrictions of the corresponding Galois representations to a
decomposition group at $l$ are either ``upper-triangular'' or
``lower-triangular''.

This problem was essentially resolved by Gross and Coleman-Voloch
(\cite{MR1074305}, \cite{MR1185584}). In the paper \cite{gee051}, the
first author reproved these results, and generalised them to Hilbert
modular forms, by a completely new technique. In essence, rather than
working directly with modular forms, the method is to firstly obtain a
Galois representation which should correspond to a modular
form in the sought-after Hida family, and then to use a modularity lifting theorem to prove that this
Galois representation is modular. In \cite{gee051} the Galois
representation is obtained by using a generalisation of a lifting
technique of Ramakrishna, which is proved by purely deformation theory
techniques. The modularity is then obtained from the $R=T$ theorem of
Kisin for Hilbert modular forms of parallel weight 2 (\cite{kis04}).

These techniques seem amenable to generalisation (to other reductive
groups over more general number fields), subject to some important caveats. In particular, it
is necessary to have modularity lifting theorems available over fields in which $l$
is highly ramified. The current technology for modularity lifting theorems requires
one to work with reductive groups which admit discrete series, and to
work over totally real or CM fields; so it is impossible at present to
work directly with $\GL_n$ for $n>2$. Instead, one works with closely
related groups, such as unitary or symplectic groups, which do admit
discrete series. 

In the present paper we make use of $R=T$ theorems for unitary groups
to deduce companion forms theorems for unitary groups (in arbitrary
dimension), and thus for conjugate self-dual automorphic
representations of $\GL_n$ over CM fields. We then deduce similar
theorems for $\GSp_4$ by developing the relevant deformation theory
and employing known instances of functoriality. The analogue for
unitary groups of the $R=T$ theorems of \cite{kis04} seem to be out of
reach at present, and we use the main theorems of \cite{ger}
instead. As explained below, this in fact allows us to prove stronger
theorems than the natural analogue of \cite{kis04} would permit. We
replace the use of Ramakrishna's techniques in \cite{gee051} with a
method of Khare and Wintenberger, which allows weaker hypotheses on
local deformation problems.

To our knowledge the only results on companion forms for groups other
than $\GL_2$ are those announced for $\GSp_4$ over $\Q$ in
\cite{herzigtilouine} (see also \cite{tilouinecompanionforms}). 
Our results are rather stronger
than those of \cite{herzigtilouine} in several respects. We are able
to work with arbitrary totally real fields (with no restriction on
ramification at $l$), rather than just over $\Q$, and we do not need
any assumption that the residual Galois representation occurs at
minimal level (indeed, one may deduce results on level lowering for
$\GSp_4$ from our theorem). In addition, the results of
\cite{herzigtilouine} apply only in one special case, effectively one
of 8 cases (corresponding to the 8 elements of the Weyl group of
$\GSp_4$) where one could hope to prove a companion forms theorem;
this is in part due to the fact that their techniques only apply
to Galois representations in the Fontaine-Laffaille range. In
contrast, we make no such restrictions.  We hope that these
results will prove useful for generalisations of the Buzzard-Taylor
method to $\GSp_4$, as part of a program of Tilouine.

In recent years there has been a good deal of interest in
generalisations of Serre's conjecture (cf. \cite{MR1896473}) and in
particular in the question of determining the set of weights of a
given Galois representation (cf. \cite{herzigthesis}). One of us
(T.G.) has formulated a conjecture to the effect that the set of
weights should be determined completely by the existence of (local)
crystalline lifts (cf. \cite{gee061}). In general this seems to be a
very difficult conjecture to prove, but our methods give a substantial
partial result; essentially we prove the conjecture (subject to mild
technical hypotheses) for ordinary weights for unitary groups which
are compact at infinity. See section \ref{sec:serre weights} for the
precise statements.

We now outline the structure of the paper. In section \ref{sec:galois
  deformations} we develop the basic deformation theory that we need. We then
recall in section \ref{sec:ordinary
  automorphic representations} the necessary material on ordinary
automorphic representations on unitary groups and modularity lifting
theorems for the corresponding Galois representations; in
particular we recall the main theorem of \cite{ger}.

Section \ref{sec:existence of lifts} contains our main theorems for
unitary groups; the corresponding Galois representations are conjugate
self-dual representations of the absolute Galois group of an imaginary CM
field. Using the results of section \ref{sec:galois deformations} we
give a lower bound for the dimension of a universal deformation ring,
and the results of section \ref{sec:ordinary
  automorphic representations} then permit us to prove that this
universal deformation ring is finite over $\Z_l$, which implies that it
has $\Qlbar$-points, which correspond to the Galois representations we
seek. The automorphy of these Galois representations follows at once
from the modularity lifting theorems recalled in section \ref{sec:ordinary
  automorphic representations}. The particular universal deformation ring we consider is one for
representations of the absolute Galois group of a totally real field,
valued in a group $\Gn$ defined in \cite{cht}. Representations valued
in this group correspond to representations which are self-dual with
respect to some pairing; this permits us to prove results for both the conjugate
self-dual representations considered in section \ref{sec:existence of
  lifts}, and the symplectic representations studied in later
sections.

We remark that the $\Qlbar$-points of universal deformation rings that
we study in section \ref{sec:existence of lifts} always correspond to
ordinary crystalline representations of a certain weight. This is in
contrast to the approach of \cite{gee051}, which used potentially
crystalline representations corresponding to Hilbert modular forms of
parallel weight 2 and non-trivial level at $l$. The required automorphic representations were then
obtained by specialising Hida families through these points at the
sought-for weight. The difficulty with following this approach in
general is that if the weight is not sufficiently regular a
specialisation of a Hida family at this weight may fail to be an
unramified principal series at places dividing $l$ (for example, a specialisation of a Hida family
of modular forms in weight 2 can correspond to a Steinberg representation 
at $l$). It is for this reason that we use modularity lifting theorems
for crystalline lifts instead. 

In section \ref{sec:serre weights} we deduce results about the
possible Serre weights of mod $l$ Galois representations corresponding
to automorphic representations of compact at infinity unitary
groups. In particular, we deduce that the possible ordinary weights are determined by the existence of
local crystalline lifts. We remark that these are the first results in
anything approaching this level of generality for any groups other
than $\GL_2$.

Finally in section \ref{sec:gsp4} we study the analogous questions for
automorphic representations of $\GSp_4$ over totally real fields. We
use the known functoriality between globally generic cuspidal
representations of $\GSp_4$ and $\GL_4$ to apply the methods of the
earlier sections. In particular, we prove results analogous to those
of section \ref{sec:galois deformations} for Galois representations
valued in $\GSp_4$, and obtain a lower bound for the dimension of a
universal deformation ring as in section \ref{sec:existence of
  lifts}. We then prove that this universal deformation ring is finite
over the corresponding one for unitary representations, which allows
us to deduce that our symplectic universal deformation ring is also
finite over $\Z_l$. Our main results for symplectic representations
follow from this.

We remark that in all our main theorems we actually obtain somewhat
more precise results; we are also able to control the ramification of
our Galois representations at places not dividing $l$, and we are able
to choose our Galois representations so as to correspond to points on
any particular set of irreducible components of the local deformation
rings. Thus as a direct corollary of our results one obtains strong
results on level lowering and level raising for ordinary automorphic
Galois representations. Similarly, our method yields modularity
lifting theorems for ordinary representations of $\GSp_4$ which are
rather stronger than those of \cite{MR2234862}; for example, we do not
need to assume any form of level-lowering for $\GSp_4$, we work over
general totally real fields, and we are not restricted to weights in
the Fontaine-Laffaille range.

We would like to thank Wee Teck Gan, Florian Herzig, Mark Kisin and Richard Taylor for helpful conversations.

\section{Notation}\label{sec:notation}If $M$ is a field, we let $G_M$ denote its absolute
Galois group. 
Let $\epsilon$ denote the $l$-adic or mod $l$
cyclotomic character of $G_M$. If $M$ is a finite extension of $\bb{Q}_p$ for
some $p$, we write $I_M$ for the inertia subgroup of $G_M$. 
We write all matrix transposes on the left; so ${}^tA$ is
the transpose of $A$. 
If $R$ is a local ring we write $\mf{m}_{R}$ for the maximal ideal of
$R$. We let $\bb{Z}^n_+$ denote the subset of elements $\lambda \in
\bb{Z}^n$ with $\lambda_1 \geq \ldots \geq \lambda_n$.

\section{Galois deformations}\label{sec:galois deformations}
\subsection{Local deformation rings}
\label{LocalDefRings}
Let $l$ be a prime number and $K$ a finite extension of
$\Ql$ with residue field $k$ and ring of integers $\bigO$. 
Let $M$ be a
finite extension of $\Qp$ (with $p$ possibly equal to $l$). 
Let
$\rhobar:G_M\to\GL_n(k)$ be a continuous representation. Let
$\mathcal{C}_\bigO$ be the category of complete local Noetherian $\bigO$-algebras
with residue field $k$. Then the functor from $\mathcal{C}_\bigO$ to
$Sets$ which takes $A\in\mathcal{C}_\bigO$ to the set of liftings of
$\rhobar$ to a continuous homomorphism $\rho:G_M\to\GL_n(A)$ is represented by
a complete local Noetherian $\bigO$-algebra $R^{\square}_{\rhobar}$. We call this
ring the universal $\mc{O}$-lifting ring of $\rhobar$.
 We write
$\rho^{\square}:G_M\to\GL_n(R^{\square}_{\rhobar})$ for the universal lifting. We will need
to consider certain quotients of $R^\square_{\rhobar}$.

\subsubsection{The case where $p \neq l$}

Firstly, we consider the case $p\neq l$. In this case, the quotients
we wish to consider will correspond to particular inertial
types. Recall that $\tau$ is an \emph{inertial type} for $G_M$ over
$K$ if $\tau$ is a $K$-representation of $I_M$ with open kernel which
extends to a representation of $G_M$, and that we say that an $l$-adic
representation of $G_M$ has type $\tau$ if the restriction of the
corresponding Weil-Deligne representation to $I_M$ is equivalent to
$\tau$. For any such $\tau$ there is a unique reduced, $l$-torsion
free quotient $R^{\square,\tau}_{\rhobar}$ of $R^{\square}_{\rhobar}$
with the property that if $E/K$ is a finite extension, then a map of
$\mc{O}$-algebras $R^{\square}_{\rhobar}\to E$ factors through
$R^{\square,\tau}_{\rhobar}$ if and only if the corresponding
$E$-representation has type $\tau$. Furthermore, we have:

\begin{lem}
\label{lem: local dimension for l not p}
  For any $\tau$, if $R^{\square,\tau}_{\rhobar}\neq 0$ then
  $R^{\square,\tau}_{\rhobar}[1/l]$ is equidimensional of dimension
  $n^2$ and is generically formally smooth.
\end{lem}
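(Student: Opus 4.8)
The claim that $R^{\square,\tau}_{\rhobar}[1/l]$ is generically formally smooth is essentially free: $R^{\square,\tau}_{\rhobar}$ is reduced and $l$-torsion free by construction, so $R^{\square,\tau}_{\rhobar}[1/l]$ is a reduced excellent $K$-algebra, its regular locus is open and dense, and regular local rings of characteristic zero are formally smooth. So the content is the dimension statement, which I would reduce to a local computation. By the defining property of $R^{\square,\tau}_{\rhobar}$, the scheme $X:=\Spec R^{\square,\tau}_{\rhobar}[1/l]$ is a union of connected components of $\Spec R^{\square}_{\rhobar}[1/l]$ (the inertial type being locally constant on the latter, cf.\ \cite{cht}). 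Since these rings are Jacobson, it is enough to show $\widehat{\mathcal{O}}_{X,x}$ is equidimensional of dimension $n^2$ for every closed point $x$; such an $x$ has residue field $E$ finite over $K$ and corresponds to a lift $\rho_x\colon G_M\to\GL_n(E)$, and $\widehat{\mathcal{O}}_{X,x}$ is then the universal lifting $E$-algebra $R^{\square}_{\rho_x}$ of $\rho_x$. So the problem reduces to: for $M/\Qp$ finite with residue field of cardinality $q$, $p\neq l$, and any continuous $\rho_x\colon G_M\to\GL_n(E)$ with $E/\Ql$ finite, the ring $R^{\square}_{\rho_x}$ is equidimensional of dimension $n^2$.

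For the lower bound I would use standard obstruction theory: the tangent space of $R^{\square}_{\rho_x}$ is $Z^1(G_M,\ad\rho_x)$, of dimension $n^2+h^1-h^0$ with $h^i=\dim_E H^i(G_M,\ad\rho_x)$, and obstructions lie in $H^2(G_M,\ad\rho_x)$. Since $p\neq l$, the local Euler characteristic formula gives $h^0-h^1+h^2=0$, so the tangent space has dimension $n^2+h^2$ and $\dim R^{\square}_{\rho_x}\geq n^2$ (hence, as closed points are dense, every irreducible component of $X$ has dimension $\geq n^2$); moreover where $H^2(G_M,\ad\rho_x)=0$ — equivalently, by local Tate duality and self-duality of $\ad\rho_x$, where $H^0(G_M,\ad\rho_x(1))=0$ — the ring is formally smooth of dimension exactly $n^2$. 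For the upper bound I would exploit that $\rho_x$, and in fact all its deformations to Artinian local $E$-algebras $A$, factor through a fixed topologically finitely generated quotient $G_M/H$, with $H$ an open normal subgroup of the wild inertia $P_M$ (the point being that $1+M_n(\mathfrak{m}_A)$ has no nontrivial pro-$p$ subgroups when $p\neq l$, so $\rho|_{P_M}$ always has the same finite image as $\rho_x|_{P_M}$). The tame quotient of $G_M/H$ is the one-relator group $\langle\phi,\sigma\mid\phi\sigma\phi^{-1}=\sigma^q\rangle$, and this yields the explicit Weil--Deligne description of the lifts — pairs $(r,N)$ with $r|_{I_M}$ of fixed inertial type and $N$ nilpotent satisfying $r(\phi)Nr(\phi)^{-1}=qN$. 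I would stratify the generic fibre by the conjugacy class of $\rho|_{I_M}$ and the rank of $N$, and compute each stratum directly — it is assembled from the centraliser of the image of inertia, a torsor of compatible Frobenius lifts, and the corresponding weight space of nilpotent operators — showing that each stratum has dimension at most $n^2$; since the strata cover $X$, every component has dimension $\leq n^2$, and with the lower bound this gives equidimensionality of dimension $n^2$.

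I expect the main obstacle to be the last step, specifically the bookkeeping on the strata where $N\neq 0$ — the ``Steinberg-type'' components — where $\dim H^2$ jumps and one must check that the stratum dimension is nevertheless still exactly $n^2$. This is the technical heart of the argument, at which point I would appeal to (or carry out in detail) the now-standard structure theory of local framed deformation rings at places not dividing $l$, as in \cite{cht}.
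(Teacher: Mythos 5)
The paper does not actually argue this lemma: its proof is the single line ``This is Theorem 2.0.6 of \cite{gee061}'', so all the content you sketch lives in that cited reference rather than in the paper itself. Your outline is essentially the standard proof of the quoted theorem, and its skeleton is sound: generic formal smoothness does follow from reducedness plus excellence; the identification of $\widehat{\mathcal{O}}_{X,x}$ with the framed lifting ring of $\rho_x$ over $E$ is Kisin's lemma; and the lower bound $n^2$ from $Z^1$, obstructions in $H^2$ and the vanishing of the local Euler characteristic for $p\neq l$ is correct (applied at a closed point lying on only one component, it bounds every component from below). Two of your inputs, however, are not as free as you present them. First, the claim that the inertial type is \emph{locally} constant on $\Spec R^{\square}_{\rhobar}[1/l]$ -- which you need both for $X$ to be open and closed and for the completed local ring at $x$ to be the full unrestricted ring $R^{\square}_{\rho_x}$ -- is a genuine lemma, and it is not in \cite{cht}; it can be proved by noting that the type of $\rho_x$ is the semisimplification of $\rho_x|_{I_M}$, whose characteristic polynomials of inertia elements take values in a finite set (bounded orders, via the Frobenius relation) and hence are constant on each irreducible component of the reduced generic fibre, so that components of distinct types cannot meet at a closed point.

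Second, the upper bound as you describe it would not close. If a stratum is counted as (conjugates of the inertial type) plus (a torsor of Frobenius lifts under the centraliser of the image of inertia) plus (nilpotents $N$ in the $q$-weight space), the total is $n^2$ \emph{plus} the dimension of the space of admissible $N$, which strictly exceeds $n^2$ on the Steinberg-type strata -- exactly the locus you flag as the technical heart. The count must instead fix the whole isomorphism class of $\rho|_{I_M}$ (equivalently the type together with $N$, since $N$ is recovered from inertia); then the compatible Frobenius elements form a torsor under the centraliser of that full datum, and each nonempty stratum has dimension exactly $(n^2-\dim Z)+\dim Z=n^2$. You also need the (standard, but not automatic) comparison identifying deformations of $\rho_x$ over Artinian $E$-algebras with deformations of the associated Weil--Deligne datum, i.e.\ with the completed local ring of a finite-type parameter scheme, before stratum dimensions can be transferred to the non-finite-type scheme $X$; your remark about the rigidity of the wild image is the key point there, but the continuity bookkeeping has to be done. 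So: right architecture, correct lower bound, but the cited support (\cite{cht}) does not contain the two facts you lean on, and the stratification bookkeeping needs the corrected torsor before the $\leq n^2$ bound is true.
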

\begin{proof}
  This is Theorem 2.0.6 of \cite{gee061}.
\end{proof}

Of course, $R^{\square,\tau}_{\rhobar}\neq 0$ if and only if $\rhobar$
has a lift of type $\tau$.

\subsubsection{The case where $p=l$}
\label{subsubsec: the case where p=l}
Now assume that $p=l$. In this case, we wish to consider crystalline
ordinary deformations of fixed weight. We assume from now on that $K$ is 
large enough that any embedding $M \hookrightarrow \overline{K}$ has image
contained in $K$.

\begin{notation}
Recall that $\bb{Z}^n_+$ is the set of non-increasing $n$-tuples of integers.
 We say that $\lambda
  \in (\bb{Z}^{n}_{+})^{\Hom(M,K )}$ is \emph{regular} if for each
  $j=1,\ldots,n-1$ there exists $\tau : M \hookrightarrow K$ with
  $\lambda_{\tau,j} > \lambda_{\tau,j+1}$.

Let $\epsilon$ be the $l$-adic cyclotomic character and let $\Art_{M} :
M^{\times} \rightarrow W_{M}^{\ab}$ be the Artin map (normalized to
take uniformizers to lifts of geometric Frobenius).

\begin{defn}
\label{defn: chars associated to lambda}
  Let $\lambda$ be an element of $(\bb{Z}^{n}_{+})^{\Hom(M,K)}$.  We
  associate to $\lambda$ an $n$-tuple of characters $ I_{M}
  \rightarrow \bigO^{\times}$ as follows. For $j=1,\ldots,n$ define
\begin{eqnarray*}
  \chi^{\lambda}_{j} : I_M & \rightarrow & \bigO^{\times} \\
  \sigma & \mapsto & \epsilon(\sigma)^{-(j-1)} \prod_{\tau : M
    \hookrightarrow K}
  \tau(\Art_{M}^{-1}(\sigma))^{-\lambda_{\tau,n-j+1}}.
\end{eqnarray*}
\end{defn}
Note that $\chi^{\lambda}_j$ can also be thought of as the restriction to $I_M$ of any crystalline character $G_M \rightarrow \Qlbar^{\times}$ whose Hodge-Tate weight with respect to $\tau : M \into \Qlbar$ is given by $(j-1)+\lambda_{\tau,n-j+1}$ for all $\tau$ (we use the convention that the Hodge-Tate
  weights of $\epsilon$ are all $-1$).

Let $\lambda$ be an element of $(\bb{Z}^{n}_{+})^{\Hom(M,K)}$.  We
associate to $\lambda$ an $l$-adic Hodge type $\mathbf{v}_{\lambda}$
in the sense of section 2.6 of \cite{kisinpst} as follows. Let $D_{K}$
denote the vector space $K^{n}$. Let $D_{K,M}=D_{K} \otimes_{\bb{Q}_l}
M$. For each embedding $\tau : M \hookrightarrow K$, we let
$D_{K,\tau}=D_{K,M} \otimes_{K\otimes M,1\otimes \tau}K $ so that
$D_{K,M} = \oplus_{\tau} D_{K,\tau}$. For each $\tau$ choose a
decreasing filtration $\Fil^{i}D_{K,\tau}$ of $D_{K,\tau}$ so that
$\dim_{K} \gr^{i}D_{K,\tau} = 0 $ unless $ i =
(j-1)+\lambda_{\tau,n-j+1}$ for some $j=1,\ldots,n$ in which case
$\dim_{K} \gr^{i}D_{K,\tau}=1$. We define a decreasing filtration of
$D_{K,M}$ by $K \otimes_{\bb{Q}_l} M$-submodules by setting
\[ \Fil^{i} D_{K,M} = \oplus_{\tau} \Fil^{i}D_{K,\tau}.\] Let
$\mathbf{v}_{\lambda}= \{ D_{K}, \Fil^{i}D_{K,M} \}$.

  Let $B$ denote a finite, local $K$-algebra and $\rho_B : G_{M}
  \rightarrow \GL_n(B)$ a crystalline representation. Then
  $D_B:=D_{\cris}(\rho_B)= (\rho_B \otimes_{\bb{Q}_l} B_{\cris})^{G_M}$ is a free $B\otimes_{\bb{Q}_l} M_0$-module of
  rank $n$ where $M_0$ is the maximal subfield of $M$ which is
  unramified over $\bb{Q}_l$. Moreover, $D_{B}$ is equipped with a
  $B$-linear and $\varphi_0$-semilinear morphism $\varphi_B$ where $\varphi_0$
  denotes geometric Frobenius on $M_0$. For each embedding $\tau : M_0
  \rightarrow K$, let $D_{B,\tau} = D_{B} \otimes_{B\otimes
    M_0,1\otimes \tau}B$. Then $D_{B,\tau} =
  \oplus_{\tau}D_{B,\tau}$. Also, for each $\tau$, $\varphi_B$ defines an
  isomorphism of $B$-modules $\varphi_B: D_{B,\tau} \isoto D_{B,\tau \circ
    \varphi_0^{-1}}$. Let
  $f=[M_0:\bb{Q}_l]$. Then $\varphi_B^f$ is a $B$-linear
  endomorphism of $D_B$ which preserves each $D_{B,\tau}$. For each
  $\tau$, the isomorphism $\varphi_B : D_{B,\tau} \rightarrow
  D_{B,\tau \circ \varphi_0^{-1}}$ takes $\phi_B^f$ to $\phi_B^f$. Let
  $P_{B}(X) \in B[X]$ denote the characteristic polynomial of
  $\phi_B^f$ on $D_{B,\tau}$ for any choice of $\tau$.

  Let $\mc{F}$ denote the flag variety over $\Spec \mc{O}$ whose set
  of $A$-points, for any $\mc{O}$-algebra $A$, corresponds to
  filtrations $0=\Fil_{0} \subset \Fil_1 \subset \ldots \subset \Fil_n
  = A^{n}$     
 of $A^{n}$ by locally free submodules which, locally, are
  direct summands and are such that $\Fil_{j}$ has rank $j$.
\end{notation}

\begin{defn}
  \label{defn:ordinary gal rep}
  Let $E$ be an algebraic extension of $K$ let $B$ be a finite local $E$-algebra. Let $\rho : G_{M} \rightarrow \GL_n(B)$ be a continuous
  homomorphism. We say that $\rho$ is \emph{ordinary of
    weight} $\lambda \in (\bb{Z}^{n}_{+})^{\Hom(M,K )}$ if $\rho$ is
  conjugate to a representation of the form
  \[ \left(\begin{matrix} \psi_{1} & * & \ldots & * &* \cr 0 &
      \psi_{2} & \ldots & * &* \cr

      \vdots & \vdots &\ddots & \vdots& \vdots \cr 0 & 0 & \ldots &
      \psi_{n-1} &*\cr 0 & 0 & \ldots & 0&
      \psi_{n}
    \end{matrix} \right)
  \]
  where for each $j = 1,\ldots,n$ the character
  $\psi_{j}$ agrees on an open subgroup of $I_M$ with
  the character $\chi^{\lambda}_{j}$ introduced above.
  
  Equivalently, $\rho$ is 
  ordinary of weight $\lambda$ if there is a full flag $0 = \Fil_{0}
  \subset \Fil_1 \subset \ldots \subset \Fil_{n} = B^{n}$ of $B^{n}$
  which preserved by $G_{M}$ and such that the representation of
  $G_{M}$ on $\gr_{j} = \Fil_{j}/\Fil_{j-1}$ is potentially semistable
  and for each embedding $\tau : M \hookrightarrow K$, the Hodge-Tate
  weight of $\gr_{j}$ with respect to $\tau$ is $(j-1) +
  \lambda_{\tau, n-j+1}$.
\end{defn}

\begin{lem}
  \label{OrdImpliesST}
  Suppose that $E$ is an algebraic extension of $K$ and $\rho : G_{M}
  \rightarrow \GL_n(E)$ is ordinary of weight $\lambda$. Let
  $\psi_{1},\ldots,\psi_{n} : G_{M} \rightarrow E^{\times}$ be as
  above. Then
  \begin{enumerate}
  \item $\rho$ is potentially semistable.
  \item If each $\psi_{j}$ is crystalline (which occurs if and only if
    $\psi_j$ agrees with $\chi^{\lambda}_j$ on all of $I_M$), then $\rho$ is semistable.
  \item If each $\psi_{j}$ is crystalline and $\lambda$ is regular,
    then $\rho$ is crystalline.
  \end{enumerate}
\end{lem}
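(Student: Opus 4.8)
The plan is to exploit the full flag $0 = \Fil_0 \subset \Fil_1 \subset \ldots \subset \Fil_n = E^n$ preserved by $G_M$, together with the standard dévissage properties of the classes of potentially semistable, semistable, and crystalline representations. First I would reduce to the case where $E = K'$ is a finite extension of $K$: every relevant statement ($\rho$ is potentially semistable, semistable, or crystalline) can be checked after restricting along a finite extension of the coefficient field, and $\rho$, being finitely generated data over $G_M$ (which is topologically finitely generated) with values in $\GL_n(E)$, is conjugate to a representation defined over a finite subextension; moreover the flag and the characters $\psi_j$ descend. So assume $E/K$ finite.

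For part (1), I would argue by induction on $n$ using the short exact sequence $0 \to \Fil_{n-1} \to \rho \to \gr_n \to 0$. Each graded piece $\gr_j$ is a character of $G_M$ which agrees on an open subgroup of $I_M$ with $\chi^\lambda_j$; since $\chi^\lambda_j$ extends to a crystalline (hence potentially semistable) character of $G_M$, and $\gr_j$ differs from such a character by a character of $G_M$ with open kernel (i.e. a finite-order, hence potentially unramified, character), $\gr_j$ is itself potentially semistable. The class of potentially semistable representations is closed under extensions (this is standard: after passing to a finite extension $M'/M$ over which both sub and quotient become semistable, one checks that $D_{\mathrm{st}}$ of the extension has the right dimension, using that $B_{\mathrm{st}}$-admissibility can be detected by dimension count and that $H^1$ of the relevant period ring computations behave well), so $\rho$ is potentially semistable. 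This proves (1).

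For part (2), first I would verify the parenthetical claim: $\psi_j$ is crystalline if and only if $\psi_j = \chi^\lambda_j$ on all of $I_M$. Indeed $\psi_j / \chi^\lambda_j$ (extending $\chi^\lambda_j$ to $G_M$ crystalline) is a character of $G_M$ trivial on an open subgroup of $I_M$; such a character is crystalline precisely when it is unramified, i.e.\ trivial on all of $I_M$, and since $\psi_j$ is crystalline iff this ratio is, the claim follows. Then, assuming each $\psi_j$ is crystalline, each $\gr_j$ is crystalline, in particular semistable; now I would invoke closure of the class of semistable representations under extensions to conclude $\rho$ is semistable. For part (3), under the additional regularity hypothesis on $\lambda$, I would show the Hodge-Tate weights of consecutive graded pieces $\gr_j$ and $\gr_{j+1}$ are ``separated'' in the appropriate sense: the Hodge-Tate weight of $\gr_j$ with respect to $\tau$ is $(j-1) + \lambda_{\tau, n-j+1}$ and that of $\gr_{j+1}$ is $j + \lambda_{\tau, n-j}$, and regularity gives an embedding $\tau$ with $\lambda_{\tau, n-j} > \lambda_{\tau, n-j+1}$, so the weights are strictly increasing for that $\tau$. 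The key input is then the fact (cf.\ Fontaine's theory, or Breuil/Kisin on the monodromy operator) that an extension of crystalline representations by crystalline representations, in which the Hodge-Tate weights of the sub and quotient are disjoint in a suitable way relative to the $\varphi$-slopes --- more precisely, where the monodromy operator $N$ on $D_{\mathrm{st}}$ is forced to vanish --- is crystalline; the regularity of $\lambda$ is exactly what forces $N = 0$ on each graded piece and prevents $N$ from being nonzero on the extension. Carrying this out carefully, via a filtered $(\varphi, N)$-module computation and an induction on $n$, is where the real content lies.

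The main obstacle I expect is part (3): showing that $N$ vanishes on $D_{\mathrm{st}}(\rho)$. The subtlety is that $N$ maps the step of the monodromy filtration associated to one Hodge--Tate weight down to the next, and a priori a semistable but non-crystalline extension of crystalline characters can exist when consecutive Hodge--Tate weights differ by a suitable amount (this is exactly the phenomenon behind non-split extensions related to the cyclotomic character). Here the point is that regularity of $\lambda$ guarantees that for \emph{some} embedding $\tau$ the weight gaps go the ``wrong way'' for a nonzero $N$ to be $\varphi$-equivariant, which kills $N$ entirely since $D_{\mathrm{st}}$ is a module over $K \otimes_{\mathbb{Q}_l} M_0$ and $N$ is determined by its effect on the $\tau$-components compatibly. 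I would either cite the relevant statement from the literature on ordinary crystalline representations (e.g.\ the appendix of \cite{ger} or Geraghty's thesis, where ``ordinary of weight $\lambda$ regular $\Rightarrow$ crystalline'' is presumably established) or spell out the filtered module argument; given the structure of this paper I expect the intended proof simply cites such a reference.
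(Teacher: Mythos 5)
There is a genuine gap in your treatment of parts (1) and (2): the assertion that the classes of potentially semistable (resp.\ semistable) representations are closed under extensions is false. For example, $\dim_{\Ql}H^1(G_M,\Ql)=[M:\Ql]+1$ while the de Rham classes form a proper subspace, so a general self-extension of the trivial character is not even de Rham; likewise, if the Hodge--Tate weights along a flag \emph{decrease} from sub to quotient (e.g.\ an extension of $\epsilon^{-1}$ by the trivial character), no nonsplit extension is de Rham. What makes the lemma true is exactly the feature your d\'evissage never uses: in the ordinary flag the $\tau$-weight of $\gr_j$ is $(j-1)+\lambda_{\tau,n-j+1}$, so the weights strictly increase from sub to quotient, and each inductive step is an extension of a crystalline character by a semistable representation all of whose weights are strictly smaller. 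It is for extensions ordered this way that every class is semistable (equivalently, the $B_{\mathrm{st}}$-admissibility dimension count succeeds), and that weight-direction statement is precisely Proposition 1.28(2) of \cite{nekovar}, which is all the paper cites for part (2); since your argument never invokes the direction of the weights, it cannot be repaired by being more careful about the closure claim. Note also that the paper obtains part (1) \emph{from} part (2) (restricting to a finite extension $M'/M$ over which each $\psi_j$ agrees with $\chi^{\lambda}_j$ on all of $I_{M'}$ and hence becomes crystalline), rather than by a separate extension-closure argument; your reduction to $E/K$ finite and your proof of the parenthetical claim in (2) are fine.

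For part (3) your diagnosis is the right one: the content is that the monodromy operator $N$ on $D_{\mathrm{st}}(\rho)$ vanishes, and regularity supplies, for each consecutive pair of graded pieces, an embedding at which the weight gap is at least $2$. But your explanation of why a gap at a \emph{single} embedding kills $N$ (``since $D_{\mathrm{st}}$ is a module over $K\otimes_{\Ql}M_0$'') is not an argument: $N$ is $E\otimes_{\Ql}M_0$-linear, whereas regularity is seen only through the filtration indexed by embeddings $M\into E$, so one must actually run the weak-admissibility computation. Your fallback of citing the literature is in fact what the paper does --- its entire proof of (3) is the citation of Propositions 1.24 and 1.26 of \cite{nekovar} --- but the self-contained arguments you outline for (1) and (2) fail as written.
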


\begin{proof} Part 2 follows from Proposition 1.28(2) of \cite{nekovar}
  and part 1 follows from part 2. Part 3 follows from Proposition 1.26
  of \cite{nekovar} and the formulae in
  Proposition 1.24 of \cite{nekovar}.
\end{proof}

\begin{lem}
  \label{lem:Ordinary always lifts in regular weight}  Let $\psi_i: G_M \rightarrow
  E^{\times}$ be as above (with respect to some $\lambda\in(\Z^n_+)^{\Hom(M,K)}$), with each
  $\psi_i$ crystalline. Suppose that $\rhobar:G_M\to\GL_n(k)$ is of the form \[ \left(\begin{matrix} \overline{\mu}_{1} & * & \ldots & * &* \cr 0 &
      \overline{\mu}_{2} & \ldots & * &* \cr

      \vdots & \vdots &\ddots & \vdots& \vdots \cr 0 & 0 & \ldots &
      \overline{\mu}_{n-1} &*\cr 0 & 0 & \ldots & 0&
      \overline{\mu}_{n}
    \end{matrix} \right)
  \]where $\overline{\psi}_i=\overline{\mu}_i$ for each $1\le i\le
  n$. Suppose that for each $i<j$ we have $\overline{\mu}_i\overline{\mu}_j^{-1}\neq\overline\epsilon$. Then $\rhobar$ has a lift to a crystalline representation
  $\rho:G_M\to\GL_n(E)$ of the form \[ \left(\begin{matrix} \psi_{1} & * & \ldots & * &* \cr 0 &
      \psi_{2} & \ldots & * &* \cr

      \vdots & \vdots &\ddots & \vdots& \vdots \cr 0 & 0 & \ldots &
      \psi_{n-1} &*\cr 0 & 0 & \ldots & 0&
      \psi_{n}
    \end{matrix} \right).
  \]
\end{lem}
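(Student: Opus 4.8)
The plan is to build the crystalline lift $\rho$ by successively extending, one row/column at a time, starting from the bottom-right corner and working outwards — equivalently, to build it as a successive extension of one-dimensional pieces $\psi_n, \psi_{n-1}, \ldots, \psi_1$. The key point is that at each stage the relevant obstruction and parameter spaces are controlled by Galois cohomology groups $H^1$ and $H^2$ of the local field $M$ with coefficients in a suitable $\Hom$-module, and that we may freely shrink the coefficient field $E$ (or rather pass between $E$, $\mc{O}_E$, and $k$) since the statement is about lifting a $k$-representation. First I would reduce to the case $E = K$ (finite over $\Ql$) by a standard limit/Noetherianity argument: a crystalline lift over some finite subextension will do, and the filtered-module data characterising ``crystalline of weight $\lambda$'' is already defined over such a field.

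The inductive step is the heart of the matter. Suppose inductively that the lower-right $(n-1)\times(n-1)$ block has been lifted: we have a crystalline $\rho':G_M\to\GL_{n-1}(\mc{O})$ of the appropriate ordinary weight reducing to the corresponding block of $\rhobar$, upper-triangular with diagonal $\psi_2,\ldots,\psi_n$. I want to find a crystalline extension
\[
0 \To \psi_1 \To \rho \To \rho' \To 0
\]
reducing mod $l$ to the given extension of $\rhobar$'s. The obstruction to lifting the mod $l$ extension class in $H^1(G_M, \rhobar'{}^\vee\otimes\overline\psi_1)$ to a class in $H^1(G_M,\rho'^\vee\otimes\psi_1)$ lies in $H^2(G_M, \rho'^\vee\otimes\psi_1)$; by local Tate duality this is dual to $H^0(G_M,\rho'\otimes\psi_1^{-1}\otimes\epsilon)=\Hom_{G_M}(\psi_1\epsilon^{-1},\rho')$, and the hypothesis that $\overline\mu_i\overline\mu_j^{-1}\neq\overline\epsilon$ for $i<j$ — applied with $i=1$, so $\overline\mu_1\overline\mu_j^{-1}\neq\overline\epsilon$, equivalently $\overline\mu_j\neq\overline\mu_1\overline\epsilon$ — kills precisely these $H^0$'s after reduction, hence (by Nakayama, working integrally) the $H^2$ vanishes and every mod $l$ extension class lifts. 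One then needs the lift to remain crystalline: since $\psi_1$ and $\rho'$ are both crystalline, the subspace $H^1_f(G_M,\rho'^\vee\otimes\psi_1)\subseteq H^1(G_M,\rho'^\vee\otimes\psi_1)$ of crystalline classes surjects onto the corresponding mod $l$ space provided the relevant $H^2_f$ or cokernel term vanishes, and the same Hodge-theoretic inequality (the gaps between the Hodge–Tate weights of $\psi_1$ and those appearing in $\rho'$ are forced by regularity in a way that makes the local crystalline condition ``unobstructed'') gives this. Concretely one invokes the computation of $\dim H^1_f$ via the Bloch–Kato formula together with Lemma~\ref{OrdImpliesST} to know the extension built this way is genuinely crystalline and ordinary of weight $\lambda$.

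I expect the main obstacle to be bookkeeping the crystallinity of the extension class rather than its mere existence: ensuring that the chosen lift of the extension class lies in $H^1_f$, and that the resulting $\rho$ is crystalline (not merely semistable or potentially semistable). This is where one must use that each $\psi_i$ is crystalline on all of $I_M$ and compare Hodge–Tate weights; the inequality $\overline\mu_i\overline\mu_j^{-1}\neq\overline\epsilon$ is exactly what is needed to make the crystalline deformation condition cut out by a smooth quotient at each stage, so that the surjectivity of $H^1_f \to H^1_f(\text{mod }l)$ holds. Everything else — the dévissage, the reduction to finite $E$, the identification of obstruction groups, local Tate duality — is routine. It may be cleanest to phrase the induction not as ``add one diagonal character at a time'' but as ``lift a mod $l$ upper-triangular crystalline ordinary representation to characteristic zero'' and run the argument by splitting off $\psi_n$ at the bottom, since then $\rho'$ has diagonal $\psi_1,\ldots,\psi_{n-1}$ and $\psi_n$ is a quotient, which makes the relevant cohomology $H^1(G_M,\psi_n^{-1}\otimes\rho')$ and its $H^2$ obstruction $\Hom_{G_M}(\psi_n\epsilon^{-1},\rho')^\vee$, again killed by the hypothesis applied with $j=n$.
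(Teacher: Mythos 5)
Your proposal is essentially sound and runs on the same two ingredients as the paper's proof, just packaged as a step-by-step induction on extensions rather than a one-shot argument: the paper lifts the whole Borel-valued representation at once, using $H^2(G_M,\mathfrak{u})=0$ (where $\mathfrak{u}$ is the strictly upper-triangular part of $\ad\rhobar$, filtered with one-dimensional graded pieces $\overline{\mu}_i\overline{\mu}_j^{-1}\neq\overline{\epsilon}$, so Tate duality kills $H^2$ exactly as in your dévissage), and then observes that \emph{any} upper-triangular representation with the prescribed crystalline diagonal is automatically crystalline, by the argument of Lemma \ref{OrdImpliesST} with the hypothesis $\psi_i\psi_j^{-1}\neq\epsilon$ standing in for regularity. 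Two points in your write-up need repair, though neither is fatal. First, you appeal to ``regularity'' (and implicitly to part (3) of Lemma \ref{OrdImpliesST}) for the crystallinity of the extension, but regularity of $\lambda$ is \emph{not} a hypothesis of this lemma, the label notwithstanding; the Hodge--Tate gaps you want come for free from the shifts $(j-1)$ in Definition \ref{defn: chars associated to lambda} (so the weights of $\psi_i\psi_j^{-1}$, $i<j$, are always $\le -1$), and the substitute for regularity in the crystallinity criterion is precisely $\psi_i\psi_j^{-1}\neq\epsilon$, which follows from $\overline{\mu}_i\overline{\mu}_j^{-1}\neq\overline{\epsilon}$. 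Second, your framing via ``surjectivity of $H^1_f$ onto the mod $l$ space'' is both murky (there is no canonical mod-$l$ crystalline condition to surject onto) and unnecessary: under the hypothesis one has $H^2(G_M,\Hom(\rho',\psi_1))=0$ and $\Fil^0 D_{\dR}=0$ for this $\Hom$-module, so the Bloch--Kato dimension count you mention gives $H^1_f=H^1$, i.e.\ \emph{every} extension of the prescribed shape is crystalline, and any integral lift of the mod-$l$ extension class (which exists by your Nakayama argument) does the job. Making these two corrections, your induction gives a complete proof; the paper's version buys brevity by doing the obstruction theory for the full Borel at once and quoting Nekov\'a\v{r} for automatic crystallinity, while yours makes the extension-class bookkeeping explicit.
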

\begin{proof}The fact that any upper-triangular representation of this
  form is crystalline follows easily as in the proof of Lemma
  \ref{OrdImpliesST}, because the assumption that
  $\overline{\mu}_i\overline{\mu}_j^{-1}\neq \epsilon$ implies that
  $\psi_i\psi_j^{-1}\neq \epsilon$. The fact that such an upper-triangular
  lift exists follows from the fact that $H^2(G_M,\mathfrak{u})=0$,
  where $\mathfrak{u}$ is the subspace of the Lie algebra $\ad\rhobar$
  consisting of strictly upper-triangular matrices. The vanishing of
  this cohomology group follows from Tate local duality and the
  existence of a filtration on $\mathfrak{u}$ whose graded pieces
  are one-dimensional with $G_M$ acting via the characters
  $\overline{\mu}_i\overline{\mu}_j^{-1}\neq \epsilon$, $i<j$ (cf. Lemma 3.2.3 of \cite{ger}).
\end{proof}

We now recall some results of Kisin. Let $\lambda$ be an element of $(\bb{Z}^{n}_{+})^{\Hom(M,K)}$ and let
$\mathbf{v}_{\lambda}$ be the associated $l$-adic Hodge type.
\begin{defn}
  If $B$ is a finite $K$-algebra and $V_{B}$ is a free $B$-module of
  rank $n$ with a continuous action of $G_{M}$ that makes $V_B$ into a
  de Rham representation, then we say that \emph{$V_B$ is of $l$-adic Hodge
  type $\mathbf{v}_{\lambda}$} if for each $i$ there is an isomorphism
  of $B \otimes_{\bb{Q}_l} M$-modules
\[ \gr^{i}(V_{B} \otimes_{\bb{Q}_l} B_{dR})^{G_{M}}
\isoto B \otimes_K (\gr^{i}D_{K,M}). \] 
\end{defn}
For example, if $E$ is a finite extension of $K$ and $\rho : G_M \rightarrow \GL_n(E)$ is ordinary of
weight $\lambda$, then $\rho$ is of $l$-adic Hodge type $\mathbf{v}_{\lambda}$.

Corollary 2.7.7 of \cite{kisinpst} implies that there is a unique
$l$-torsion-free quotient $R^{\mathbf{v}_{\lambda},cr}_{\rhobar}$ of
$R^{\square}_{\rhobar}$ with the property that for any finite
$K$-algebra $B$, a homomorphism of $\mc{O}$-algebras $\zeta :
R^{\square}_{\rhobar}\rightarrow B$ factors through
$R^{\mathbf{v}_{\lambda},cr}_{\rhobar}$ if and only if $\zeta \circ
\rho^{\square}$ is crystalline of $l$-adic Hodge type
$\mathbf{v}_{\lambda}$. Moreover, Theorem 3.3.8 of \cite{kisinpst}
implies that $\Spec R^{\mathbf{v}_{\lambda},cr}_{\rhobar}[1/l]$ is
formally smooth over $K$ and equidimensional of dimension $n^2 +
\frac{1}{2}n(n-1)[M:\bb{Q}_l]$. 

Let $\mc{F}$ be the flag
variety over $\Spec \mc{O}$ as introduced above and let
$\mc{G}^{\lambda}$ be the closed subscheme of $\mc{F}\times_{\Spec
  \mc{O}} \Spec R^{\mathbf{v}_{\lambda},cr}_{\rhobar}$ corresponding
to filtrations $\Fil$ which (i) are preserved by the induced action of
$G_M$ and (ii) are such that $I_M$ acts on $\gr_j=\Fil_j/\Fil_{j-1}$
via the character $\chi^{\lambda}_{j}$ for each $j=1,\ldots,n$. The
fact that $\mc{G}^{\lambda}$ is a closed subscheme can proved in the
same way as Lemma 3.1.2 of \cite{ger}. Let
$R^{\triangle_{\lambda},cr}_{\rhobar}$ be the image of 
\[ R^{\mathbf{v}_{\lambda},cr}_{\rhobar} \rightarrow
\mc{O}_{\mc{G}^{\lambda}}(\mc{G}^{\lambda}[1/l]). \]
In other words, $\Spec R^{\triangle_{\lambda},cr}_{\rhobar}$ is the
scheme theoretic image of the morphism
$\mc{G}^{\lambda}[1/l] \rightarrow \Spec
R^{\mathbf{v}_{\lambda},cr}_{\rhobar}$. The next result follows from Lemma 3.3.3 of \cite{ger}.

\begin{lem}
\label{lem:ordinary crystalline ring for GL_n}
  For any finite local $K$-algebra $B$, a homomorphism of $\mc{O}$-algebras $\zeta :
  R^{\mathbf{v}_{\lambda},cr}_{\rhobar} \rightarrow B$ factors through
  $R^{\triangle_{\lambda},cr}_{\rhobar}$ if and only if $\zeta \circ
  \rho^{\square}$ is ordinary of weight $\lambda$. Moreover,  $\Spec R^{\triangle_{\lambda},cr}_{\rhobar}$ is a union of irreducible components of
  $\Spec R^{\mathbf{v}_{\lambda},cr}_{\rhobar}$.
\end{lem}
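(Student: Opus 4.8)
The plan is to prove Lemma \ref{lem:ordinary crystalline ring for GL_n} by reducing both assertions to the two cited results of \cite{ger} (Lemmas 3.3.3 and 3.1.2 there), after establishing that the scheme-theoretic image construction really does ``see'' exactly the ordinary locus. First I would unwind the definitions: by construction $\mc{G}^{\lambda}$ parametrizes pairs $(\zeta,\Fil)$ where $\zeta:R^{\mathbf{v}_{\lambda},cr}_{\rhobar}\to A$ and $\Fil$ is a $G_M$-stable full flag on which $I_M$ acts on the $j$-th graded piece via $\chi^{\lambda}_j$. Comparing with the second (equivalent) characterization in Definition \ref{defn:ordinary gal rep}, a $\Qlbar$-point (or finite local $K$-algebra point) $\zeta$ of $R^{\mathbf{v}_{\lambda},cr}_{\rhobar}$ lies in the image of $\mc{G}^{\lambda}[1/l]$ precisely when $\zeta\circ\rho^{\square}$ is ordinary of weight $\lambda$: indeed such a $\zeta$ is already crystalline of Hodge type $\mathbf{v}_{\lambda}$ by definition of $R^{\mathbf{v}_{\lambda},cr}_{\rhobar}$, so the graded pieces of a $G_M$-stable flag are automatically crystalline with the prescribed Hodge--Tate weights, and the condition that $I_M$ act via $\chi^{\lambda}_j$ on $\gr_j$ is then equivalent to the condition in the first matrix form of Definition \ref{defn:ordinary gal rep} (using Lemma \ref{OrdImpliesST}(2) to pass between ``crystalline character with the right Hodge--Tate weight'' and ``agrees with $\chi^{\lambda}_j$ on all of $I_M$'').

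Granting this, the ``if and only if'' in the Lemma is essentially the statement that $\Spec R^{\triangle_{\lambda},cr}_{\rhobar}$, being the scheme-theoretic image of $\mc{G}^{\lambda}[1/l]\to\Spec R^{\mathbf{v}_{\lambda},cr}_{\rhobar}$, has the same finite-local-$K$-algebra points as the set-theoretic image, which is exactly the ordinary locus. The point to check here is that a homomorphism $\zeta:R^{\mathbf{v}_{\lambda},cr}_{\rhobar}\to B$ with $B$ finite local over $K$ factors through the scheme-theoretic image if and only if it factors set-theoretically — this is where I would invoke Lemma 3.3.3 of \cite{ger}, which handles precisely this kind of descent of points through the scheme-theoretic image of the ``ordinary'' flag scheme; the essential input is that $\mc{G}^{\lambda}[1/l]\to\Spec R^{\mathbf{v}_{\lambda},cr}_{\rhobar}$ is proper (as $\mc{F}$ is proper over $\Spec\mc{O}$) and, after inverting $l$, a closed immersion onto a union of connected components, so that its scheme-theoretic image is cut out by the ideal of functions vanishing on the ordinary locus and ``being ordinary'' is both open and closed on $\Spec R^{\mathbf{v}_{\lambda},cr}_{\rhobar}[1/l]$.

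For the second assertion, that $\Spec R^{\triangle_{\lambda},cr}_{\rhobar}$ is a union of irreducible components of $\Spec R^{\mathbf{v}_{\lambda},cr}_{\rhobar}$: by Theorem 3.3.8 of \cite{kisinpst} the target $\Spec R^{\mathbf{v}_{\lambda},cr}_{\rhobar}[1/l]$ is formally smooth over $K$, hence regular, so its irreducible components coincide with its connected components. It therefore suffices to show the image of $\mc{G}^{\lambda}[1/l]$ is open (it is automatically closed, being the image of a proper map, and it is reduced and $l$-torsion-free by the definition of $R^{\triangle_{\lambda},cr}_{\rhobar}$). Openness is the content of Lemma 3.1.2 (and the surrounding discussion) of \cite{ger}: on the formally smooth locus, the ordinarity condition of fixed weight is open because the flag realizing it deforms, which in turn rests on the vanishing of the relevant $H^2$ — precisely the cohomological input already used in Lemma \ref{lem:Ordinary always lifts in regular weight}. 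Finally one notes $R^{\triangle_{\lambda},cr}_{\rhobar}$ is by construction reduced and $l$-torsion-free (it is a subring of $\mc{O}_{\mc{G}^{\lambda}}(\mc{G}^{\lambda}[1/l])$), so it is uniquely determined by its union of components, completing the proof. The main obstacle is verifying carefully that the set-theoretic image of $\mc{G}^{\lambda}[1/l]$ is exactly the ordinary-of-weight-$\lambda$ locus and that this locus is open and closed — i.e., that no ``spurious'' non-ordinary points can lie in the scheme-theoretic image and that the ordinary points do not accumulate on non-ordinary ones; both follow from the properness and local-on-the-target closed-immersion properties established in \cite{ger}, so the work is in assembling these citations correctly rather than in any new argument.
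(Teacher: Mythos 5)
Your proposal is correct and follows essentially the same route as the paper, which simply deduces the lemma from Lemma 3.3.3 of \cite{ger}; your sketch (properness of the flag scheme giving closedness of the ordinary locus, uniqueness of the flag and formal smoothness giving that the scheme-theoretic image sees exactly the ordinary points, and regularity of $\Spec R^{\mathbf{v}_{\lambda},cr}_{\rhobar}[1/l]$ identifying irreducible with connected components) is precisely the content of that cited result. The only slip is attributional: Lemma 3.1.2 of \cite{ger} is used for $\mc{G}^{\lambda}$ being a closed subscheme of the flag bundle, not for openness of the ordinary locus, which is part of the argument of Lemma 3.3.3 itself.
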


\subsubsection{The $p=l$ case with a slight refinement}\label{Refined
  liftings for l=p}

We continue to consider, as above, crystalline lifts of $\rhobar$
which are ordinary of a given weight $\lambda$. A necessary condition for
such lifts to exist is that  $\rhobar$ itself is conjugate to an upper
triangular representation whose ordered $n$-tuple of diagonal
characters, restricted to $I_M$, is given by
$(\overline{\chi}^{\lambda}_1,\ldots,\overline{\chi}^{\lambda}_n)$. Let
us assume that $\rhobar$ has this property. In fact, let us  
fix characters
$\overline{\mu}_{1},\ldots,\overline{\mu}_{n} : G_{M} \rightarrow k^{\times}$ with
$\overline{\mu}_{j}|_{I_M} = \overline{\chi}_{j}^{\lambda}$ and  assume that $\rhobar$ is
conjugate to an upper triangular representation whose ordered
$n$-tuple of diagonal characters is $\mubar:=(\mubar_{1},\ldots,\mubar_n)$. Note that
if the characters $\overline{\chi}^{\lambda}_j$ are not distinct, then
we may have more than one choice for the ordered $n$-tuple
$(\mubar_1,\ldots,\mubar_n)$. We now would like to study crystalline lifts of
$\rhobar$ which are ordinary of weight $\lambda$ \emph{and} are such
that for each $j$, the character $\psi_j$ of Definition \ref{defn:ordinary
  gal rep} lifts $\mubar_j$.

Let $R_{\mubar}$ denote the object of $\mc{C}_{\mc{O}}$ representing
the functor which sends an object $A$ of $\mc{C}_{\mc{O}}$ to the set
of lifts $(\psi_1,\ldots,\psi_n)$ of the ordered $n$-tuple
$(\mubar_1,\ldots,\mubar_n)$ with $\psi_j|_{I_M}=\chi^{\lambda}_{j}$ for
each $j$. The ring $R_{\mubar}$ is non-canonically isomorphic to
$\mc{O}[[X_1,\ldots,X_n]]$. Let $(\psi^{\univ}_1,\ldots,\psi^{\univ}_n)$
be the universal lift of the tuple $(\mubar_1,\ldots,\mubar_n)$ to
$R_{\mubar}$. Let $\mc{G}^{\lambda}_{\mubar}$ denote the closed
subscheme of the flag variety $\mc{F}\times_{\Spec \mc{O}} \Spec
(R^{\triangle_{\lambda},cr}_{\rhobar}\widehat{\otimes}_{\mc{O}}R_{\mubar})$
corresponding to filtrations which are (i) preserved by the induced
action of $G_M$ and (ii) such that $G_M$ acts on $\gr_j$ via the
pushforward of $\psi^{\univ}_j$ for each $j=1,\ldots,n$. Let
$R^{\triangle_{\lambda},cr}_{\rhobar,\mubar}$ be the quotient of $
R^{\triangle_{\lambda},cr}_{\rhobar}\widehat{\otimes}_{\mc{O}}R_{\mubar}$
corresponding to the scheme theoretic image of
$\mc{G}^{\lambda}_{\mubar}[1/l]$.  Note that we have a natural
morphism $\mc{G}^{\lambda}_{\mubar}[1/l] \rightarrow
\mc{G}^{\lambda}[1/l]$ covering the morphism $\Spec
R^{\triangle_{\lambda},cr}_{\rhobar,\mubar} \rightarrow \Spec
R^{\triangle_{\lambda},cr}_{\rhobar}$.

\begin{lem}\label{lem: picking components of the ordinary crystalline
    deformation ring}
  After inverting $l$, the morphism $\Spec R^{\triangle_{\lambda},cr}_{\rhobar,\mubar}
  \rightarrow \Spec R^{\triangle_{\lambda},cr}_{\rhobar}$ becomes a closed
  immersion and identifies $\Spec R^{\triangle_{\lambda},cr}_{\rhobar,\mubar}[1/l]$ with a union of irreducible
  components of $\Spec R^{\triangle_{\lambda},cr}_{\rhobar}[1/l]$. Moroever, every irreducible component of $\Spec R^{\triangle_{\lambda},cr}_{\rhobar}$ arises in this way. 
\end{lem}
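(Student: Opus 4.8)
The plan is to argue by decomposing the ordinary crystalline deformation ring according to the possible diagonal characters, exactly as the preceding discussion sets up. First I would recall that by Lemma \ref{lem:ordinary crystalline ring for GL_n}, the $\overline{K}$-points of $\Spec R^{\triangle_{\lambda},cr}_{\rhobar}[1/l]$ correspond precisely to crystalline lifts of $\rhobar$ that are ordinary of weight $\lambda$; for any such lift $\rho$ there is a $G_M$-stable full flag whose graded pieces $\gr_j$ carry characters $\psi_j$ agreeing with $\chi^{\lambda}_j$ on $I_M$. Since $\rhobar$ is upper-triangular with diagonal reduction $(\overline{\psi}_j)_j$, and since any such tuple of reductions is by hypothesis one of the finitely many admissible tuples $\mubar=(\mubar_1,\dots,\mubar_n)$ (those with $\mubar_j|_{I_M}=\overline{\chi}^{\lambda}_j$ arising as the ordered diagonal of an upper-triangular conjugate of $\rhobar$), every closed point of $\Spec R^{\triangle_{\lambda},cr}_{\rhobar}[1/l]$ lies in the image of $\mc{G}^{\lambda}_{\mubar}[1/l]$ for at least one such $\mubar$. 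This gives the surjectivity ("every irreducible component arises in this way") once we know the images are unions of components.

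Next I would show $\Spec R^{\triangle_{\lambda},cr}_{\rhobar,\mubar}\to\Spec R^{\triangle_{\lambda},cr}_{\rhobar}$ is a closed immersion after inverting $l$. The map factors through $\Spec(R^{\triangle_{\lambda},cr}_{\rhobar}\widehat{\otimes}_{\mc O}R_{\mubar})$, and $R_{\mubar}\cong\mc O[[X_1,\dots,X_n]]$ is a power series ring; after inverting $l$ the extra variables are pinned down because the universal characters $\psi^{\univ}_j$ are forced — on a point where a $G_M$-stable flag with the right $I_M$-action on graded pieces exists, the characters $\psi_j$ are the actual graded characters of that flag, and these are already defined over $R^{\triangle_{\lambda},cr}_{\rhobar}[1/l]$ itself (the flag exists over the ring by construction of $R^{\triangle_\lambda,cr}_{\rhobar}$ via $\mc{G}^\lambda$). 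Concretely: the morphism $\mc{G}^{\lambda}_{\mubar}[1/l]\to\mc{G}^{\lambda}[1/l]$ is a locally closed immersion cut out by the condition that the graded characters reduce to the fixed $\mubar_j$ rather than merely restricting correctly to $I_M$ — an open-and-closed condition on the relevant locus since the $\mubar_j$ are pairwise among finitely many possibilities — and $\mc{G}^{\lambda}[1/l]\to\Spec R^{\triangle_{\lambda},cr}_{\rhobar}[1/l]$ is, by the cited Lemma 3.3.3 of \cite{ger} together with the formal smoothness in Theorem 3.3.8 of \cite{kisinpst}, well-understood; pushing forward a (locally) closed immersion along a morphism whose scheme-theoretic image is the target shows the image of $\Spec R^{\triangle_{\lambda},cr}_{\rhobar,\mubar}[1/l]$ is a union of connected — hence, by equidimensionality and formal smoothness of the source, irreducible — components.

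Finally I would assemble these: the finitely many $\mubar$ give maps whose images are unions of irreducible components of the (formally smooth after inverting $l$) scheme $\Spec R^{\triangle_{\lambda},cr}_{\rhobar}[1/l]$, each map is a closed immersion on $\Spec(\cdot)[1/l]$, and every closed point is hit, so the images exhaust all components. The main obstacle is the closed immersion claim: one must verify that after inverting $l$ no two distinct points of $\Spec R^{\triangle_{\lambda},cr}_{\rhobar,\mubar}$ map to the same point of $\Spec R^{\triangle_{\lambda},cr}_{\rhobar}$, equivalently that the extra power-series variables are redundant after inverting $l$. This reduces to the rigidity statement that a crystalline ordinary representation determines its ordering of graded characters once the target tuple $\mubar$ is fixed — which follows because the filtration whose graded pieces carry the $\psi_j$ is essentially unique in the relevant (generic, formally smooth) situation, so the characters $\psi_j$ are functions of the point of $\Spec R^{\triangle_{\lambda},cr}_{\rhobar}[1/l]$ and no genuine moduli are added by $R_{\mubar}$. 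I would handle this by working on the smooth generic fibre and using that the flag variety morphism $\mc{G}^\lambda[1/l]\to\Spec R^{\triangle_\lambda,cr}_{\rhobar}[1/l]$ identifies source and target (being proper, and an isomorphism onto its scheme-theoretic image by Lemma \ref{lem:ordinary crystalline ring for GL_n}), so that the flag — and hence the $\psi_j$ — is canonically attached to the point.
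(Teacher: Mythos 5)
Your overall strategy is the right one, and it matches the intuition behind the paper's argument: the graded characters of the ordinary flag are determined by the point of $\Spec R^{\triangle_{\lambda},cr}_{\rhobar}[1/l]$, so $R_{\mubar}$ adds no genuine moduli, the choice of $\mubar$ only selects components, and every component is hit because every closed point has some admissible reduction tuple. But the crux is exactly the rigidity statement you defer to the end, and there your justification has a genuine gap. You assert that $\mc{G}^{\lambda}[1/l] \rightarrow \Spec R^{\triangle_{\lambda},cr}_{\rhobar}[1/l]$ is "an isomorphism onto its scheme-theoretic image by Lemma \ref{lem:ordinary crystalline ring for GL_n}". That lemma says nothing of the sort: it only characterises which $B$-points factor through $R^{\triangle_{\lambda},cr}_{\rhobar}$ and says the latter is a union of components of $R^{\mathbf{v}_{\lambda},cr}_{\rhobar}$. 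The statement you are invoking is essentially equivalent to what has to be proved, and its proof requires the one input that never appears in your proposal: the characters $\chi^{\lambda}_1,\ldots,\chi^{\lambda}_n$ are pairwise distinct on $I_M$ (their $\tau$-Hodge--Tate weights $(j-1)+\lambda_{\tau,n-j+1}$ are strictly increasing in $j$ since $\lambda_\tau \in \bb{Z}^n_+$), so a representation ordinary of weight $\lambda$ admits a \emph{unique} flag with $I_M$ acting on $\gr_j$ by $\chi^{\lambda}_j$, and hence unique graded characters $\psi_j$. Saying the flag is "essentially unique in the relevant (generic, formally smooth) situation" is precisely the point at issue, not a consequence of formal smoothness.

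Two further steps are asserted rather than argued, and both are where the paper does real work. First, knowing the $\psi_j$ are determined pointwise does not by itself give a closed immersion of schemes; the paper shows that for every finite local $E$-algebra point $B$ of the completed local ring of $\Spec R^{\triangle_{\lambda},cr}_{\rhobar}$ at the image $z$ of a closed point $x$ of $\Spec R^{\triangle_{\lambda},cr}_{\rhobar,\mubar}[1/l]$, the lift to $\mc{G}^{\lambda}_{\mubar}$ is unique, so the map on completed local rings at $z$ and $x$ is formally smooth of relative dimension $0$ with the same residue field, hence an isomorphism; combined with injectivity on closed points this yields the closed immersion and the component statement. Second, your claim that "the graded characters reduce to the fixed $\mubar_j$" is an open-and-closed condition needs justification: one must first make sense of the reduction, which the paper does by producing a subring $A\subset B$ finite and local over $\mc{O}_E$ through which each $\psi_j$ factors, with $\psi_j \bmod \mf{m}_A = \mubar_j$. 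Without these two points (distinctness of the $\chi^{\lambda}_j$ forcing uniqueness of the flag, and the infinitesimal/integral argument upgrading pointwise uniqueness to an isomorphism of completed local rings), the proposal assumes the conclusion at its key step. Your first paragraph, giving the final statement of the lemma, is fine modulo the routine lattice-reduction remark, and agrees with the paper, which treats that statement as clear.
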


\begin{proof}
  Let $X^{ord,cr}=\Spec R^{\triangle_{\lambda},cr}_{\rhobar}$ and let
  $X^{ord,cr}_{\mubar} = \Spec
  R^{\triangle_{\lambda},cr}_{\rhobar,\mubar}$. Let $x$ be a closed
  point of $X^{ord,cr}_{\mubar}[1/l]$ with residue field $E$. Let $z$ denote the image of $x$ in $X^{ord,cr}$. We claim
  that the natural map on completed local rings
  $\mc{O}_{X^{ord,cr},z}^{\wedge} \rightarrow
  \mc{O}_{X^{ord,cr}_{\mubar},x}^{\wedge}$ is an isomorphism.

  With this in mind, let $B$ denote a finite, local $E$-algebra and $\zeta :
  \mc{O}_{X^{ord,cr},z}^{\wedge} \rightarrow B$ an $E$-algebra
  homomorphism. It follows from Lemma \ref{lem:ordinary crystalline ring
    for GL_n} that $\zeta$ corresponds to a crystalline
  representation $\rho_B : G_M \rightarrow GL_n(B)$ which preserves a
  full flag $0\subset \Fil_1 \subset \ldots \subset \Fil_n = B^n$ with
  $I_M$ acting on $\gr_j$ via $\chi^{\lambda}_j$. Since the characters
  $\chi^{\lambda}_j$ are pairwise distinct, there is a unique such
  flag. Moreover, there exists a subring $A \subset B$ which is local
  and finite over $\mc{O}_E$ and such that the action of $G_M$ on $\gr_j$
  is given by a character
  $\psi_j: G_M \rightarrow B^{\times}$ which factors through
  $A^{\times}$ with $\psi_{j} \mod \mf{m}_{A} = \mubar_j
  \otimes_k A/\mf{m}_A$.
We see that there is a unique lifting of $\zeta^{*} : \Spec B
  \rightarrow \Spec R^{\triangle_{\lambda},cr}_{\rhobar}$ to
  $\mc{G}^{\lambda}_{\mubar}$. It follows that the homomorphism
  $\mc{O}_{X^{ord,cr},z}^{\wedge} \rightarrow
  \mc{O}_{X^{ord,cr}_{\mubar},x}^{\wedge}$ is formally smooth of
  relative dimension 0. It's easy to see that both sides have the same
  residue field and hence the map is an isomorphism.

  For each closed point in $X^{ord,cr}[1/l]$, there is at
  most 1 closed point of $X^{ord,cr}_{\mubar}[1/l]$ lying over it. From this,
  and the claim just established, 
    we deduce the first two statements.
  The final statement is clear.
\end{proof}

\subsubsection{The $p=l$ case in non-fixed weight}
\label{sec: p=l case in non-fixed weight}

In this section we assume that $p=l$, that $\rhobar : G_M \rightarrow
\GL_n(k)$ is the trivial homomorphism. Let $R_{\rhobar}^{\square}$
denote the universal $\mc{O}$-lifting ring of $\rhobar$ and let
$\Lambda_M = \mc{O}[[I_{M^{\ab}}(l)^n]]$ where for a group $H$, $H(l)$ denotes its pro-$l$ completion. Then $\Lambda_M$ represents the functor $\mc{C}_{\mc{O}} \rightarrow Sets$ sending an algebra $A$ to the set of ordered $n$-tuples $(\chi_1,\ldots,\chi_n)$ of characters $\chi_j : I_{M^{\ab}} \rightarrow A^{\times}$ lifting the trivial character modulo $\mf{m}_A$. Let $\rho^{\square}$ denote the universal lift of $\rhobar$ to $R^{\square}_{\rhobar}$ and let $(\chi_1^{\univ},\ldots,\chi_n^{\univ})$ denote the universal $n$-tuple of characters $I_{M^{\ab}} \rightarrow \Lambda^{\times}_M$.

Let $R^{\square}_{\rhobar,\Lambda_M} = R^{\square}_{\rhobar}
\widehat{\otimes}_{\mc{O}} \Lambda_M$. Let $\mc{G}$ denote the
closed subscheme of the flag variety $\mc{F} \times_{\Spec \mc{O}}
\Spec R^{\square}_{\rhobar,\Lambda_M}$ corresponding to filtrations
which are (i) preserved by the induced action of $G_M$ and (ii) such
that $I_M$ acts on $\gr_j$ via the pushforward of
$\chi^{\univ}_{j}$. Let $R^{\triangle}_{\rhobar,\Lambda_M}$ be the
quotient of $R^{\square}_{\rhobar,\Lambda_M}$ corresponding to the
scheme theoretic image of the morphism
\[ \mc{G}[1/l] \rightarrow \Spec R^{\square}_{\rhobar,\Lambda_M} .\]
If $E$ is a finite extension of $K$, a homomorphism of $\mc{O}$-algebras $\zeta :
R^{\square}_{\rhobar,\Lambda_M} \rightarrow E$ factors through
$R^{\triangle}_{\rhobar,\Lambda_M}$ if and only if $\zeta \circ
\rho^{\square}$ is conjugate to an upper triangular representation
whose ordered $n$-tuple of diagonal characters, restricted to $I_M$, is the pushforward of
$(\chi_1^{\univ},\ldots,\chi_n^{\univ})$.

\subsection{Global deformation rings}
\label{subsec:global deformation rings}
\subsubsection{The group $\mathcal{G}_n$}
Let $n$ be a positive integer, and let $\Gn$ be the group scheme over
$\Z$ which is the semidirect product of $\GL_n\times\GL_1$ by the
group $\{1,j\}$, which acts on $\GL_n\times\GL_1$
by \[j(g,\mu)j^{-1}=(\mu{}^tg^{-1},\mu).\] There is a homomorphism
$\nu:\Gn\to\GL_1$ sending $(g,\mu)$ to $\mu$ and $j$ to $-1$. Write
$\mathfrak{g}_n^0$ for the trace zero subspace of the Lie algebra of
$\GL_n$, regarded as a Lie subalgebra of the Lie algebra of $\Gn$.

\begin{defn}
  Let $F^+$ be a totally real field, and let $r:G_{F^+}\to\Gn(L)$ be a
  continuous homomorphism, where $L$ is a topological field. Then we
  say that $r$ is \emph{odd} if for all complex conjugations $c_v\in
  G_{F^+}$, $\nu\circ r(c_v)=-1$.
\end{defn}

\subsubsection{Bigness}Recall definition 2.5.1 of \cite{cht}.
\begin{defn}

  Let $k$ be an algebraic extension of the finite field $\F_l$. We say
  that a finite subgroup $H\subset \GL_n(k)$ is \emph{big} if the
  following conditions are satisfied.
  \begin{itemize}
  \item $H$ has no quotient of $l$-power order.
  \item $H^0(H,\mathfrak{g}_n^0(k))=(0)$.
  \item $H^1(H,\mathfrak{g}_n^0(k))=(0)$.
  \item For all irreducible $k[H]$-submodules $W$ of
    $\mathfrak{g}_n^0(k)$ we can find $h\in H$ and $\alpha\in k$ such
    that the $\alpha$-generalised eigenspace $V_{h,\alpha}$ of $h$ in
    $k^n$ is one-dimensional and furthermore $\pi_{h,\alpha}\circ
    W\circ i_{h,\alpha}\neq (0)$. Here $\pi_{h,\alpha}:k^n\to
    V_{h,\alpha}$ is the $h$-equivariant projection of $k^n$ to
    $V_{H,\alpha}$, and $i_{h,\alpha}$ is the $h$-equivariant
    injection of $V_{h,\alpha}$ into $k^n$.
  \end{itemize}
We call a finite subgroup $H\subset\Gn(k)$ \emph{big} if $H$ surjects
onto $\Gn(k)/\Gn^0(k)$ and $H\cap\Gn^0(k)$ is big.
\end{defn}

\subsubsection{Deformation problems}
\label{sec: deformation problems}

Let $F/F^{+}$ be a totally imaginary quadratic extension of a totally
real field $F^+$. Let $c$ denote the non-trivial element of $\Gal(F/F^+)$. Let $k$ denote a finite field of characteristic $l$ and $K$ a
finite extension of $\bb{Q}_l$, inside a fixed algebraic closure $\Qlbar$, with ring of integers $\mc{O}$ and
residue field $k$. Assume that $K$ contains the image of every
embedding $F \into \Qlbar$ and that the prime $l$ is odd. Assume that
every place in $F^+$ dividing $l$ splits in $F$. Let $S$ denote a finite set of finite places of $F^+$ which split in
$F$, and assume that $S$ contains every place dividing $l$. Let $S_l$ denote the set of places of $F^+$ lying over $l$. Let $F(S)$
denote the maximal extension of $F$ unramified away from $S$. Let
$G_{F^{+},S}=\Gal(F(S)/F^{+})$ and $G_{F,S}=\Gal(F(S)/F)$. For each $v
\in S$ choose a place $\wt{v}$ of $F$ lying over $v$ and let $\wt{S}$
denote the set of $\wt{v}$ for $v \in S$. For each place $v|\infty$ of $F^+$ we let
$c_v$ denote a choice of a complex conjugation at $v$ in
$G_{F^+,S}$.  For each place $w$ of $F$
we have a $G_{F,S}$-conjugacy class of homomorphisms $G_{F_w}
\rightarrow G_{F,S}$. For $v \in S$ we fix a choice of homomorphism
$G_{F_{\wt{v}}} \rightarrow G_{F,S}$.
 
If $R$ is a ring and $r : G_{F^+,S} \rightarrow \mc{G}_n(R)$
is a homomorphism with $r^{-1}(\GL_n(R)\times \GL_1(R))=G_{F,S}$, we
will make a slight abuse of notation and write $r|_{G_{F,S}}$
(respectively $r|_{G_{F_w}}$ for $w$ a place of $F$) to mean
$r|_{G_{F}}$ (respectively $r|_{G_{F_w}}$) composed with the
projection $\GL_n(R)\times \GL_1(R) \rightarrow \GL_n(R)$.

Fix a continuous homomorphism
\[ \rbar : G_{F^+,S} \rightarrow \mc{G}_n(k) \] such that $G_{F,S} =
\rbar^{-1}(\GL_n(k)\times \GL_1(k))$ and fix a continuous character
$\chi : G_{F^+,S}\rightarrow \mc{O}^{\times} $ such that $\nu \circ
\rbar = \overline{\chi}$. Assume that $\rbar|_{G_{F,S}}$ is absolutely
irreducible. 
As in Definition 1.2.1 of \cite{cht}, we define
\begin{itemize}
\item a \emph{lifting} of $\rbar$ to an object $A$ of
  $\mc{C}_{\mc{O}}$ to be a continuous homomorphism $r : G_{F^+,S}
  \rightarrow \mc{G}_n(A)$ lifting $\rbar$ and with $\nu \circ r =
  \chi$;
\item two liftings $r$, $r^{\prime}$ of $\rbar$ to $A$ to be
  \emph{equivalent} if they are conjugate by an element of
  $\ker(\GL_n(A)\rightarrow \GL_n(k))$;
\item a \emph{deformation} of $\rbar$ to an object $A$ of
  $\mc{C}_{\mc{O}}$ to be an equivalence class of liftings.
\end{itemize}

For each place $v \in S$, let $R^{\square}_{\rbarwtv}$ denote
the universal $\mc{O}$-lifting ring of $\rbar|_{G_{F_{\wt{v}}}}$ and
let $R_{\wt{v}}$ denote a quotient of $R^{\square}_{\rbarwtv}$ which satisfies the
following property: 
\begin{itemize}
\item[(*)] let $A$ be an object of $\mc{C}_{\mc{O}}$ and let
$\zeta,\zeta^{\prime}: R^{\square}_{\rbarwtv}\rightarrow A$ be homomorphisms corresponding to
two lifts $r$ and $r^{\prime}$ of $\rbarwtv$ which are
conjugate by an element of $\ker(\GL_n(A)\rightarrow
\GL_n(k))$. Then $\zeta$ factors through $R_{\wt{v}}$ if and only if
$\zeta^{\prime}$ does.
\end{itemize}
We
consider the \emph{deformation problem}
\[ \mc{S} = (F/F^{+},S,\wt{S},\mc{O},\rbar,\chi,\{ R_{\wt{v}}\}_{v \in
  S}) \]
(see sections 2.2 and 2.3 of \cite{cht} for this terminology).
We say that a lifting $r : G_{F^+,S}\rightarrow \mc{G}_n(A)$ is
\emph{of type $\mc{S}$} if for each place $v \in S$, the homomorphism
$R^{\square}_{\rbarwtv} \rightarrow A$ corresponding to $r|_{G_{F_{\wt{v}}}}$ factors
through $R_{\wt{v}}$. We also define deformations of type $\mc{S}$ in the same way.

Let $\Def_{\mc{S}}$ be the functor $ \mc{C}_{\mc{O}}\rightarrow Sets$
which sends an algebra $A$ to the set of deformations of $\rbar$ to
$A$ of type $\mc{S}$. 
By Proposition 2.2.9 of
\cite{cht} this functor is represented by an object
$R_{\mc{S}}^{\univ}$ 
of $\mc{C}_{\mc{O}}$.

\begin{lem}
  \label{lem: components are conjugation invariant}
  Let $M$ be a finite extension of $\bb{Q}_p$ for some prime $p$ and
  $\rhobar : G_M \rightarrow \GL_n(k)$ a continuous homomorphism. If
  $p \neq l$, let $\tau$ be an inertial type for $G_M$ over $K$ and
  let $R$ be a quotient of $R^{\square,\tau}_{\rhobar}$ corresponding
  to a union of irreducible components. If
  $p=l$, assume that $K$ contains the image of every embedding $M
  \into \overline{K}$, let $\lambda \in
  (\bb{Z}^n_+)^{\Hom(M,K)}$ and let $R$ be a quotient of
  $R^{\mathbf{v}_{\lambda},cr}_{\rhobar}$ corresponding to a union of
  irreducible components. Then $R$ satisfies property (*) above.
\end{lem}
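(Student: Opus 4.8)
The plan is to reduce property (*) to a statement about the action of conjugation by $\ker(\GL_n(A) \to \GL_n(k))$ on the set of irreducible components of the relevant local lifting ring. The key observation is that property (*) concerns two lifts $r, r'$ of $\rbarwtv$ to $A$ that differ by conjugation by some $g \in \ker(\GL_n(A) \to \GL_n(k))$, and we must show that the corresponding points $\zeta, \zeta'$ of $\Spec R^\square_{\rbarwtv}$ either both lie in the closed subscheme $\Spec R$ or both lie outside it. Since $R$ is (by hypothesis) a quotient corresponding to a union of irreducible components of $R^{\square,\tau}_{\rhobar}$ (when $p \neq l$) or of $R^{\mathbf{v}_\lambda, cr}_{\rhobar}$ (when $p = l$), it suffices to show that conjugation by $\ker(\GL_n(A) \to \GL_n(k))$ permutes these components trivially — or more precisely, that $\zeta$ and $\zeta'$ always lie on the same set of components.

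First I would set up the conjugation action carefully. Conjugation by $\GL_n$ acts on $R^\square_{\rhobar}$; this action preserves the conditions defining $R^{\square,\tau}_{\rhobar}$ (having inertial type $\tau$ is conjugation-invariant) and the conditions defining $R^{\mathbf{v}_\lambda,cr}_{\rhobar}$ (being crystalline of a fixed $l$-adic Hodge type is conjugation-invariant, since the $l$-adic Hodge type is intrinsic to the representation up to isomorphism). Hence $\GL_n$ acts on both $\Spec R^{\square,\tau}_{\rhobar}$ and $\Spec R^{\mathbf{v}_\lambda,cr}_{\rhobar}$, and consequently permutes their irreducible components. Now the subgroup $\widehat{\GL_n}$ of elements reducing to the identity mod $\mf{m}$ — more precisely, the formal completion of $\GL_n$ along the identity — is (geometrically) connected, in fact an affine space formally, so it acts trivially on the (discrete) set of irreducible components. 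The point is that $g \in \ker(\GL_n(A) \to \GL_n(k))$ for $A \in \mc{C}_{\mc{O}}$ factors through this formal neighbourhood of the identity, so conjugation by $g$ cannot move a point of $\Spec R^\square$ off of the components it started on.

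The cleanest way to make this rigorous: the two homomorphisms $\zeta, \zeta' : R^\square_{\rhobar} \to A$ are related by $\zeta' = \zeta \circ (\text{conj by } g)^\sharp$ where $g$ lifts the identity. Because $A$ is complete local Noetherian and $g \equiv 1 \bmod \mf{m}_A$, we can connect $g$ to the identity through a path in the group of such elements — equivalently, the induced automorphism of $R^\square_{\rhobar}$ restricts, on the quotient $R^{\square,\tau}_{\rhobar}$ or $R^{\mathbf{v}_\lambda,cr}_{\rhobar}$, to an automorphism that is the identity on $\pi_0$ of the generic fiber. Concretely, one can use that the ring $R$ is cut out by an idempotent-type condition (a union of components is determined by a decomposition after inverting $l$ and normalizing, or by the vanishing of certain minimal primes), and that $\zeta$ factoring through $R$ is a Zariski-closed condition that is invariant under the connected group $\widehat{\GL_n}$ acting by conjugation. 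I would spell this out by noting that the orbit map $\widehat{\GL_n} \to \Spec R^{\square,\tau}_{\rhobar}$ (resp. into $\Spec R^{\mathbf{v}_\lambda,cr}_{\rhobar}$) sending $h \mapsto h \cdot \zeta$ has connected source, hence connected image, hence the image meets a fixed union of components in either everything or nothing; since it contains $\zeta$, and $\zeta'$ is in this orbit, $\zeta$ and $\zeta'$ lie on the same components.

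The main obstacle will be handling the non-reduced structure and the passage between "$\zeta$ factors through $R$" (a statement about a possibly non-reduced quotient, valued in an arbitrary $A \in \mc{C}_{\mc{O}}$, not just a domain) and the clean geometric statement about irreducible components. The resolution is that $R$ is $l$-torsion free and reduced in the relevant cases (both $R^{\square,\tau}_{\rhobar}$ and $R^{\mathbf{v}_\lambda,cr}_{\rhobar}$ are reduced and $l$-torsion free by construction, per Lemma \ref{lem: local dimension for l not p} and the discussion following Corollary 2.7.7 of \cite{kisinpst}), so a map $R^\square_{\rhobar} \to A$ factors through $R$ if and only if it factors through $R$ after a faithfully flat base change, and one can test this on a domain or on $\overline{K}$-points by approximating; alternatively, one reduces to the universal case and checks that the conjugation automorphism of $R^\square_{\rhobar} \widehat{\otimes} R^\square_{\rhobar}$ (tracking the conjugating element as an extra variable) preserves the ideal defining $R$, which follows because it does so on the open dense locus after inverting $l$ where everything is a union of formally smooth connected pieces permuted trivially by the connected conjugating group.
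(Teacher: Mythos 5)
Your overall strategy — reduce property (*) to the statement that conjugation by $\ker(\GL_n(A)\to\GL_n(k))$ preserves the chosen union of irreducible components, and implement this by ``tracking the conjugating element as an extra variable'' — is the right one, and is in fact how the paper proceeds. But the step on which everything rests is not established in your sketch. The claim that the orbit map has connected image and therefore ``meets a fixed union of components in either everything or nothing'' is false as stated: a union $Z$ of irreducible components is closed but in general not open, since distinct components may intersect (and here they can: for $p\neq l$ the ring $R^{\square,\tau}_{\rhobar}[1/l]$ is only \emph{generically} formally smooth, and nothing is asserted about the special fibre), so a connected set can meet $Z$ without being contained in it. The correct form of the connectedness principle is that a connected group acting algebraically preserves each irreducible component; but your ``group'' is only the formal completion of $\GL_n$ along the identity, acting through a coaction $\alpha : R \to R[[X_{ij}]]$ on a complete local ring, and making the principle precise in exactly this setting is the content that has to be supplied. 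Your fallback — that the universal conjugation preserves the ideal of $R$ ``because it does so on the open dense locus after inverting $l$ where everything is a union of formally smooth connected pieces permuted trivially by the connected conjugating group'' — is circular (it re-asserts the connectedness claim rather than proving it) and is in any case unavailable in the $p\neq l$ case.

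What is missing is a short algebraic argument, which is what the paper's proof supplies. First one checks, much as you indicate (using that $R^{\mathbf{v}_{\lambda},cr}_{\rhobar}[[X_{ij}]]$ is reduced and $l$-torsion free and testing on $\Qlbar$-points), that the universal conjugate $(1_n+(X_{ij}))\rho^{\square}(1_n+(X_{ij}))^{-1}$ induces a map $\alpha : R^{\mathbf{v}_{\lambda},cr}_{\rhobar} \to R^{\mathbf{v}_{\lambda},cr}_{\rhobar}[[X_{ij}]]$ (similarly with $R^{\square,\tau}_{\rhobar}$ when $p\neq l$). Then, for a minimal prime $\wp$, compose with reduction to get $\beta : R^{\mathbf{v}_{\lambda},cr}_{\rhobar} \to (R^{\mathbf{v}_{\lambda},cr}_{\rhobar}/\wp)[[X_{ij}]]$: the target is a domain, so $\ker\beta$ is prime; the retraction $\gamma$ given by $X_{ij}\mapsto 0$ satisfies $\gamma\circ\alpha=\id$, so $\ker\beta\subseteq\wp$; minimality then forces $\ker\beta=\wp$. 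Consequently $\alpha(I)\subseteq I\cdot R^{\mathbf{v}_{\lambda},cr}_{\rhobar}[[X_{ij}]]$ for $I$ an intersection of minimal primes, and (*) follows for an arbitrary $A\in\mc{C}_{\mc{O}}$ by specializing the $X_{ij}$ to the entries of $g-1_n\in M_n(\mf{m}_A)$ (legitimate since $A$ is complete), with the reverse implication obtained by conjugating by $g^{-1}$. It is this retraction-plus-primality argument — not connectedness of orbits or generic smoothness — that pins down that each minimal prime is preserved, and without some substitute for it your proposal has a genuine gap at its central step.
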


\begin{proof}
  We consider the case $p = l$; the other case is similar. Let
  $R^{\mathbf{v}_{\lambda},cr}_{\rhobar}[[\underline{X}]]=R^{\mathbf{v}_{\lambda},cr}_{\rhobar}[[X_{ij}:1\leq
  i,j \leq n]]$ and consider the lift of $\rhobar$ to
  $R^{\mathbf{v}_{\lambda},cr}_{\rhobar}[[\underline{X}]]$ given by
  $(1_n+(X_{ij}))\rho^{\square}(1_n+(X_{ij}))^{-1}$. This lift gives
  rise to an $\mc{O}$-algebra homomorphism $R^{\square}_{\rhobar}
  \rightarrow
  R^{\mathbf{v}_{\lambda},cr}_{\rhobar}[[\underline{X}]]$. We claim
  that this homomorphism factors through
  $R^{\mathbf{v}_{\lambda},cr}_{\rhobar}$. This follows from the fact
  that $R^{\mathbf{v}_{\lambda},cr}_{\rhobar}[[\underline{X}]]$ is
  reduced and $l$-torsion-free and every $\Qlbar$ point of this ring
  gives rise to a lift of $\rhobar$ which is crystalline of $l$-adic
  Hodge type $\mathbf{v}_{\lambda}$. Let $\alpha$ denote the resulting
  $\mc{O}$-algebra homomorphism $R^{\mathbf{v}_{\lambda},cr}_{\rhobar}
  \to R^{\mathbf{v}_{\lambda},cr}_{\rhobar}[[\underline{X}]]$ and let
  $\iota : R^{\mathbf{v}_{\lambda},cr}_{\rhobar} \to
  R^{\mathbf{v}_{\lambda},cr}_{\rhobar}[[\underline{X}]]$ denote the
  standard $R^{\mathbf{v}_{\lambda},cr}_{\rhobar}$-algebra structure
  on $R^{\mathbf{v}_{\lambda},cr}_{\rhobar}[[\underline{X}]]$.

  The irreducible components of $\Spec
  R^{\mathbf{v}_{\lambda},cr}_{\rhobar}[[\underline{X}]]$ and $\Spec
  R^{\mathbf{v}_{\lambda},cr}_{\rhobar}$ are in natural bijection (if
  $\wp$ is a minimal prime of $R^{\mathbf{v}_{\lambda},cr}_{\rhobar}$,
  then $\iota(\wp)$ generates a minimal prime of
  $R^{\mathbf{v}_{\lambda},cr}_{\rhobar}[[\underline{X}]]$). Let $\wp$
  be a minimal prime of $R^{\mathbf{v}_{\lambda},cr}_{\rhobar}$. We
  claim that the kernel of the map $\beta :
  R^{\mathbf{v}_{\lambda},cr}_{\rhobar} \to
  R^{\mathbf{v}_{\lambda},cr}_{\rhobar}[[\underline{X}]]/\iota(\wp) =
  (R^{\mathbf{v}_{\lambda},cr}_{\rhobar}/\wp)[[\underline{X}]]$
  induced by $\alpha$ is $\wp$. To see this note that the map $\gamma:
  R^{\mathbf{v}_{\lambda},cr}_{\rhobar}[[\underline{X}]] \onto
  R^{\mathbf{v}_{\lambda},cr}_{\rhobar}[[\underline{X}]]/(X_{ij})
  \cong R^{\mathbf{v}_{\lambda},cr}_{\rhobar}$ satisfies $\gamma \circ
  \alpha = \id$. From this it follows that $\ker \beta \subset
  \wp$. Since $\wp$ is minimal, we must have $\ker \beta = \wp$. If
  $\wp_1,\ldots,\wp_k$ are minimal primes of
  $R^{\mathbf{v}_{\lambda},cr}_{\rhobar}$ and $I=\wp_1 \cap \dots \cap
  \wp_k$, we deduce that the kernel of the map
  $R^{\mathbf{v}_{\lambda},cr}_{\rhobar} \stackrel{\alpha}{\to}
  (R^{\mathbf{v}_{\lambda},cr}_{\rhobar}/I)[[\underline{X}]]$ is
  $I$. The lemma follows.
\end{proof}

\subsubsection{A lower bound}
Let $F,F^{+},S, \wt{S}$ and $\rbar$ be as in the previous section. In this section we will give a lower bound on the Krull dimension of the ring $R^{\univ}_{\mc{S}}$ for certain deformation problems $\mc{S}$.

For each place $v \in S$ away from $l$, fix an inertial type $\tau_{v}$ for $I_{F_{\wt{v}}}$ and assume that $\rbarwtv$ has a lift of type $\tau_v$ (in other words, $R^{\square,\tau_{v}}_{\rbarwtv}$ is non-zero). Let $R_{\wt{v}}$ be a quotient of $R^{\square,\tau_{v}}_{\rbarwtv}$ corresponding to a union of irreducible components.

For each place $v \in S$ lying above $l$, let $\lambda_{\wt{v}}$ be an
element of $(\bb{Z}^{n}_+)^{\Hom(F_{\wt{v}},K)}$, and assume that
$\rbarwtv$ has a crystalline lift which is ordinary of weight
$\lambda_{\wt{v}}$ and let $R_{\wt{v}}$ be a quotient of the ring
$R^{\triangle_{\lambda_{\wt{v}}},cr}_{\rbarwtv}$ corresponding to a
union of irreducible components. Let
\[ \mc{S}  = (F/F^{+},S,\wt{S},\mc{O},\rbar,\chi,\{ R_{\wt{v}}\}_{v \in
  S}). \]

\begin{lem}\label{lem: lower bound on unitary dimension} Assume that $\rbar$ is odd, and that 
   $H^0(G_{F^+,S},\ad \rbar(1))=\{0\}$.  For
  $\mc{S}$ as above, the Krull dimension of $R^{univ}_{\mc{S}}$ is at
  least $1$.
\end{lem}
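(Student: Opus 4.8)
The plan is to bound $\dim R^{\univ}_{\mc{S}}$ below using the standard formula relating the dimension of a Galois deformation ring to local deformation conditions and a global Euler characteristic, exactly as in Section 2.2--2.4 of \cite{cht}. Recall that the framed local deformation rings $R^{\square,\tau_v}_{\rbarwtv}$ (for $v \nmid l$) have generic dimension $n^2$ by Lemma~\ref{lem: local dimension for l not p}, while for $v \mid l$ the ring $R^{\triangle_{\lambda_{\wt v}},cr}_{\rbarwtv}$ has generic dimension $n^2 + \frac{1}{2}n(n-1)[F_{\wt v}:\bb{Q}_l]$ (it is a union of components of $R^{\mathbf{v}_{\lambda_{\wt v}},cr}_{\rbarwtv}$, whose generic dimension is computed by Theorem~3.3.8 of \cite{kisinpst}). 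Since each $R_{\wt v}$ is a union of irreducible components of the relevant ring, it has the same generic dimension; and by Lemma~\ref{lem: components are conjugation invariant} each $R_{\wt v}$ satisfies property~(*), so $\mc{S}$ is a genuine deformation problem and $R^{\univ}_{\mc{S}}$ exists.

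First I would pass to the framed global deformation ring $R^{\square_T}_{\mc{S}}$ (framing at the places of $T \supseteq \wt S$, or just at $\wt S$), which is formally smooth of relative dimension $n^2\#S - \dim_k H^0(G_{F^+,S},\ad\rbar) = n^2\#S$ over $R^{\univ}_{\mc{S}}$ --- here we use that $\rbar|_{G_{F,S}}$ is absolutely irreducible so $H^0(G_{F^+,S},\ad\rbar)$ is at most one-dimensional, and oddness of $\rbar$ together with $\ad$ being self-dual actually pins it down; in any case it suffices to track this. Then the key input is the lower bound of the form
\[
\dim R^{\square_T}_{\mc{S}} \;\geq\; 1 + n^2\#S + \sum_{v \in S}\bigl(\dim R_{\wt v} - 1 - n^2\bigr) + (\text{global correction})
\]
coming from the Galois cohomology computation: one writes $R^{\square_T}_{\mc{S}}$ as a quotient of a power series ring over $(\widehat{\otimes}_{v\in S} R_{\wt v})$ by at most $\dim_k H^1_{\mc{S}^\perp}(G_{F^+,S},\ad\rbar(1))$ relations, in $g$ variables where $g - (\text{relations})$ is controlled by the global Euler--Poincaré formula. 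This is precisely the mechanism of Corollary 2.2.12 / Proposition 2.2.9 and Lemma 2.3.3 of \cite{cht} (or §3 of \cite{ger}): the Euler characteristic over a totally real field contributes a term involving $\sum_{v\mid\infty}\dim_k H^0(G_{F^+_v},\ad\rbar)$, and the oddness hypothesis $\nu\circ\rbar(c_v) = -1$ forces $\dim_k \ad\rbar^{c_v = 1} = \frac{1}{2}n(n+1)$ (or $\frac12 n(n-1)$, depending on normalization), which is exactly what cancels the $\frac{1}{2}n(n-1)[F_{\wt v}:\bb{Q}_l]$ excess from the crystalline-ordinary local rings at $l$ when one sums $\sum_{v\mid l}[F_{\wt v}:\bb{Q}_l] = [F^+:\bb{Q}]$. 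The hypothesis $H^0(G_{F^+,S},\ad\rbar(1)) = \{0\}$ is what lets us drop (or rather, not worry about an unwanted contribution to) a dual Selmer term, ensuring the inequality goes the right way.

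Carrying this out, all the local dimension terms $n^2$ and the framing terms cancel, the oddness contribution cancels the ramification-at-$l$ excess, the vanishing of $H^0(\ad\rbar(1))$ removes the remaining obstruction term, and one is left with $\dim R^{\univ}_{\mc{S}} \geq 1$. I expect the main obstacle to be purely bookkeeping: assembling the global Euler characteristic formula over $F^+$ with the $\Gn$-valued (rather than $\GL_n$-valued) conventions of \cite{cht}, and verifying that the contributions of the infinite places and of the local rings at $l$ cancel exactly --- this is the content of the relevant lemmas in \cite{cht} and \cite{ger}, so the proof will mostly amount to citing those with the dimension inputs supplied above, and the real work is checking that our hypotheses (oddness, $H^0(\ad\rbar(1)) = 0$, each $R_{\wt v}$ a union of components of generic dimension as stated) are precisely the ones needed to invoke them.
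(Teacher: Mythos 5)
Your proposal is correct and takes essentially the same route as the paper: the paper simply invokes Corollary 2.3.5 of \cite{cht} (exactly the presentation-over-$\widehat{\otimes}_v R_{\wt{v}}$ plus global Euler characteristic bound you describe), plugs in $\dim R_{\wt{v}} = n^2+1$ for $v \nmid l$ and $n^2+1+\frac{1}{2}n(n-1)[F^+_v:\Q_l]$ for $v \mid l$, and notes that oddness yields the archimedean term $\sum_{v \mid \infty} n(n-1)/2$ which cancels the excess at $l$, while $H^0(G_{F^+,S},\ad\rbar(1))=0$ kills the remaining term. The only differences are bookkeeping (the paper states the bound directly for the unframed ring and cites \cite{cht} rather than re-deriving the framed presentation), so there is no substantive divergence.
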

\begin{proof}
  By Corollary 2.3.5 of \cite{cht} (noting that $\chi(c_v)=-1$ for all
  $v|\infty$) we see that this dimension is at least \[1+\sum_{v\in
    S}(\dim R_{\wt{v}}-n^2-1)-\dim_k
  H^0(G_{F^+,S},\ad\rbar(1))-\sum_{v|\infty}n(n-1)/2.\]  For $v \in S$ away from $l$, we
  have $\dim R_{\wt{v}} = n^2+1$ by Lemma \ref{lem: local dimension
    for l not p}. For $v \in S$ lying over $l$ we have $\dim
  R_{\wt{v}} = n^2 + 1 + \frac{1}{2}n(n-1)[F_{v}^{+}:\Q_l]$ by Lemma
  \ref{lem:ordinary crystalline ring for GL_n} and the remark
  preceding it. 
 We therefore have
  \begin{align*}
    \sum_{v\in S}(\dim R_{\wt{v}}-n^2-1)
    &=\sum_{v|l}\frac{1}{2}n(n-1)[F^+_v:\Q_l] \\
    &=\frac{1}{2}n(n-1)[F^+:\Q]\\
    &=\sum_{v|\infty}n(n-1)/2, 
  \end{align*}giving
  the required bound.
\end{proof}

\subsubsection{A finiteness result}

Let $F,F^{+},S, \wt{S}$ and $\rbar$ be as in the previous two
sections.  Suppose that $L^+/F^+$ is a finite totally real
extension. Let $L=L^+F$. Let $S^{\prime}$ (resp.\ $\wt{S}^{\prime}$)
denote a set of places of $L^+$ (resp.\ $L$) all of which split in
$L$, containing all places lying over a place in $S$ (resp. containing
exactly one place above each place in $S^{\prime}$, and containing
every place lying above a place in $\wt{S}$). Let
$G_{L^+,S^{\prime}}=\Gal(L(S^{\prime})/L^+)$, where $L(S^{\prime})$ is
the maximal extension of $L$ unramified outside $S^{\prime}$. Let
$G_{L,S^{\prime}} = \Gal(L(S^{\prime})/L)$. We assume that
$\rbar|_{G_{L,S^{\prime}}}$ is absolutely irreducible.

Let
\[ \mc{S}_{0} = (F/F^{+},S,\wt{S},\mc{O},\rbar,\chi,\{ R_{\rbarwtv}^{\square}\}_{v \in
  S}) \]
and
\[ \mc{S}^{\prime}_{0} =  
(L/L^{+},S^{\prime},\wt{S}^{\prime},\mc{O},\rbar|_{G_{L^+,S^{\prime}}},\chi|_{G_{L^+,S^{\prime}}},\{
R_{\rbar|_{G_{L_{\wt{v}^{\prime}}}}}^{\square}\}_{v^{\prime} \in
  S^{\prime}}) \]
and let $R^{\univ}_{\mc{S}_0}$ and $R^{\univ}_{\mc{S}^{\prime}_0}$ denote
the rings representing the functors $\Def_{\mc{S}_0}$ and $\Def_{\mc{S}^{\prime}_0}$.
Restricting the universal
deformation valued in $R^{\univ}_{\mc{S}_0}$ to $G_{L^+,S^{\prime}}$
gives $R^{\univ}_{\mc{S}_0}$ the structure of a
$R^{\univ}_{\mc{S}^{\prime}_0}$-algebra.

\begin{lem}\label{lem: deformation ring is finite over another for the
  unitary group}
  $R^{\univ}_{\mc{S}_0}$ is a finite
  $R_{\mc{S}^{\prime}_0}^{\univ}$-algebra.
\end{lem}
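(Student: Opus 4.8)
The plan is to reduce the statement to a standard fact about deformation rings under restriction to a finite-index open subgroup, namely that if $H' \subseteq H$ is open of finite index and $\rbar$ is a representation of $H$ whose restriction to $H'$ remains absolutely irreducible, then the universal deformation ring of $\rbar|_{H}$ is a finite algebra over the universal deformation ring of $\rbar|_{H'}$. Here the relevant groups are $G_{F^+,S}$ and $G_{L^+,S^{\prime}}$; the latter is of finite index in the former once one checks that $L(S') \supseteq F(S)$, which holds because $S'$ contains all places of $L^+$ above places in $S$ (so that the maximal extension of $L$ unramified outside $S'$ contains the maximal extension of $F$ unramified outside $S$, after passing to the compositum). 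The key hypothesis making this work is the assumption that $\rbar|_{G_{L,S'}}$ is absolutely irreducible, which is needed precisely so that the deformation functors are representable and so that one has good control of the endomorphisms of lifts.

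The concrete steps I would carry out are: (1) Observe that $G_{L^+,S'}$ is an open subgroup of finite index in $G_{F^+,S}$, and that restriction of the universal deformation gives the claimed algebra structure. (2) By the topological/Noetherian version of Nakayama's lemma for complete local rings, it suffices to show that $R^{\univ}_{\mc{S}_0} \otimes_{R^{\univ}_{\mc{S}'_0}} k$ (i.e., the fiber over the closed point) is a finite $k$-algebra, equivalently that $R^{\univ}_{\mc{S}_0}/\mf{m}_{R^{\univ}_{\mc{S}'_0}}R^{\univ}_{\mc{S}_0}$ is Artinian; since this ring is already a complete local Noetherian ring, it is enough to check it has Krull dimension $0$, or equivalently that its tangent space is finite-dimensional. (3) Identify the mod-$\mf{m}$ fiber with (a quotient of) the deformation ring of $\rbar|_{G_{F^+,S}}$ subject to the constraint that the restriction to $G_{L^+,S'}$ is the trivial deformation; its tangent space is then a subspace of $\ker\left(H^1(G_{F^+,S},\ad\rbar) \to H^1(G_{L^+,S'},\ad\rbar)\right)$. (4) Conclude this kernel is finite-dimensional by inflation-restriction: it is a subquotient of $H^1(\Gal(L(S')/F^+(S'))\text{-type quotient}, (\ad\rbar)^{G_{L^+,S'}})$, and since the relevant Galois group is finite (it is a quotient of a finite-index situation, hence topologically finitely generated with finite image on $\ad\rbar$... more precisely one uses that $H^1$ of a finitely generated profinite group with finite coefficients is finite). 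Actually the cleanest route is: $H^1(G_{F^+,S},\ad\rbar)$ is itself finite-dimensional over $k$ (standard: $G_{F^+,S}$ satisfies the finiteness condition $\Phi_l$ à la Mazur), so the kernel is automatically finite-dimensional, and no inflation-restriction gymnastics are needed.

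The main obstacle I anticipate is not any single deep input but rather bookkeeping: one must be careful that the deformation problems $\mc{S}_0$ and $\mc{S}'_0$ use the \emph{full} local lifting rings $R^{\square}_{\rbarwtv}$ (no local conditions imposed), so that the only content is in the global Galois cohomology and the comparison of $G_{F^+,S}$ with $G_{L^+,S'}$; this is why the lemma is stated for $\mc{S}_0$ rather than a general $\mc{S}$ with imposed local conditions. One also must track the $\mc{G}_n$-valued (rather than $\GL_n$-valued) formalism, but since $G_{L^+,S'}$ still surjects onto $\{1,j\}$ (as $L^+$ is totally real and $L/L^+$ is the relevant quadratic CM extension), the odd/self-dual structure is inherited and the argument of \cite{cht} for representability and for the finiteness condition applies verbatim. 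The genuinely substantive point — that $H^1(G_{F^+,S},\ad\rbar)$ is finite-dimensional — is classical and I would simply cite it (e.g.\ via \cite{cht}).
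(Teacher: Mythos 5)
There is a genuine gap, and it sits exactly at the pivot of your argument: the claim in step (2) that Krull dimension $0$ of the fiber $\overline{R}:=R^{\univ}_{\mc{S}_0}/\mf{m}_{R^{\univ}_{\mc{S}'_0}}R^{\univ}_{\mc{S}_0}$ is ``equivalent to its tangent space being finite-dimensional'' is false. A complete local Noetherian $k$-algebra always has finite-dimensional tangent space (e.g.\ $k[[x]]$ has a one-dimensional tangent space but Krull dimension $1$), so steps (3)--(4), which only bound the tangent space inside $\ker\bigl(H^1(G_{F^+,S},\ad\rbar)\to H^1(G_{L^+,S'},\ad\rbar)\bigr)$ and invoke finiteness of $H^1(G_{F^+,S},\ad\rbar)$, prove nothing beyond Noetherianness of $\overline{R}$, which you already know. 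Nor can you hope to show the tangent space vanishes: by inflation--restriction that kernel is typically nonzero, so the fiber genuinely has deformations to $k[\epsilon]$ and one must rule out positive-dimensional families by a different mechanism. Your setup up to that point (restriction gives the algebra structure, topological Nakayama reduces to finiteness of the fiber, and the fiber classifies deformations of $\rbar$ whose restriction to $G_{L^+,S'}$ is the trivial deformation) agrees with the paper; it is the finiteness of the fiber itself that is left unproved.

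The argument the paper uses (following Lemma 3.6 of Khare--Wintenberger) is not cohomological. From the fact that the universal representation $r_{F^+,L^+}$ over $\overline{R}$ restricts to (a conjugate of) the constant deformation $\rbar|_{G_{L^+,S'}}$, one deduces that $r_{F^+,L^+}$ has \emph{finite image}, factoring through $\Gal(M/F^+)$ for $M$ the normal closure of the compositum of $L^+$ with the fixed field of $\ker\rbar$. Then, by Lemma 2.1.12 of \cite{cht} (this is where absolute irreducibility enters, via generation by traces over $G_{F,S}$), $\overline{R}$ is generated by the traces $\tr r_{F^+,L^+}(g)$; since the image is finite these traces are sums of roots of unity of bounded order, so for every prime $\mf{p}$ the quotient $\overline{R}/\mf{p}$ is a finite field. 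Hence $\overline{R}$ has Krull dimension $0$, is Artinian with finite residue fields, and is finite, after which Nakayama concludes as you intended. So the missing idea in your proposal is precisely this finite-image-plus-traces step; some replacement for it (rather than any statement about $H^1$) is what forces the fiber to be Artinian.
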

\begin{proof}
  The argument is extremely similar to that of Lemma 3.6 of
  \cite{kw}. Write $\mathfrak{m}_{L^+}$ for the maximal ideal of
  $R_{\mc{S}^{\prime}_0}^{\univ}$, and let $r_{F^+,L^+}$ denote the
  $R^{\univ}_{\mc{S}_0}/\mathfrak{m}_{L^+}R^{\univ}_{\mc{S}_0}$-representation
  of $G_{F^+,S}$ obtained from the universal representation over
  $R^{\univ}_{\mc{S}_0}$.  By definition,
  $r_{F^+,L^+}|_{G_{L^+,S^{\prime}}}$ is equivalent to
  $\rbar|_{G_{L^+,S^{\prime}}}$. As a consequence, if $M$ denotes the
  normal closure of the composite of $L^+$ and the fixed field of
  $\ker\rbar$, then $r_{F^+,L^+}$ factors through $\Gal(M/F^+)$, and
  the image of $r_{F^+,L^+}$ is necessarily finite.

  Now, by Lemma 2.1.12 of \cite{cht}, we see that
  $R^{\univ}_{\mc{S}_0}/\mathfrak{m}_{L^+}R^{\univ}_{\mc{S}_0}$ is
  generated by the traces of the $r_{L^+,F^+}(g)$ for $g\in
  G_{F,S}$. Consider a prime ideal $\mathfrak{p}$ of
  $\overline{R}^{\univ}:=R^{\univ}_{\mc{S}_0}/\mathfrak{m}_{L^+}R^{\univ}_{\mc{S}_0}$. Because
  the image of $r_{F^+,L^+}$ is finite, we see that the images of
  these traces in $\overline{R}^{\univ}/\mathfrak{p}$ are sums of roots
  of unity of bounded degree, so that
  $\overline{R}^{\univ}/\mathfrak{p}$ is finite. Thus
  $\overline{R}^{\univ}$ is a 0-dimensional Noetherian ring, so it is
  Artinian, and thus a direct product of Artin local rings with finite
  residue fields. Thus $\overline{R}^{\univ}$ is finite. It follows
  that $R^{\univ}_{\mc{S}_0}$ is a finitely generated module over the
  complete local ring $R^{\univ}_{\mc{S}^{\prime}_0}$.
\end{proof}

\section{Ordinary automorphic representations}\label{sec:ordinary
  automorphic representations}

\subsection{Ordinary automorphic representations of $\GL_n$}
\label{subsec: ordinary automorphic representations}
Let $L$ be either a totally real number field or a quadratic totally imaginary
extension of a totally real number field.  Let $\lambda \in
(\bb{Z}^{n}_+)^{\Hom(L,\bb{C})}$.  Let $\pi$ be an automorphic
representation of $GL_n(\bb{A}_L)$ which is
\begin{itemize}
\item RAESDC (regular, algebraic, essentially-self-dual, cuspidal) of
  weight $\lambda$ if $L$ is totally real, or
\item RACSDC (regular, algebraic, conjugate-self-dual, cuspidal) of
  weight $\lambda$ if $L$ is totally imaginary.
\end{itemize}
See section 5 of \cite{tay06} or section 4 of \cite{cht} for
definitions of these terms. Let $l$ be a prime number and $\iota :
\overline{\bb{Q}}_l \isoto \bb{C}$ an isomorphism of
fields. Let $v$ be a place of $L$ dividing $l$ and $\varpi_{v}$ a
uniformizer in $\mc{O}_{L_v}$. For each $b > 0$, let $\Iw(v^{b,b})$
denote the open compact subgroup of $GL_n(\mc{O}_{L_v})$ consisting of
matrices which reduce modulo $v^{b}$ to a unipotent upper triangular
matrix. The space $(\iota^{-1}\pi_v)^{\Iw(v^{b,b})}$ carries commuting
actions of the scaled Hecke operators
\[ U_{\iota^*\lambda,\varpi_v}^{(j)} := \left(\prod_{i=1}^j \prod_{ \tau : L_v
    \hookrightarrow \overline{\bb{Q}}_l} \tau(\varpi_{v})^{- \lambda_{
      \iota \tau|_{L}, n-i+1}} \right) \left[ \Iw(v^{b,b})
  \left( \begin{matrix} \varpi_{v}1_j & 0 \cr 0 & 1_{n-j} \end{matrix}
  \right) \Iw(v^{b,b}) \right]
\]
for $j=1,\ldots,n$. We define the ordinary part $(\iota^{-1}
\pi_v)^{\Iw(v^{b,b}),\ord}$ of $(\iota^{-1}\pi_v)^{\Iw(v^{b,b})}$ to be
the maximal subspace which is invariant under each
$U_{\iota^*\lambda,\varpi_v}^{(j)}$ and such that every eigenvalue of each
$U_{\iota^*\lambda,\varpi_v}^{(j)}$ is an $l$-adic unit. We define
\[ (\iota^{-1}\pi_v)^{\ord} : = \varinjlim_{b>0}
(\iota^{-1}\pi_v)^{\Iw(v^{b,b}),\ord} .\] We say that $\pi$ is
$\iota$-ordinary at $v$ if the space
$(\iota^{-1}\pi_v)^{\ord}$ is non-zero.

\subsection{$l$-adic automorphic forms on definite unitary
  groups}\label{subsec:auto forms on unitary groups}

Let $F^{+}$ denote a totally real number field and $n$ a positive
integer. Let $F/F^{+}$ be a totally imaginary quadratic extension of
$F^{+}$ and let $c$ denote the non-trivial element of
$\Gal(F/F^{+})$. Suppose that the extension $F/F^{+}$ is unramified at
all finite places. Assume that $n[F^{+}:\bb{Q}]$ is divisible by
4. Under this assumption, we can find a reductive algebraic group $G$
over $F^{+}$ with the following properties:
\begin{itemize}
\item $G$ is an outer form of $\GL_n$ with $G_{/F} \cong \GL_{n/F}$;
\item for every finite place $v$ of $F^{+}$, $G$ is quasi-split at
  $v$;
\item for every infinite place $v$ of $F^{+}$, $G(F^{+}_v) \cong
  U_{n}(\bb{R})$.
\end{itemize}
We can and do fix a model for $G$ over the ring of integers
$\mc{O}_{F^{+}}$ of $F^{+}$ as in section 2.1 of \cite{ger}.
For each place $v$ of $F^{+}$ which splits as $ww^{c}$ in $F$ there is
a natural isomorphism
\[ \iota_{w} : G(F^{+}_v) \isoto \GL_n(F_w) \]
which restricts to an isomorphism between $G(\mc{O}_{F^{+}_v})$ and $\GL_n(\mc{O}_{F_w})$.
If $v$ is a place of $F^{+}$ split over $F$ and $\wt{v}$ is a place of
$F$ dividing $v$, then we let
\begin{itemize}
\item $\Iw(\wt{v})$ denote the subgroup of $\GL_n(\mc{O}_{F_{\wt{v}}})$
  consisting of matrices which reduce to an upper triangular matrix
  modulo $\wt{v}$.
\item $\Iw(\wt{v}^{b,c})$, for $0 \leq b \leq c$, denote the subgroup
  of $\GL_n(\mc{O}_{F_{\wt{v}}})$ consisting of matrices which reduce
  to an upper triangular matrix modulo $\wt{v}^{c}$ and to a unipotent
  matrix modulo $\wt{v}^{b}$. In particular $\Iw(\wt{v}^{0,0}) =
  \GL_n(\mc{O}_{F_{\wt{v}}})$. 
\end{itemize}

Let $l>n$ be prime number with the property that every place of
$F^{+}$ dividing $l$ splits in $F$. Fix an algebraic closure $\Qlbar$ of
$\bb{Q}_l$. Let $K$ be an algebraic extension of $\bb{Q}_l$ in
$\Qlbar$ such that every embedding $F \hookrightarrow \Qlbar$ has
image contained in $K$ and such that $K$ contains a primitive $l$-th
root of unity. Let $\mc{O}$ denote the ring of integers in $K$ and $k$ the residue field. Let
$S_l$ denote the set of places of $F^{+}$ dividing $l$ and for each $v
\in S_l$, let $\wt{v}$ be a place of $F$ over $v$. Let $\tilde{S}_l$
be the set of $\wt{v}$ for $v\in S_l$.

Let $W$ be an $\mc{O}$-module with an action of $G(\mc{O}_{F^{+},l})$. 
Let $V \subset G(\bb{A}_{F^+}^{\infty})$ be a compact open subgroup
with $v_l \in G(\mc{O}_{F^+,l})$ for all $v \in V$, where $v_l$
denotes the projection of $v$ to $G(F^{+}_l)$. We let $S(V,W)$ denote
the space of $l$-adic automorphic forms on $G$ of weight $W$ and level $V$, that is, the space of functions
\[  f : G(F^{+})\backslash G(\bb{A}_{F^+}^{\infty}) \rightarrow W \]
with $f(gv) = v_l^{-1} f(g)$ for all $v \in V$. 

Let $\wt{I}_{l}$ denote the set of embeddings $F \hookrightarrow K$
giving rise to a place in $\tilde{S}_l$. 
To each $\lambda \in (\bb{Z}^{n}_{+})^{\wt{I}_l}$ we
associate a finite free $\mc{O}$-module $M_{\lambda}$ with a
continuous action of $G(\mc{O}_{F^{+},l})$ as in Definition 2.2.3 of
\cite{ger}. The representation $M_{\lambda}$ is the tensor product
over $\tau \in \wt{I}_l$ of the irreducible algebraic representations
of $\GL_n$ of highest weights given by the $\lambda_\tau$. We write
$S_{\lambda}(V,\mc{O})$ instead of $S(V,M_{\lambda})$ and similarly
for any $\mc{O}$-module $A$, we write $S_{\lambda}(V,A)$ for
$S(V,M_{\lambda}\otimes_{\mc{O}}A)$.

Assume from now on that $K$ is a finite extension of $\Ql$. Let
$\mf{l}$ denote the product of all places in $S_l$. Let $R$ and $S_a$
denote finite sets of finite places of $F^{+}$ disjoint from each
other and from $S_l$ and consisting only of places which split in
$F$. Assume that each $v \in S_a$ is unramified over a rational prime $p$ with
$[F(\zeta_p):F]>n$. Let $T=S_l \coprod R \coprod S_a$. For each $v \in
T$ fix a place $\wt{v}$ of $F$ dividing $v$, extending the choice of
$\wt{v}$ for $v \in S_l$. We henceforth identify $G(F^{+}_{v})$ with
$GL_n(F_{\wt{v}})$ via $\iota_{\wt{v}}$ for $v \in T$ without comment.
Let $U=\prod_v U_v$ be the compact open subgroup of
$G(\bb{A}_{F^{+}}^{\infty})$ with
\begin{itemize}
\item $U_v = G(\mc{O}_{F^{+}_v})$ if $v \not \in R \cup S_a$ splits in
  $F$;
\item $U_v = \Iw(\wt{v})$ if $v \in R$;
\item $U_v = \ker( \GL_n(\mc{O}_{F_{\wt{v}}})
  \rightarrow \GL_n(k(\wt{v})))$ if $v \in S_a$;
\item $U_v$ is a hyperspecial maximal compact subgroup of $G(F^{+}_v)$
  if $v$ is inert in $F$.
\end{itemize}
If $0 \leq b \leq c$, we let $U(\mf{l}^{b,c})=U^{l} \times \prod_{v
  \in S_l} \Iw(\wt{v}^{b,c})$. We note that if $S_a$ is non-empty then $U$ is sufficiently small (which means that its projection to $G(F^{+}_v)$ for some place $v \in F^{+}$ contains no finite order elements other than the identity).

For each $v \in S_l$ fix a uniformizer $\varpi_{\wt{v}}$ in
$\mc{O}_{F_{\wt{v}}}$. For $0\leq b \leq c$ with $c>0$ and $j=1,\ldots,n$, consider the scaled Hecke
operator
\[
\Uop := \left(\prod_{i=1}^{j} \prod_{ \tau : F_{\wt{v}} \hookrightarrow \Qlbar}
  \tau(\varpi_{\wt{v}})^{- \lambda_{\tau|_{F}, n-i+1}} \right) \left[
  \Iw(\wt{v}^{b,c}) \left( \begin{matrix} \varpi_{\wt{v}}1_j & 0 \cr
    0 & 1_{n-j} \end{matrix} \right) \Iw(\wt{v}^{b,c}) \right]
\]
acting on the space $S_{\lambda}(U(\mf{l}^{b,c}),\mc{O})$. We let
$S_{\lambda}^{\ord}(U(\mf{l}^{b,c}),\mc{O})$
denote the ordinary part of $S_{\lambda}(U(\mf{l}^{b,c}),\mc{O})$ as defined in section 2.4 of
\cite{ger} (noting that the space
$S_{\lambda}(U(\mf{l}^{b,c}),\mc{O})$ is denoted
$S_{\lambda,\{1\}}(U(\mf{l}^{b,c}),\mc{O})$ in \cite{ger}). This is the maximal submodule on which each of the operators
$\Uop$ acts invertibly. This space is preserved by the Hecke operators
\begin{itemize}
\item 
\[ T_{w}^{(j)}:=  \iota_{w}^{-1}\left( \left[ GL_n(\mc{O}_{F_w}) \left( \begin{matrix}
      \varpi_{w}1_j & 0 \cr 0 & 1_{n-j} \end{matrix} \right)
GL_n(\mc{O}_{F_w}) \right] \right)
\]
for $w$ a place of $F$, split over $F^+$ and not lying over
$T$, $j=1,\ldots,n$ and
$\varpi_{w}$ a uniformizer in $\mc{O}_{F_w}$, and 
\item \[ \langle u \rangle := \prod_{v \in S_l} \left[
     \Iw(\wt{v}^{b,c})  \diag(u_{\wt{v}})
 \Iw(\wt{v}^{b,c}) \right] \]
for $u=  (u_{\wt{v}})_{v \in S_l} \in \prod_{v \in S_l}
(\mc{O}_{F_{\wt{v}}}^{\times})^{n}$.
\end{itemize}
We let $\bb{T}_{\lambda}^{T,\ord}(U(\mf{l}^{b,c}),\mc{O})$
denote the $\mc{O}$-subalgebra of $\End_{\mc{O}}(S_{\lambda}^{\ord}(U(\mf{l}^{b,c}),\mc{O}))$ generated by the operators
 $T_{w}^{(j)}$, $(T_{w}^{(n)})^{-1}$ and $\langle u \rangle$. We let
\[ \bb{T}_{\lambda}^{T,\ord}(U(\mf{l}^{\infty}),\mc{O}) :=
\varprojlim_{c} \bb{T}_{\lambda}^{T,\ord}(U(\mf{l}^{c,c}),\mc{O})
.\]
Let $T_n$ denote the diagonal torus in
$GL_n$. We define $T_n(\mf{l})$ as the pro-$l$ part of
$T_{n}(\mc{O}_{F^{+},l})=\prod_{v \in S_l}T_{n}(\mc{O}_{F^{+}_{v}})$. In other
words, we have an exact sequence
\[ 0 \rightarrow T_{n}(\mf{l}) \rightarrow T_{n}(\mc{O}_{F^{+},l})\rightarrow
T_{n}(\mc{O}_{F^{+}}/\mf{l}) \rightarrow 0 .\]
Define the completed group algebras
\begin{eqnarray*}
  \Lambda^{+} & := & \mc{O}[[T_{n}(\mc{O}_{F^{+},l})]] \\
  \Lambda & := & \mc{O}[[ T_{n}(\mf{l}) ]].
\end{eqnarray*}
Identifying $T_{n}(\mc{O}_{F^{+},l})$ with $\prod_{v \in S_l}
T_{n}(\mc{O}_{F_{\wt{v}}})$ in the natural way gives  $
\bb{T}_{\lambda}^{T,\ord}(U(\mf{l}^{\infty}),\mc{O})$
the structure of a $\Lambda^{+}$-algebra (via the operators $\langle u \rangle$).

It is shown in section 2.6 of \cite{ger} that the algebra
$\bb{T}_{\lambda}^{T,\ord}(U(\mf{l}^{\infty}),\mc{O})$
is independent of the weight $\lambda$
 in the sense that for each $\lambda$ there is a natural isomorphism $\bb{T}_{\lambda}^{T,\ord}(U(\mf{l}^{\infty}),\mc{O})\cong \bb{T}_{0}^{T,\ord}(U(\mf{l}^{\infty}),\mc{O})$.
We let
$\bb{T}^{T,\ord}(U(\mf{l}^{\infty}),\mc{O})$ denote
the universal ordinary Hecke algebra as in Definition 2.6.2 of \cite{ger}. By definition, this is just $\bb{T}_{0}^{T,\ord}(U(\mf{l}^{\infty}),\mc{O})$ with a modified
$\Lambda^{+}$-structure which is more convenient from the point of view of Galois representations.

\subsection{An $R^{\red}=\bb{T}$ Theorem}\label{sec:R=T}

Let $\TUniv$ be the algebra introduced above. Let $\mf{m}$ be a
maximal ideal of $\TUniv$ with residue field $k$ which is
non-Eisenstein in the sense of section 2.7 of \cite{ger}. According to
propositions 2.7.3 and 2.7.4 of \cite{ger} one can choose a continuous
homomorphism
\[ \rbar_{\mf{m}} : G_{F^{+}} \rightarrow
\mc{G}_n(\TUniv/\mf{m}) \]
with $\rbar_{\mf{m}}|_{G_{F}}$ absolutely irreducible and a continuous lifting
\[ r_{\mf{m}} : G_{F^{+}} \rightarrow \mc{G}_{n}(\TUniv_{\mf{m}}) \]
with the following properties:
\begin{enumerate}
\item[(0)] $r_{\mf{m}}^{-1}(\GL_n\times \GL_1)(\TUniv_{\mf{m}}) = G_{F}$.
\item $r_{\mf{m}}$ is unramified outside $T$.
\item If $v \not \in T$ is a place of $F^{+}$ which splits as $ww^{c}$
  in $F$ and $\Frob_w$ is the geometric Frobenius element of $G_{F_w}/I_{F_w}$, then
$r_{\mf{m}}(\Frob_{w})$ has characteristic polynomial
\[ X^{n}- T_{w}^{(1)} X^{n-1}+\ldots+ (-1)^{j} (\mathbf{N}w)^{j(j-1)/2} T_{w}^{(j)}X^{n-j} + \ldots + (-1)^{n}
(\mathbf{N}w)^{n(n-1)/2} T_{w}^{(n)} .\]

\item $\nu \circ r_{\mf{m}} = \epsilon^{1-n} \delta_{F/F^{+}}^{\mu_\mf{m}}$ where $\delta_{F/F^+}$ is the
non-trivial character of $\Gal(F/F^{+})$ and $\mu_{\mf{m}} \in \bb{Z}/2\bb{Z}$.

\item If $v \in R$ and $\sigma \in I_{F_{\wt{v}}}$, then $r_{\mf{m}}(\sigma)$ has characteristic polynomial $(X-1)^{n}$.
\end{enumerate}
We make the following \emph{assumptions}:
\begin{itemize}
\item[(a)] The subgroup $\overline{r}_{\mf{m}}(G_{F^{+}(\zeta_l)})$ of
  $\mc{G}_{n}(k)$ is big;
\item[(b)] For $v \in S_l \cup R$,
  $\overline{r}_{\mf{m}}(G_{F_{\wt{v}}})=\{ 1_n\}$;
\item[(c)] The set $S_a$ is non-empty and for $v \in S_a$, $\overline{r}_{\mf{m}}|_{G_{F_{\wt{v}}}}$ is
  unramified and $H^{0}(G_{F_{\wt{v}}},\ad
  \overline{r}_{\mf{m}}(1))=\{ 0\}$.
\end{itemize}

For $v \in S_l$, let
\begin{eqnarray*}
  \Lambda_{F_{\wt{v}}} & := & \mc{O}[[I_{F_{\wt{v}}^{\ab}}(l)^{n}]] 
  \end{eqnarray*}
where for a group $H$, $H(l)$ denotes the pro-$l$ completion.  The
inverses of the Artin maps $\Art_{F_{\wt{v}}}$ for $v \in S_l$ give
rise to an isomorphism
\[\prod_{v \in S_l} (I_{F_{\wt{v}}^{\ab}}(l))^{n}    \tilde{\longrightarrow}  \prod_{v \in S_l} (1+ \varpi_{\wt{v}}\mc{O}_{F_{\wt{v}}})^{n} \cong   T_{n}(\mf{l}) \]
and hence an isomorphism
\[ \widehat{\otimes}_{v \in S_l} \Lambda_{F_{\wt{v}}} \tilde{\longrightarrow}
\Lambda .\]

Corollary 3.1.4 of \cite{ger} shows that $r_{\mf{m}}$ satisfies the following property, in addition to (0)-(4) above:
\begin{itemize}
\item[(5)] For $v \in S_l$, the homomorphism
  $R^{\square}_{\rbarwtv}\widehat{\otimes}_{\mc{O}}\Lambda_{F_{\wt{v}}}
  \rightarrow \TUniv_{\mf{m}}$ coming from
  $r_{\mf{m}}|_{G_{F_{\wt{v}}}}$ and the
  $\Lambda_{F_{\wt{v}}}$-algebra structure on $\TUniv_{\mf{m}}$
  factors through the quotient
  $R^{\triangle}_{\rbarwtv,\Lambda_{F_{\wt{v}}}}$ of
  $R^{\square}_{\rbarwtv}\widehat{\otimes}_{\mc{O}}\Lambda_{F_{\wt{v}}}$
  constructed in section \ref{sec: p=l case in non-fixed weight}.
\end{itemize}

We now turn to deformation rings.
For each $v \in S_l$, let
$R^{\triangle}_{\rbarwtv,\Lambda_{F_{\wt{v}}}}$ be the quotient of
$R^{\square}_{\rhobar}\widehat{\otimes}_{\mc{O}}\Lambda_{F_{\wt{v}}}$
constructed in section \ref{sec: p=l case in non-fixed weight}.  For
$v$ in $R$, let $R_{\rmbarwtv}^{1}$ denote the quotient of
$R_{\rmbarwtv}^{\square}$ corresponding to lifts $r$ for which
$r(\sigma)$ has characteristic polynomial $(X-1)^n$ for each $\sigma
\in I_{F_{\wt{v}}}$.  This ring is studied in section 3 of
\cite{tay06}.
Let 
\[ \mc{S}_{\Lambda} =
\left(F/F^+,T,\wt{T},\Lambda,\rbar,\epsilon^{1-n}\delta_{F/F^+}^{\mu_{\mf{m}}},\{
  R_{\rmbarwtv}^{\square}\}_{v \in S_a},\{ R_{\rmbarwtv}^{1}\}_{v \in
    R}, \{ R_{\rmbarwtv,\wt\Lambda_{\wt{v}}}^{\triangle}\}_{v \in
    S_l}\right) \] Let $\mc{C}_{\Lambda}$ denote the category of
complete local Noetherian $\Lambda$-algebras with residue field
$k$. We say that a lift $r$ of $\rbar$ to an object $A$ of
$\mc{C}_{\Lambda}$ is of type $\mc{S}_{\Lambda}$ if for each $v \in
S_l$, the homomorphism
$R^{\square}_{\rbarwtv}\widehat{\otimes}_{\mc{O}}\Lambda_{F_{\wt{v}}}
\rightarrow A$ coming from $r|_{G_{F_{\wt{v}}}}$ and the
$\Lambda$-structure on $A$ factors through
$R^{\triangle}_{\rbarwtv,\Lambda_{F_{\wt{v}}}}$ and if for each $v \in
R$ the homomorphism $R^{\square}_{\rbarwtv} \rightarrow A$ coming from
$r|_{G_{F_{\wt{v}}}}$ factors through $ R_{\rmbarwtv}^{1}$. We define
deformations of type $\mc{S}_{\Lambda}$ in the same way.  Let
$\Def_{\mc{S}_{\Lambda}} : \mc{C}_{\Lambda} \rightarrow Sets$ be the
functor which sends an object $A$ to the set of deformations of
$\rbar$ to $A$ of type $\mc{S}_{\Lambda}$. This functor is represented
by an object $R^{\univ}_{\mc{S}_{\Lambda}}$ of $\mc{C}_{\Lambda}$.

Properties (0)-(5) above imply that the lift $r_{\mf{m}}$ of $\rbar_{\mf{m}}$ to  $\TUniv_{\mf{m}}$ is of type $\mc{S}_{\Lambda}$ and hence gives rise to a homomorphism of $\Lambda$-algebras
\[ R_{\mc{S}_{\Lambda}}^{\univ} \rightarrow \TUniv_{\mf{m}}. \]
The following result is contained in Theorem 4.3.1 of \cite{ger}.

\begin{thm}
\label{thm: R=T}
  The map $R_{\mc{S}_{\Lambda}}^{\univ} \rightarrow \TUniv_{\mf{m}}$ induces an isomorphism
  \[ (R_{\mc{S}_{\Lambda}}^{\univ})^{\red} \isoto \TUniv_{\mf{m}} \]
and $\mu_{\mf{m}}\equiv n \mod 2$ so that $\rbar_{\mf{m}}$ is odd.
\end{thm}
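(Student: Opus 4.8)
The statement is Theorem 4.3.1 of \cite{ger}, so the plan is to recall the structure of its proof: a Taylor--Wiles--Kisin patching argument carried out in the Hida-theoretic ($\Lambda$-adic) setting. The first, easy, point is that $R^{\univ}_{\mc{S}_{\Lambda}}\to\TUniv_{\mf{m}}$ is surjective. Indeed $\TUniv_{\mf{m}}$ is topologically generated over $\Lambda$ by the operators $\langle u\rangle$ (which define its $\Lambda$-structure) together with the $T^{(j)}_w$ and $(T^{(n)}_w)^{-1}$ for $w$ split over $F^+$ and away from $T$; by property (2) each $T^{(j)}_w$ is, up to an explicit unit, a coefficient of the characteristic polynomial of $r_{\mf{m}}(\Frob_w)$, hence a polynomial in traces of $r_{\mf{m}}$, and since $\rbar_{\mf{m}}|_{G_F}$ is absolutely irreducible these traces generate the image of $R^{\univ}_{\mc{S}_{\Lambda}}$ (Lemma 2.1.12 of \cite{cht}). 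It remains to see that the kernel of this surjection is the nilradical; this will drop out of the patching, which simultaneously shows $\TUniv_{\mf{m}}$ is reduced.

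Next, the patching. Using the bigness hypothesis (a) one produces, for each $N\geq 1$, a finite set $Q_N$ of places of $F^+$, split in $F$ and disjoint from $T$, of fixed cardinality $q$, such that for $v\in Q_N$ the element $\rbar_{\mf{m}}(\Frob_{\wt v})$ has $n$ distinct eigenvalues, $\mathbf{N}v\equiv 1\pmod{l^N}$, and the dual Selmer group of the deformation problem obtained from $\mc{S}_{\Lambda}$ by relaxing to the Taylor--Wiles condition at $Q_N$ vanishes. The numerical heart of this step is the Poitou--Tate/Euler-characteristic computation underlying Lemma \ref{lem: lower bound on unitary dimension}, in which oddness of $\rbar_{\mf{m}}$ contributes the term $\sum_{v\mid\infty}n(n-1)/2$ and makes the expected defect vanish. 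On the automorphic side, the essential input from Hida theory (section 2 of \cite{ger}) is that $S^{\ord}(U(\mf{l}^{\infty}),\mc{O})_{\mf{m}}$ is a \emph{finite free $\Lambda$-module}, which is exactly what makes taking the inverse limit over the $Q_N$-level tower possible; adding $Q_N$-level structure produces Hecke modules carrying an action of the group ring of $\prod_{v\in Q_N}k(\wt v)^{\times}(l)$. Patching these data over $N$, after also introducing framing variables at the places of $T$, produces a ring $R_{\infty}=R^{\mathrm{loc}}[[x_1,\dots,x_g]]$, where $R^{\mathrm{loc}}$ is the completed tensor product over $\mc{O}$ of the local lifting rings entering $\mc{S}_{\Lambda}$ (namely $R^{\square}_{\rmbarwtv}$ for $v\in S_a$, $R^{1}_{\rmbarwtv}$ for $v\in R$, and $R^{\triangle}_{\rmbarwtv,\Lambda_{F_{\wt v}}}$ for $v\in S_l$), together with a finitely generated $R_{\infty}$-module $M_{\infty}$ which is maximal Cohen--Macaulay over $R_{\infty}$ and over which $R_{\infty}$ acts through a patched deformation ring.

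The proof is concluded by a dimension count using the local ring dimensions established earlier: $\dim R^{\square}_{\rmbarwtv}=n^2+1$ for $v\in S_a$ (formally smooth of relative dimension $n^2$ over $\mc{O}$, using $H^0(G_{F_{\wt v}},\ad\rbar_{\mf{m}}(1))=0$ from (c)), $\dim R^{1}_{\rmbarwtv}=n^2+1$ for $v\in R$ (the rings of \cite{tay06}), and, for $v\in S_l$, $R^{\triangle}_{\rmbarwtv,\Lambda_{F_{\wt v}}}$ is flat over $\Lambda_{F_{\wt v}}$ with generic fibre controlled by the ordinary crystalline rings of section \ref{subsubsec: the case where p=l} and Lemma \ref{lem:ordinary crystalline ring for GL_n}. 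Since $g$ was chosen precisely so that $\dim R_{\infty}$ equals the dimension of the ring of patched diamond/level operators over which $M_{\infty}$ is free, one concludes that $M_{\infty}$ is in fact free over $R_{\infty}$, whence $R_{\infty}$ acts faithfully on $M_{\infty}$ and is identified with the patched deformation ring. Descending through the patching (quotienting by the patching and framing variables) then identifies $R^{\univ}_{\mc{S}_{\Lambda}}\to\TUniv_{\mf{m}}$ as an isomorphism after applying $(-)^{\red}$, and shows $\TUniv_{\mf{m}}$ is reduced.

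Finally, for $\mu_{\mf{m}}\equiv n\pmod 2$: by property (3), $\nu\circ r_{\mf{m}}=\epsilon^{1-n}\delta_{F/F^+}^{\mu_{\mf{m}}}$, so for a complex conjugation $c_v\in G_{F^+}$ one has $\nu(r_{\mf{m}}(c_v))=(-1)^{1-n+\mu_{\mf{m}}}$, using $\epsilon(c_v)=-1$ and $\delta_{F/F^+}(c_v)=-1$ (as $F/F^+$ is totally imaginary). Hence $\mu_{\mf{m}}\equiv n\pmod 2$ is equivalent to $\nu(r_{\mf{m}}(c_v))=-1$, i.e.\ to oddness of $r_{\mf{m}}$, and so of $\rbar_{\mf{m}}$. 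To see the congruence holds, specialise $\TUniv_{\mf{m}}$ at a classical point --- a $\Qlbar$-point corresponding to an $\iota$-ordinary RACSDC automorphic representation of $\GL_n$ over $F$ contributing to $\mf{m}$ --- whose attached $\mc{G}_n$-valued Galois representation is known to be odd by its construction over CM fields; since $\mu_{\mf{m}}\in\Z/2\Z$ is constant on $\Spec\TUniv_{\mf{m}}$ the congruence follows (equivalently, the odd parity is forced by the dimension count above, since for the other parity Lemma \ref{lem: lower bound on unitary dimension} would fail and $M_{\infty}$ could not be nonzero). The main obstacle throughout is the patching step: producing Taylor--Wiles primes from the bigness hypothesis while keeping the deformation conditions at the places above $l$ compatible with the $\Lambda$-adic structure, and pushing the relative-dimension bookkeeping through over $\Lambda$ rather than over $\mc{O}$, so that the patched module is free over the patched diamond algebra and the Cohen--Macaulay argument applies; surjectivity and the sign computation are formal by comparison.
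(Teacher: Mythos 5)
The paper itself offers no proof of this statement: it is imported verbatim from Theorem 4.3.1 of \cite{ger}, so the comparison must be with the argument given there. You correctly identify that argument's general shape -- a Taylor--Wiles--Kisin patching argument carried out over $\Lambda$ using Hida theory -- and your treatment of surjectivity (traces plus Lemma 2.1.12 of \cite{cht}), of the choice of Taylor--Wiles primes via bigness, and of the parity claim $\mu_{\mf{m}}\equiv n \bmod 2$ (either by specialising at a classical point and invoking the known sign of the associated $\mc{G}_n$-valued representations, or by noting that the wrong parity is incompatible with the dimension bookkeeping) is consistent with \cite{ger}.

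However, the decisive step of your sketch has a genuine gap. From freeness of $M_\infty$ over the patched diamond algebra $S_\infty$ together with $\dim S_\infty=\dim R_\infty$ one gets only $\operatorname{depth}_{R_\infty}M_\infty\ge\dim R_\infty$, hence (granting equidimensionality of $R_\infty$) that the support of $M_\infty$ is a \emph{union of irreducible components} of $\Spec R_\infty$; it does not follow that $M_\infty$ is free, or even faithful, over $R_\infty$, and your assertion that $M_\infty$ is maximal Cohen--Macaulay over $R_\infty$ is essentially what has to be proved rather than an input. The entire difficulty of Theorem 4.3.1 of \cite{ger} is to show that \emph{every} irreducible component of $\Spec R_\infty$ lies in that support, and this is exactly where hypothesis (b) is used: because $\rbar_{\mf{m}}$ is trivial on $G_{F_{\wt{v}}}$ for $v\in S_l$, \cite{ger} proves an irreducibility/connectedness statement for the $\Lambda$-adic ordinary lifting rings $R^{\triangle}_{\rmbarwtv,\Lambda_{F_{\wt{v}}}}$, and at the places $v\in R$ (where the theorem is non-minimal) one needs Taylor's Ihara-avoidance argument from \cite{tay06}, comparing the deformation problem defined by $R^{1}_{\rmbarwtv}$ with an auxiliary problem defined by rings $R^{\chi}$ for distinct finite-order characters and matching their special fibres. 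Neither ingredient appears in your proposal; without them the patching argument yields only automorphy of points on the components actually in the support, not the full isomorphism $(R^{\univ}_{\mc{S}_{\Lambda}})^{\red}\isoto\TUniv_{\mf{m}}$.
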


Let $\lambda \in (\bb{Z}^{n}_{+})^{\wt{I}_l}$ and for each $v \in
S_l$, let $\lambda_{\wt{v}}$ denote the element of
$(\bb{Z}^n_+)^{\Hom(F_{\wt{v}},K)}$ given by the $\lambda_{\tau|_{F}}$
for $\tau : F_{\wt{v}}\hookrightarrow K$. In section \ref{subsubsec:
  the case where p=l} we associated to $\lambda_{\wt{v}}$ an $n$-tuple
of characters
$(\chi^{\lambda_{\wt{v}}}_1,\ldots,\chi^{\lambda_{\wt{v}}}_n)$ from
$I_{F_{\wt{v}}}$ to $\mc{O}^{\times}$. These characters induce an
$\mc{O}$-algebra homomorphism
\[ \chi^{\lambda_{\wt{v}}}: \Lambda_{F_{\wt{v}}} \rightarrow \mc{O} 
\]
and taking the tensor product over the places $v \in S_l$, we get a homomorphism
\[ \chi^{\lambda} : \Lambda \rightarrow \mc{O}. \]
We denote the kernels of these homomorphisms by $\wp_{\lambda_{\wt{v}}}$ and $\wp_{\lambda}$. The next result follows from
Corollary 2.5.4 and Lemma 2.6.4  of \cite{ger} (noting that $U$ is sufficiently small since $S_a$ is non-empty).

\begin{prop}
\label{prop: T is finite and faithful over Lambda}
  The algebra $\TUniv$ is finite and faithful as a $\Lambda$-module
  and for every $\lambda \in (\bb{Z}^{n}_{+})^{\wt{I}_l}$ there is a
  natural surjection
\[  \bb{T}^{T,\ord}(U(\mf{l}^{\infty}),\mc{O})\otimes_{\Lambda}\Lambda_{\wp_{\lambda}}/\wp_{\lambda} \onto \bb{T}^{T,\ord}_{\lambda}(U(\mf{l}^{1,1}),\mc{O})\otimes_{\mc{O}}K \] 
whose kernel is nilpotent.
\end{prop}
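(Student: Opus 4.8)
The plan is to deduce both assertions from the Hida theory developed in section 2 of \cite{ger}: the finiteness and faithfulness of $\TUniv$ over $\Lambda$ is Corollary 2.5.4 of \cite{ger}, and the comparison with the fixed-weight Hecke algebra is Lemma 2.6.4 of \cite{ger}. The only hypothesis of those results that needs checking here is that the tame level $U$ is \emph{sufficiently small}, and this holds because $S_a$ is non-empty (see section \ref{subsec:auto forms on unitary groups}); this is what makes the relevant spaces of $l$-adic automorphic forms free over the appropriate coefficient rings, with no contribution from finite stabilisers.

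For the finiteness and faithfulness, recall that the Hida theory of section 2.5 of \cite{ger} produces a finite free $\Lambda$-module $S^{\ord}_0(U(\mf{l}^{\infty}),\mc{O}):=\varprojlim_c S^{\ord}_0(U(\mf{l}^{c,c}),\mc{O})$ carrying a $\Lambda$-linear action of the operators $T^{(j)}_w$, $(T^{(n)}_w)^{-1}$ and $\langle u\rangle$; by construction $\TUniv$, with its (modified) $\Lambda^{+}$-structure and hence its $\Lambda$-structure, is the $\Lambda$-subalgebra of $\End_{\Lambda}(S^{\ord}_0(U(\mf{l}^{\infty}),\mc{O}))$ generated by these operators. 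As $\Lambda$ is Noetherian and $S^{\ord}_0(U(\mf{l}^{\infty}),\mc{O})$ is module-finite over $\Lambda$, so is any $\Lambda$-subalgebra of its endomorphism ring; and since $\Lambda$ acts faithfully on the nonzero free module $S^{\ord}_0(U(\mf{l}^{\infty}),\mc{O})$, we get $\Lambda\hookrightarrow\TUniv$, so $\TUniv$ is $\Lambda$-faithful.

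For the specialisation statement, note that $\wp_{\lambda}=\ker(\chi^{\lambda}:\Lambda\to\mc{O})$, so $\Lambda/\wp_{\lambda}\cong\mc{O}$ and $\Lambda_{\wp_{\lambda}}/\wp_{\lambda}\Lambda_{\wp_{\lambda}}\cong K$; thus $(-)\otimes_{\Lambda}\Lambda_{\wp_{\lambda}}/\wp_{\lambda}$ localises at $\wp_{\lambda}$ and then base changes along $\chi^{\lambda}$, so it retains exactly the components of $\Spec\TUniv$ passing through $\wp_{\lambda}$. The control theorem, as proved in section 2.6 of \cite{ger} — using the weight-independence isomorphism $\bb{T}^{T,\ord}_{\lambda}(U(\mf{l}^{\infty}),\mc{O})\cong\bb{T}^{T,\ord}_{0}(U(\mf{l}^{\infty}),\mc{O})$ recalled in section \ref{subsec:auto forms on unitary groups} — identifies $S^{\ord}_0(U(\mf{l}^{\infty}),\mc{O})\otimes_{\Lambda}\Lambda_{\wp_{\lambda}}/\wp_{\lambda}$ with $S^{\ord}_{\lambda}(U(\mf{l}^{1,1}),\mc{O})\otimes_{\mc{O}}K$, compatibly with all the Hecke operators above. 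Passing to Hecke algebras, each such operator in $\TUniv\otimes_{\Lambda}\Lambda_{\wp_{\lambda}}/\wp_{\lambda}$ maps to the corresponding one in $\bb{T}^{T,\ord}_{\lambda}(U(\mf{l}^{1,1}),\mc{O})\otimes_{\mc{O}}K$; since these generate the target the map is surjective, and its kernel consists of the elements annihilating $S^{\ord}_{\lambda}(U(\mf{l}^{1,1}),\mc{O})\otimes_{\mc{O}}K$. This kernel is nilpotent: the source is a finite (hence Artinian) $K$-algebra, so a product of Artinian local rings; the target is reduced, because the operators $T^{(j)}_w$ and $\langle u\rangle$ act semisimply on the finite-dimensional space of classical automorphic forms; and no local factor of the source dies, because each component of $\Spec\TUniv$ through $\wp_{\lambda}$ gives, by the control theorem, a classical automorphic eigenform of weight $\lambda$ and level $\mf{l}^{1,1}$, hence a nonzero quotient of that factor. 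Therefore the kernel equals the nilradical of the source.

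I expect the one genuinely delicate step to be establishing the precise integral form of the control theorem used above — keeping track of the ordinary idempotent, the normalising factors built into the operators $\Uop$, and the passage to the inverse limit over the levels $\mf{l}^{c,c}$ — but this is exactly what is carried out in section 2 of \cite{ger}, so in practice the proof is the reduction just described.
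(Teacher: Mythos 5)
Your proposal is correct and matches the paper's argument: the paper's proof is precisely the citation of Corollary 2.5.4 and Lemma 2.6.4 of \cite{ger}, with the observation that $U$ is sufficiently small because $S_a$ is non-empty. Your additional unwinding of the Hida-theoretic content of those results (finiteness of the ordinary space over $\Lambda$, the control theorem at $\wp_{\lambda}$, and the nilpotence of the kernel) is a faithful account of what \cite{ger} proves, but the paper itself leaves all of that to the reference.
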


Let $\lambda$ and $\lambda_{\wt{v}}$ for $v \in S_l$ be as
above. Consider the deformation problem
\[
\mc{S}_{\lambda}=(F/F^{+},T,\wt{T},\mc{O},\rbar,\epsilon^{1-n}\delta_{F/F^+}^{\mu_{\mf{m}}},\{
R_{\rmbarwtv}^{\square}\}_{v \in S_a},\{ R_{\rmbarwtv}^{1}\}_{v \in
  R}, \{ R_{\rmbarwtv}^{\triangle_{\lambda_{\wt{v}}},cr}\}_{v \in
  S_l}) \]
and the corresponding deformation ring
$R^{\univ}_{\mc{S}_{\lambda}}$. 

\begin{cor}
\label{cor : R is finite over O}
  The ring $R^{\univ}_{\mc{S}_{\lambda}}$ is a finite $\mc{O}$-algebra.
\end{cor}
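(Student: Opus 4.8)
The plan is to deduce the finiteness of $R^{\univ}_{\mc{S}_{\lambda}}$ over $\mc{O}$ from a finiteness statement for the ``$\Lambda$-adic'' deformation ring $R^{\univ}_{\mc{S}_{\Lambda}}$ of Section~\ref{sec:R=T}, by realising $R^{\univ}_{\mc{S}_{\lambda}}$ as a quotient of the specialisation $R^{\univ}_{\mc{S}_{\Lambda}}\otimes_{\Lambda,\chi^{\lambda}}\mc{O}$ of $R^{\univ}_{\mc{S}_{\Lambda}}$ at $\wp_{\lambda}$.

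\emph{First} I would prove that $R^{\univ}_{\mc{S}_{\Lambda}}$ is a finite $\Lambda$-module. By Proposition~\ref{prop: T is finite and faithful over Lambda}, $\TUniv$ is a finite $\Lambda$-module; since $\Lambda$ is complete local Noetherian, $\TUniv$ is the product of its localisations at its (finitely many) maximal ideals, so the direct factor $\TUniv_{\mf{m}}$ is again a finite $\Lambda$-module. By Theorem~\ref{thm: R=T}, $(R^{\univ}_{\mc{S}_{\Lambda}})^{\red}\cong\TUniv_{\mf{m}}$, so $(R^{\univ}_{\mc{S}_{\Lambda}})^{\red}$ is a finite $\Lambda$-module. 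To deduce the same for $R^{\univ}_{\mc{S}_{\Lambda}}$ itself I would use that, the ring being Noetherian, its nilradical $\mf{N}$ is nilpotent, say $\mf{N}^{N}=0$: each $\mf{N}^{i}/\mf{N}^{i+1}$ is a finitely generated $(R^{\univ}_{\mc{S}_{\Lambda}})^{\red}$-module, hence a finite $\Lambda$-module, and climbing the finite filtration shows $R^{\univ}_{\mc{S}_{\Lambda}}$ is a finite $\Lambda$-module.

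\emph{Next} I would compare the deformation problems $\mc{S}_{\lambda}$ and $\mc{S}_{\Lambda}$. The local conditions at the places of $R$ and $S_{a}$ coincide, so only the places $v\in S_{l}$ matter. For such a $v$, $\mc{S}_{\lambda}$ asks that $r|_{G_{F_{\wt{v}}}}$ factor through $R^{\triangle_{\lambda_{\wt{v}}},cr}_{\rbarwtv}$, while $\mc{S}_{\Lambda}$ (after composing $\Lambda_{F_{\wt{v}}}\to A$ with $\chi^{\lambda_{\wt{v}}}$) asks that it factor through $R^{\triangle}_{\rbarwtv,\Lambda_{F_{\wt{v}}}}$, the ring of Section~\ref{sec: p=l case in non-fixed weight}. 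By Lemma~\ref{lem:ordinary crystalline ring for GL_n} and Lemma~\ref{OrdImpliesST}, every $\Qlbar$-point of $R^{\triangle_{\lambda_{\wt{v}}},cr}_{\rbarwtv}$ is a crystalline representation of $G_{F_{\wt{v}}}$ which is ordinary of weight $\lambda_{\wt{v}}$ and whose diagonal characters $\psi_{j}$ — crystalline as subquotients, with Hodge--Tate weights prescribed by $\lambda_{\wt{v}}$ — satisfy $\psi_{j}|_{I_{F_{\wt{v}}}}=\chi^{\lambda_{\wt{v}}}_{j}$ exactly; such a point therefore lifts to a $G_{F_{\wt{v}}}$-stable full flag with $I_{F_{\wt{v}}}$ acting on the $j$-th graded piece via $\chi^{\lambda_{\wt{v}}}_{j}$, and, being in characteristic zero, lifts into the $[1/l]$-locus cutting out $R^{\triangle}_{\rbarwtv,\Lambda_{F_{\wt{v}}}}$. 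Since $R^{\triangle_{\lambda_{\wt{v}}},cr}_{\rbarwtv}$ is reduced and $\mc{O}$-flat, this forces the tautological map $R^{\square}_{\rbarwtv}\widehat{\otimes}_{\mc{O}}\Lambda_{F_{\wt{v}}}\to R^{\triangle_{\lambda_{\wt{v}}},cr}_{\rbarwtv}$, with $\Lambda_{F_{\wt{v}}}$ acting through $\chi^{\lambda_{\wt{v}}}$, to factor through the scheme-theoretic image $R^{\triangle}_{\rbarwtv,\Lambda_{F_{\wt{v}}}}$. Composing with $R^{\triangle_{\lambda_{\wt{v}}},cr}_{\rbarwtv}\to R^{\univ}_{\mc{S}_{\lambda}}$ shows that the universal deformation $r^{\univ}:G_{F^{+},T}\to\mc{G}_{n}(R^{\univ}_{\mc{S}_{\lambda}})$ of type $\mc{S}_{\lambda}$ is of type $\mc{S}_{\Lambda}$ once $R^{\univ}_{\mc{S}_{\lambda}}$ is given the $\Lambda$-structure $\chi^{\lambda}:\Lambda\to\mc{O}\to R^{\univ}_{\mc{S}_{\lambda}}$.

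\emph{Finally}, this yields a $\Lambda$-algebra map $R^{\univ}_{\mc{S}_{\Lambda}}\to R^{\univ}_{\mc{S}_{\lambda}}$ which, since the $\Lambda$-action on the target is via $\chi^{\lambda}$, kills $\wp_{\lambda}$ and hence descends to a map $R^{\univ}_{\mc{S}_{\Lambda}}\otimes_{\Lambda,\chi^{\lambda}}\mc{O}\to R^{\univ}_{\mc{S}_{\lambda}}$ of $\mc{O}$-algebras; this map is surjective because $R^{\univ}_{\mc{S}_{\lambda}}$ is topologically generated over $\mc{O}$ by the traces $\tr r^{\univ}(g)$, $g\in G_{F,T}$ (Lemma~2.1.12 of \cite{cht}), all of which lie in its image. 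As $\Lambda/\wp_{\lambda}\cong\mc{O}$ and $R^{\univ}_{\mc{S}_{\Lambda}}$ is a finite $\Lambda$-module by the first step, $R^{\univ}_{\mc{S}_{\Lambda}}\otimes_{\Lambda,\chi^{\lambda}}\mc{O}$ is a finite $\mc{O}$-module, whence so is its quotient $R^{\univ}_{\mc{S}_{\lambda}}$. The real content is the finiteness of $R^{\univ}_{\mc{S}_{\Lambda}}$ over $\Lambda$, which is supplied by Theorem~\ref{thm: R=T} together with the Hida-theoretic Proposition~\ref{prop: T is finite and faithful over Lambda}; the step that requires genuine care is the local comparison at $v\in S_{l}$, since $R^{\triangle}_{\rbarwtv,\Lambda_{F_{\wt{v}}}}$ is defined as a scheme-theoretic image, so the identification with the fixed-weight crystalline-ordinary ring must be made after inverting $l$ and then propagated back using reducedness and $\mc{O}$-flatness of $R^{\triangle_{\lambda_{\wt{v}}},cr}_{\rbarwtv}$.
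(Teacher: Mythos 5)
Your proof is correct and takes essentially the same route as the paper: combine Theorem \ref{thm: R=T} with Proposition \ref{prop: T is finite and faithful over Lambda} to get finiteness over $\Lambda$, specialise at $\wp_{\lambda}$, and realise $R^{\univ}_{\mc{S}_{\lambda}}$ as a quotient via the comparison of the fixed-weight local condition at $v|l$ with the $\Lambda$-adic one. The only differences are bookkeeping: the paper deals with nilpotents at the end (it shows $R$ is finite over $\mc{O}$ once $R^{\red}$ is, via topological Nakayama) rather than at the start, and it leaves implicit the local factorisation through $R^{\triangle}_{\rbarwtv,\Lambda_{F_{\wt{v}}}}$ and the trace-generation surjectivity that you spell out.
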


\begin{proof}
First of all, observe that if $R$ is an object of $\mc{C}_{\mc{O}}$,
then $R$ is a finite $\mc{O}$-algebra if $R^{\red}$ is. Indeed, if
$R^{\red}$ is finite over $\mc{O}$ then $R/\mf{m}_{\mc{O}}$ is Noetherian and
0-dimensional and hence Artinian. It follows from the topological
Nakayama lemma that $R$ is finite over $\mc{O}$.

The ring $(R^{\univ}_{\mc{S}_{\lambda}})^{\red}$ is a quotient of
$(R^{\univ}_{\mc{S}_{\Lambda}})^{\red}/\wp_{\lambda}$. By Theorem \ref{thm:
  R=T} and Proposition \ref{prop: T is finite and faithful over
  Lambda}, $(R^{\univ}_{\mc{S}_{\Lambda}})^{\red}/\wp_{\lambda}$ is
a finite $\mc{O}$-algebra. The result follows.
\end{proof}

\section{Existence of lifts}\label{sec:existence of lifts}\subsection{}

Let $F$ be an imaginary CM field, $F^+$ its maximal totally real subfield and $c$
the non-trivial element of $\Gal(F/F^+)$. Let $\pi$ be a RACSDC automorphic
representation of $\GL_n(\bb{A}_F)$ and $\iota$ an isomorphism $\Qlbar
\isoto \bb{C}$. In \cite{chenevierharris} it is shown that there is a semisimple representation
\[ r_{l,\iota}(\pi) : G_F \rightarrow \GL_n(\Qlbar) \]
uniquely determined by the following properties:
\begin{enumerate}
\item $r_{l,\iota}(\pi)^c = r_{l,\iota}(\pi)^{\vee}\epsilon^{1-n}$;
\item for $w$ a place of $F$ not dividing $l$ we have
\[ r_{l,\iota}(\pi)|_{G_{F_w}}^{\sesi} \cong
r_{l}(\iota^{-1}\pi_w)^{\vee}(1-n)^{\sesi} \]
where $r_{l}(\iota^{-1}\pi_w)$ is the $l$-adic represenation
associated to the Weil-Deligne representation
$\rec_l(\pi_w^{\vee}\otimes |\;\;|^{(1-n)/2})$ (and $\rec_l$ is the
local Langlands correspondence of \cite{MR1876802}).
\end{enumerate}

If $F$ and $c$ are as above, we let
$(\bb{Z}^n_{+})^{\Hom(F,\Qlbar)}_c$ denote the subset of
$(\bb{Z}^{n}_+)^{\Hom(F,\Qlbar)}$ consisting of elements $\lambda$
with  $\lambda_{\tau c,j}=-\lambda_{\tau,n-j+1}$ for all
  $\tau:F\into \Qlbar$ and $j=1,\ldots,n$.
If $\lambda \in (\bb{Z}^n_{+})^{\Hom(F,\Qlbar)}_c$ and $w$ is a place
of $F$ dividing $l$, we let
$\lambda_{w}=(\lambda_{w,\sigma})_{\sigma}$ be the element of
$(\bb{Z}^n_+)^{\Hom(F_{w},\Qlbar)}$ determined by
$\lambda_{w,\sigma}=\lambda_{\sigma|_{F}}$ for all $\sigma :
F_{w}\into \Qlbar$.

One can find a finite extension $K$ of $\bb{Q}_l$ with ring of
integers $\mc{O}$ so that $r_{l,\iota}(\pi)$ can be conjugated to take
values in $\GL_n(\mc{O})$. Reducing modulo the maximal ideal of
$\mc{O}$, extending scalars to $\Flbar$ and semisimplifying, one
obtains a representation $\rbar_{l,\iota}(\pi):G_{F} \rightarrow \GL_n(\Flbar)$ which is independent
of any choices made.

If $K$ (resp.\ $k$) is an algebraic extension of $\bb{Q}_l$ (resp.\ $\bb{F}_l$) and $\rho : G_{F}
\rightarrow \GL_n(K)$ (resp.\  $\rhobar : G_{F}
\rightarrow \GL_n(k)$)  is a continuous representation, we say that
$\rho$ (resp.\ $\rhobar$) is automorphic if there exists a $\pi$ and $\iota$ as above with
$r_{l,\iota}(\pi)$ (resp.\ $\rbar_{l,\iota}(\pi)$) isomorphic to $\rho
\otimes_{K}\Qlbar$ (resp.\ $\rhobar\otimes_k \overline{\bb{F}}_l$). We
say that $\rho$ (or $\rhobar$) is ordinarily automorphic if in addition $\pi$ and
$\iota$ can be chosen so that $\pi$ is $\iota$-ordinary at every place dividing $l$. We say that $\rho$ is ordinary
automorphic of weight  $\lambda\in(\bb{Z}^n_{+})^{\Hom(F,\Qlbar)}_c$
if $\rho$ is automorphic and $\rho|_{G_{F_w}}$ is ordinary of weight $\lambda_w \in (\bb{Z}^n_+)^{\Hom(F_w,\Qlbar)}$ for
each place $w|l$ of $F$. We say that $\rho$ is ordinary automorphic if
it is ordinary automorphic of some weight.
If $\rho$ is ordinarily automorphic and its
  reduction $\rhobar$ is absolutely irreducible, then $\rho$ is
  ordinary automorphic by Proposition 5.3.1 of \cite{ger}.

We are now ready to prove our main theorem. For the convenience of the
reader, we recall all our assumptions in the statement of the
theorem.

\begin{thm}\label{thm: main result for unitary groups} Let $F$ be an imaginary CM field with maximal totally real
  subfield $F^+$. Let $n\geq 2$ be an integer and $l>n$ a prime
  number.  Suppose that $\zeta_l \not \in F$. Assume that the extension $F/F^+$ is split at all places
  dividing $l$.  Suppose that \[\rhobar:G_{F}\to\GL_n(\Flbar)\] is an
  irreducible representation satisfying the following assumptions.
  \begin{enumerate}
  \item The representation $\rhobar$ is ordinarily automorphic (so in
    particular $\rhobar^c\cong\rhobar^\vee\overline{\epsilon}^{1-n}$).
  \item Any place of $F$ at which $\rhobar$ is ramified splits over
    $F^+$.
  \item The image $\rhobar(G_{F(\zeta_l)})$ is big.
  \item $\overline{F}^{\ker\ad \rhobar}$ does not contain $F(\zeta_l)$.
  \item There is an element $\lambda\in(\Z^n_{+})^{\Hom(F,\Qlbar)}_c$
    such that for every place $w|l$ of $F$, $\rhobar|_{G_{F_w}}$ has
    an ordinary crystalline lift of weight $\lambda_w$. 
  \end{enumerate}
  Then $\rhobar$ has an automorphic lift $\rho$ which is crystalline
  and ordinary of weight $\lambda_w$ at each place $w$ of $F$ dividing
  $l$, and which is ordinarily automorphic of level prime to $l$.

In fact, suppose we are given a set of places $S$ of $F^+$ which split in
$F$, a choice of a place $\tilde{v}$ of $F$ above each place $v$ of
$F^+$, and an inertial type $\tau_{\tilde{v}}$ for $I_{F_{\tilde{v}}}$ for each $v\in S$ not dividing $l$ such that
$\rhobar|_{G_{F_{\tilde{v}}}}$ has a lift of type $\tau_{\tilde{v}}$. Then $\rho$ can be
chosen to be of type $\tau_{\tilde{v}}$ at $\tilde{v}$ for all places $v\in S$,
$v\nmid l$. More precisely, given a choice of irreducible component of each ring
$R_{\rhobar|_{G_{F_{\tilde{v}}}}}^{\sq,\tau_{\tilde{v}}}$ with $v\in S$, $v\nmid
l$ and of each ring  $R_{\rhobar|_{G_{F_{\tilde{v}}}}}^{\triangle_{\lambda_{\wt{v}}},cr}$
with $v|l$, $\rho$ may be chosen so as to
give a point on each of these components.
\end{thm}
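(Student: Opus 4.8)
We explain the strategy; the plan is to follow the method of \cite{gee051}, replacing the lifting technique of Ramakrishna used there by a base-change argument in the style of Khare--Wintenberger, and replacing the modularity lifting theorem of \cite{kis04} by Theorem \ref{thm: R=T}. In outline, I would construct over $F^{+}$ a global deformation problem $\mc{S}$ whose local conditions encode precisely the behaviour demanded of $\rho$ --- ordinary crystalline of weight $\lambda_{\wt{v}}$, on the chosen component, at each $v\mid l$, and the prescribed inertial type $\tau_{\wt{v}}$, on the chosen component, at each $v\in S$ with $v\nmid l$ --- then show that $R^{\univ}_{\mc{S}}$ has Krull dimension at least $1$ and is finite over $\mc{O}$, extract a $\Qlbar$-point, and recognise the resulting lift as automorphic using the $R=T$ theorem. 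The first step is a solvable base change: all of the conditions in play (irreducibility, bigness, hypothesis (4), ordinary automorphy, and the existence of the prescribed local lifts) are stable under a suitable solvable totally real base change $L^{+}/F^{+}$, and inertial types and ordinary crystalline lifts restrict to such, so I would choose $L^{+}$, with $L=L^{+}F$, so that $L/L^{+}$ is unramified at all finite places, $n[L^{+}:\Q]$ is divisible by $4$, $\zeta_{l}\notin L$, $\rhobar|_{G_{L}}$ is irreducible with big image on $G_{L(\zeta_{l})}$ and $\overline{L}^{\ker\ad\rhobar}\not\supseteq L(\zeta_{l})$, each prescribed inertial type becomes trivial on the relevant restricted inertia, and $\rhobar$ becomes trivial on every decomposition group at $l$ and at the (places above the) non-$l$ places of $S$. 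Solvable base change of the ordinarily automorphic $\rhobar|_{G_{L}}$ then puts us in the setting of Section \ref{sec:R=T}: $\rhobar|_{G_{L}}$ arises from a non-Eisenstein ordinary Hecke eigensystem $\mf{m}$ on a definite unitary group over $L^{+}$, for sets of places $S_{l}'\coprod R'\coprod S_{a}'$ in which $R'$ is the set of places above the non-$l$ places of $S$, $S_{a}'$ is a nonempty auxiliary set chosen by a Chebotarev argument so that the relevant group $U$ is sufficiently small and the conditions on $S_{a}'$ hold, and hypotheses (a)--(c) of Section \ref{sec:R=T} are satisfied.

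Next I would extend $\rhobar$ to $\rbar:G_{F^{+}}\to\Gn(\Flbar)$ as in \cite{cht}, assume after enlarging $S$ (adding, at each new non-$l$ place, the type of some chosen lift) that $S$ contains $S_{l}$, the set $S_{a}$ descending $S_{a}'$, and every place at which $\rhobar$ ramifies, and define $\mc{S}$ by taking: at each $v\mid l$, the chosen irreducible component of $R^{\triangle_{\lambda_{\wt{v}}},cr}_{\rbarwtv}$, which is nonzero by hypothesis (5); at each $v\in S$ with $v\nmid l$, the chosen irreducible component of $R^{\sq,\tau_{\wt{v}}}_{\rbarwtv}$, nonzero by the hypothesis that a lift of type $\tau_{\wt{v}}$ exists; and at each $v\in S_{a}$, the full lifting ring $R^{\sq}_{\rbarwtv}$. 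By Lemma \ref{lem: components are conjugation invariant} each of these quotients satisfies property (*), so $\mc{S}$ is a genuine deformation problem, represented by $R^{\univ}_{\mc{S}}$. Since $\rbar$ is odd by Theorem \ref{thm: R=T} and $H^{0}(G_{F^{+},S},\ad\rbar(1))=0$ by hypotheses (3) and (4), Lemma \ref{lem: lower bound on unitary dimension}, together with the local dimension formulas of Lemmas \ref{lem: local dimension for l not p} and \ref{lem:ordinary crystalline ring for GL_n} applied to the chosen components, gives $\dim R^{\univ}_{\mc{S}}\geq 1$.

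The crux, and the step I expect to be the main obstacle, is finiteness of $R^{\univ}_{\mc{S}}$ over $\mc{O}$; the plan is to compare it with the automorphic deformation ring over $L^{+}$. The restriction to $G_{L^{+}}$ of the universal deformation of type $\mc{S}$ is of type $\mc{S}_{\lambda}$, where $\mc{S}_{\lambda}$ is the problem of Section \ref{sec:R=T} formed for $L^{+}$ and $\mf{m}$ (a fixed-weight ordinary crystalline representation is ordinary, so it lands in $R^{\triangle_{\lambda_{\wt{v}'}},cr}$; an inertial type trivial on restricted inertia forces unipotent inertia, so it lands in $R^{1}$; at the places of $S_{a}'$ there is nothing to check), so one obtains an $\mc{O}$-algebra homomorphism $R^{\univ}_{\mc{S}_{\lambda}}\to R^{\univ}_{\mc{S}}$. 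Running the argument of Lemma \ref{lem: deformation ring is finite over another for the unitary group} for this map --- which uses only that a deformation of $\rbar$ that is trivial on $G_{L^{+}}$ after reduction modulo the maximal ideal of $R^{\univ}_{\mc{S}_{\lambda}}$ has finite image, so that its traces are sums of bounded-degree roots of unity and the corresponding quotient is $0$-dimensional --- shows that $R^{\univ}_{\mc{S}}$ is finite over $R^{\univ}_{\mc{S}_{\lambda}}$. And $R^{\univ}_{\mc{S}_{\lambda}}$ is finite over $\mc{O}$ by Corollary \ref{cor : R is finite over O} (a consequence of Theorem \ref{thm: R=T} and Proposition \ref{prop: T is finite and faithful over Lambda}), so $R^{\univ}_{\mc{S}}$ is finite over $\mc{O}$. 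Making this reconciliation work --- arranging the base change so that all of the prescribed local conditions (types, weight, chosen components) descend compatibly into the framework of \cite{ger}, which is only available over a field where $\rhobar$ is trivial at $l$ --- is the delicate point.

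Finally, $R^{\univ}_{\mc{S}}$ is a finite $\mc{O}$-algebra of Krull dimension at least $1$, so $R^{\univ}_{\mc{S}}[1/l]$ is a nonzero finite $K$-algebra, and a maximal ideal of it gives a lift $\rho:G_{F^{+}}\to\Gn(\Qlbar)$ of $\rbar$ of type $\mc{S}$. By construction of the local conditions (together with conjugate self-duality, which propagates the condition at $\wt{v}$ to $\wt{v}^{c}$), $\rho|_{G_{F}}$ lifts $\rhobar$, is crystalline and ordinary of weight $\lambda_{w}$ at each place $w\mid l$ of $F$, has inertial type $\tau_{\wt{v}}$ at each $v\in S$ with $v\nmid l$, and gives a point on each of the chosen irreducible components. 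Automorphy follows because $\rho|_{G_{L^{+}}}$ is a deformation of $\rbar|_{G_{L^{+}}}$ of type $\mc{S}_{\Lambda}$, hence by Theorem \ref{thm: R=T} arises from the ordinary Hecke algebra; thus $\rho|_{G_{L}}$ is ordinarily automorphic and, being crystalline at $l$, of level prime to $l$, so $\rho|_{G_{F}}$ is automorphic by solvable descent and ordinarily automorphic of level prime to $l$ by Proposition 5.3.1 of \cite{ger}. The refined assertions about inertial types and components are built into the choice of $\mc{S}$, which completes the proof.
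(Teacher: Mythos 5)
Your proposal is correct and is essentially the paper's own argument: extend $\rhobar$ to $\rbar$ valued in $\Gn$, form the deformation problem over $F^+$ from the chosen local components, get Krull dimension at least one from Lemma \ref{lem: lower bound on unitary dimension}, prove finiteness over $\mc{O}$ by restricting to a solvable CM base change $L/L^+$ where the hypotheses of Section \ref{sec:R=T} hold (via the argument of Lemma \ref{lem: deformation ring is finite over another for the unitary group} together with Corollary \ref{cor : R is finite over O}), and deduce automorphy of a closed point from Theorem \ref{thm: R=T} and solvable descent, with ordinarity and level prime to $l$ coming from \cite{ger}. The one detail to adjust is your level structure over $L^+$: the Iwahori set $R'$ must also contain the places above which $\pi_L$ is ramified (with $L$ chosen so that these are split over $L^+$, $(\pi_L)_w^{\Iw(w)}\neq 0$ there, and $\mathbf{N}w\equiv 1 \bmod l$ at places above $\wt{S}$), since $\pi$ may ramify at places where $\rhobar$ does not (including places of $F$ not split over $F^+$), and this is exactly what the paper's list of conditions on $L$ arranges.
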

\begin{proof} It suffices to prove the final statement. Choose an
  isomorphism $\iota: \Qlbar \isoto \bb{C}$ and a
  RACSDC automorphic representation $\pi$ of $\GL_n(\bb{A}_F)$ which is $\iota$-ordinary at all places dividing $l$  such that
  $\rbar_{l,\iota}(\pi)\cong \rhobar$.  By Lemma 2.1.4 of \cite{cht},
  $\rhobar$ extends to a representation
  $\rbar:G_{F^+}\to\mathcal{G}_n(\Flbar)$ with
  $\rbar^{-1}(\GL_n(\Flbar)\times \GL_1(\Flbar) )= G_{F}$. 
Extending $S$ if necessary, we may assume that $S$ contains all places above $l$ and that $\rhobar$ is unramified away from $S$. Indeed, for the places $v \nmid l$ just added to $S$, the lift $r_{l,\iota}(\pi)|_{G_{F_{\wt{v}}}}$ determines an inertial type $\tau_{\wt{v}}$ for $I_{F_{\wt{v}}}$ and at least one irreducible component of $R^{\square,\tau_{\wt{v}}}_{\rhobar|_{G_{F_{\tilde{v}}}}}$.

 Choose a finite extension $K$ of $\Ql$ inside $\Qlbar$ with residue
 field $k$ and ring of integers $\mc{O}$ containing a primitive $l$-th
 root of unity so that $r_{l,\iota}(\pi)$
 can be conjugated to take values in $\mc{G}_n(\mc{O})$, so that $K$
 contains the image of every embedding $F \into \Qlbar$ and so that
 each type $\tau_{\tilde{v}}$ for $v \in S, v \nmid l$ is defined over
 $K$. Assume from now on that $r_{l,\iota}(\pi)$ takes values in $\mc{G}_n(\mc{O})$.

By Lemma 2.1.4 of \cite{cht},  $\nu \circ \rbar =
\overline{\epsilon}^{1-n}\delta_{F/F^+}^{\mu}$ for some $\mu \in
\bb{Z}/2\bb{Z}$, where  $\delta_{F/F^+}$ is the quadratic character of
$G_{F^+}$ corresponding to $F$. By Theorem \ref{thm: R=T}, $\rbar$ is
odd, so in fact $\mu \equiv n \mod 2$.
For each $v \in S$, let $R_{\wt{v}}$ be the chosen irreducible component of 
 $R^{\square,\tau_{\tilde{v}}}_{\rhobar|_{G_{F_{\wt{v}}}}}$ when $v\nmid l$ or $R^{\triangle_{\lambda_{\wt{v}}},cr}_{\rhobar|_{G_{F_{\wt{v}}}}}$ when $v|l$. Let $\wt{S}$ denote the set of $\wt{v}$ for $v \in S$ and let 
\[ \mc{S} = (F/F^+,S,\wt{S},\mc{O},\rbar,\epsilon^{1-n}\delta_{F/F^+}^{\mu},\{ R_{\wt{v}}\}_{v \in S}). \]
To prove the theorem it suffices to show that we can find a closed point of $R^{\univ}_{\mc{S}}[1/l]$ so that the corresponding representation restricted to $G_F$ is automorphic.

Choose a finite place $v_1$ of $F$ not lying over $S$ so that
\begin{itemize}
\item $v_1$ is unramified over a rational prime $p$ with $[F(\zeta_p):F]>n$;
\item $v_1$ does not split completely in $F(\zeta_l)$;
\item $\ad \rbar(\Frob_{v_1}) = 1$.
\end{itemize}
The last two conditions imply that $H^0(G_{F_{v_1}},\ad \rbar(1))=\{0\}$.
Choose a CM extension $L$ of $F$ with the following
properties:
\begin{itemize}
\item $L/F$ is Galois and soluble;
\item $L$ is linearly disjoint from $\overline{F}^{\ker(\ad \rbar)}(\zeta_l)$ over $F$;
\item all primes of $L$ lying above $S$ or $\{v_1\}$ are split over
  $L^+$ where $L^+$ is the maximal totally real subfield of
  $L$;
\item the extension $L/L^+$ is unramified at all finite places;
\item $4|[L^+:F^+]$;
\item for each place $\wt{v} \in \wt{S}$ away from $l$ and each place $w$ of $L$ lying over $\wt{v}$, we have $\mathbf{N}w\equiv 1 \mod l$, $\rhobar(G_{L_{w}})=\{ 1_n \}$, the type $\tau_{\wt{v}}$ becomes trivial upon restriction to $I_{L_w}$ and if $\pi_L$ denote the base change of $\pi$ to $L$, then $(\pi_L)_w^{\Iw(w)} \neq 0$.
\item the places $\{v_1,cv_1\}$ split completely in $L$;
\item for each place $\wt{v} \in \wt{S}$ dividing $l$ and each place $w$ of $L$ lying over $\wt{v}$ we have $\rhobar(G_{L_w})=\{ 1_n\}$.
\item if $w$ is a place of $L$ not lying over $l$ such that
  $(\pi_L)_w$ is ramified, then $w$ lies over a place of $L^+$ which
  splits in $L$, and $(\pi_L)_w^{\Iw(w)} \neq 0$.
\end{itemize}
Let $T$ denote the set of places of $L^+$ comprised of those lying
above $S\cup \{v_{1}|_{F^+}\}$, together with any places of $L^+$ over
which there is a place $w$ of $L$ with $(\pi_L)_w$ ramified. Let
$\wt{T}$ denote a set of places of $L$, containing all places
lying above $\wt{S}\cup \{v_1\}$, such that $\wt{T}$ consists of one
place $\wt{w}$ for each place $w \in T$. For each $\wt{w} \in \wt{T}$
lying above $v_1$, let
$R_{\wt{w}}=R^{\square}_{\rhobar|_{G_{L_{\wt{w}}}}}$. For $\wt{w} \in
\wt{T}$ not dividing $l$ or $v_1$, let
$R_{\wt{w}}$ denote the quotient $R^{1}_{\rhobar|_{G_{L_{\wt{w}}}}}$
of $R^{\square}_{\rhobar|_{G_{L_{\wt{w}}}}}$ corresponding to lifts
for which each element of inertia has characteristic polynomial
$(X-1)^n$. Let $\lambda_{L}$ be the element of
$(\bb{Z}^n_{+})^{\Hom(L,\Qlbar)}_c$ determined by
$(\lambda_L)_{\tau}=\lambda_{\tau|_F}$ for all $\tau : L \into
\Qlbar$. Extend $K$ if necessary so that it contains the image of
every embedding $L \into \Qlbar$. For $\wt{w} \in \wt{T}$ lying above
$l$, let $R_{\wt{w}} =
R^{\triangle_{(\lambda_{L})_{\wt{w}}},cr}_{\rhobar|_{G_{L_{\wt{w}}}}}$. Let
\[ \mc{S}^{\prime} =
(L/L^+,T,\wt{T},\mc{O},\rbar|_{G_{L^+,T}},\epsilon^{1-n}\delta_{L/L^+}^{\mu},\{
R_{\wt{w}}\}_{w \in T}) .\]
Restricting the universal deformation over $R^{\univ}_{\mc{S}}$ to $G_{L^{+},T}$ gives rise to a map $R^{\univ}_{\mc{S}^{\prime}} \rightarrow R^{\univ}_{\mc{S}}$ and by
 Lemma \ref{lem: deformation ring is finite over another for the
   unitary group}, this map is finite. 

 Now, let $G$ be a reductive group over $\mc{O}_{L^+}$ as in section
 \ref{subsec:auto forms on unitary groups} (with $L^+$ replacing
 $F^+$). By Th\'eor\`eme 5.4 and Corollaire 5.3 of \cite{labesse} and
 the assumption that $L/L^+$ is unramified at all finite places,
 $\pi_L$ is the strong base change of an automorphic representation
 $\Pi$ of $G(\bb{A}_{L^+})$. By Lemma 5.1.6 of \cite{ger} $\pi_L$ is
 $\iota$-ordinary at each place of $L$ dividing
 $l$. Let $U\subset G(\bb{A}_{L^+}^{\infty})$ be the compact open
 subgroup defined as in section \ref{subsec:auto forms on unitary
   groups} with $S_a$ the set of places of $T$ above $v_1|_{F^+}$ and
 $R$ the set of places of $T$ not dividing $l$ and not in $S_a$. Then
 extending $\mc{O}$ if necessary, the Hecke eigenvalues on
 $(\iota^{-1}\Pi^{l,\infty})^{U^l}\bigotimes \otimes_{v|l}(\iota^{-1}
 \Pi_{v})^{\ord}$ give rise to an $\mc{O}$-algebra homomorphism
 $\TUniv \rightarrow \mc{O}$. Reducing this modulo the maximal ideal
 of $\mc{O}$ gives a maximal ideal $\mf{m}$ of $\TUniv$ which is
 non-Eisenstein by the second of our conditions on $L$ above. All of
 the hypotheses of section \ref{sec:R=T} are satisfied and we deduce
 from Corollary \ref{cor : R is finite over O} that
 $R^{\univ}_{\mc{S}^{\prime}}$ is finite over $\mc{O}$. Theorem \ref{thm: R=T} and Proposition \ref{prop: T is finite and
   faithful over Lambda} imply that every closed point of
 $R^{\univ}_{\mc{S}^{\prime}}[1/l]$ gives rise to a representation of
 $G_{L}$ which is ordinarily automorphic.

 Since $R^{\univ}_{\mc{S}}$ is finite over $\mc{O}$ and has Krull
 dimension at least one by Lemma \ref{lem: lower bound on unitary
   dimension}, the ring $R^{\univ}_{\mc{S}}[1/l]$ is non-zero. Any
 closed point on this ring gives rise to a crystalline ordinary representation $\rho$ of
 $G_{F}$ which is ordinarily automorphic upon restriction to $G_L$.
 By Lemma 1.4 of \cite{BLGHT} any such $\rho$ is automorphic and hence, by
 Lemma 5.1.6 of \cite{ger}, is in fact ordinarily
 automorphic. Finally, it follows from Theorem 5.3.2 of \cite{ger}
 that such a $\rho$ is ordinarily automorphic of level prime $l$.
\end{proof}
We can frequently make this rather more explicit.

\begin{cor}
  \label{cor: explicit main unitary theorem in regular
    weight} Let $F$ be an imaginary CM field with maximal totally real
  subfield $F^+$. Let $n\geq 2$ be an integer and $l>n$ a prime
  number.  Suppose that $\zeta_l \not \in F$. Assume that the extension $F/F^+$ is split at all places
  dividing $l$. Let $\tilde{S}_l$ be a set of places of $F$ lying over
  $l$, containing exactly one place above each place of $F^+$ dividing
  $l$. Suppose that \[\rhobar:G_{F}\to\GL_n(\Flbar)\] is an
  irreducible representation satisfying the following assumptions.
  \begin{enumerate}
 \item The representation $\rhobar$ is ordinary automorphic (so in
    particular $\rhobar^c=\rhobar^\vee\epsilon^{1-n}$).
 \item Any place of $F^+$ at which $\rhobar$ is ramified splits in $F$.
  \item The image $\rhobar(G_{F(\zeta_l)})$ is big.
  \item $\overline{F}^{\ker\ad \rhobar}$ does not contain $F(\zeta_l)$.
  \item There is an element $\lambda\in(\Z^n_{+})^{\Hom(F,\Qlbar)}_c$ such that
        for every place $v\in\tilde{S}_l$, $\rhobar|_{G_{F_v}}$ is
    isomorphic to a representation \[ \left(\begin{matrix} \overline{\mu}_{v,1} & * & \ldots & * &* \cr 0 &
   \overline{\mu}_{v,2} & \ldots & * &* \cr
       \vdots & \vdots &\ddots & \vdots& \vdots \cr 0 & 0 & \ldots &
    \overline{\mu}_{v,n-1} &*\cr 0 & 0 & \ldots & 0&
     \overline{\mu}_{v,n}
     \end{matrix} \right)
   \] where
 $\overline{\mu}_{v,i}|_{I_{F_v}}=\overline{\chi}_{i}^{\lambda_v}|_{I_{F_v}}$
 (where $\chi_{i}^{\lambda_v}$ is the crystalline character of
 Definition \ref{defn: chars associated to lambda}), and for each $i<j$ we have
  $\overline{\mu}_{v,i}\overline{\mu}_{v,j}^{-1}\neq \epsilon$.
 
  \end{enumerate}
  Then $\rhobar$ has an ordinarily automorphic lift (of level prime to
  l) $\rho$ of weight
  $\lambda$ which is crystalline at all places dividing $l$; furthermore for each place
  $v\in\tilde{S}_l$, $\rho|_{G_{F_v}}$ is isomorphic to a representation \[ \left(\begin{matrix} \psi_{v,1} & * & \ldots & * &* \cr 0 &
      \psi_{v,2} & \ldots & * &* \cr
       \vdots & \vdots &\ddots & \vdots& \vdots \cr 0 & 0 & \ldots &
      \psi_{v,n-1} &*\cr 0 & 0 & \ldots & 0&
      \psi_{v,n}
     \end{matrix} \right)
  \]with $\psi_{v,i}$ a lift of $\overline{\mu}_{v,i}$ agreeing with
  $\chi_{i}^{\lambda_v}$ on $I_{F_v}$.
\end{cor}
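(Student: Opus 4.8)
The plan is to deduce this corollary from the refined version of Theorem~\ref{thm: main result for unitary groups}. Hypotheses (1)--(4) here coincide with hypotheses (1)--(4) there, so there are only two things to do: first, verify hypothesis (5) of Theorem~\ref{thm: main result for unitary groups}, namely that $\rhobar|_{G_{F_w}}$ admits an ordinary crystalline lift of weight $\lambda_w$ for \emph{every} place $w\mid l$ of $F$ (not just those in $\tilde S_l$); and second, choose the irreducible components of the local ordinary crystalline deformation rings at the places of $\tilde S_l$ so that the $\rho$ produced by the theorem has the prescribed triangular shape there.

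For the first task at a place $w\in\tilde S_l$, I would apply Lemma~\ref{lem:Ordinary always lifts in regular weight}. To set it up, after possibly enlarging $K$ I would produce crystalline characters $\psi_{w,i}\colon G_{F_w}\to\mc{O}^\times$ with $\psi_{w,i}|_{I_{F_w}}=\chi^{\lambda_w}_i$ and $\overline{\psi}_{w,i}=\overline{\mu}_{w,i}$: by the remark after Definition~\ref{defn: chars associated to lambda}, $\chi^{\lambda_w}_i$ is the restriction to $I_{F_w}$ of some crystalline character of $G_{F_w}$, and since that character agrees on $I_{F_w}$ with $\overline{\mu}_{w,i}$ by hypothesis~(5), twisting by an unramified character valued in $\mc{O}^\times$ makes the reductions match. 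Since the remaining hypothesis of Lemma~\ref{lem:Ordinary always lifts in regular weight}, that $\overline{\mu}_{w,i}\overline{\mu}_{w,j}^{-1}\neq\overline{\epsilon}$ for $i<j$, is part of hypothesis~(5), the lemma produces an upper-triangular crystalline lift $\rho_w$ of $\rhobar|_{G_{F_w}}$ with diagonal $(\psi_{w,1},\dots,\psi_{w,n})$, which by Definition~\ref{defn:ordinary gal rep} is ordinary of weight $\lambda_w$. For a place $w\mid l$ not in $\tilde S_l$, I would write $w=v^c$ with $v\in\tilde S_l$ the place over the same place of $F^+$ and use the conjugate self-duality in hypothesis~(1): $\rhobar^c\cong\rhobar^\vee\overline{\epsilon}^{1-n}$ identifies $\rhobar|_{G_{F_w}}$ with $(\rhobar|_{G_{F_v}})^\vee\overline{\epsilon}^{1-n}$ via $c$, so $\rho_v^\vee\epsilon^{1-n}$ (transported through the same identification and conjugated by the antidiagonal matrix) is an upper-triangular crystalline lift of $\rhobar|_{G_{F_w}}$ with diagonal $(\psi_{v,n}^{-1}\epsilon^{1-n},\dots,\psi_{v,1}^{-1}\epsilon^{1-n})$; the hypothesis $\lambda\in(\Z^n_{+})^{\Hom(F,\Qlbar)}_c$ is precisely what makes this ordinary of weight $\lambda_w$, forcing $(\psi_{v,\,n-i+1}^{-1}\epsilon^{1-n})|_{I_{F_w}}=\chi^{\lambda_w}_i$ for each $i$.

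For the second task, I would invoke Theorem~\ref{thm: main result for unitary groups} with $S$ the set of places of $F^+$ dividing $l$, taking the distinguished place of $F$ above each to be the one in $\tilde S_l$. At each $v\in\tilde S_l$ the lift $\rho_v$ above defines a point of $\Spec R^{\triangle_{\lambda_v},cr}_{\rhobar|_{G_{F_v}}}[1/l]$ lying on $\Spec R^{\triangle_{\lambda_v},cr}_{\rhobar|_{G_{F_v}},\mubar_v}[1/l]$, where $\mubar_v=(\overline{\mu}_{v,1},\dots,\overline{\mu}_{v,n})$ and the notation is that of section~\ref{Refined liftings for l=p}; by Lemma~\ref{lem: picking components of the ordinary crystalline deformation ring} this is a union of irreducible components of $\Spec R^{\triangle_{\lambda_v},cr}_{\rhobar|_{G_{F_v}}}$, and I would take it as the chosen component. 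The $\rho$ produced by the theorem then gives a point on it, so by the characterisation of $R^{\triangle_{\lambda_v},cr}_{\rhobar|_{G_{F_v}},\mubar_v}$ in Lemma~\ref{lem: picking components of the ordinary crystalline deformation ring}, $\rho|_{G_{F_v}}$ is conjugate to an upper-triangular representation whose $i$-th diagonal character $\psi_{v,i}$ lifts $\overline{\mu}_{v,i}$ and agrees with $\chi^{\lambda_v}_i$ on $I_{F_v}$; together with the remaining conclusions of Theorem~\ref{thm: main result for unitary groups} (automorphy, weight $\lambda$, level prime to $l$, crystalline above $l$) this is the assertion of the corollary.

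The hard part will be the self-duality bookkeeping in the second paragraph: I would need to check carefully that dualising, twisting by $\epsilon^{1-n}$, and reversing the order of the diagonal characters interact correctly with the weight conventions, so that the defining condition of $(\Z^n_{+})^{\Hom(F,\Qlbar)}_c$ is exactly what is needed for the dual local lift to be ordinary of weight $\lambda_w$ rather than of some other weight. A lesser point is that Lemma~\ref{lem: picking components of the ordinary crystalline deformation ring} is cleanest to apply when the characters $\chi^{\lambda_v}_j$ are pairwise distinct, which holds in the regular situation of interest here; in general one must be a little more careful in pinning down the component at $v$.
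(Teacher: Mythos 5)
Your proposal is correct and follows essentially the same route as the paper, whose proof simply cites Theorem \ref{thm: main result for unitary groups}, Lemma \ref{lem:Ordinary always lifts in regular weight} and Lemma \ref{lem: picking components of the ordinary crystalline deformation ring}; you merely spell out the details the paper leaves implicit (constructing the crystalline characters $\psi_{v,i}$ lifting $\overline{\mu}_{v,i}$, transferring the local lift to the conjugate places via $\rhobar^c\cong\rhobar^\vee\overline{\epsilon}^{1-n}$ and the condition $\lambda\in(\Z^n_{+})^{\Hom(F,\Qlbar)}_c$, and selecting the components of $R^{\triangle_{\lambda_v},cr}_{\rhobar|_{G_{F_v}}}$ cut out by $R^{\triangle_{\lambda_v},cr}_{\rhobar|_{G_{F_v}},\mubar_v}$). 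The only cosmetic point is that the refined statement of the theorem asks for a choice of a single irreducible component at each $v$, so one should pick an irreducible component inside the union given by Lemma \ref{lem: picking components of the ordinary crystalline deformation ring} (nonempty by the lift you constructed), which changes nothing.
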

\begin{proof}
  This is an immediate consequence of Theorem \ref{thm: main result
    for unitary groups}, Lemma \ref{lem:Ordinary always lifts in
    regular weight} and Lemma \ref{lem: picking components of the ordinary crystalline
    deformation ring}.
\end{proof}

We now state a corollary to the proof of Theorem \ref{thm: main result for
  unitary groups}, which we will use in section \ref{sec:gsp4}.

\begin{cor}\label{cor: universal unitary ring has o-rank at least
   one}
Let $l,n,F,F^{+},\rhobar,\pi,\rbar,S,\wt{S},\{\tau_{\wt{v}}\}_{v \in
  S, v \nmid l}, \{ \lambda_{\wt{v}} \}_{v \in S_l}$ and
$\{ R_{\wt{v}} \}_{v \in S}$ be as in Theorem
\ref{thm: main result for unitary groups} and its proof. Let
\[ \mc{S} =
(F/F^+,S,\wt{S},\mc{O},\rbar,\epsilon^{1-n}\delta_{F/F^+}^{n},\{
R_{\wt{v}} \}_{v\in S}).\]
Then $R^{\univ}_{\mc{S}}$ is a finite $\mc{O}$-module of rank at
least 1.
\end{cor}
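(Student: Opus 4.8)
The plan is to read off both assertions from the proof of Theorem~\ref{thm: main result for unitary groups}, in which the deformation problem $\mc{S}$ appearing in the present statement already occurs: by the oddness part of Theorem~\ref{thm: R=T} we have $\mu\equiv n\bmod 2$, so $\delta_{F/F^+}^{\mu}=\delta_{F/F^+}^{n}$ and the two deformation problems coincide.

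First I would recall that, in that proof, one introduces the auxiliary place $v_1$ of $F$, a soluble CM extension $L/F$ and a deformation problem $\mc{S}'$ over $L^+$, and establishes two things: restricting the universal deformation of type $\mc{S}$ to $G_{L^+,T}$ makes $R^{\univ}_{\mc{S}}$ a finite $R^{\univ}_{\mc{S}'}$-algebra (by Lemma~\ref{lem: deformation ring is finite over another for the unitary group}); and $R^{\univ}_{\mc{S}'}$ is a finite $\mc{O}$-algebra (by Corollary~\ref{cor : R is finite over O}, applied to the automorphic representation $\Pi$ on the unitary group over $L^+$ constructed there, together with the resulting non-Eisenstein maximal ideal of $\TUniv$, all hypotheses of section~\ref{sec:R=T} being satisfied). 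Combining these, $R^{\univ}_{\mc{S}}$ is a finite $\mc{O}$-module.

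It then remains to check that this module has rank at least $1$. Since $\mc{O}$ is a discrete valuation ring and $R^{\univ}_{\mc{S}}$ is finite over it, this is equivalent to $\Spec R^{\univ}_{\mc{S}}$ having Krull dimension at least $1$, which is exactly the conclusion of Lemma~\ref{lem: lower bound on unitary dimension}. To apply that lemma I must verify its hypotheses: $\rbar$ is odd by Theorem~\ref{thm: R=T}, and $H^0(G_{F^+,S},\ad\rbar(1))=\{0\}$ because $\ad\rbar(1)$ is a $G_{F^+,S}$-module, the place $v_1\notin S$ was chosen in the proof of Theorem~\ref{thm: main result for unitary groups} so that $H^0(G_{F_{v_1}},\ad\rbar(1))=\{0\}$, and restriction along $G_{F_{v_1}}\to G_{F^+,S}$ embeds $H^0(G_{F^+,S},\ad\rbar(1))$ into $H^0(G_{F_{v_1}},\ad\rbar(1))$. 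Hence $\dim R^{\univ}_{\mc{S}}\geq 1$ and we are done. The only delicate point is bookkeeping: one must make sure that the local quotients $\{R_{\wt{v}}\}_{v\in S}$ used here and the hypotheses of Lemma~\ref{lem: lower bound on unitary dimension} and section~\ref{sec:R=T} are exactly those appearing in the proof of Theorem~\ref{thm: main result for unitary groups}, there being no new mathematical content.
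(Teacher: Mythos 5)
Your proposal is correct and follows essentially the same route as the paper: the corollary is stated there as a corollary to the proof of Theorem \ref{thm: main result for unitary groups}, and that proof already contains exactly your two ingredients — finiteness of $R^{\univ}_{\mc{S}}$ over $\mc{O}$ via Lemma \ref{lem: deformation ring is finite over another for the unitary group} and Corollary \ref{cor : R is finite over O} over the soluble CM extension $L$, and Krull dimension at least one via Lemma \ref{lem: lower bound on unitary dimension}, whence $R^{\univ}_{\mc{S}}[1/l]\neq 0$. Your explicit verification of $H^0(G_{F^+,S},\ad\rbar(1))=0$ (via the auxiliary place $v_1$, or equally via hypothesis (4) of the theorem) and of $\mu\equiv n\bmod 2$ is sound and just makes explicit bookkeeping the paper leaves implicit.
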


\section{Serre weights}\label{sec:serre weights}\subsection{}We now
put ourselves in the setting of section \ref{subsec:auto forms on
  unitary groups}. In particular, we let $F^{+}$ denote a totally real
number field and $n$ a positive integer. Let $F/F^{+}$ be a totally
imaginary quadratic extension of $F^{+}$ and let $c$ denote the
non-trivial element of $\Gal(F/F^{+})$. Suppose that the extension $F/F^{+}$ is
unramified at all finite places. Assume that $n[F^{+}:\bb{Q}]$ is
divisible by 4. We note that we could make somewhat weaker
assumptions, but the necessity of considering definite unitary groups
which fail to be quasi-split at some finite places would complicate
the notation unnecessarily.

Let $G$ be the reductive algebraic group over $F^+$ defined in section
\ref{subsec:auto forms on unitary groups}, together with a fixed model
over $\bigO_{F^+}$ as before. Again, we take a prime number $l>n$ so
that every place in the set $S_l$ of places of $F^+$ dividing $l$
splits in $F$. Fix a set $\tilde{S}_l$ of places of $F$ consisting of
exactly one place above each place in $S_l$. Let $\bigO$ be the ring
of integers of $\Qlbar$, with residue field $\Flbar$. Let
$\tilde{I}_l$ denote the set of embeddings $F\into \Qlbar$ giving rise
to one of the places $\tilde{v}\in\tilde{S}_l$. Let
$\tilde{I}_{\tilde{v}}$ denote the subset of $\tilde{I}_l$ giving rise
to $\tilde{v}$. Let the residue field of $F_{\tilde{v}}$ be
$k(\tilde{v})$. Then any element $\sigma\in\tilde{I}_{\tilde{v}}$
induces an embedding $\overline{\sigma}:k(\tilde{v})\into \Flbar$. For
an embedding $\tau:k(\tilde{v})\into \Flbar$, we let $\tilde{I}_\tau$
denote the subset of $\tilde{I}_{\tilde{v}}$ consisting of the
$\sigma$ with $\overline{\sigma}=\tau$. We let $\overline{I}_l$ be the
set of embeddings $\tau:k(\tilde{v})\into \Flbar$ (running over all
$v$).

Define $(\Z^n_+)^{\tilde{I}_l}$ as in section \ref{subsec:auto forms
  on unitary groups}. Let $(\Z^n_+)^{\overline{I}_l}$ be the subset of
$(\Z^n)^{\overline{I}_l}$ consisting of $\lambda$ with
$l-1\ge\lambda_{\tau,i}-\lambda_{\tau,i+1}\ge 0$ for all $\tau$ and
all $i=1,\dots,n-1$. Let $(\Z^n_{+,res})^{\tilde{I}_l}$ denote the
subset of $(\Z^n_+)^{\tilde{I}_l}$ consisting of weights $\lambda$ with the
property that for each $\tilde{v}$ and $\tau:k(\tilde{v})\into \Flbar$, it is
possible to write
$\tilde{I}_\tau=\{\sigma_{\tau,1},\dots,\sigma_{\tau,e}\}$ with
$\lambda_{\sigma_{\tau,i},j}=0$ if $i>1$ and $l-1\ge\lambda_{\sigma_{\tau,1},j}-\lambda_{\sigma_{\tau,1},j+1}\ge 0$ for all $j=1,\dots,n-1$. This being the case, we define
$\lambda_{\tau,j}:=\lambda_{\sigma_{\tau,1},j}$. In this way, we
define a surjective map $\pi$
from  $(\Z^n_{+,res})^{\tilde{I}_l}$ to $(\Z^n_+)^{\overline{I}_l}$.

Fix $\lambda\in (\Z^n_{+,res})^{\tilde{I}_l}$. We now consider the finite free $\bigO$-module $M_\lambda$ of
Definition 2.2.3 of \cite{ger}. This has a continuous action of
$G(\bigO_{F^+,l})=\prod_{v\in S_l}\GL_n(\bigO_{F_{\tilde{v}}})$. The
action on $M_\lambda\otimes \Flbar$ factors through $\prod_{v\in S_l}\GL_n(k(\tilde{v}))$.

Let $S_a$ be a nonempty set of finite places of $F^+$, disjoint from $S_l$, such that any
place $v$ of $S_a$ splits in $F$ and is unramified over a rational prime $p$
with $[F(\zeta_p):F]>n$. Choose a place $\tv$ of $F$ lying over each
$v \in S_a$. Let $U=\prod_vU_v$ be a compact open subgroup
of $G(\A_{F^+}^\infty)$ such that
\begin{itemize}
\item $U_v\subset G(\bigO_{F^+_v})$ for all $v$;
  \item $U_v=\iota_{\tv}^{-1}\ker(\GL_n(\bigO_{F_{\tilde{v}}})\to\GL_n(k(\tilde{v})))$
    if $v\in S_a$;
  \item $U_v=G(\bigO_{F^+_v})$ if $v|l$;
  \item $U_v$ is a hyperspecial maximal compact subgroup of $G(F_v^+)$
    if $v$ is inert in $F$.
\end{itemize}
Then (because $S_a$ is nonempty) $U$ is sufficiently small,
and \[S_\lambda(U,\bigO)\otimes \Flbar=S(U,M_\lambda)\otimes
\Flbar=S(U,M_\lambda\otimes \Flbar).\]Let $T$ be a finite set of finite places
of $F^+$ which split in $F$, containing $S_l$ and all the places $v$ which split
in $F$ for which $U_v\neq G(\bigO_{F^+_v})$. We let
$\mathbb{T}_\lambda^{T,univ}$ be the commutative $\bigO$-polynomial algebra
generated by formal variables $T_w^{(j)}$ for all $1\le j\le n$, $w$ a place
of $F$ lying over a place $v$ of $F^+$ which splits in $F$ and is not
contained in $T$, together with variables $T_{\lambda,\tilde{v}}^{(j)}$ for
all $v\in S_l$ and $j=1,\ldots,n$. We now fix a uniformiser $\varpi_{\tilde{v}}$ of
$\bigO_{F_\tv}$ for each $v\in S_l$. The algebra
$\mathbb{T}_\lambda^{T,univ}$ acts on $S(U,M_\lambda)$ via the
following Hecke operators:
\begin{itemize}
\item 
\[ T_{w}^{(j)}:=  \iota_{w}^{-1} \left[ GL_n(\mc{O}_{F_w}) \left( \begin{matrix}
      \varpi_{w}1_j & 0 \cr 0 & 1_{n-j} \end{matrix} \right)
GL_n(\mc{O}_{F_w}) \right] 
\] for $w\not \in T$ and $\varpi_w$ a uniformiser in $\mc{O}_{F_w}$, and 
\item \[T_{\lambda,\tv}^{(j)}:= \left(\prod_{i=1}^{j} \prod_{ \tau : F_{\wt{v}} \hookrightarrow \Qlbar}
  \tau(\varpi_{\wt{v}})^{- \lambda_{\tau|_{F}, n-i+1}} \right) \iota_{\tv}^{-1}\left[  GL_n(\mc{O}_{F_\tv}) \left( \begin{matrix} \varpi_{\wt{v}}1_j & 0 \cr
    0 & 1_{n-j} \end{matrix} \right) GL_n(\mc{O}_{F_\tv}) \right].
\] for $v\in S_l$.
\end{itemize}

Suppose that $\mf{m}$ is a maximal ideal of
$\mathbb{T}_\lambda^{T,univ}$ with residue field $\Flbar$ such that
$S(U,M_\lambda)_{\mf{m}}\neq 0$. Then as in section \ref{sec:R=T}
there is a continuous semisimple
representation \[\rbar_{\mf{m}}:G_{F^+}\to\Gn(\Flbar)\]naturally associated
to $\mf{m}$. We have the following definition.
\begin{defn}
  Suppose that $\rhobar:G_F\to\GL_n(\Flbar)$ is a continuous
  irreducible representation. Then we say that $\rhobar$ is modular
  and ordinary of weight $\lambda\in  (\Z^n_{+,res})^{\tilde{I}_l}$ if
  there is a $U$, $T$ as
  above and a maximal ideal $\mathfrak{m}$ of
  $\mathbb{T}^{T,univ}_\lambda$ with residue field $\Flbar$ such
  that
  \begin{itemize}
  \item $S(U,M_\lambda)_{\mf{m}}\neq 0$,
  \item $\mathfrak{m}$ does not contain any of the
    operators $T^{(j)}_{\lambda,\tv}$, and
  \item $\rhobar\cong \rbar_{\mathfrak{m}}|_{G_{F}}$.
  \end{itemize}
We say that $\rhobar$ is modular and ordinary if it is modular and
ordinary of some weight $\lambda\in  (\Z^n_{+,res})^{\tilde{I}_l}$.
\end{defn}
Note in particular that if $\rhobar$ is ordinary and modular then it
is unramified at any place of $F$ which doesn't split over
$F^+$. We have the following theorem.

\begin{thm}\label{thm:serre wts; char 0 version}
   Suppose that \[\rhobar:G_{F}\to\GL_n(\Flbar)\] is an 
  irreducible representation satisfying the following assumptions.
  \begin{enumerate}
  \item The representation $\rhobar$ is modular and ordinary (so in
    particular $\rhobar^c=\rhobar^\vee\epsilon^{1-n}$).
  \item The image $\rhobar(G_{F(\zeta_l)})$ is big.
  \item $\overline{F}^{\ker\ad \rhobar}$ does not contain $F(\zeta_l)$.
   \end{enumerate}
  Then $\rhobar$ is modular and ordinary of weight $\lambda\in
  (\Z^n_{+,res})^{\tilde{I}_l}$ if and only if
  \begin{itemize}
  \item For every place $v|l$ of $F^+$, $\rhobar|_{G_{F_{\tilde{v}}}}$ has
    an ordinary crystalline lift of weight $\lambda_{\tilde{v}}$. 
  \end{itemize}

\end{thm}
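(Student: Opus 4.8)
The plan is to read off the two implications from machinery already assembled: the ``only if'' direction from local--global compatibility at $l$ for $\iota$-ordinary automorphic forms (as in \cite{ger}), and the ``if'' direction from Theorem~\ref{thm: main result for unitary groups}, which was set up for exactly this purpose. In both directions the bridge is the passage between the characteristic $0$ and characteristic $l$ pictures, and between the notion of being ``modular and ordinary'' (in terms of $S(U,M_\lambda)$ and the operators $T^{(j)}_{\lambda,\tilde{v}}$) and being ordinarily automorphic.

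For ``only if'', suppose $\rhobar$ is modular and ordinary of weight $\lambda$, witnessed by a choice of $U$, $T$ and a maximal ideal $\mathfrak{m}\subset\mathbb{T}_\lambda^{T,univ}$ with $S(U,M_\lambda)_{\mathfrak{m}}\neq 0$, with $\mathfrak{m}$ containing none of the $T^{(j)}_{\lambda,\tilde{v}}$, and with $\rhobar\cong\rbar_{\mathfrak{m}}|_{G_F}$. Since $S(U,M_\lambda)$ is a finite free $\mathcal{O}$-module and $S(U,M_\lambda)_{\mathfrak{m}}$ is a direct summand of it, the nonvanishing of $S(U,M_\lambda)_{\mathfrak{m}}\otimes\Flbar$ forces $S(U,M_\lambda)_{\mathfrak{m}}\otimes K\neq 0$; after enlarging $K$ this gives a Hecke eigenform over $\mathcal{O}$ reducing to $\mathfrak{m}$, hence --- via the descent of automorphic representations of definite unitary groups to $\GL_n$, using that $F/F^+$ is unramified at all finite places --- a RACSDC representation $\pi$ of $\GL_n(\mathbb{A}_F)$, unramified at all places dividing $l$, with $\rbar_{l,\iota}(\pi)\cong\rbar_{\mathfrak{m}}|_{G_F}\cong\rhobar$ (an isomorphism, not merely of semisimplifications, since $\rhobar$ is irreducible). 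Because $\mathfrak{m}$ avoids the $T^{(j)}_{\lambda,\tilde{v}}$, the chosen normalisation of these operators shows $\pi$ is $\iota$-ordinary at every place dividing $l$, and local--global compatibility at $l$ for $\iota$-ordinary RACSDC representations of level prime to $l$ (the results of Section~2.7 and Proposition~5.3.1 of \cite{ger}) shows $r_{l,\iota}(\pi)|_{G_{F_{\tilde{v}}}}$ is crystalline and ordinary of weight $\lambda_{\tilde{v}}$ for each $v\mid l$; this is the required lift.

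For ``if'', suppose for each $v\mid l$ the representation $\rhobar|_{G_{F_{\tilde{v}}}}$ has an ordinary crystalline lift of weight $\lambda_{\tilde{v}}$. First extend $\lambda\in(\Z^n_{+,res})^{\tilde{I}_l}$ to $\tilde\lambda\in(\Z^n_+)^{\Hom(F,\Qlbar)}_c$ by imposing $\tilde\lambda_{\tau c,j}=-\tilde\lambda_{\tau,n-j+1}$; dualising, twisting by $\epsilon^{1-n}$ and reconjugating to upper-triangular form, and using $\rhobar^c\cong\rhobar^\vee\epsilon^{1-n}$, the given lift at $\tilde{v}$ yields a crystalline ordinary lift of $\rhobar|_{G_{F_w}}$ of weight $\tilde\lambda_w$ at the remaining place $w=c\tilde{v}$ above each $v\mid l$, so condition (5) of Theorem~\ref{thm: main result for unitary groups} holds for $\tilde\lambda$. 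By the ``only if'' part already proved, assumption (1) makes $\rhobar$ ordinarily automorphic and unramified at every place not split over $F^+$, and assumptions (2) and (3) are precisely the bigness and linear-disjointness hypotheses of that theorem; it therefore produces an automorphic lift $\rho$ of $\rhobar$ which is crystalline and ordinary of weight $\tilde\lambda_w$ at each $w\mid l$ and is ordinarily automorphic of level prime to $l$. Hence there is an $\iota$-ordinary RACSDC representation $\pi'$ of $\GL_n(\mathbb{A}_F)$, unramified at all places above $l$, with $r_{l,\iota}(\pi')\cong\rho$; descending $\pi'$ to an automorphic representation $\Pi'$ of $G(\mathbb{A}_{F^+})$ (Labesse, using $F/F^+$ unramified everywhere and $4\mid n[F^+:\mathbb{Q}]$), $\Pi'$ is unramified at every place dividing $l$ and has the infinity type corresponding to $\lambda$, so for a suitable $U$, $T$ it contributes to $S(U,M_\lambda)$. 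Its Hecke eigensystem, defined over $\mathcal{O}$ after enlarging $K$, reduces to a maximal ideal $\mathfrak{m}$ with $S(U,M_\lambda)_{\mathfrak{m}}\neq 0$, with $\mathfrak{m}$ containing no $T^{(j)}_{\lambda,\tilde{v}}$ (as $\pi'$ is $\iota$-ordinary), and with $\rbar_{\mathfrak{m}}|_{G_F}\cong\rhobar$ (it is the semisimplified reduction of $\rho$, and $\rhobar$ is irreducible); thus $\rhobar$ is modular and ordinary of weight $\lambda$.

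The substance is bookkeeping rather than a new idea, and the step to watch is the comparison of the two uses of ``ordinary'': the hyperspecial-level, weight-$M_\lambda$ notion cut out by the $T^{(j)}_{\lambda,\tilde{v}}$ of this section, and the Iwahori-level, weight-$0$ notion of Section~\ref{sec:R=T}. The normalisations of the relevant $U$-operators are fixed so that these coincide (essentially the weight-independence of the ordinary Hecke algebra, Section~2.6 of \cite{ger}), and one must also check that the descent from $\GL_n$ to the definite unitary group preserves unramifiedness at $l$ and the archimedean weight, and that the ``restricted'' hypothesis on $\lambda$ is consistent with the weight $\tilde\lambda$ fed into Theorem~\ref{thm: main result for unitary groups}. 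Beyond this the proof is a direct appeal to Theorems~\ref{thm: main result for unitary groups} and \ref{thm: R=T} and Proposition~\ref{prop: T is finite and faithful over Lambda}.
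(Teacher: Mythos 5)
Your proposal is correct and takes essentially the same route as the paper: the ``only if'' direction by passing to characteristic zero, base changing to $\GL_n$ via Labesse and using $\iota$-ordinarity and local--global compatibility at $l$ from \cite{ger}, and the ``if'' direction by feeding the hypotheses into Theorem \ref{thm: main result for unitary groups} and descending the resulting ordinarily automorphic lift back to the definite unitary group. You in fact make explicit several steps the paper leaves implicit (extending $\lambda$ conjugate-self-dually and obtaining lifts at the conjugate places $c\tilde{v}$, and deducing ordinary automorphy of $\rhobar$ from hypothesis (1) via the first direction); the check you flag about matching local components, level at $l$ and archimedean weight under descent is exactly what the paper's appeal to strong multiplicity one and Lemma 2.2.5 of \cite{ger} accomplishes.
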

\begin{proof}
  Suppose firstly that  $\rhobar$ is modular and ordinary of weight $\lambda\in
  (\Z^n_{+,res})^{\tilde{I}_l}$. Then by definition we see that there is
  a $U$, $T$, $\mathfrak{m}$ as above such that
  $S_\lambda(U,\bigO)_{\mathfrak{m}}\neq 0$ and
  $\rhobar_{\mathfrak{m}}\cong\rhobar$. Then
  $S_\lambda(U,\Qlbar)_{\mathfrak{m}}\neq 0$. Choose an isomorphism
  $\iota:\Qlbar \isoto \bb{C}$. We see by Corollaire 5.3 of
  \cite{labesse} and Lemma 2.2.5 of \cite{ger}
  that there is a RACSDC representation $\pi$ of $\GL_n(\A_F)$ which
  is unramified at $l$, $\iota$-ordinary at all $v|l$ (by Lemma 2.7.6 of \cite{ger}) and which satisfies
  $\rbar_{l,\iota}(\pi)\cong\rhobar$. Thus $r_{l,\iota}(\pi)$ is ordinary and
  crystalline of weight $\lambda$. The representations
  $r_{l,\iota}(\pi)|_{G_{F_{\tilde{v}}}}$ then provide the required lifts.

  For the converse, if the condition holds then by Theorem \ref{thm:
    main result for unitary groups} $\rhobar$ has a lift to a
  representation $\rho$ which is crystalline and ordinary of weight
  $\lambda$ and ordinarily automorphic of level prime to $l$.
 Say $\rho=r_{l,\iota}(\pi)$. The
  result now follows from Corollaire 5.3 and Th\'{e}or\`{e}me 5.4 of
  \cite{labesse}, the strong multiplicity one theorem for $\GL_n$ and 
Lemma 2.2.5 of \cite{ger}.  
\end{proof}

We now show that if $\rhobar$ is modular and ordinary of weight
$\lambda$, then it is modular of weight $\pi(\lambda)$ in the sense of
generalisations of Serre's conjecture (cf. \cite{herzigthesis}). This
is a straightforward consequence of the elementary calculations
underlying Hida theory, as we now explain.

Let $v_\lambda$ be the rank one $\bigO$-submodule of $M_\lambda$ on
which the usual maximal torus of $\GL_n$ acts via the highest weight
$\lambda$. Let $v_{w_0\lambda}$ be the rank one $\bigO$-submodule of $M_\lambda$ on
which the usual maximal torus of $\GL_n$ acts via the lowest weight
$w_0\lambda$.

The irreducible $\Flbar$-representations of $\prod_{v\in
  S_l}\GL_n(k(\tilde{v}))$ are tensor products of irreducible
representations of the $\GL_n(k(\tilde{v}))$. From the standard
classification of the irreducible $\Flbar$-representations of
$\GL_n(k(\tilde{v}))$ (see for example the appendix to
\cite{herzigthesis}), one sees that:
\begin{enumerate}
\item There is an irreducible $\Flbar$-representation $F_\lambda$ of $\prod_{v\in
  S_l}\GL_n(k(\tilde{v}))$ for each
  $\lambda\in(\Z^n_+)^{\overline{I}_l}$, and every irreducible
  $\Flbar$-representation  of $\prod_{v\in
  S_l}\GL_n(k(\tilde{v}))$ is equivalent to some $F_\lambda$.
\item Take $\lambda\in(\Z^n_{+,res})^{\tilde{I}_l}$. Let $P_\lambda$ be the sub-$\prod_{v\in
  S_l}\GL_n(k(\tilde{v}))$-representation of
$M_\lambda\otimes \Flbar$ generated by $v_\lambda\otimes \Flbar$. Then
$P_\lambda\cong F_{\pi(\lambda)}$ (see II.8.8(1) of \cite{MR2015057}).
\item $P_\lambda$ contains $v_{w_0\lambda}\otimes \Flbar$.
\end{enumerate}

We have the following straightforward but crucial lemma.

\begin{lemma}\label{lem:ordinary component is the socle}
  Let $T_l$ be the Hecke operator $\prod_{v|l,1\le j\le
    n}T_{\lambda,\tv}^{(j)}$. Then $T_l$ acts by $0$ on
  $S(U,(M_\lambda\otimes \Flbar)/P_\lambda)$.
\end{lemma}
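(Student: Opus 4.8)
The plan is to reduce to a local computation on the coefficient module $M_\lambda$ and then read off the conclusion from the Hida normalisation of the operators $T_{\lambda,\tv}^{(j)}$. First I would use that $U$ is sufficiently small (since $S_a\neq\emptyset$): choosing representatives for $G(F^+)\backslash G(\A_{F^+}^\infty)/U$ whose stabilisers are trivial, the functor $W\mapsto S(U,W)$ on $\mc{O}$-modules (resp.\ $\Flbar$-modules) equipped with an action of $G(\mc{O}_{F^+,l})=\prod_{v\in S_l}\GL_n(\mc{O}_{F_{\tv}})$ is exact; indeed $S(U,W)$ is non-canonically a finite direct sum of copies of $W$. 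Applying this to the exact sequence $0\to P_\lambda\to M_\lambda\otimes\Flbar\to (M_\lambda\otimes\Flbar)/P_\lambda\to 0$ of $\prod_{v\in S_l}\GL_n(k(\tv))$-modules, it suffices to show that $T_l$ carries $S(U,M_\lambda\otimes\Flbar)$ into $S(U,P_\lambda)$.

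Next I would unwind $T_l=\prod_{v\mid l}\prod_{j=1}^{n}T_{\lambda,\tv}^{(j)}$: writing each double coset $\GL_n(\mc{O}_{F_{\tv}})\diag(\varpi_{\tv}1_j,1_{n-j})\GL_n(\mc{O}_{F_{\tv}})$ as a disjoint union of right cosets and composing, $(T_lf)(g)$ becomes a finite sum of terms $\overline{\rho_\lambda(\delta)}\cdot f(g\gamma)$, where $\gamma\in G(\A_{F^+}^\infty)$ is supported at the places above $l$, $\rho_\lambda$ denotes the (suitably normalised) action on $M_\lambda$, and $\delta$ runs through the normalised elements of the monoid generated over $v\mid l$ by $\GL_n(\mc{O}_{F_{\tv}})$ and the matrices $\diag(\varpi_{\tv}1_j,1_{n-j})$, the normalising scalar being the product over $v,j$ of the factors $\prod_{i=1}^{j}\prod_{\tau\colon F_{\tv}\hookrightarrow\Qlbar}\tau(\varpi_{\tv})^{-\lambda_{\tau|_F,\,n-i+1}}$ occurring in $T_{\lambda,\tv}^{(j)}$. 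Since every element of $M_\lambda\otimes\Flbar$ arises as some value $f(g\gamma)$ and $P_\lambda$ is $\prod_v\GL_n(k(\tv))$-stable, it is enough to show that each such $\overline{\rho_\lambda(\delta)}$ has image inside $P_\lambda$. As $\GL_n(\mc{O}_{F_{\tv}})$ preserves $P_\lambda$, the Cartan decomposition $\delta=u\,\diag(\varpi_{\tv}^{\nu_1},\dots,\varpi_{\tv}^{\nu_n})\,u'$ (with $u,u'\in\GL_n(\mc{O}_{F_{\tv}})$ and $\nu$ dominant) reduces this to the normalised diagonal operator attached to such a $\nu$, at a single place $v\mid l$.

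Now comes the computation. By the structure of the spherical Hecke algebra (repeatedly applying the Satake/Littlewood--Richardson rule to the product $\prod_{j=1}^{n}\diag(\varpi_{\tv}1_j,1_{n-j})$), the $\nu$ that occur are exactly the dominant $n$-tuples with $\sum_i\nu_i=n(n+1)/2$ and $\nu\preceq(n,n-1,\dots,1)$, the top one $\nu=(n,n-1,\dots,1)$ occurring with unit coefficient. A short computation identifies the total normalising scalar with $\prod_{\tau}\tau(\varpi_{\tv})^{-\sum_i i\,\lambda_{\tau,i}}=\prod_{\tau}\tau(\varpi_{\tv})^{-\langle w_0\lambda_\tau,\,(n,n-1,\dots,1)\rangle}$. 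On the $\mu$-weight space of the algebraic representation $M_\lambda\otimes K$ the element $\diag(\varpi_{\tv}^{\nu_1},\dots,\varpi_{\tv}^{\nu_n})$ acts by the scalar $\prod_\tau\tau(\varpi_{\tv})^{\langle\mu_\tau,\nu\rangle}$; hence the normalised diagonal operator acts there by a scalar whose $l$-adic valuation is a positive multiple of $\sum_\tau\bigl(\langle\mu_\tau,\nu\rangle-\langle w_0\lambda_\tau,(n,n-1,\dots,1)\rangle\bigr)$. Using that $\nu$ and each $\lambda_\tau$ are dominant and $\nu\preceq(n,n-1,\dots,1)$, one checks that this valuation is $\geq0$ (so the operator preserves $M_\lambda$ and induces an endomorphism of $M_\lambda\otimes\Flbar$) and that it vanishes precisely when $\nu=(n,n-1,\dots,1)$ and $\mu_\tau=w_0\lambda_\tau$ for every $\tau$, the regularity of $(n,n-1,\dots,1)$ then forcing $\mu$ to equal the lowest weight $w_0\lambda$ (whose weight space is one-dimensional). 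Therefore each normalised diagonal operator appearing in $T_l$ reduces modulo $l$ to a scalar times the projection onto the line $\Flbar\cdot(v_{w_0\lambda}\otimes\Flbar)$, which is contained in $P_\lambda$ by point (3) above. Combining this with the reductions of the preceding paragraph, $T_l$ maps $S(U,M_\lambda\otimes\Flbar)$ into $S(U,P_\lambda)$, i.e.\ acts as $0$ on $S(U,(M_\lambda\otimes\Flbar)/P_\lambda)$.

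The main obstacle is the bookkeeping in the third paragraph: matching the several normalising factors entering the $T_{\lambda,\tv}^{(j)}$ against the weight-space valuations while keeping every convention (left versus right action, the chosen realisation of $M_\lambda$, $\varpi_{\tv}$ versus $\varpi_{\tv}^{-1}$) consistent with those of \cite{ger}. A secondary point needing a little care is the case where some $\lambda_\tau$ is non-regular: there the displayed inequality can fail to be strict on weight spaces other than the extreme ones, and one argues separately that the corresponding weight vectors still lie in $P_\lambda$, using that that tensor factor of $M_\lambda$ is inflated from an irreducible representation of a Levi subgroup on which $\lambda_\tau$ is central.
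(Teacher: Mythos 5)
Your overall strategy is the same as the paper's: reduce to the coefficient module, use the Hida normalisation to see that modulo $l$ only the lowest weight line survives, and conclude from the facts that $P_\lambda$ is $\prod_{v\in S_l}\GL_n(k(\tv))$-stable and contains $v_{w_0\lambda}\otimes\Flbar$. (The paper phrases this via the single element $\alpha=\prod_{v,j}\diag(\varpi_{\tv}1_j,1_{n-j})$ from the proof of Lemma 2.5.2 of \cite{ger}, for which the relevant cocharacter $(n,n-1,\dots,1)$ is regular, so no regularity of $\lambda$ ever enters.) The genuine gap is in your third paragraph. Having correctly allowed all Cartan cells $K\diag(\varpi^{\nu})K$ with $\nu$ dominant, $\sum_i\nu_i=n(n+1)/2$ and $\nu\preceq(n,n-1,\dots,1)$ --- and such cells really do occur: for $n=4$ the product $\diag(\varpi,1,1,1)\cdot\diag(1,\varpi,\varpi,1)\cdot\diag(\varpi,\varpi,\varpi,1)\cdot\varpi 1_4$ of elements of the four double cosets has Cartan type $(3,3,3,1)$ --- you assert that the normalised eigenvalue on the $\mu$-weight space is a unit ``precisely when $\nu=(n,\dots,1)$ and $\mu=w_0\lambda$''. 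That is false: for $n=4$, $\lambda_\tau=(a,b,b,b)$ with $a>b$, $\nu=(3,3,3,1)$ and $\mu=w_0\lambda_\tau=(b,b,b,a)$ one has $\langle\mu,\nu\rangle=9b+a=\langle w_0\lambda_\tau,(4,3,2,1)\rangle$, so the eigenvalue is a unit on a cell strictly below the top one. What your argument actually needs is only the implication ``unit eigenvalue $\Rightarrow\mu=w_0\lambda$'', valid for \emph{every} such $\nu$; this is true, but you do not prove it (your justification, regularity of $(n,\dots,1)$, only applies when $\nu$ is the top cell). A correct argument: write $\mu-w_0\lambda=\sum_i c_i\alpha_i$ and $(n,\dots,1)-\nu=\sum_i d_i\alpha_i$ with $c_i,d_i\in\Z_{\ge0}$; vanishing of $\langle\mu-w_0\lambda,\nu\rangle$ forces $\nu_i=\nu_{i+1}$ whenever $c_i>0$, and since consecutive gaps of $(n,\dots,1)$ equal $1$, dominance of $\nu$ then forces $d_i\ge1$ for those $i$; vanishing of $\langle w_0\lambda,(n,\dots,1)-\nu\rangle$ then forces $\lambda_{\tau,n-i}=\lambda_{\tau,n-i+1}$, i.e.\ $w_0\lambda$ is orthogonal to every simple root in the support of $c$; finally, the weights of $M_\lambda\otimes K$ lying in $w_0\lambda+\sum_{i\in\mathrm{supp}(c)}\Z_{\ge0}\alpha_i$ are exactly the weights of the submodule generated by the lowest weight line under the corresponding Levi, which is one-dimensional by that orthogonality, so $\mu=w_0\lambda$ after all.

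Relatedly, the patch you sketch in your last paragraph does not work as stated: as just explained, strictness in fact never fails off the lowest weight line, so there is nothing to patch; $M_\lambda$ is not ``inflated from an irreducible representation of a Levi subgroup'' when some $\lambda_\tau$ is non-regular; and even if there were additional weight vectors with unit eigenvalue, you have no grounds for asserting they lie in $P_\lambda$, of which you only know the two extreme weight lines and $\GL_n(k(\tv))$-stability. With the equality analysis corrected as above (or by arguing as the paper does, where only the regular cocharacter $(n,\dots,1)$ intervenes), the rest of your proof --- the reduction using that $U$ is sufficiently small, the identification of the total normalising factor with $(w_0\lambda)(\alpha)^{-1}$, and the use of stability of $P_\lambda$ under $\GL_n(\mc{O}_{F_{\tv}})$ --- is fine.
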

\begin{proof} We employ the notation of
  section 2 of \cite{ger}; specifically, let $\alpha$ be the element
  of $G(\A_{F^+}^\infty)$ defined in the proof of Lemma 2.5.2 of
  \cite{ger}. Then $(w_0\lambda)(\alpha)^{-1}\alpha$ acts by an
  $l$-adic unit on $v_{w_0\lambda}$, but by an eigenvalue with
  positive $l$-adic valuation on every other weight space in
  $M_\lambda$. The result follows immediately from the fact that $P_\lambda$
  contains $v_{w_0\lambda}\otimes \Flbar$.
 \end{proof}
Note that if $U$, $T$, $\lambda$ are as above and $\mf{m}$ is a maximal ideal of
  $\mathbb{T}^{T,univ}_\lambda$ with residue field $\Flbar$ such
  that $S(U,P_\lambda)_{\mathfrak{m}}\neq 0$, then
  $S(U,M_\lambda)_{\mf{m}}\neq 0$, and we have a Galois representation
  $\rbar_{\mf{m}}$ as before.
 \begin{cor}
   $\rhobar$ is modular and ordinary of weight $\lambda$ if and only
   if there is a $U$, $T$ as
  above and a maximal ideal $\mathfrak{m}$ of
  $\mathbb{T}^{T,univ}_\lambda$ with residue field $\Flbar$ such
  that
  \begin{itemize}
  \item $S(U,P_\lambda)_{\mathfrak{m}}\neq 0$,
  \item $\mathfrak{m}$ does not contain any of the
    operators $T^{(j)}_{\lambda,\tv}$, and
  \item $\rhobar\cong \rbar_{\mathfrak{m}}|_{G_{F}}$.

  \end{itemize}
 \end{cor}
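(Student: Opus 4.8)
The plan is to deduce this formally from Lemma~\ref{lem:ordinary component is the socle}, the definition of being modular and ordinary of weight $\lambda$, and the remark immediately preceding the statement. The ``if'' direction is essentially immediate: given $U$, $T$ and $\mathfrak{m}$ with $S(U,P_\lambda)_{\mathfrak{m}}\neq 0$, the remark before the corollary gives $S(U,M_\lambda)_{\mathfrak{m}}\neq 0$ with the same associated representation $\rbar_{\mathfrak{m}}$, and since by hypothesis $\mathfrak{m}$ contains none of the operators $T^{(j)}_{\lambda,\tv}$ and $\rhobar\cong\rbar_{\mathfrak{m}}|_{G_F}$, all three conditions in the definition of modular and ordinary of weight $\lambda$ are met.

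For the converse, suppose $\rhobar$ is modular and ordinary of weight $\lambda$, witnessed by $U$, $T$ and a maximal ideal $\mathfrak{m}$ of $\mathbb{T}^{T,univ}_\lambda$ with $S(U,M_\lambda)_{\mathfrak{m}}\neq 0$, with $\mathfrak{m}$ containing no $T^{(j)}_{\lambda,\tv}$, and $\rhobar\cong\rbar_{\mathfrak{m}}|_{G_F}$. Write $Q_\lambda=(M_\lambda\otimes\Flbar)/P_\lambda$ and apply the functor $S(U,-)$ to $0\to P_\lambda\to M_\lambda\otimes\Flbar\to Q_\lambda\to 0$. Since $U$ is sufficiently small, $S(U,-)$ is exact, and (as is built into the statement of Lemma~\ref{lem:ordinary component is the socle} and the preceding remark) the resulting sequence is equivariant for the Hecke action of $\mathbb{T}^{T,univ}_\lambda$. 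Localising at $\mathfrak{m}$ gives an exact sequence $0\to S(U,P_\lambda)_{\mathfrak{m}}\to S(U,M_\lambda\otimes\Flbar)_{\mathfrak{m}}\to S(U,Q_\lambda)_{\mathfrak{m}}\to 0$. Now Lemma~\ref{lem:ordinary component is the socle} says the operator $T_l=\prod_{v|l,\,1\le j\le n}T^{(j)}_{\lambda,\tv}$ annihilates $S(U,Q_\lambda)$, while $\mathfrak{m}$ is prime and contains none of the factors $T^{(j)}_{\lambda,\tv}$, so $T_l\notin\mathfrak{m}$ and hence $T_l$ is a unit in $(\mathbb{T}^{T,univ}_\lambda)_{\mathfrak{m}}$; an operator that is simultaneously zero and invertible on $S(U,Q_\lambda)_{\mathfrak{m}}$ forces $S(U,Q_\lambda)_{\mathfrak{m}}=0$. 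Therefore $S(U,P_\lambda)_{\mathfrak{m}}\isoto S(U,M_\lambda\otimes\Flbar)_{\mathfrak{m}}$, and since $S(U,M_\lambda\otimes\Flbar)_{\mathfrak{m}}=S(U,M_\lambda)_{\mathfrak{m}}\otimes_{\bigO}\Flbar$ (using again that $U$ is sufficiently small, as recorded in the text) this is nonzero by the topological Nakayama lemma because $S(U,M_\lambda)_{\mathfrak{m}}\neq 0$. Hence $S(U,P_\lambda)_{\mathfrak{m}}\neq 0$, and the same $U$, $T$, $\mathfrak{m}$ witness the right-hand condition.

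The argument is purely formal once Lemma~\ref{lem:ordinary component is the socle} is available, so I do not anticipate a genuine obstacle; the only points requiring care are the exactness of $S(U,-)$ for sufficiently small $U$, the identification $S(U,M_\lambda\otimes\Flbar)=S(U,M_\lambda)\otimes_{\bigO}\Flbar$, and the $\mathbb{T}^{T,univ}_\lambda$-equivariance of the short exact sequence $0\to S(U,P_\lambda)\to S(U,M_\lambda\otimes\Flbar)\to S(U,Q_\lambda)\to 0$, each of which is already in use (at least implicitly) in the surrounding text.
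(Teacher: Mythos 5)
Your argument is correct and is exactly the intended one: the paper's proof simply declares the corollary an immediate consequence of the definitions and Lemma \ref{lem:ordinary component is the socle}, and your write-up is the natural unwinding of that (localise the inclusion $S(U,P_\lambda)\subset S(U,M_\lambda\otimes\Flbar)$ at $\mathfrak{m}$ and use that $T_l$ is both zero on the quotient and a unit at $\mathfrak{m}$). The only cosmetic remarks are that left-exactness of $S(U,-)$ already suffices, and ordinary Nakayama (rather than the topological version) is what is needed once one knows $S(U,M_\lambda)_{\mathfrak{m}}$ is finitely generated.
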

 \begin{proof}
   This is an immediate consequence of the definitions and of Lemma \ref{lem:ordinary component is the socle}.
 \end{proof}

 Fix now an element $\mu\in(\Z^n_{+})^{\overline{I}_l}$. Fix
 $\lambda\in(\Z^n_{+,res})^{\tilde{I}_l}$ with
 $\pi(\lambda)=\mu$. Then there is an equivalence $P_\lambda\cong
 F_\mu$, so that $\bb{T}^{T,univ}_\lambda$ acts on $S(U,F_\mu)$. Suppose
 that $\lambda'\in(\Z^n_{+,res})^{\tilde{I}_l}$ with
 $\pi(\lambda')=\mu$. Then we also have an action of
 $\bb{T}^{T,univ}_{\lambda'}$ on $S(U,F_\mu)$, and it is easy to check that
 under the natural isomorphism between $\bb{T}^{T,univ}_{\lambda'}$ and
 $\bb{T}^{T,univ}_\lambda$ the Hecke operators at places not dividing $l$
 act identically on $S(U,F_\mu)$, while those at places dividing $l$
 differ only by units (this is just a statement about the rescaling in
 the definition of the Hecke operators at places dividing $l$). We
 thus have the following result/definition.

 \begin{lemma}\label{lem: defining ordinary modular in char l}
   Suppose that $\rhobar:G_F\to\GL_n(\Flbar)$ is a continuous
  irreducible representation. Then we say that $\rhobar$ is modular
  and ordinary of weight  $\mu\in(\Z^n_{+})^{\overline{I}_l}$ if
  there is a $U$, $T$ as
  above, and for some (equivalently, any) $\lambda\in(\Z^n_{+,res})^{\tilde{I}_l}$ with
 $\pi(\lambda)=\mu$ there is a maximal ideal $\mathfrak{m}$ of
  $\mathbb{T}^{T,univ}_\lambda$ with residue field $\Flbar$ such
  that
  \begin{itemize}
  \item $S(U,F_\mu)_{\mf{m}}\neq 0$,
  \item $\mathfrak{m}$ does not contain any of the
    operators $T^{(j)}_{\lambda,\tv}$, and
  \item $\rhobar\cong \rbar_{\mathfrak{m}}|_{G_{F}}$.
  \end{itemize}
\end{lemma}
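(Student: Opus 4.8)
The content is the parenthetical ``(equivalently, any)'', which I will prove by comparing the constructions attached to two choices of weight. So fix $U$, $T$, $\mu$, a continuous irreducible $\rhobar : G_F\to\GL_n(\Flbar)$, and two weights $\lambda,\lambda'\in(\Z^n_{+,res})^{\tilde{I}_l}$ with $\pi(\lambda)=\pi(\lambda')=\mu$. Since $P_\lambda\cong F_{\pi(\lambda)}$ and $P_{\lambda'}\cong F_{\pi(\lambda')}$, I would fix $\prod_{v\in S_l}\GL_n(k(\tv))$-equivariant isomorphisms $P_\lambda\cong F_\mu\cong P_{\lambda'}$; transporting the Hecke actions along them gives two $\mc{O}$-algebra homomorphisms $\psi_\lambda : \bb{T}^{T,univ}_\lambda\to\End_{\Flbar}S(U,F_\mu)$ and $\psi_{\lambda'} : \bb{T}^{T,univ}_{\lambda'}\to\End_{\Flbar}S(U,F_\mu)$ (note $S(U,F_\mu)$ is a finite-dimensional $\Flbar$-vector space).

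The key step --- this is exactly the elementary Hecke computation recalled just before the lemma, and the only point needing any real care --- is the claim that $\psi_\lambda(T_w^{(j)})=\psi_{\lambda'}(T_w^{(j)})$ for every split place $w$ not over $T$, and that $\psi_\lambda(T^{(j)}_{\lambda,\tv})=u_{v,j}\,\psi_{\lambda'}(T^{(j)}_{\lambda',\tv})$ for each $v\in S_l$ and $j$, with $u_{v,j}\in\mc{O}^{\times}$. For the first assertion one uses that, for split $w$ not over $T$ (hence not over $l$), the double coset defining $T_w^{(j)}$ involves only the component at $w$, so its action on $S(U,W)$ is functorial in the coefficient module $W$ and hence carried to the same endomorphism of $S(U,F_\mu)$ by any $\prod_{v\in S_l}\GL_n(k(\tv))$-equivariant isomorphism; in particular by both of those fixed above. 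For the second assertion, the \emph{unnormalised} double coset $\iota_{\tv}^{-1}[\GL_n(\mc{O}_{F_{\tv}})\diag(\varpi_{\tv}1_j,1_{n-j})\GL_n(\mc{O}_{F_{\tv}})]$ is likewise functorial in $W$, hence carried by $\psi_\lambda$ and $\psi_{\lambda'}$ to the same endomorphism of $S(U,F_\mu)$; since $T^{(j)}_{\lambda,\tv}$ and $T^{(j)}_{\lambda',\tv}$ are this operator multiplied by the normalising factors from their respective definitions, they are carried to scalar multiples of one another, the scalar being
\[
u_{v,j} \;=\; \prod_{i=1}^{j}\prod_{\tau : k(\tv)\into\Flbar} \bigl(\sigma_{\tau,1}(\varpi_{\tv})\,\sigma'_{\tau,1}(\varpi_{\tv})^{-1}\bigr)^{-\mu_{\tau,n-i+1}},
\]
where $\sigma_{\tau,1},\sigma'_{\tau,1} : F\into\Qlbar$ are the distinguished embeddings above $\tv$ singled out by $\lambda$ and $\lambda'$ in the definition of $(\Z^n_{+,res})^{\tilde{I}_l}$; as these induce the same place $\tv$ of $F$, the ratio $\sigma_{\tau,1}(\varpi_{\tv})/\sigma'_{\tau,1}(\varpi_{\tv})$ is an $l$-adic unit, so $u_{v,j}\in\mc{O}^{\times}$. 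In particular $\psi_\lambda$ and $\psi_{\lambda'}$ have the same image $B\subseteq\End_{\Flbar}S(U,F_\mu)$.

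It remains to conclude. For a maximal ideal $\mf{m}$ of $\bb{T}^{T,univ}_\lambda$ with residue field $\Flbar$, the condition $S(U,F_\mu)_{\mf{m}}\neq 0$ forces $\ker\psi_\lambda\subseteq\mf{m}$, so $\mf{m}=\psi_\lambda^{-1}(\mf{n})$ for a maximal ideal $\mf{n}$ of $B$ with residue field $\Flbar$, and $S(U,F_\mu)_{\mf{m}}=S(U,F_\mu)_{\mf{n}}$. I would then put $\mf{m}':=\psi_{\lambda'}^{-1}(\mf{n})$, again a maximal ideal with residue field $\Flbar$. Now $S(U,F_\mu)_{\mf{m}'}=S(U,F_\mu)_{\mf{n}}\neq 0$; since $\psi_\lambda(T^{(j)}_{\lambda,\tv})$ and $\psi_{\lambda'}(T^{(j)}_{\lambda',\tv})$ differ by the unit $u_{v,j}$, one lies in $\mf{n}$ iff the other does, so $\mf{m}$ avoids all $T^{(j)}_{\lambda,\tv}$ iff $\mf{m}'$ avoids all $T^{(j)}_{\lambda',\tv}$; and $\rbar_{\mf{m}}$ depends only on the images in $B/\mf{n}=\Flbar$ of the $\psi_\lambda(T_w^{(j)})$ (through which the characteristic polynomials of the $\rbar_{\mf{m}}(\Frob_w)$, for $w$ split and not over $T$, are defined, as in section \ref{sec:R=T}), which coincide with the images of the $\psi_{\lambda'}(T_w^{(j)})$, so $\rbar_{\mf{m}}\cong\rbar_{\mf{m}'}$ and $\rbar_{\mf{m}}|_{G_F}\cong\rbar_{\mf{m}'}|_{G_F}$. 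Hence the three conditions hold for $(\lambda,\mf{m})$ exactly when they hold for $(\lambda',\mf{m}')$; this is the asserted independence of the choice of $\lambda$, and the lemma follows. The only real obstacle is the functoriality-in-$W$ comparison of the Hecke actions on $S(U,F_\mu)$ underlying the key step; the rest is formal.
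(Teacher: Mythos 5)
Your argument is correct and is essentially the paper's own: the paper justifies the "(equivalently, any)" in the paragraph preceding the lemma by exactly the observation you elaborate, namely that under the natural identification of $\bb{T}^{T,univ}_{\lambda}$ with $\bb{T}^{T,univ}_{\lambda'}$ the prime-to-$l$ Hecke operators act identically on $S(U,F_\mu)$ while the operators at places dividing $l$ differ only by the ($l$-adically unit) rescaling factors, so the three defining conditions transfer between $(\lambda,\mf{m})$ and $(\lambda',\mf{m}')$. Your write-up simply makes explicit the unit $u_{v,j}$ and the maximal-ideal/localization bookkeeping that the paper leaves as "easy to check".
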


We can then reinterpret Theorem \ref{thm:serre wts; char 0 version}.

\begin{thm}\label{thm:serre wts; char l version}
  Suppose that \[\rhobar:G_{F}\to\GL_n(\Flbar)\] is an 
  irreducible representation satisfying the following assumptions.
  \begin{enumerate}
  \item The representation $\rhobar$ is ordinary and modular (so in
    particular $\rhobar^c=\rhobar^\vee\epsilon^{1-n}$).
  \item The image $\rhobar(G_{F(\zeta_l)})$ is big.
  \item $\overline{F}^{\ker\ad \rhobar}$ does not contain $F(\zeta_l)$.
   \end{enumerate}
  Then $\rhobar$ is modular and ordinary of weight
  $\mu\in(\Z^n_{+})^{\overline{I}_l}$ if and only if for some (equivalently, any) $\lambda\in(\Z^n_{+,res})^{\tilde{I}_l}$ with
 $\pi(\lambda)=\mu$, the following condition holds. 
  \begin{itemize}
  \item For every place $v|l$ of $F^+$, $\rhobar|_{G_{F_{\tilde{v}}}}$ has
    an ordinary crystalline lift of weight $\lambda_{\tilde{v}}$. 
  \end{itemize}

\end{thm}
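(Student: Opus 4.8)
The plan is to deduce this directly from Theorem \ref{thm:serre wts; char 0 version} by unwinding the definition of being modular and ordinary of weight $\mu$ provided by Lemma \ref{lem: defining ordinary modular in char l}. The essential point is that for any $\lambda\in(\Z^n_{+,res})^{\tilde{I}_l}$ with $\pi(\lambda)=\mu$ there is an equivalence of $\prod_{v\in S_l}\GL_n(k(\tilde{v}))$-representations $P_\lambda\cong F_{\pi(\lambda)}=F_\mu$, so that $S(U,F_\mu)$ is identified with $S(U,P_\lambda)$ compatibly with the action of $\mathbb{T}^{T,univ}_\lambda$. Hence the conditions on $U$, $T$ and $\mathfrak{m}$ appearing in Lemma \ref{lem: defining ordinary modular in char l} for this particular $\lambda$ are literally those appearing in the corollary preceding that lemma.

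First I would fix an auxiliary $\lambda\in(\Z^n_{+,res})^{\tilde{I}_l}$ with $\pi(\lambda)=\mu$. By the corollary preceding Lemma \ref{lem: defining ordinary modular in char l}, together with the identification $S(U,F_\mu)=S(U,P_\lambda)$ just recalled, the assertion ``there exist $U$, $T$, $\mathfrak{m}$ with $S(U,F_\mu)_{\mathfrak{m}}\neq 0$, with $\mathfrak{m}$ containing none of the $T^{(j)}_{\lambda,\tilde{v}}$, and with $\rhobar\cong\rbar_{\mathfrak{m}}|_{G_F}$'' is equivalent to ``$\rhobar$ is modular and ordinary of weight $\lambda$'' in the sense of the definition preceding Theorem \ref{thm:serre wts; char 0 version}. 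Since our hypotheses (1)--(3) are exactly hypotheses (1)--(3) of that theorem, Theorem \ref{thm:serre wts; char 0 version} then tells us this holds if and only if, for every place $v|l$ of $F^+$, the representation $\rhobar|_{G_{F_{\tilde{v}}}}$ has an ordinary crystalline lift of weight $\lambda_{\tilde{v}}$. Thus, for each fixed lift $\lambda$ of $\mu$, the condition of Lemma \ref{lem: defining ordinary modular in char l} with that $\lambda$ is equivalent to the displayed local lifting condition for that same $\lambda$.

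It remains to reconcile the ``some (equivalently, any)'' clauses. By definition (Lemma \ref{lem: defining ordinary modular in char l}), $\rhobar$ is modular and ordinary of weight $\mu$ precisely when the condition of that lemma holds for some $\lambda$ with $\pi(\lambda)=\mu$, and the lemma asserts this holds for some such $\lambda$ if and only if it holds for every such $\lambda$. Transporting this equivalence through the chain established in the previous paragraph shows that the local lifting condition (existence of ordinary crystalline lifts of weight $\lambda_{\tilde{v}}$ at all $v|l$) holds for some lift $\lambda$ of $\mu$ if and only if it holds for every lift $\lambda$ of $\mu$, and that either of these is equivalent to $\rhobar$ being modular and ordinary of weight $\mu$. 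This is exactly the assertion of the theorem.

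The argument is pure bookkeeping, so there is no serious obstacle; the only point requiring care is the compatibility, as $\lambda$ varies over lifts of $\mu$, between the Hecke algebras $\mathbb{T}^{T,univ}_\lambda$ and hence between the maximal ideals $\mathfrak{m}$ and the associated residual representations $\rbar_{\mathfrak{m}}$. This is precisely what is recorded in the remark preceding Lemma \ref{lem: defining ordinary modular in char l}: the Hecke operators away from $l$ act identically on $S(U,F_\mu)$ and those above $l$ differ only by units, so non-Eisenstein maximal ideals and their attached Galois representations correspond, and the whole chain of equivalences is genuinely independent of the chosen lift $\lambda$.
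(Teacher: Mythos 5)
Your argument is correct and is essentially the paper's proof: the paper likewise deduces the statement directly from Theorem \ref{thm:serre wts; char 0 version}, Lemma \ref{lem:ordinary component is the socle} (via the corollary identifying ordinarity with nonvanishing of $S(U,P_\lambda)_{\mathfrak{m}}$), and Lemma \ref{lem: defining ordinary modular in char l}, with the independence of the choice of $\lambda$ handled exactly as you describe. Nothing further is needed.
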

\begin{proof}
  This follows at once from Theorem \ref{thm:serre wts; char 0
    version}, Lemma \ref{lem:ordinary component is the socle}, and
  Lemma \ref{lem: defining ordinary modular in char l}.
\end{proof}

\section{$\GSp_4$}\label{sec:gsp4}

\subsection{Definitions}We define $\GSp_4$ to be the reductive group
over $\Z$
defined as a subgroup of $\GL_4$
by \[\GSp_4(R)=\{g\in\GL_4(R):gJ{}^tg=\mu(g)J\}\]
where $\mu(g)$ is the
similitude factor (which is uniquely determined by $g$), and $J$ is
the antisymmetric matrix \[\begin{pmatrix}0& X\\-X&0\end{pmatrix}\]where $X$ is the
$2\times 2$ antidiagonal matrix with all entries on the antidiagonal
equal to $1$. Note that the map $\mu:g\mapsto\mu(g)$ gives a
homomorphism $\GSp_4\to\Gm$.

\begin{lem}\label{lem: symplectic representation valued over trace}Let
  $\Gamma$ be a profinite group, and $S\subset R$ be complete local
  Noetherian rings with $\m_R\cap S=\m_S$ and common residue
  field $k$ of characteristic $>2$. Let $\rho:\Gamma\to\GSp_4(R)$ be a continuous representation. Suppose that $\rho\text{ mod }\m_R$ is absolutely irreducible
  and that $\tr\rho(\Gamma)\subset S$. Then there is a
  $\ker(\GSp_4(R)\to\GSp_4(k))$-conjugate of $\rho$ whose image is
  contained in $\GSp_4(S)$.
  
\end{lem}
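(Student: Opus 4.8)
The plan is to mimic the standard argument for $\GL_n$ (Lemma 2.1.12 of \cite{cht}), pushing it through the symplectic setting by exploiting the fact that $\GSp_4$ sits inside $\GL_4$ together with the relation $g J\,{}^t g = \mu(g) J$. First I would pass to $\rho$ as a representation $\Gamma \to \GL_4(R)$; since $\rho \bmod \m_R$ is absolutely irreducible and $\tr\rho(\Gamma) \subset S$, the classical result (Lemma 2.1.12 of \cite{cht}, or Carayol's lemma) produces a $g_0 \in \ker(\GL_4(R) \to \GL_4(k))$ such that $g_0 \rho g_0^{-1}$ takes values in $\GL_4(S)$. The point to be verified is that $g_0$ can be chosen in $\GSp_4(R)$, so that the conjugate still lands in $\GSp_4(S)$.

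For this, let $\rho' = g_0 \rho g_0^{-1}$, which takes values in $\GL_4(S)$ but a priori no longer respects $J$; instead it respects the bilinear form given by the matrix $J' := \,{}^t g_0^{-1} J g_0^{-1}$, in the sense that $\rho'(\gamma) J' \,{}^t\rho'(\gamma) = \mu(\rho(\gamma)) J'$ for all $\gamma$. Note $J' \in \GL_4(R)$, it is antisymmetric, and $J' \equiv J \bmod \m_R$ (since $g_0 \equiv 1$). The similitude character $\mu \circ \rho : \Gamma \to R^\times$ actually takes values in $S^\times$: indeed $\mu(\rho(\gamma))$ can be recovered from traces of $\rho$ (for instance via $\mu(g)^2 = \det g$ for $g \in \GSp_4$, and $\det$ is a polynomial in traces, or more directly $\mu(g) \cdot \tr\rho(\gamma) = \tr(\mu(g)\rho(\gamma)^{-1})$ type identities), so $\mu \circ \rho$ is $S$-valued. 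Now both $J'$ and $J$ are antisymmetric perfect pairings on $R^4$ preserved up to the same similitude character $\mu\circ\rho$ by the absolutely irreducible representation $\rho' \bmod \m_R$; by Schur's lemma the space of such invariant forms is one-dimensional over $k$, so $J' \equiv c\, J \bmod \m_R$ for a scalar $c \in k^\times$ — but in fact $c = 1$ since $J' \equiv J$.

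The key step is then to produce $h \in \ker(\GL_4(S) \to \GL_4(k))$ with $h J\, {}^t h = J'$; equivalently, to trivialise the difference between the two symplectic forms $J$ and $J'$ over $S$, compatibly with the reduction. This is a standard deformation-theory / smoothness statement: the scheme of symplectic forms on a free rank-$4$ module is a homogeneous space under $\GL_4$ (acting by $h \cdot J = h J\,{}^t h$), and this action is smooth with stabiliser $\Sp_4$; since $J' \equiv J \bmod \m_R$ over the complete local ring $S$ (note $J'$ has entries in $S$, which requires a small check — its entries are polynomials in the entries of $g_0^{-1}$ and of $J$, and one argues that after possibly adjusting $g_0$ within its coset these lie in $S$, or alternatively one works directly over $R$ and descends at the end), smoothness lets us lift the identity congruence to an actual $h$ over $S$ congruent to $1$ mod $\m_S$. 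Then $h^{-1} g_0 \in \GSp_4(R)$: it reduces to $1$ mod $\m_R$, and conjugating the original $J$-relation shows $(h^{-1}g_0)\rho(h^{-1}g_0)^{-1} = h^{-1}\rho' h$ preserves $J$ up to the similitude $\mu\circ\rho$, hence lands in $\GSp_4(S)$, and $h^{-1}g_0 \in \ker(\GSp_4(R) \to \GSp_4(k))$ as required.

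**The main obstacle** I expect is the bookkeeping around which ring the matrix $J'$ has entries in, and correspondingly whether the form-trivialising element $h$ can genuinely be found over $S$ rather than only over $R$. The cleanest route is probably: first trivialise over $R$ to replace $\rho$ by a $\GSp_4(R)$-valued representation whose $J$-form is the standard one and whose trace lies in $S$ (this is automatic), then invoke the $\GL_4$-result to conjugate into $\GL_4(S)$ by $g_0 \equiv 1$, and finally use that the centraliser-in-$\GL_4$ of an absolutely irreducible $\GSp_4$-representation is contained (mod $\m_R$, hence by Hensel/completeness mod $\m_R^n$ for all $n$, hence) in $\GSp_4 \cdot (\text{scalars})$ — a Schur-type argument using that the invariant antisymmetric form is unique up to scalar — to conclude $g_0$ differs from a $\GSp_4(S)$-element by a scalar in $1 + \m_R$, which can be absorbed. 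The characteristic $> 2$ hypothesis enters in guaranteeing that antisymmetric is the same as alternating and that $\det = \mu^2$ can be used to control $\mu$; I would flag exactly where it is used.
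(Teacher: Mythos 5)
Your overall skeleton matches the paper's up to the decisive step: both arguments start from the $\GL_4$ result of \cite{cht} to get $\rho'=g_0\rho g_0^{-1}$ valued in $\GL_4(S)$ with $g_0\equiv 1$, both get $\mu\circ\rho$ valued in $S$ via $(\mu\circ\rho)^2=\det\rho'$ and Hensel's lemma (this is where $\mathrm{char}\,k>2$ enters), and both end with a Schur-type uniqueness of the invariant alternating form to see that the total conjugating matrix lies in $\ker(\GSp_4(R)\to\GSp_4(k))$. The genuine gap is exactly the point you flag as ``the main obstacle'' and then do not close: you need a nondegenerate $\Gamma$-invariant alternating form for $\rho'$ with entries in $S$ (congruent to $J$), but the form you have, $J'={}^tg_0^{-1}Jg_0^{-1}$, only has entries in $R$. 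Your proposed repairs do not work as stated. Adjusting $g_0$ ``within its coset'' only changes $g_0$ by an element of the centraliser of $\rho'$, i.e.\ by a scalar in $R^\times$ (absolute irreducibility), which merely rescales $J'$; and an $R$-line $R\cdot J'$ spanned by a matrix over $R$ need not meet $M_4(S)$ nontrivially when $R$ is not flat over $S$ — e.g.\ with $S=k[[t]]\subset R=k[[t,x]]$, no nonzero $R$-multiple of a matrix with entries $1$ and $x$ lies in $M_4(S)$. Likewise the remark about the centraliser of the image being contained in scalars times $\GSp_4$ says nothing about $g_0$, which is an intertwiner between $\rho$ and $\rho'$, not a centralising element; so the claim that ``$g_0$ differs from a $\GSp_4(S)$-element by a scalar'' is unsubstantiated, and your smoothness-of-the-orbit argument (which is fine in itself) cannot start because it presupposes the $S$-rationality of the form.

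The missing input is a descent-of-conjugacy theorem: the paper applies Th\'eor\`eme 1 of \cite{MR1279611} (Carayol) to the two $S$-valued, residually absolutely irreducible representations $\rho'$ and ${}^t(\rho')^{-1}(\mu\circ\rho)$, which have equal traces since they are conjugate over $R$; this produces an intertwiner $B\in\GL_4(S)$, i.e.\ a nondegenerate invariant form already defined over $S$. Uniqueness of the invariant form up to scalar (Schur, plus $\mathrm{char}\,k\neq 2$ to separate symmetric from antisymmetric) shows $B$ is antisymmetric, and choosing a symplectic basis for $B$ over the local ring $S$ plays the role of your $h$; the final verification that the conjugating matrix lies in $\GSp_4(R)$ and reduces to the identity is then the Schur argument you sketched. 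So your proof becomes correct once you insert Carayol's theorem (or reprove such a trace-determines-conjugacy-over-$S$ statement) at the place where you currently assert that $J'$ can be arranged to have entries in $S$.
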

\begin{proof} By Lemma 2.1.10 of \cite{cht}, we see that
  $\rho$ is $\ker(\GL_4(R)\to\GL_4(k))$-conjugate to a
  representation $\rho'$ valued in $\GL_4(S)$. Now, $(\mu\circ\rho)^2=\det\rho=\det\rho'$ is valued in $S$, which
  by Hensel's lemma means that $\mu\circ\rho$ is valued in $S$. Thus
  ${}^t(\rho')^{-1}(\mu\circ\rho)$ is also valued in
  $\GL_4(S)$. Because $\rho'$ and  ${}^t(\rho')^{-1}(\mu\circ\rho)$
  are conjugate in $\GL_4(R)$ they are also conjugate in $\GL_4(S)$,
  by Th\'{e}or\`{e}me 1 of \cite{MR1279611}. Suppose that $\rho'=B{}^t(\rho')^{-1}(\mu\circ\rho)B^{-1}$. The matrix $B$ is
  antisymmetric (because $\rho$ is symplectic).  By choosing a symplectic basis for the symplectic form
  determined by $B$, we see that $\rho$ is $\GL_4(R)$-conjugate to a
  representation valued in $\GSp_4(S)$, and it is easy to check that
  one may choose the symplectic basis so that the conjugating matrix
  is in $\ker(\GL_4(R)\to\GL_4(k))$. It remains to check that
the conjugating matrix is also in $\GSp_4(R)$; but this is an immediate consequence of
  Schur's lemma.
\end{proof}

\subsection{Symplectic lifting rings (local case)}

Fix as before a finite field $k$
of characteristic $l>2$, and a finite totally ramified extension $K$ of $W(k)[1/l]$ with
ring of integers $\bigO$. Let the maximal ideal of $\bigO$ be $\mf{m}_K=(\pi_K)$.
Let $M$ be a finite extension of $\bb{Q}_p$ for some prime $p$,
possibly equal to $l$. In the case where $p=l$, we assume that $K$ contains the image of
every embedding of $M$ into $\overline{K}$. Let 
\[\rhobar: G_M \to \GSp_4(k)\] 
be a continuous representation. Since $\GSp_4$ is an algebraic
subgroup of $\GL_4$, we can also view it as a representation to
$\GL_4(k)$. Then there is a universal
$\mc{O}$-lifting 
\[\rho^{\sq}:G_M \to\GL_4(R^{\sq}_{\rhobar}),\]
and it is immediate that there is a quotient $R^{\sq,sympl}_{\rhobar}$ of
$R^{\sq}_{\rhobar}$ and a universal symplectic
lifting 
\[\rho^{\sq,sympl}:G_M \to\GSp_4(R^{\sq,sympl}_{\rhobar}).\]

We will need to study
certain refined lifting problems. Suppose that $p=l$. Let $\lambda$ be an element of $(\bb{Z}^{4}_+)^{\Hom(M,K)}$ and
let $\mathbf{v}_{\lambda}$ be the associated $l$-adic Hodge type (see
section \ref{subsubsec: the case where p=l}). 
Corollary 2.7.7 of \cite{kisinpst} shows that there is a unique
$l$-torsion-free quotient
$R^{sympl,\mathbf{v}_{\lambda},cr}_{\rhobar}$ of
$R^{\sq,sympl}_{\rhobar}$ with the property that for any finite
$K$-algebra $B$, a homomorphism of $\bigO$-algebras
$R^{\sq}_{\rhobar}\to B$ factors through
$R^{sympl,\mathbf{v}_{\lambda},cr}_{\rhobar}$ if and only if the
corresponding representation is crystalline of $l$-adic Hodge type
$\mathbf{v}_{\lambda}$ (where as usual we define a homomorphism
$G_M\to\GSp_4(B)$ to be crystalline of a particular Hodge type if and
only if the same is true of the composite homomorphism to $\GL_4(B)$).

The following discussion will be useful below. Let $E$ be a finite
extension of $K$ and let $\mc{C}_E$ be the category of finite, local
$E$-algebras with residue field $E$. If $B$ is an object of
$\mc{C}_E$, a \emph{symplectic $B$-module} is a pair $(V_B,\alpha_B)$ 
where $V_B$ is a free $B$-module of rank 4 with a continuous
action of $G_M$ and $\alpha_B$ is a symplectic pairing $ V_B \times V_B
\rightarrow B$ satisfying
\[ \alpha_B(gx,gy)= \psi_B(g)\alpha_B(x,y) \] for all $x,y \in V_B$
and $g \in G_M$, for some continuous character $\psi_B : G_M
\rightarrow B^\times$.  A \emph{symplectic basis} of such a pair
$(V_B,\alpha_B)$ is a basis $\beta_B=\{ e_1,e_2,e_3,e_4\}$ of $V_B$
where the matrix $(\alpha_B(e_i,e_j))$ equals $\lambda J$ for some
$\lambda \in B^\times$. Two symplectic $B$-modules $(V_B,\alpha_B)$ and $(V_B^{\prime},\alpha_B^{\prime})$ are isomorphic if $\psi_B = \psi_B^{\prime}$ and there is an isomorphism of $B[G_M]$-modules $V_B \cong V_B^{\prime}$ under which $\alpha_B^{\prime}$ pulls back to $\alpha_B$.

Fix a symplectic $E$-module $(V_E,\alpha_E)$ together with a
symplectic basis $\beta_E$.  A \emph{deformation} of $(V_E,\alpha_E)$
to an object $B$ of $\mc{C}_E$ is a triple $(V_B,\alpha_B,\iota_B)$
where $(V_B,\alpha_B)$ is a symplectic $B$-module and $\iota_B$ is an
isomorphism $(V_B \otimes_B B/\mf{m}_B, \alpha_B \otimes_{B}
B/\mf{m}_B)\cong (V_E,\alpha_E)$ of symplectic $E$-modules. A
\emph{framed deformation} of $(V_E,\alpha_E,\beta_E)$ is a deformation
$(V_B,\alpha_B,\iota_B)$ together with a symplectic basis $\beta_B$ of
$(V_B,\alpha_B)$ reducing to $\beta_E$ under $\iota_B$. Let $\rho_E :
G_M \rightarrow \GSp_4(E)$ be the matrix of $V_E$ with respect to
$\beta_E$. For an object $B$ of $\mc{C}_E$ there is a natural
bijection between the set of isomorphism classes of framed
deformations of $(V_E,\alpha_E,\beta_E)$ to $B$ and the set of lifts
$\rho_B : G_M \rightarrow \GSp_4(B)$: the class of a framed
deformation $(V_B,\alpha_B,\beta_B)$ corresponds to the matrix
representation of $V_B$ with respect to the basis $\beta_B$.
Similarly, there is a natural bijection between the set of isomorphism
classes of deformations of $(V_B,\alpha_B)$ to $B$ and the set of
\emph{deformations of $\rho_E$} to $B$, that is,
$\ker(\GSp_4(B)\rightarrow \GSp_4(E))$-conjugacy classes of lifts
$\rho_B : G_M \rightarrow \GSp_4(B)$ of $\rho_E$: the class of a
deformation $(V_B,\alpha_B)$ corresponds to the conjugacy class of the
matrix representation of $V_B$ with respect to any symplectic basis
$\beta_B$ lifting $\beta_E$.

Suppose that $(V_B,\alpha_B)$ is a \emph{crystalline} symplectic
$B$-module and let $D_{B}:=D_{\cris}(V_B)=(V_B \otimes_{\bb{Q}_l}
B_{\cris})^{G_M}$ be the associated weakly admissible filtered
$\varphi$-module. Let $D_{\psi_B}=D_{\cris}(\psi_B)$.
There is an associated alternating pairing
\[D(\alpha_B): D_{B} \times D_{B} \rightarrow D_{\psi_B} \]
which is a map of filtered $\varphi$-modules and is non-degenerate in
the sense that it induces an isomorphism $D_{B} \rightarrow \Hom(D_{B},D_{\psi_B})$.
This pairing is defined by taking the $B_{\cris}$-linear extension of $\alpha_B$
 to
$V_{B}\otimes_{\bb{Q}_l} B_{\cris}$ and then taking
$G_M$-invariants. 
Suppose in addition that $V_B$ has $l$-adic Hodge type
$\mathbf{v}_{\lambda}$. Let $\tau : M \into K$ be an embedding and let
$D_{B,\tau}=(D_{B}\otimes_{M_0}M)\otimes_{B\otimes M,1\otimes \tau}B$
and $D_{\psi_B,\tau}=(D_{\psi_B}\otimes_{M_0}M)\otimes_{B\otimes
  M,1\otimes \tau}B$. Then $D(\alpha_B)$ induces a symplectic pairing
$D_{B,\tau}\times D_{B,\tau} \rightarrow D_{\psi_B,\tau}$.
For $j=1,\ldots,4$, let $i_j =
\lambda_{\tau,j}+(4-j)$ be the Hodge-Tate weights of $B$ with respect
to $\tau$. Let $i_{\psi}$ be the Hodge-Tate weight of $\psi_B$ with
respect to $\tau$. Then $i_{\psi}=i_1+i_4=i_2+i_3$ since $V_{B} \cong \Hom_{B}(V_{B},\psi_B)$. Let
$\Fil^{i}$ be the filtration on $D_{B,\tau}$.
 In order for $D(\alpha_B)$ to respect filtrations
and to be non-degenerate we must have
$D(\alpha_B)(\Fil^{i_1},\Fil^{i_3})=\{0\},D(\alpha_B)(\Fil^{i_2},\Fil^{i_3})=D_{\psi_B,\tau}$
and $D(\alpha_B)(\Fil^{i_1},\Fil^{i_4})=D_{\psi_B,\tau}$. 
In other
words, we can find a symplectic basis $e_1,e_2,e_3,e_4$ for
$D_{B,\tau}$ such that $\Fil^{i_j}=Be_1 + \ldots + Be_j$ for
$j=1,\ldots,4$.

We define a \emph{symplectic filtered $\varphi$-module} over an
object $B$ in $\mc{C}_E$ to be a pair $(D_B,D(\alpha_B))$ consisting
of a weakly admissible rank 4 filtered
$\varphi$-module $D_B$ over $B\otimes_{\bb{Q}_l}M_0$ and an
alternating, non-degenerate morphism of filtered $\varphi$-modules
\[ D(\alpha_B) : D_B \times D_B \rightarrow D_{\psi_B} \]
where $D_{\psi_B}$ is a weakly admissible rank 1 filtered
$\varphi$-module over $B\otimes_{\bb{Q}_l}M_0$. There is an obvious notion of isomorphism between
symplectic filtered $\varphi$-modules and also an obvious notion of a
deformation of a symplectic filtered $\varphi$-module over $E$ to an
object $B$ of $\mc{C}_E$.
The functors $D_{\cris}$ and $V_{\cris}$ are quasi-inverse equivalences of categories between
the category of crystalline symplectic $B$-modules and the category
of symplectic filtered $\varphi$-modules over $B$ (all morphisms in
these categories are isomorphisms).

Suppose now that $M$ is a finite extension of $\Qp$, $p\neq l$. Then
it is easy to check (for example by considering the Weil-Deligne
representation corresponding to the universal lifting) that the
inertial type at a closed point of the generic fibre is an invariant
of the irreducible components of $R^{\sq,sympl}_{\rhobar}[1/l]$. Thus
for any $4$-dimensional inertial type $\tau$ of $I_M$ which is defined
over $K$, there is a unique reduced $l$-torsion-free quotient
$R^{sympl,\tau}_{\rhobar}$ of $R^{\sq,sympl}_{\rhobar}$, corresponding
to a union of irreducible components of
$R^{\sq,sympl}_{\rhobar}[1/l]$, with the property that for any finite
extension $L$ of $K$, a homomorphism of $\bigO$-algebras
$R^{\sq,sympl}_{\rhobar}\to L$ factors through
$R^{sympl,\tau}_{\rhobar}$ if and only if the corresponding lifting of
$\rhobar$ (considered as a representation to $\GL_4(L)$) has type
$\tau$.

In applications we will fix the similitude character of our
deformations. To this end, fix a character
$\psi:G_M \to\bigO^\times$ lifting the character $\mu\circ\rhobar$
which is crystalline if $p=l$.
Let $R^{\sq,sympl,\psi}_{\rhobar}$ denote the universal lifting ring for
lifts with similitude factor $\psi$. Similarly, we let
$R^{sympl,\tau,\psi}_{\rhobar}$,
$R^{sympl,\mathbf{v}_{\lambda},cr,\psi}_{\rhobar}$ denote the
corresponding quotients of $R^{\sq,sympl,\psi}_{\rhobar}$. 

Let $\ad\rhobar$ denote the Lie algebra of $\GSp_4$ over $k$, and
$\ad^0\rhobar$ the Lie algebra of $\Sp_4$. These
have a natural action of $G_{M}$ via $\rhobar$ and the adjoint
action of $\GSp_4(k)$, and are respectively $11$-dimensional and
$10$-dimensional $k$-vector spaces.

We have the following result on the dimensions of these local
lifting rings.

\begin{prop}\label{prop: dimension of local symplectic deformation rings}
  Let $M$ be a finite extension of $\Qp$. If $p\neq l$, and $\tau$ is
  such that the ring $R^{sympl,\tau,\psi}_{\rhobar}$ is non-zero, then any
  irreducible component of $R^{sympl,\tau,\psi}_{\rhobar}$ has dimension at
  least $11$. If $p=l$ 
    and $\mathbf{v}_{\lambda}$ is such that 
  $R^{sympl,\mathbf{v}_{\lambda},cr,\psi}_{\rhobar}$ is non-zero, then
  this ring is
  equidimensional of dimension $11+4[M:\Ql]$.
\end{prop}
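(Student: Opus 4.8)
The plan is to treat the two cases by the same strategy as the $\GL_n$-statements in \S\ref{subsubsec: the case where p=l}, namely (the analogue of) Theorem 2.0.6 of \cite{gee061} when $p\ne l$ and (the analogue of) Theorem 3.3.8 of \cite{kisinpst} when $p=l$, keeping track throughout of the symplectic form and of the fixed similitude character $\psi$. The unifying observation is that, since we fix $\psi$, the relevant adjoint Galois module is not $\ad\rhobar$ (the Lie algebra of $\GSp_4$, of dimension $11$) but $\ad^0\rhobar$ (the Lie algebra of $\Sp_4$, of dimension $10$): an infinitesimal symplectic lift of similitude $\psi$ is a cocycle valued in $\ad^0\rhobar$.

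For $p\ne l$, write $h^i:=\dim_k H^i(G_M,\ad^0\rhobar)$. By the standard obstruction-theoretic presentation of a framed lifting ring (see \S2.2 of \cite{cht}), $R^{\sq,sympl,\psi}_{\rhobar}$ is a quotient of $\bigO[[x_1,\dots,x_g]]$ with $g=\dim_k\ad^0\rhobar-h^0+h^1$ by an ideal that can be generated by $h^2$ elements. Krull's height theorem then shows that every irreducible component of $\Spec R^{\sq,sympl,\psi}_{\rhobar}$ has dimension at least $1+g-h^2=1+\dim_k\ad^0\rhobar+(h^1-h^2-h^0)$, and $h^1-h^2-h^0=0$ by the local Euler characteristic formula (valid since $l\ne p$), so this lower bound equals $1+\dim_k\ad^0\rhobar=11$. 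Now $R^{sympl,\tau,\psi}_{\rhobar}$ is by construction a reduced $l$-torsion-free quotient of $R^{\sq,sympl,\psi}_{\rhobar}$ cutting out a union of irreducible components of $\Spec R^{\sq,sympl,\psi}_{\rhobar}[1/l]$, and it is non-zero by hypothesis; each of its irreducible components is therefore the closure of the generic fibre of an $\bigO$-flat irreducible component of $\Spec R^{\sq,sympl,\psi}_{\rhobar}$, which has dimension $\ge 11$, so that generic fibre has dimension $\ge 10$ and its closure has dimension $\ge 11$, as required.

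For $p=l$ the plan is to run the argument of \cite{kisinpst} in the symplectic setting, using the equivalence of categories set up above between crystalline symplectic $B$-modules with similitude $\psi$ and symplectic filtered $\varphi$-modules. As in Corollary 2.7.7 of \cite{kisinpst}, $R^{sympl,\mathbf{v}_{\lambda},cr,\psi}_{\rhobar}$ is reduced and $l$-torsion free, hence $\bigO$-flat, so it suffices to show that $\Spec R^{sympl,\mathbf{v}_{\lambda},cr,\psi}_{\rhobar}[1/l]$ is formally smooth over $K$ and equidimensional of dimension $10+4[M:\Ql]$; adding back the $\bigO$-direction then gives Krull dimension $11+4[M:\Ql]$. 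By the equivalence of categories, the completed local ring at a closed point pro-represents the deformation functor of the associated symplectic filtered $\varphi$-module with fixed $\psi$, and its tangent space splits into: deformations of the isocrystal $(D,\varphi)$ compatible with the pairing, an unobstructed problem contributing $\dim\Sp_4=10$; and deformations of the Hodge filtration, which (by the computation above producing a $\varphi$-stable symplectic basis compatible with the filtration) must be a complete isotropic flag of the prescribed jump type and hence vary in the flag variety of $\Sp_4$, smooth of dimension $4$ for each of the $[M:\Ql]$ embeddings $M\hookrightarrow K$; weak admissibility is an open condition (\cite{kisinpst}). This yields the formal smoothness and the dimension count. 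Equivalently, one may identify $\Spec R^{sympl,\mathbf{v}_{\lambda},cr,\psi}_{\rhobar}[1/l]$ with the locus inside $\Spec R^{\mathbf{v}_{\lambda},cr}_{\rhobar}[1/l]$ on which the representation is symplectic with similitude $\psi$, a formally smooth subscheme of codimension $6+2[M:\Ql]$, using $\dim\GL_4-\dim\Sp_4=6$ and that the full flag varieties of $\GL_4$ and of $\Sp_4$ differ in dimension by $2$; this gives $16+6[M:\Ql]-(6+2[M:\Ql])=10+4[M:\Ql]$.

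The main obstacle is the $p=l$ case: pushing Kisin's deformation-theoretic argument through the symplectic condition with fixed similitude, and in particular checking that the resulting locus is formally smooth (not merely equidimensional) of the stated codimension, requires care with the compatibility between the alternating pairing, Frobenius, and the Hodge filtration on filtered $\varphi$-modules, along the lines of the discussion preceding the statement. The $p\ne l$ case is, by contrast, a routine presentation-and-Krull argument once one records that the operative adjoint module is $\ad^0\rhobar\cong\mf{sp}_4$.
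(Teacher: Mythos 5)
Your $p\neq l$ argument is the paper's own: a Mazur-style presentation of $R^{\sq,sympl,\psi}_{\rhobar}$ over $\bigO$ with tangent space $Z^1(G_M,\ad^0\rhobar)$ of dimension $\dim_k\ad^0\rhobar-h^0+h^1$, at most $h^2$ relations, the local Euler characteristic, and then the passage to $R^{sympl,\tau,\psi}_{\rhobar}$ through irreducible components of the generic fibre. That half is complete and correct.

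The $p=l$ half, which you yourself identify as the main obstacle, has a genuine gap. First, the completed local ring of $\Spec R^{sympl,\mathbf{v}_{\lambda},cr,\psi}_{\rhobar}[1/l]$ at a closed point pro-represents the functor of crystalline \emph{liftings} of $\rho_E$ with similitude $\psi$ (equivalently, framed deformations of $(V_E,\alpha_E,\beta_E)$), not the deformation functor of the associated symplectic filtered $\varphi$-module; your sketch conflates the two. Second, the asserted tangent-space splitting --- ``deformations of the isocrystal contribute $\dim\Sp_4=10$, the filtration contributes $4[M:\Ql]$'' --- is not a correct decomposition: deforming the matrix of $\varphi$ in a fixed basis gives $10[M_0:\Ql]$ parameters, not $10$, and in the correct accounting the $10$ comes from the Galois-side framing, which contributes only $\dim_E\bigl(\mf{g}^{\circ}/(\mf{g}^{\circ})^{G_M}\bigr)$, where $\mf{g}^{\circ}$ is the Lie algebra of $\Sp_4(E)$. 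Whenever $\rho_E$ has nontrivial automorphisms this is strictly less than $10$, and this is exactly the case of interest, since the ordinary points the paper needs are reducible. The paper's proof closes this by writing the framed tangent dimension as $\dim_E\bigl(\mf{g}^{\circ}/(\mf{g}^{\circ})^{G_M}\bigr)+\dim_E D_{\rho_E}(E[\varepsilon])$, identifying $D_{\rho_E}(E[\varepsilon])$ with deformations of $(D_E,D(\alpha_E))$, computing those via the surjection $\mf{g}^{\circ}_{D_E}\oplus\mf{g}_{D_{E,M}}/\mf{b}_{D_{E,M}}\onto D_{D_E}(E[\varepsilon])$ whose kernel is $(\mf{g}^{\circ}_{D_E})^{\varphi=1,\Fil}$, and cancelling the two correction terms via the isomorphism $(\mf{g}^{\circ})^{G_M}\cong(\mf{g}^{\circ}_{D_E})^{\varphi=1,\Fil}$ induced by $D_{\cris}$. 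This bookkeeping is the actual content of the dimension count and is absent from your sketch; your pieces happen to sum to the right total only because the two omitted correction terms cancel. Similarly, formal smoothness does not follow from ``weak admissibility is open'' plus unobstructedness of the isocrystal: one must lift, across a small extension $B\onto B/I$, the Frobenius as an element of $\GSp_4(B\otimes_{\Ql}M_0)$ with the \emph{same} similitude factor, and the filtration compatibly with the pairing (using the adapted symplectic basis produced in the discussion preceding the statement). Your alternative ``codimension $6+2[M:\Ql]$ inside $\Spec R^{\mathbf{v}_{\lambda},cr}_{\rhobar}[1/l]$'' computation simply asserts the smoothness and codimension that are precisely what must be proved.
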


\begin{proof}
  Firstly, suppose $p=l$ and let $X= \Spec
  R^{sympl,\mathbf{v}_{\lambda},cr,\psi}_{\rhobar}$. Let $x$ be a closed point of $X[1/l]$
  with residue field $E$. It suffices to show that the completed local
  ring $\mc{O}_{X,x}^{\wedge}$ is formally smooth over $E$ of
  dimension $10+4[M:\bb{Q}_l]$. We first establish formal
  smoothness. Let $\rho_{E} : G_M \rightarrow \GSp_4(E)$ be the
  representation associated to $x$.  Let $B$ denote a finite local
  $E$-algebra with residue field $E$ and let $I$ be an ideal of $B$
  with $\mf{m}_{B}I=\{0\}$. Let $\zeta :
  R^{sympl,\mathbf{v},cr,\psi}_{\rhobar} \rightarrow B/I$ be an $\mc{O}$-algebra
  homomorphism corresponding to a crystalline lift $\rho_{B/I} : G_M
  \rightarrow \GSp_4(B/I)$ of $\rho_E$.
  We need to show that we can lift $\zeta$ to $B$, or equivalently,
  that we can find a crystalline lift $G_M \rightarrow \GSp_4(B)$ of
  $\rho_{B/I}$ with similitude character $\psi$.

  Let $V_{B/I}=(B/I)^4$ regarded as $G_M$-module via $\rho_{B/I}$ and
  let $\alpha_{B/I} : V_{B/I} \times V_{B/I} \rightarrow (B/I)(\psi)$ be the
  symplectic pairing associated to the matrix $J$ (that is,
  $\alpha_{B/I}(x,y)={}^tx J y$ where $x$ and $y$ are regarded as column
  vectors).  
    Let $(D_{B/I},D(\alpha_{B/I}))$ be the symplectic, filtered
  $\varphi$-module over $B/I$ associated to $(V_{B/I},\alpha_{B/I})$.
  To construct the required lift of $\rho_{B/I}$, it suffices (by applying
  $V_{\cris}$) to construct a symplectic filtered
  $\varphi$-module $(D_B,D(\alpha_B))$ over $B$ (with $D(\alpha_B)$
  valued in $B \otimes_{E} D_{\cris}(E(\psi))$) lifting $(D_{B/I},D(\alpha_{B/I}))$.

  Let $b$ be an $E\otimes_{\bb{Q}_l}M_0$-generator of $D_{\psi}:=D_{\cris}(E(\psi))$.
  Choose a $(B/I) \otimes_{\bb{Q}_l} M_0$-basis $e_1,e_2,e_3,e_4$ for
  $D_{B/I}$ so that the matrix $(D(\alpha_{B/I})(e_i,e_j))$ is $(1\otimes b)J
  \in M_{4\times 4}((B/I)\otimes_{E}D_{\psi})$. The matrix
  $M_{\varphi}$ of $\varphi$ with respect to this basis is an element
  of $\GSp_4((B/I)\otimes_{\bb{Q}_l} M_0)$ with similitude factor
  `$\varphi(b)/b$'$\in (E\otimes_{\bb{Q}_l}M_0)^{\times}\subset ((B/I)\otimes_{\bb{Q}_l}M_0)^{\times}$.  Let
  $\wt{M}_{\varphi}$ be a lifting of this matrix to an element of
  $\GSp_4(B\otimes_{\bb{Q}_l}M_0)$ with the same similitude factor.
  Let $D_{B}$ be the free $B\otimes_{\bb{Q}_l} M_0$-module on
  generators $\wt{e}_1,\wt{e}_2,\wt{e}_3,\wt{e}_4$. Endow it with the
  symplectic form $D(\alpha_{B}): D_B \times D_B \to B\otimes_E D_{\psi}$ defined by
  $(D(\alpha_B)(\wt{e}_i,\wt{e}_j))=(1\otimes b)J$. Let $\wt\varphi$
  be the $\varphi_0$-semilinear automorphism of $D_{B}$ whose matrix
  with respect to the basis $\wt{e}_{i}$ is $\wt{M}_{\varphi}$. Now
  choose a filtration on $D_{B}\otimes_{M_0} M$ lifting the
  filtration on $D_{B/I} \otimes_{M_0}M$ and such that $D(\alpha_B)$
  becomes a map of filtered $\varphi$-modules.
  Then $D_{B}$ becomes a weakly admissible
  filtered $\varphi$-module and we have shown that $\mc{O}_{X,x}^{\wedge}$ is formally smooth over $E$.

  We now determine the relative dimension $d$ of $\mc{O}_{X,x}^{\wedge}$
  over $E$. Let $\mf{g}$ denote the Lie algebra of $\GSp_4(E)$ and
  $\mf{g}^\circ$ the Lie algebra of $\Sp_4(E)$.
 Let $D^{\square}_{\rho_E}(E[\varepsilon])$ (resp.\
 $D_{\rho_E}(E[\varepsilon])$) denote the set of crystalline lifts
 (resp.\ deformations) $G_M \rightarrow \GSp_4(E[\varepsilon])$ of
 $\rho_E$ with similitude character $\psi$. These sets are naturally
 $E$-vector spaces. Since the natural map
 $D_{\rho_E}^{\square}(E[\varepsilon])\onto
 D_{\rho_E}(E[\varepsilon])$ is a $\mf{g}^{\circ}/(\mf{g}^{\circ})^{G_M}$-torsor, we have
 \[ d = \dim_E D^{\square}_{\rho_E}(E[\varepsilon]) = \dim_E \left(
   \mf{g}^{\circ}/ (\mf{g}^{\circ})^{G_M} \right) + \dim_E
 D_{\rho_E}(E[\varepsilon]). \] 
 Let $D_{D_E}(E[\varepsilon])$ denote the set of equivalence classes
 of deformations $(D,D(\alpha))$ to $E[\varepsilon]$ of the symplectic filtered $\varphi$-module
 $(D_{E},D(\alpha_E))$ where the pairing $D(\alpha)$ takes values in
 $E[\varepsilon]\otimes_E D_{\psi}$. By the discussion
 preceding the proposition, we see that there is a natural bijection between
 $D_{\rho_E}(E[\varepsilon])$ and $D_{D_E}(E[\varepsilon])$.

 Choose any deformation $(D,D(\alpha))$ in
 $D_{D_E}(E[\varepsilon])$. Given any other such deformation
 $(D^{\prime},D(\alpha)^{\prime})$, we can choose an isomorphism of
 $E[\varepsilon]\otimes_{\bb{Q}_l}M_0$-modules $j:
 D^{\prime} \rightarrow D$ taking $D(\alpha)$ to
 $D(\alpha)^{\prime}$. Let $\varphi$ denote the $\varphi$-operator on
 $D$ and $\Fil$ the filtration on $D\otimes_{M_0} M$.
 Let $\varphi^{\prime}$ denote the operator on
 $D$ corresponding under $j$ to the $\varphi$-operator on
 $D^{\prime}$. Similarly, let $\Fil^{\prime}$ denote the filtration on
 $D\otimes_{M_0}M$ corresponding under $j$ to the filtration on
 $D^{\prime}\otimes_{M_0}M$. Choose an isomorphism of
 $E[\varepsilon]\otimes_{\bb{Q}_l}M_0$ modules between $D$ and
 $D_{E}\otimes_{E}E[\varepsilon]$ which identifies $D(\alpha)$ with
 $D(\alpha_E)\otimes 1$.
Let $\mf{g}_{D_E}$ and $\mf{g}_{D_E}^{\circ}$
 denote the Lie algebras of $\GSp(D_{E},D(\alpha_E))$ and
 $\Sp(D_E,D(\alpha_E))$ respectively. Similarly, let $\mf{g}_{D_{E,M}}$
 denote the Lie algebra of $\GSp(D_{E}\otimes_{M_0}M,
 \alpha_{E}\otimes 1)$. Let $\mf{b}_{D_{E,M}}$ denote the Lie algebra of
 the Borel subgroup of $\GSp(D_{E}\otimes_{M_0}M,\alpha_E \otimes 1)$
 which stablises the filtration on $D_{E}\otimes_{M_0}M$.
 Then there exists $X \in \mf{g}_{D_E}^{\circ}$ and
 $Y \in \mf{g}_{D_{E,M}}$ such that $\varphi^{\prime}=(1+\varepsilon
 X)\varphi$ and $\Fil^{\prime} = (1+\varepsilon Y)\Fil$. Moreover, any
 such pair $X,Y$ gives rise to a deformation of $(D_{E},D(\alpha_E))$
 and we get a surjective linear map
\[ \mf{g}_{D_E}^{\circ} \oplus \mf{g}_{D_{E,M}}/\mf{b}_{D_{E,M}} \onto D_{D_E}(E[\varepsilon]).\]
The kernel of this map is the image of the map
\[ \mf{g}_{D_E}^{\circ} \rightarrow  \mf{g}_{D_E}^{\circ} \oplus
\mf{g}_{D_{E,M}}/\mf{b}_{D_{E,M}} \]
sending $Z$ to the pair $(Z - \varphi \circ Z \circ \varphi^{-1},
Z)$.
Denote the
kernel of this last map by
$(\mf{g}_{D_E}^{\circ})^{\varphi=1,\Fil}$. We have shown that
\[ d = \dim_E \left( \mf{g}^{\circ}/(\mf{g}^{\circ})^{G_M}\right) +
\dim_{E} \mf{g}_{D_{E,M}}/\mf{b}_{D_{E,M}} + \dim_E (\mf{g}_{D_E}^{\circ})^{\varphi=1,\Fil}.\]
The result now follows from the fact that $\dim_E \mf{g}^{\circ}=10$,
$\dim_{E} \mf{g}_{D_{E,M}}/\mf{b}_{D_{E,M}}=4[M:\bb{Q}_l]$ and
  $
(\mf{g}^{\circ})^{G_M} \cong  
(\mf{g}_{D_E}^{\circ})^{\varphi=1,\Fil}$ via $D_{\cris}$.

Now suppose that $p\neq l$. In this case we only need to establish a
lower bound on the dimension, and we do this by means of a slight
variant of Mazur's lower bound for the dimension of an unrestricted
deformation ring (see Proposition 2 of \cite{MR1012172}). Note that by
the construction of the ring $R^{sympl,\tau,\psi}_{\rhobar}$, we need only show
that each irreducible component of $R^{\sq,sympl,\psi}_{\rhobar}$ has dimension at
least 11.

Let $\m^{sympl}$ denote the maximal ideal of $R^{\sq,sympl,\psi}_{\rhobar}$. Then
$R^{\sq,sympl,\psi}_{\rhobar}$ is the quotient of a power series ring over $\bigO$ in
$\dim_k \m^{sympl}/((\m^{sympl})^2,\pi_K)$ variables. The argument of
the proof of Lemma 4.1.1 of \cite{MR2459302} shows that it is
necessary to quotient out by at most $\dim_k H^2(G_M,\ad^0\rhobar)$
relations. Thus every component of $R^{\sq,sympl,\psi}_{\rhobar}$ has
dimension at least \[1+\dim_k \m^{sympl}/((\m^{sympl})^2,\pi_K) -
\dim_k H^2(G_M,\ad^0\rhobar).\] Now, $
\m^{sympl}/((\m^{sympl})^2,\pi_K)$ is dual to the tangent
space \[D^{\sq,sympl}(k[\epsilon]/(\epsilon^2)),\]where $D^{\sq,sympl}$
is the functor represented by $R^{\sq,sympl,\psi}_{\rhobar}$. The elements of this
space are $1$-cocycles in $Z^1(G_M,\ad^0\rhobar)$, so we see that \begin{align*}\dim_k
\m^{sympl}/((\m^{sympl})^2,\pi_K)&=\dim_k Z^1(G_M,\ad^0\rhobar)\\ & =
\dim_k H^1(G_M,\ad^0\rhobar)+\dim_k\ad^0\rhobar-\dim_k
H^0(G_M,\ad^0\rhobar).\end{align*} Thus every component of $R^{\sq,sympl,\psi}_{\rhobar}$ has
dimension at least \begin{align*}1+\dim_k H^1(G_M,\ad^0\rhobar)+\dim_k\ad^0\rhobar-\dim_k
H^0(G_M,\ad^0\rhobar) -\dim_k H^2(G_M,\ad^0\rhobar)\\ = 1+\dim_k
\ad^0\rhobar\\=11\end{align*} by the local Euler
characteristic formula, as required.
\end{proof}

\begin{remark}
  It is presumably possible to use the techniques of \cite{kisinpst} to prove that if $p\neq l$, and $\tau$ is
  such that the ring $R^{sympl,\tau,\psi}$ is non-zero, then it is
  equidimensional of dimension $11$. As we do not need this result we
  have not attempted to verify this.
\end{remark}

The following lemma can be proved in exactly the same way as Lemma 3.3.3
of \cite{ger}.

\begin{lem}
\label{lem: dimension of symplectic ordinary crystalline rings}
  Let $M$ be a finite extension of $\bb{Q}_l$. There is a quotient $R^{sympl,\triangle_{\lambda},cr,\psi}_{\rhobar}$ of
  $R^{sympl,\mathbf{v}_{\lambda},cr,\psi}_{\rhobar}$ corresponding to a union of
  irreducible components such that for any finite local $K$-algebra $B$, a homomorphism of $\mc{O}$-algebras $\zeta :
  R^{sympl,\mathbf{v}_{\lambda},cr,\psi}_{\rhobar} \rightarrow B$ factors through
  $R^{sympl,\triangle_{\lambda},cr,\psi}_{\rhobar}$ if and only if $\zeta \circ
  \rho^{\square}$ is ordinary of weight $\lambda$.
\end{lem}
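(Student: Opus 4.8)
The plan is to follow, essentially verbatim, the construction of $R^{\triangle_{\lambda},cr}_{\rhobar}$ carried out above in Lemma~\ref{lem:ordinary crystalline ring for GL_n} (which in turn follows Lemma 3.3.3 of \cite{ger}), working throughout with the symplectic crystalline lifting ring $R^{sympl,\mathbf{v}_{\lambda},cr,\psi}_{\rhobar}$ in place of $R^{\mathbf{v}_{\lambda},cr}_{\rhobar}$. Concretely, let $\mathcal{F}$ be the flag variety over $\Spec\mathcal{O}$ introduced in Section~\ref{subsubsec: the case where p=l} (with $n=4$), and let $\mathcal{G}$ be the closed subscheme of $\mathcal{F}\times_{\Spec\mathcal{O}}\Spec R^{sympl,\mathbf{v}_{\lambda},cr,\psi}_{\rhobar}$ cut out by the conditions that the universal flag (i) be stable under the action of $G_M$ coming from $\rho^{\square}$ and (ii) have $I_M$ act on $\gr_j$ via $\chi^{\lambda}_j$ for each $j=1,\ldots,4$; that this locus is closed is proved just as in Lemma 3.1.2 of \cite{ger}. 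We then let $R^{sympl,\triangle_{\lambda},cr,\psi}_{\rhobar}$ be the quotient of $R^{sympl,\mathbf{v}_{\lambda},cr,\psi}_{\rhobar}$ corresponding to the scheme-theoretic image of $\mathcal{G}[1/l]$.

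Next I would verify the moduli description. For a finite local $K$-algebra $B$, a $B$-point of $\mathcal{G}$ lying over a homomorphism $\zeta\colon R^{sympl,\mathbf{v}_{\lambda},cr,\psi}_{\rhobar}\to B$ is precisely a full $G_M$-stable flag of $B^4$ on whose graded pieces $I_M$ acts via the $\chi^{\lambda}_j$ --- that is, exactly the data exhibiting $\zeta\circ\rho^{\square}$ as ordinary of weight $\lambda$ in the sense of Definition~\ref{defn:ordinary gal rep} (the potential semistability of the graded pieces being automatic since $\zeta\circ\rho^{\square}$ is already crystalline of Hodge type $\mathbf{v}_{\lambda}$). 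So if $\zeta$ factors through $R^{sympl,\triangle_{\lambda},cr,\psi}_{\rhobar}$ then $\zeta\circ\rho^{\square}$ is ordinary of weight $\lambda$; the converse --- that the mere existence of such a flag forces $\zeta$ to factor through the scheme-theoretic image --- is the argument of Lemma 3.3.3 of \cite{ger}, which uses only that $R^{sympl,\mathbf{v}_{\lambda},cr,\psi}_{\rhobar}$ is reduced and $l$-torsion free, and carries over with no change.

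It then remains to see that $\Spec R^{sympl,\triangle_{\lambda},cr,\psi}_{\rhobar}$ is a union of irreducible components of $\Spec R^{sympl,\mathbf{v}_{\lambda},cr,\psi}_{\rhobar}$, and here the one genuinely new ingredient is Proposition~\ref{prop: dimension of local symplectic deformation rings}: it guarantees that $\Spec R^{sympl,\mathbf{v}_{\lambda},cr,\psi}_{\rhobar}[1/l]$ is formally smooth over $K$, hence regular, so that its irreducible and connected components coincide. As in \cite{ger}, the ordinary locus is closed in this generic fibre because the flag variety is proper, and the morphism $\mathcal{G}[1/l]\to\Spec R^{sympl,\mathbf{v}_{\lambda},cr,\psi}_{\rhobar}[1/l]$ is a monomorphism --- the Hodge--Tate weights of the characters $\chi^{\lambda}_j$ being pairwise distinct, there is at most one admissible flag at each point, exactly as in the proof of Lemma~\ref{lem: picking components of the ordinary crystalline deformation ring} --- so this morphism is a closed immersion onto a union of connected (equivalently irreducible) components of the regular generic fibre, and passing to Zariski closures inside the reduced, $\mathcal{O}$-flat ring $R^{sympl,\mathbf{v}_{\lambda},cr,\psi}_{\rhobar}$ gives the claim. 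I expect the only real work to be bookkeeping of the symplectic structure through all of this --- in particular, observing via the uniqueness of the flag and a Hodge--Tate weight count that any such $G_M$-stable flag is automatically self-dual with respect to the symplectic pairing up to the similitude character $\psi$, so that nothing about working inside $\GSp_4$ rather than $\GL_4$ causes trouble; this is exactly why the lemma holds in the same way as its $\GL_n$ analogue.
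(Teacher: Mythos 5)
Your overall route is exactly the paper's: the paper proves this lemma by remarking that it can be proved ``in exactly the same way as Lemma 3.3.3 of \cite{ger}'', i.e.\ by running the flag-variety construction of section \ref{subsubsec: the case where p=l} over $R^{sympl,\mathbf{v}_{\lambda},cr,\psi}_{\rhobar}$, which is what you set up, and your observation that the only new input is Proposition \ref{prop: dimension of local symplectic deformation rings} (plus the harmlessness of the symplectic structure) is the right one.

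However, the part of the argument you spell out yourself has a genuine gap, and you have inverted where the difficulty lies. The direction you claim follows from the moduli description of $\mc{G}$ --- that $\zeta$ factoring through the scheme-theoretic image forces $\zeta\circ\rho^{\square}$ to be ordinary --- is the hard one: a $B$-point of the scheme-theoretic image need not a priori lift to a $B$-point of $\mc{G}[1/l]$ when $B$ has nilpotents, so you cannot simply read off a flag over $B$. Conversely, the direction you attribute to Lemma 3.3.3 of \cite{ger} (existence of a flag implies factoring through the image) is the trivial one, since a $B$-point of $\mc{G}[1/l]$ tautologically maps into the scheme-theoretic image. More seriously, ``proper monomorphism, hence closed immersion'' does not yield ``onto a union of connected components'': a closed subscheme of a regular scheme need not be open (a single closed point is already a closed immersion). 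What is actually needed --- and what is the real content of Lemma 3.3.3 of \cite{ger}, compare the completed-local-ring argument in Lemma \ref{lem: picking components of the ordinary crystalline deformation ring} --- is that ordinariness is \emph{formally open} on the crystalline locus: for a closed point $z$ of the image and an Artinian thickening $B$ of its residue field, any crystalline lift of $l$-adic Hodge type $\mathbf{v}_{\lambda}$ over $B$ deforming an ordinary point still carries a (necessarily unique) $G_M$-stable flag with $I_M$ acting via the $\chi^{\lambda}_j$ on the graded pieces; equivalently, the surjection of completed local rings from $\Spec R^{sympl,\mathbf{v}_{\lambda},cr,\psi}_{\rhobar}[1/l]$ onto the image is an isomorphism at every such $z$. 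Granting this flag-lifting statement, regularity of the generic fibre gives both the union-of-components claim and the two directions of the moduli description at once. So your appeal to the argument of \cite{ger} carrying over unchanged is ultimately what does the work (and it does carry over, as the paper asserts), but that flag-deformation step must be invoked or reproved explicitly; it cannot be replaced by the closed-immersion observation.
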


\subsection{A lower bound on the dimension of a symplectic deformation
  ring}In this section we outline a proof of a lower bound on the
dimension of a global deformation ring. This material is by now rather
standard (see for example section 4 of \cite{MR2459302} or Corollary
2.3.5 of \cite{cht}), and we
content ourselves with a sketch of the proofs.

Suppose that $F^+$ is a totally real field. Let
$\rhobar:G_{F^+}\to\GSp_4(k)$ be absolutely irreducible. Let $S$ be a
finite set of finite places of $F^+$, including all places at which
$\rhobar$ is ramified, and all places dividing $l$. Let $F^+(S)$
be the maximal extension of $F^+$ unramified outside of $S$, and write
$G_{F^+(S)/F^+}$ for the Galois group $\Gal(F^+(S)/F^+)$ (note that we
do not use the more usual notation $G_{F^+,S}$ for this group, as to
do so would be inconsistent with \cite{cht} and the earlier sections
of this paper). Fix a character
$\psi: G_{F^+(S)/F^+}\to\bigO^\times$ lifting the character
$\mu\circ\rhobar$. If $R$ is a complete local Noetherian
$\bigO$-algebra with residue field $k$, then an $R$-valued deformation
of $\rhobar$ is a $\ker(\GSp_4(R)\to\GSp_4(k))$-conjugacy class of
liftings of $\rhobar$ to $\GSp_4(R)$. Since $\rhobar$ is absolutely
irreducible, it is an easy consequence of Schur's lemma that $\rhobar$
has a universal symplectic deformation with fixed similitude factor
$\psi$ to a complete local Noetherian $\mc{O}$-algebra
$R_{F^+,S}^{sympl,\psi}$ (see for example Theorem 3.3 of
\cite{MR1643682}).

Let $R_{F^+,S}^{\sq,sympl,\psi}$ denote the
complete local Noetherian $\bigO$-algebra representing the functor
$\mathcal{D}_{F^+,S}^{\sq,sympl,\psi}$ which assigns to a complete
local Noetherian $\bigO$-algebra $R$ with residue field $k$ the set of
equivalence classes of tuples $(\rho,\{\alpha_v\}_{\{v\in S\}})$ where
$\rho$ is a lifting of $\rhobar$ to $R$ with similitude character
$\psi$ and for each $v\in S$,
$\alpha_v\in\ker(\GSp_4(R)\to\GSp_4(k))$. Two such tuples $(\rho,\{
\alpha_v\}_{ \{ v \in S\}})$ and $(\rho^{\prime},\{
\alpha^{\prime}_v\}_{ \{ v \in S\}})$ are said to be equivalent if
there exists an element $\beta \in \ker(\GSp_4(R)\rightarrow
\GSp_4(k))$ with $\rho^{\prime}=\beta \rho \beta^{-1}$ and
$\alpha^{\prime}_v=\beta \alpha_v$ for all $v \in S$.
Note that $R_{F^+,S}^{\sq,sympl,\psi}$ is formally smooth
over $R_{F^+,S}^{sympl,\psi}$ of relative dimension $11|S|-1$. 
For each $v\in S$ let $R_v^{\sq,sympl,\psi}$ denote the universal
$\mc{O}$-lifting ring for symplectic liftings of $\rhobar|_{G_{F^+_v}}$ with
similitude character $\psi$. Let $R_S^\psi=\widehat{\otimes}_{v\in
  S}R_v^{\sq,sympl,\psi}$. There is a natural map $R^\psi_S\to
R_{F^+,S}^{\sq,sympl,\psi}$ given on $R$-points by sending a tuple
$(\rho,\{\alpha_v\}_{\{v\in S\}})$ to the tuple
$(\alpha_v^{-1}\rho|_{G_{F^+_v}}\alpha_v)_{\{v\in S\}}$ (note that this map
is well-defined by the definition of equivalence for these tuples).

For $i=1$, $2$ we let $h^i_S$ denote the
$k$-dimension of the kernel of the natural
map \[H^i(G_{F^+(S)/F^+},\ad^0\rhobar)\to\prod_{v\in
  S}H^i(G_{F^+,v},\ad^0\rhobar).\] Let $\mf{m}_{F^+,S}$ denote the maximal ideal of $R^{\sq,sympl,\psi}_{F^+,S}$, and
$\mf{m}_{S}$ the maximal ideal of $R^\psi_S$.

\begin{prop}\label{prop: abstract dimension bound for global rings}Let \[\eta:\mf{m}_S/(\mf{m}_S^2,\pi_K)\to\mf{m}_{F^+,S}/(\mf{m}_{F^+,S}^2,\pi_K)\]be
  the natural map. Then $R^{\sq,sympl}_{F^+,S}$ is  a quotient of a
  power series ring over $R_S^\psi$ in $\dim_k\coker\eta$ variables by at
  most $\dim_k\ker\eta+h^2_S$ relations.
  
\end{prop}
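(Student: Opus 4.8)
The plan is to follow the by-now-standard Galois cohomological argument for lower bounds on global deformation rings, adapted from Corollary 2.3.5 of \cite{cht} and section 4 of \cite{MR2459302}, but in the symplectic setting with $\ad^0\rhobar$ replaced throughout for $\ad\rhobar$ (using that the similitude character $\psi$ is fixed). First I would observe that the functor $\mathcal{D}^{\sq,sympl,\psi}_{F^+,S}$ is formally smooth over $\mathcal{D}^{sympl,\psi}_{F^+,S}$ of relative dimension $11|S|-1$ (as already recorded in the excerpt: $11=\dim_k\ad^0\rhobar+1=\dim_k\ad\rhobar$, so $\dim\GSp_4=11$, and we subtract $1$ because $\psi$ is fixed and $\rhobar$ is absolutely irreducible so the only global $\GSp_4$-automorphisms are scalars). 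Hence it suffices to produce the presentation of $R^{\sq,sympl,\psi}_{F^+,S}$ over $R^\psi_S$ with the stated numbers of generators and relations, because both sides of the comparison absorb the same formally smooth factor.

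The key step is the computation of the relative (co)tangent and obstruction spaces of the morphism $\Spec R^{\sq,sympl,\psi}_{F^+,S}\to\Spec R^\psi_S$. By the usual $\delta$-functor / Nakayama argument, $R^{\sq,sympl,\psi}_{F^+,S}$ is a quotient of a power series ring over $R^\psi_S$ in $\dim_k\coker\eta$ variables (the relative cotangent space is $\coker\eta$ by right-exactness of the cotangent functor and the definition of $\eta$), and the number of relations needed is bounded by the dimension of a relative obstruction space. Here the relevant relative deformation problem has tangent space computed by the Selmer-type group fitting into the exact sequence
\[
0\to H^1_{\{triv\}}(G_{F^+(S)/F^+},\ad^0\rhobar)\to H^1(G_{F^+(S)/F^+},\ad^0\rhobar)\to\prod_{v\in S}H^1(G_{F^+_v},\ad^0\rhobar),
\]
whose image-kernel bookkeeping gives exactly the appearance of $h^1_S$ and $h^2_S$, and the relative obstruction group is the corresponding $H^2$-type group of dimension $h^2_S$ (after the framing variables at each $v\in S$ have been used to kill the local $H^1$-contributions, which is precisely the point of working with $R^\psi_S=\widehat\otimes_v R^{\sq,sympl,\psi}_v$ rather than with local quotients). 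I would spell this out by the standard diagram chase relating the framed global deformation functor, the unframed global deformation functor, and the product of local framed functors over $R^\psi_S$, exactly as in \cite{cht} Proposition 2.2.9 and its proof, but with $\ad$ replaced by $\ad^0$ because the similitude character is rigidified.

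The main obstacle — really the only place requiring care rather than citation — is making sure the symplectic constraint and the fixed similitude character interact correctly with the cocycle computations: one must check that the relative deformation theory of $\rho\rightsquigarrow\rho|_{G_{F^+_v}}$ over the fixed-$\psi$ lifting rings is controlled by $\ad^0\rhobar=\Lie\Sp_4$ and not $\Lie\GSp_4$, and that the framing variables $\alpha_v\in\ker(\GSp_4(R)\to\GSp_4(k))$ (not merely in $\ker(\GL_4\to\GL_4)$) suffice to trivialize the local tangent contributions; this is where absolute irreducibility of $\rhobar$ and Schur's lemma (as in Lemma \ref{lem: symplectic representation valued over trace} and Theorem 3.3 of \cite{MR1643682}) are used, and it is the reason the relative dimension of $R^{\sq,sympl,\psi}_{F^+,S}$ over $R^{sympl,\psi}_{F^+,S}$ is $11|S|-1$ with the crucial $-1$. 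Once that is in place the counting is purely formal and yields the asserted presentation: $\dim_k\coker\eta$ generators and at most $\dim_k\ker\eta+h^2_S$ relations.
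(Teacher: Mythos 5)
Your proposal takes the same route as the paper, whose entire proof is the observation that the statement follows "in exactly the same fashion" as Proposition 4.1.4 of \cite{MR2459302}: the generators are counted by the relative cotangent space $\coker\eta$ (Nakayama), and the relations are bounded by the obstruction-theoretic dualization of Böckle--Khare--Wintenberger, adapted with $\ad^0\rhobar$ and $\GSp_4$-framings because the similitude character is fixed. The only thin spot is that you assert rather than derive why the bound is $\dim_k\ker\eta+h^2_S$ (in that argument a functional on the relation module yields either a nonzero class in the kernel of the localization map on $H^2$, of dimension $h^2_S$, or, when the obstruction vanishes, a discrepancy with the $R^\psi_S$-algebra structure detected by $\ker\eta$), but this is precisely the content the paper itself delegates to the citation, so your sketch is consistent with its proof.
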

\begin{proof}
  This may be proved in exactly the same fashion as Proposition 4.1.4
  of \cite{MR2459302}.
\end{proof}

\begin{cor}\label{cor: global symplectic estimate for totally real}
 Suppose that $H^0(G_{F^+(S)/F^+},(\ad^0\rhobar)^*(1))=0$. Let $s=\sum_{v|\infty}\dim_k H^0(G_{F^+,v},\ad^0\rhobar)$. Then for some
 non-negative integer $r$ and some $f_1,\dots,f_{r+s}$, there is an
 isomorphism \[R_{F^+,S}^{\sq,sympl,\psi}\isoto R_S^\psi[[x_1,\dots,x_{r+|S|-1}]]/(f_1,\dots,f_{r+s}).\]
\end{cor}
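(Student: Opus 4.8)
The plan is to feed Proposition~\ref{prop: abstract dimension bound for global rings} into the by-now-standard cohomological bookkeeping (as in Corollary~2.3.5 of \cite{cht} or section~4 of \cite{MR2459302}). By that proposition, $R^{\sq,sympl,\psi}_{F^+,S}$ is isomorphic to a quotient of $R_S^\psi[[x_1,\dots,x_g]]$, with $g=\dim_k\coker\eta$, by an ideal generated by at most $N:=\dim_k\ker\eta+h^2_S$ elements. So it suffices to produce a non-negative integer $r$ with $g=r+|S|-1$ and $N\le r+s$; one then pads the (at most $N$) generators with zeros to obtain the list $f_1,\dots,f_{r+s}$.

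The first step is to compute $\dim_k\coker\eta$ and $\dim_k\ker\eta$. Dualizing, $\eta$ is the map on mod-$\varepsilon$ tangent spaces induced by $R_S^\psi\to R^{\sq,sympl,\psi}_{F^+,S}$, i.e.\ $D^{\sq,sympl,\psi}_{F^+,S}(k[\varepsilon])\to\prod_{v\in S}D^{\sq,sympl,\psi}_v(k[\varepsilon])$, $(\rho,\{\alpha_v\})\mapsto(\alpha_v^{-1}\rho|_{G_{F^+_v}}\alpha_v)_v$. Here $D^{\sq,sympl,\psi}_v(k[\varepsilon])=Z^1(G_{F^+_v},\ad^0\rhobar)$, whereas $D^{\sq,sympl,\psi}_{F^+,S}(k[\varepsilon])$ is the quotient of $Z^1(G_{F^+(S)/F^+},\ad^0\rhobar)\oplus\bigoplus_{v\in S}\ad\rhobar$ by a diagonally embedded copy of $\ad\rhobar$ --- note that the framing is by the \emph{full} Lie algebra $\ad\rhobar$ of $\GSp_4$, which is exactly what makes $R^{\sq,sympl,\psi}_{F^+,S}$ formally smooth of relative dimension $11|S|-1$ over the unframed ring. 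Using that $\rhobar$ is absolutely irreducible, so $H^0(G_{F^+(S)/F^+},\ad^0\rhobar)=0$, a direct count of this tangent map gives
\begin{gather*}
\dim_k\coker\eta=|S|-1+\sum_{v\in S}\dim_k H^0(G_{F^+_v},\ad^0\rhobar)+h^1_S,\\
\dim_k\ker\eta=\sum_{v\in S}\dim_k H^1(G_{F^+_v},\ad^0\rhobar)-\dim_k H^1(G_{F^+(S)/F^+},\ad^0\rhobar)+h^1_S.
\end{gather*}
Accordingly I would set $r:=\sum_{v\in S}\dim_k H^0(G_{F^+_v},\ad^0\rhobar)+h^1_S\ge 0$, so that $g=r+|S|-1$.

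It remains to verify $N=\dim_k\ker\eta+h^2_S=r+s$. I would combine: (i) the local Euler characteristic formula, so that $\dim_k H^1(G_{F^+_v},\ad^0\rhobar)-\dim_k H^0(G_{F^+_v},\ad^0\rhobar)$ equals $\dim_k H^2(G_{F^+_v},\ad^0\rhobar)+[F^+_v:\Q_l]\dim_k\ad^0\rhobar$ for $v\mid l$ and $\dim_k H^2(G_{F^+_v},\ad^0\rhobar)$ for $v\nmid l$ (and $\sum_{v\mid l}[F^+_v:\Q_l]=[F^+:\Q]$); (ii) the global Euler characteristic formula over the totally real field $F^+$, which, using $H^0(G_{F^+(S)/F^+},\ad^0\rhobar)=0$, gives $\dim_k H^1(G_{F^+(S)/F^+},\ad^0\rhobar)=\dim_k H^2(G_{F^+(S)/F^+},\ad^0\rhobar)+[F^+:\Q]\dim_k\ad^0\rhobar-s$; and (iii) the tail $H^2(G_{F^+(S)/F^+},\ad^0\rhobar)\to\bigoplus_{v\in S}H^2(G_{F^+_v},\ad^0\rhobar)\to H^0(G_{F^+(S)/F^+},(\ad^0\rhobar)^*(1))^\vee\to 0$ of the Poitou--Tate nine-term exact sequence (the archimedean $H^2$-terms vanishing as $l$ is odd), from which the hypothesis $H^0(G_{F^+(S)/F^+},(\ad^0\rhobar)^*(1))=0$ yields $\dim_k H^2(G_{F^+(S)/F^+},\ad^0\rhobar)=h^2_S+\sum_{v\in S}\dim_k H^2(G_{F^+_v},\ad^0\rhobar)$. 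Plugging (i)--(iii) into the expression for $N$, the terms $[F^+:\Q]\dim_k\ad^0\rhobar$ and $\sum_{v\in S}\dim_k H^2(G_{F^+_v},\ad^0\rhobar)$ all cancel and one is left with $N=r+s$, as required. There is no genuine difficulty here --- the statement is standard, and the only thing calling for care is the cohomological accounting, in particular keeping the framing Lie algebra $\ad\rhobar$ distinct from $\ad^0\rhobar$ (this is the source of the $|S|-1$ shift) and invoking the correct form of the global Euler characteristic formula at the real places of $F^+$; alternatively one could simply transcribe the proof of Corollary~2.3.5 of \cite{cht} with $\GL_n\subset\mathcal{G}_n$ there replaced by $\Sp_4\subset\GSp_4$.
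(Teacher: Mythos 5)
Your proposal is correct and follows essentially the same route as the paper: both feed Proposition~\ref{prop: abstract dimension bound for global rings} into the local and global Euler characteristic formulae together with the tail of the Poitou--Tate sequence (using $H^0(G_{F^+(S)/F^+},(\ad^0\rhobar)^*(1))=0$ to get $h^2_S=\dim_k H^2(G_{F^+(S)/F^+},\ad^0\rhobar)-\sum_{v\in S}\dim_k H^2(G_{F^+_v},\ad^0\rhobar)$). The only difference is organizational: the paper compares just the difference of the two tangent-space dimensions, whereas you compute $\dim_k\ker\eta$ and $\dim_k\coker\eta$ separately (the $h^1_S$ terms cancelling), which in addition makes the non-negativity of $r$ explicit.
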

\begin{proof} This is very similar to the proof of Proposition 4.1.5
  of \cite{MR2459302}. By Proposition \ref{prop: abstract dimension
    bound for global rings} we see that the result will hold with $s$
  chosen such
  that \[|S|-s-1=\dim_k\mf{m}_{F^+,S}/(\mf{m}_{F^+,S}^2,\pi_K)-\dim_k\mf{m}_S/(\mf{m}_S^2,\pi_K)-h^2_S,\]so
  it suffices to show that this agrees with the value of $s$ in the statement
  of the corollary. Note firstly that
  $\Hom_k(\mf{m}_{F^+,S}/(\mf{m}_{F^+,S}^2,\pi_K),k)$ is naturally
  isomorphic to
  $\mathcal{D}_{F^+,S}^{\sq,sympl,\psi}(k[\epsilon]/(\epsilon^2))$. Consideration
  of the equivalence relation defining
  $\mathcal{D}_{F^+,S}^{\sq,sympl,\psi}$ shows that this space has
  $k$-dimension \[11|S|+\dim_kH^1(G_{F^+(S)/F^+},\ad^0\rhobar)-\dim_kH^0(G_{F^+(S)/F^+},\ad^0\rhobar)-1.\]
  Similarly, \begin{align*}\dim_k\mf{m}_S/(\mf{m}_S^2,\pi_K)&=\sum_{v\in
    S}(\dim\ad^0\rhobar+\dim_kH^1(G_{F^+_v},\ad^0\rhobar)-\dim_kH^0(G_{F^+_v},\ad\rhobar))\\
  &=\sum_{v\in
    S}(10+\dim_kH^1(G_{F^+_v},\ad^0\rhobar)-\dim_kH^0(G_{F^+_v},\ad^0\rhobar)).\end{align*}

The condition that $H^0(G_{F^+(S)/F^+},(\ad^0\rhobar)^*(1))=0$, together
with the last 3 terms of the Poitou-Tate sequence, shows that the map
$\theta^2$ is surjective, so
that \[h^2_S=\dim_kH^2(G_{F^+(S)/F^+},\ad^0\rhobar)-\sum_{v\in
  S}\dim_kH^2(G_{F^+_v},\ad^0\rhobar).\]Thus \[\dim_k\mf{m}_{F^+,S}/(\mf{m}_{F^+,S}^2,\pi_K)-\dim_k\mf{m}_S/(\mf{m}_S^2,\pi_K)-h^2_S=|S|+\sum_{v\in
  S}\chi(G_{F^+_v},\ad^0\rhobar)-\chi(G_{F^+(S)/F^+},\ad^0\rhobar)-1,\]where $\chi$
denotes the Euler characteristic as a $k$-vector space, and it
suffices to show that \[\sum_{v\in
  S}\chi(G_{F^+_v},\ad^0\rhobar)-\chi(G_{F^+(S)/F^+},\ad^0\rhobar)=\sum_{v|\infty}\dim_k
H^0(G_{F^+_v},\ad^0\rhobar).\]This follows at once from the local and global
Euler characteristic formulae.\end{proof}

 For each place $v\in S$ not dividing $l$ we fix a type $\tau_v$ such that
$\rhobar|_{G_{F^+_v}}$ has a symplectic lifting of type $\tau_v$ and similitude
character $\psi|_{G_{F^+_v}}$, and we fix a quotient $R_v$  of
$R^{sympl,\tau_v,\psi}_{v}$ corresponding to a union of irreducible components. For each
$v|l$ we fix a weight $\lambda_v$ such that $\rhobar|_{G_{F^+_v}}$
has a crystalline symplectic lift of weight $\lambda_v$ and similitude
character $\psi|_{G_{F^+_v}}$, and we fix a quotient $R_v$ of
$R_{v}^{sympl,\Delta_{\lambda_v},cr,\psi}$
corresponding to a union of irreducible components. Let
$R_S^{\psi,\tau}:=\widehat{\otimes}_{v\in S}R_v$, and let
$R_{F^+,S}^{\sq,\tau,\psi}=R_{F^+,S}^{\sq,sympl,\psi}\widehat{\otimes}_{R_S^\psi}R_S^{\psi,\tau}$. Let
$R_{F^+,S}^{sympl,\tau,\psi}$ be the universal deformation $\bigO$-algebra
representing the functor which assigns to $R$ the
$\ker(\GSp_4(R)\to\GSp_4(k))$-conjugacy classes of liftings of
$\rhobar$ with the property that for each $v\in S$ the corresponding
lifting of $\rhobar|_{G_{F^+_v}}$ gives an $R$-point of $R_v$ (that this functor is well defined follows from the symplectic analogue of Lemma \ref{lem: components are conjugation invariant} which can be proved in the same way). Thus
$R^{\sq,\psi,\tau}_{F^+,S}$ is formally smooth over $R^{sympl,\psi,\tau}_{F^+,S}$ of
relative dimension $11|S|-1$.

\begin{defn}
  We say that $\rhobar$ is \emph{odd} if for all complex conjugations
  $c\in G_{F^+}$, $(\mu\circ\rhobar)(c)=-1$.
\end{defn}

\begin{prop}\label{symplectic def ring has dimension at least one.}Assume that $\rhobar$ is odd and that
  $H^0(G_{F^+(S)/F^+},(\ad^0\rhobar)^*(1))=0$. Then the Krull dimension of
  $R^{sympl,\psi,\tau}_{F^+,S}$ is at least one.
  
\end{prop}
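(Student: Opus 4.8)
The plan is to feed the local dimension estimates of Proposition~\ref{prop: dimension of local symplectic deformation rings} and Lemma~\ref{lem: dimension of symplectic ordinary crystalline rings} into the presentation of the framed global deformation ring supplied by Corollary~\ref{cor: global symplectic estimate for totally real}, and then to check that the archimedean contribution, controlled by oddness, is exactly what is needed to make the resulting bound equal to $1$.

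First I would apply Corollary~\ref{cor: global symplectic estimate for totally real}, whose hypothesis $H^0(G_{F^+(S)/F^+},(\ad^0\rhobar)^*(1))=0$ is exactly what we are assuming: for some $r\ge 0$ there is an isomorphism
\[ R_{F^+,S}^{\sq,sympl,\psi}\cong R_S^{\psi}[[x_1,\dots,x_{r+|S|-1}]]/(f_1,\dots,f_{r+s}), \qquad s=\sum_{v|\infty}\dim_k H^0(G_{F^+_v},\ad^0\rhobar). \]
Base changing along $R_S^{\psi}\to R_S^{\psi,\tau}$ and invoking the definition $R_{F^+,S}^{\sq,\tau,\psi}=R_{F^+,S}^{\sq,sympl,\psi}\widehat{\otimes}_{R_S^{\psi}}R_S^{\psi,\tau}$ exhibits $R_{F^+,S}^{\sq,\tau,\psi}$ as a quotient of $R_S^{\psi,\tau}[[x_1,\dots,x_{r+|S|-1}]]$ by at most $r+s$ elements, whence
\[ \dim R_{F^+,S}^{\sq,\tau,\psi}\ \ge\ \dim R_S^{\psi,\tau}+|S|-1-s. \]

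Next I would estimate $\dim R_S^{\psi,\tau}=\dim\big(\widehat{\otimes}_{v\in S}R_v\big)$. Each $R_v$ is $\mc{O}$-flat (it is an $l$-torsion free quotient of one of the lifting rings constructed above, corresponding to a union of irreducible components), so the completed tensor product over $\mc{O}$ is $\mc{O}$-flat, with special fibre equal to the completed tensor product over $k$ of the special fibres, and hence $\dim R_S^{\psi,\tau}=1+\sum_{v\in S}(\dim R_v-1)$. For $v\nmid l$, every irreducible component of $R_v$ has dimension at least $11$ by Proposition~\ref{prop: dimension of local symplectic deformation rings}; for $v|l$, Lemma~\ref{lem: dimension of symplectic ordinary crystalline rings} together with Proposition~\ref{prop: dimension of local symplectic deformation rings} shows that the (nonzero) ring $R_v$ is equidimensional of dimension $11+4[F^+_v:\Q_l]$. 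Since $\sum_{v|l}[F^+_v:\Q_l]=[F^+:\Q]$ this gives $\dim R_S^{\psi,\tau}\ge 1+10|S|+4[F^+:\Q]$. Combining with the previous display and with the fact that $R_{F^+,S}^{\sq,\tau,\psi}$ is formally smooth over $R_{F^+,S}^{sympl,\psi,\tau}$ of relative dimension $11|S|-1$, we obtain
\[ \dim R_{F^+,S}^{sympl,\psi,\tau}\ \ge\ 1+4[F^+:\Q]-s. \]

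It remains to compute $s$, and this is the step I expect to carry the content. For $v|\infty$ the element $\rhobar(c_v)\in\GSp_4(k)$ satisfies $\rhobar(c_v)^2=1$ and, by oddness, has similitude factor $(\mu\circ\rhobar)(c_v)=-1$; in particular it has order exactly $2$, and since $l>2$ it is diagonalisable with eigenvalues $\pm1$. The identity $\rhobar(c_v)\,J\,{}^t\rhobar(c_v)=-J$ forces each of the $\pm1$-eigenspaces in $k^4$ to be isotropic for the symplectic form, hence each is two-dimensional, so after conjugating we may take $\rhobar(c_v)=\diag(1,1,-1,-1)$. A direct block computation then shows that the fixed subspace of the adjoint action of this element on $\ad^0\rhobar=\mathfrak{sp}_4$ consists precisely of the block-diagonal elements of $\mathfrak{sp}_4$, and so has dimension $4$. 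Therefore $\dim_k H^0(G_{F^+_v},\ad^0\rhobar)=4$ for every $v|\infty$, whence $s=4[F^+:\Q]$ and $\dim R_{F^+,S}^{sympl,\psi,\tau}\ge 1$. The only genuine obstacle is this last numerical input: it is precisely the equality $\dim_k(\mathfrak{sp}_4)^{\rhobar(c_v)}=4$ that cancels the $4[F^+:\Q]$ coming from the crystalline local rings at $l$ and makes the estimate tight; everything else is routine bookkeeping with Krull dimensions of completed tensor products and of formally smooth maps.
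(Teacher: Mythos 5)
Your argument is correct and follows essentially the same route as the paper: the presentation from Corollary \ref{cor: global symplectic estimate for totally real} base-changed to $R_S^{\psi,\tau}$, the local dimension bounds of Proposition \ref{prop: dimension of local symplectic deformation rings} and Lemma \ref{lem: dimension of symplectic ordinary crystalline rings}, the formal smoothness of relative dimension $11|S|-1$, and the computation $\dim_k H^0(G_{F^+_v},\ad^0\rhobar)=4$ via $\rhobar(c_v)\sim\diag(1,1,-1,-1)$, which is exactly the paper's use of oddness. Your justification of the conjugacy of $\rhobar(c_v)$ to $\diag(1,1,-1,-1)$ and of the dimension count for the block-diagonal part of $\mathfrak{sp}_4$ simply makes explicit what the paper leaves as an "easy calculation".
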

\begin{proof}
  It suffices to check that the dimension of
  $R^{\sq,\psi,\tau}_{F^+,S}$ is at least $11|S|$. By Corollary
  \ref{cor: global symplectic estimate for totally real}, it would be
  enough to check that
  \[\dim R_S^{\psi,\tau}+|S|-1-\sum_{v|\infty}\dim_k
  H^0(G_{F^+_v},\ad^0\rhobar)\ge 11|S|.\] By Propositions \ref{prop:
    dimension of local symplectic deformation rings} and \ref{lem:
    dimension of symplectic ordinary crystalline rings}, \[\dim R_S^{\psi,\tau}\geq
  1+10|S|+4[F^+:\Q].\]An easy calculation using the fact that
  $\rhobar$ is odd shows that for each $v|\infty$,
  $\dim_kH^0(G_{F^+_v},\ad^0\rhobar)=4$ (for example, one easily checks that
  if $c_v$ is a corresponding complex conjugation then
  $\rhobar(c_v)$ is conjugate to the diagonal matrix
  $diag(1,1,-1,-1)$, and one may then compute explicitly). Thus   \[\dim R_S^{\psi,\tau}+|S|-1-\sum_{v|\infty}\dim_k
  H^0(G_{F^+_v},\ad^0\rhobar)\ge 10|S|+4[F^+:\Q]+|S|-4[F^+:\Q]= 11|S|,\]as required.
\end{proof}

\subsection{Relationship to unitary representations}Let $F$ be a
totally imaginary CM field with maximal totally real field $F^+$, with
the property that all primes in $S$ split in $F$. Let $\tilde{S}$
denote a set of places of $F$ consisting of one place dividing each
place in $S$. Recall that we let $G_{F^+,S}=\Gal(F(S)/F^+)$. Let
$\rho:G_{F^+}\to\GSp_4(R)$ be a continuous representation, with $R$ a
complete local Noetherian ring. Then, as in Lemma 2.1.2 of \cite{cht},
there is a continuous homomorphism $r: G_{F^+}\to\G_4(R)$ determined
by \[r(g)=(\rho(g),(\mu\circ\rho)(g))\]if $g\in G_{F}$,
and \[r(g)=(\rho(g)J^{-1},-(\mu\circ\rho)(g))j\]if $g\notin G_{F}$. We
have \[\nu\circ r=\mu\circ\rho.\] Furthermore, this construction is
obviously compatible with deformations, in the sense that if
$B\in\ker(\GSp_4(R)\to\GSp_4(k))$ and $\rho$ is replaced by $\rho_B$
with \[\rho_B(g):=B\rho(g)B^{-1},\]then $r$ is replaced by $r_B$
with \[r_B(g):=(aB,1)r(g)(aB,1)^{-1},\]where $a^2=\mu(B)^{-1}$ (such
an $a$ exists because $\mu(B)\in 1+\mf{m}_B$ and $l>2$). Applying this
construction to the universal symplectic
deformation of the previous section \[\rho^{univ}:G_{F^+(S)/F^+}\to\GSp_4(R^{sympl,\psi,\tau}_{F^+,S}),\]
we obtain a
deformation \[r^{sympl}:G_{F^+(S)/F^+}\to\G_4(R^{sympl,\psi,\tau}_{F^+,S}).\]
We may also consider the corresponding residual
representation \[\rbar:G_{F^+,S}\to\G_4(k),\] and (in the notation of
sections 2.2 and 2.3 of \cite{cht}) the deformation
problem \[\mathcal{S}=(F/F^+,S,\tilde{S},\bigO,\rbar,\psi,R_v)\]with
corresponding universal deformation \[r_{\mc{S}}:G_{F^+,S}\to\G_n(R_{\mc{S}}^{\univ}).\]  Since
$G_{F^+(S)/F^+}$ is a quotient of $G_{F^+,S}$, there is a homomorphism
$\theta:R_{\mc{S}}^{\univ}\to
R^{sympl,\psi,\tau}_{F^+,S}$ such that there is an equality of
deformations \[r^{sympl}=\theta\circ r_{\mc{S}}.\]
\begin{lem}\label{lem:symplectic finite over unitary}
$R^{sympl,\psi,\tau}_{F^+,S}$ is finite over $R_{\mc{S}}^{\univ}$.
\end{lem}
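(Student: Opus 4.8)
The plan is to adapt the proof of Lemma~\ref{lem: deformation ring is finite over another for the unitary group} (that is, of Lemma~3.6 of \cite{kw}) to the present situation. Write $\mathfrak{m}$ for the maximal ideal of $R_{\mc{S}}^{\univ}$ and set $\overline{R}:=R^{sympl,\psi,\tau}_{F^+,S}/\mathfrak{m}R^{sympl,\psi,\tau}_{F^+,S}$, regarded as an $R_{\mc{S}}^{\univ}$-algebra via $\theta$. Since $R_{\mc{S}}^{\univ}$ is a local $\bigO$-algebra with residue field $k$ we have $\pi_K\in\mathfrak{m}$, so $\overline{R}$ is a complete local Noetherian $k$-algebra with residue field $k$. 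It will suffice to show that $\overline{R}$ is Artinian: then $\overline{R}$ is a finite-dimensional $k$-vector space, and the topological Nakayama lemma (exactly as at the end of the proof of Lemma~\ref{lem: deformation ring is finite over another for the unitary group}) shows that $R^{sympl,\psi,\tau}_{F^+,S}$ is a finitely generated $R_{\mc{S}}^{\univ}$-module. Let $\overline{\rho}:G_{F^+(S)/F^+}\to\GSp_4(\overline{R})$ denote the reduction of the universal deformation $\rho^{univ}$.

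The first step is to check that $R^{sympl,\psi,\tau}_{F^+,S}$ is topologically generated over $\bigO$ by the traces $\tr\rho^{univ}(g)$, $g\in G_{F^+(S)/F^+}$. Let $R_0$ be the closed $\bigO$-subalgebra of $R^{sympl,\psi,\tau}_{F^+,S}$ generated by these traces; it is a complete local Noetherian $\bigO$-algebra with maximal ideal $\mathfrak{m}_{R^{sympl,\psi,\tau}_{F^+,S}}\cap R_0$ and residue field $k$ (its cotangent space injects into that of $R^{sympl,\psi,\tau}_{F^+,S}$). Since $\rhobar$ is absolutely irreducible, Lemma~\ref{lem: symplectic representation valued over trace} applied to $\rho^{univ}$ and the subring $R_0$ produces a $\ker(\GSp_4(R^{sympl,\psi,\tau}_{F^+,S})\to\GSp_4(k))$-conjugate $\rho'$ of $\rho^{univ}$ taking values in $\GSp_4(R_0)$. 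Now $\rho'$ is a lift of $\rhobar$ to $R_0$ with similitude character $\psi$; using the symplectic analogue of Lemma~\ref{lem: components are conjugation invariant} (asserted when $R^{sympl,\psi,\tau}_{F^+,S}$ was constructed) one sees that conjugation by $\ker(\GSp_4\to\GSp_4(k))$ preserves each of the chosen local conditions $R_v$, so $\rho'$ is of the type cut out by the $R_v$ and is therefore classified by an $\bigO$-algebra map $R^{sympl,\psi,\tau}_{F^+,S}\to R_0$. As $\rho'$ and $\rho^{univ}$ define the same deformation over $R^{sympl,\psi,\tau}_{F^+,S}$, composing this map with the inclusion $R_0\hookrightarrow R^{sympl,\psi,\tau}_{F^+,S}$ gives the identity; hence $R_0=R^{sympl,\psi,\tau}_{F^+,S}$, and in particular $\overline{R}$ is generated over $k$ by the traces $\tr\overline{\rho}(g)$.

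The second step bounds those traces. Reducing the identity of deformations $r^{sympl}=\theta\circ r_{\mc{S}}$ modulo $\mathfrak{m}$, and observing that $\theta$ followed by $R^{sympl,\psi,\tau}_{F^+,S}\onto\overline{R}$ kills $\mathfrak{m}$ and hence factors through $R_{\mc{S}}^{\univ}/\mathfrak{m}=k$, we find that the $\G_4$-valued representation attached to $\overline{\rho}$ is conjugate to $\rbar\otimes_k\overline{R}$. Projecting to $\GL_4$ and restricting to $G_F$, this shows that $\overline{\rho}|_{G_F}$ is $\GL_4(\overline{R})$-conjugate to $\rhobar|_{G_F}\otimes_k\overline{R}$, so it has finite image, of exponent $M$ dividing the order of $\rhobar(G_F)$. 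For $g\notin G_F$ we have $\overline{\rho}(g)^2=\overline{\rho}(g^2)$ with $g^2\in G_F$, so $\overline{\rho}(g)$ has order dividing $2M$; thus every element of $\overline{\rho}(G_{F^+(S)/F^+})$ has order dividing $2M$, and every $\tr\overline{\rho}(g)$ is a sum of four roots of unity of order dividing $2M$. For any prime $\mathfrak{p}$ of $\overline{R}$ the quotient $\overline{R}/\mathfrak{p}$ is a domain of characteristic $l$, so any such root of unity in it has order dividing the prime-to-$l$ part of $2M$ and therefore lies in a fixed finite subfield; hence $\overline{R}/\mathfrak{p}$ is generated over $k$ by elements of a fixed finite field, so is finite. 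Therefore $\overline{R}$ is $0$-dimensional and Noetherian, hence Artinian, which is what was needed.

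The main point requiring care is the first step: one must verify that conjugating $\rho^{univ}$ by an element of $\ker(\GSp_4\to\GSp_4(k))$ leaves it lying on the prescribed irreducible components $R_v$ of the local symplectic lifting rings (so that the lift $\rho'$ valued in $R_0$ really is of the right type), and that the resulting classifying map $R^{sympl,\psi,\tau}_{F^+,S}\to R_0$ is a section of the inclusion; this is exactly where Lemma~\ref{lem: symplectic representation valued over trace} and the symplectic analogue of Lemma~\ref{lem: components are conjugation invariant} are used. Everything else is a routine transcription of the argument of Lemma~\ref{lem: deformation ring is finite over another for the unitary group}.
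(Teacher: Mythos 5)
Your proof is correct and follows the same route as the paper: reduce modulo the maximal ideal of $R_{\mc{S}}^{\univ}$, use Lemma \ref{lem: symplectic representation valued over trace} (plus conjugation-invariance of the local conditions) to see the symplectic deformation ring is generated by traces, observe via $r^{sympl}=\theta\circ r_{\mc{S}}$ that the reduced representation has finite image so the traces are sums of roots of unity of bounded order, and conclude by Artinianness and topological Nakayama. The only (inessential) difference is that you bound the order of $\overline{\rho}(g)$ for $g\notin G_F$ by squaring, whereas the paper deduces finiteness of the image of $\rho_{F,F^+}$ directly from that of $r_{F,F^+}$.
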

\begin{proof}
 Let $\rho_{F,F^+}$ denote the
  $\GSp_4(R^{sympl,\psi,\tau}_{F^+,S}/\theta(\m_{R_{\mc{S}}^{\univ}}))$-valued representation obtained
  from $\rho^{univ}$, and let $r_{F,F^+}$ denote the corresponding
  representation to $\G_4(R^{sympl,\psi,\tau}_{F^+,S}/\theta(\m_{R_{\mc{S}}^{\univ}}))$. Then
  $r_{F,F^+}$ is equivalent to $\rbar$, so it has finite image, and
  thus the image of $\rho_{F,F^+}$ is also finite. An
  argument exactly as in the proof of Lemma \ref{lem: deformation ring is finite over another for the
  unitary group} (using Lemma \ref{lem: symplectic representation
  valued over trace} to see that the universal deformation ring is
generated by traces) shows that  $R^{sympl,\psi,\tau}_{F^+,S}/\theta(\m_{R_{\mc{S}}^{\univ}})$ is
finite, as required.
\end{proof}

\subsection{Companion forms for symplectic Galois representations and
  automorphic representations for  $\GL_4$}We now prove our first
companion forms theorem for symplectic representations. This theorem
applies to automorphic representations of $\GL_4$; in the next section
we will use functoriality to deduce a result for automorphic
representations of $\GSp_4$.

Suppose that $\pi$ is a RAESDC representation of $\GL_4(\A_{F^+})$,
with $\pi^\vee\cong\chi\pi$. Let $\iota:\Qlbar\isoto\C$.  Then there
is a continuous semisimple
representation $$\rho_{l,\iota}(\pi):G_{F^+}\to\GL_4(\Qlbar)$$
associated to $\pi$ (see theorem 1.1 of \cite{BLGHT}). We say that a representation
$\rho:G_{F^+}\to\GL_4(\Qlbar)$ is automorphic if
$\rho\cong\rho_{l,\iota,}(\pi)$ for some $\iota$, $\pi$.

The representation $\rhobar_{l,\iota}(\pi)$ may be conjugated to be valued in the
ring of integers of a finite extension of $\Ql$, and we may reduce it
modulo the maximal ideal of this ring of integers and semisimplify to
obtain a well-defined continuous
representation $$\rhobar_{l,\iota}(\pi):G_{F^+}\to\GL_4(\Flbar).$$We
say that a representation $\rhobar:G_{F^+}\to\GL_4(\Flbar)$ is automorphic if
$\rhobar\cong\rhobar_{l,\iota}(\pi)$ for some $\iota$, $\pi$. We say that
$\rhobar$ is symplectic ordinarily automorphic if
$\rhobar\cong\rhobar_{l,\iota}(\pi)$, where $\pi$ is $\iota$-ordinary and $\rho_{l,\iota}(\pi)$ is symplectic. We say that
$\rhobar$ is symplectic ordinarily automorphic of level prime to $l$
if furthermore $\pi$ may be taken to be unramified at all places
dividing $l$.

\begin{cor}\label{cor: o rank one for symplectic deformation ring}
 Assume that $\rhobar$ is symplectic ordinarily automorphic, that $\rhobar(G_{F^+(\zeta_l)})$ is
 big, and that $(\overline{F^+})^{\ker \ad\rhobar}$ does not contain $F^+(\zeta_l)$. Then $R^{sympl,\psi,\tau}_{F^+,S}$ is a finite $\bigO$-module of
  rank at least one.
\end{cor}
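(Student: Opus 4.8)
The plan is to deduce this corollary from Corollary \ref{cor: universal unitary ring has o-rank at least one} by transferring the hypotheses from the symplectic setting over $F^+$ to the unitary setting over a suitable CM field $F$, and then using Lemma \ref{lem:symplectic finite over unitary}. First I would choose the auxiliary CM extension $F/F^+$ exactly as in the proof of Theorem \ref{thm: main result for unitary groups}: a soluble CM extension, linearly disjoint from $\overline{F^+}^{\ker\ad\rhobar}(\zeta_l)$ over $F^+$, with all primes of $S$ (and an auxiliary Taylor--Wiles-type prime $v_1$) split, with $4\mid[F^+:\Q]$ after enlarging, and so on; the role of these conditions is to guarantee that $\rhobar|_{G_{F,S}}$ (equivalently $\rhobar|_{G_F}$, where I write $\rhobar:G_{F^+}\to\GSp_4\subset\GL_4$) is absolutely irreducible and that the bigness and adequacy hypotheses (3), (4) of Theorem \ref{thm: main result for unitary groups} descend: $\rhobar(G_{F(\zeta_l)})$ is big because $\rhobar(G_{F^+(\zeta_l)})$ is and $F$ is chosen linearly disjoint, and likewise $\overline{F}^{\ker\ad\rhobar}\not\supset F(\zeta_l)$. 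The symplectic ordinary automorphy of $\rhobar$ over $F^+$, via base change (Th\'eor\`eme 5.4 and Corollaire 5.3 of \cite{labesse}, cf.\ \cite{BLGHT}), yields a RACSDC $\iota$-ordinary $\pi_F$ of $\GL_4(\A_F)$ with $\rbar_{l,\iota}(\pi_F)\cong\rhobar|_{G_F}$, so hypothesis (1) of Theorem \ref{thm: main result for unitary groups} holds; this is where I use that $\rho_{l,\iota}(\pi)$ is symplectic (so that its base change is conjugate self-dual with the right sign).

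Next I would pass to the group $\mathcal{G}_4$: by Lemma 2.1.2 of \cite{cht} (the construction recalled just above Lemma \ref{lem:symplectic finite over unitary}) the symplectic representation $\rhobar:G_{F^+}\to\GSp_4(k)$ gives rise to a homomorphism $\rbar:G_{F^+,S}\to\mathcal{G}_4(k)$ with $\nu\circ\rbar=\mu\circ\rhobar=\psibar$, and the local deformation conditions chosen in the symplectic problem $\mathcal{S}=(F/F^+,S,\tilde S,\bigO,\rbar,\psi,R_v)$ match (via the obvious functor from $\GSp_4$-valued to $\mathcal{G}_4$-valued lifting problems, together with Lemma \ref{lem: dimension of symplectic ordinary crystalline rings} identifying the ordinary crystalline quotients) the local conditions in the unitary deformation problem of Corollary \ref{cor: universal unitary ring has o-rank at least one}. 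Concretely, the inertial types $\tau_v$ ($v\nmid l$) and the ordinary crystalline weights $\lambda_v$ ($v\mid l$) picked out for the symplectic rings $R_v$ determine, by restriction to $\GL_4$, types and weights $\lambda_{\tilde v}$ over $F$ for which $\rhobar|_{G_{F_{\tilde v}}}$ has the required lifts (pulled back from the chosen symplectic lifts); and the chosen components of $R_v$ determine components of the corresponding unitary local rings. Applying Corollary \ref{cor: universal unitary ring has o-rank at least one} with this data gives that the unitary deformation ring $R^{\univ}_{\mathcal S'}$ over $F$ (or, after descent along the soluble extension $L/F$ built in the proof of Theorem \ref{thm: main result for unitary groups}, the ring $R^{\univ}_{\mathcal S}$) is a finite $\bigO$-module of rank at least $1$.

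Finally, Lemma \ref{lem:symplectic finite over unitary} says $R^{sympl,\psi,\tau}_{F^+,S}$ is finite over $R^{\univ}_{\mathcal S}$, hence finite over $\bigO$. For the rank-at-least-one assertion I would argue that the map $\theta:R^{\univ}_{\mathcal S}\to R^{sympl,\psi,\tau}_{F^+,S}$ is injective after inverting $l$ on the relevant component, or more simply that Proposition \ref{symplectic def ring has dimension at least one.} (whose hypotheses hold: $\rhobar$ is odd because $\mu\circ\rhobar=\psibar$ lifts the odd character, and $H^0(G_{F^+(S)/F^+},(\ad^0\rhobar)^*(1))=0$ follows from bigness together with the choice of $v_1$) shows $R^{sympl,\psi,\tau}_{F^+,S}$ has Krull dimension at least one; combined with finiteness over $\bigO$ this forces $R^{sympl,\psi,\tau}_{F^+,S}[1/l]\neq 0$, i.e.\ $\bigO$-rank at least one. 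The main obstacle I anticipate is the bookkeeping of the descent: ensuring that the CM field $F$ and the further soluble extension $L$ can be chosen simultaneously compatibly with \emph{all} of the local conditions (splitting of $S$ and of $v_1$, triviality of types on inertia after restriction to $L$, nonvanishing of $\Iw$-fixed vectors in the base-changed $\pi$, the $4\mid[L^+:\Q]$ condition) while preserving bigness and the non-containment of $F(\zeta_l)$ — but this is exactly the combinatorial content already carried out in the proof of Theorem \ref{thm: main result for unitary groups}, so it should go through verbatim with $\GSp_4$-representations in place of $\GL_n$-ones, using Lemma \ref{lem: symplectic representation valued over trace} in place of Lemma 2.1.10 of \cite{cht} wherever a trace-generation argument is needed.
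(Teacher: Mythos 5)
Your proposal is correct and follows the paper's own route exactly: the paper deduces the corollary from precisely the three ingredients you cite — Corollary \ref{cor: universal unitary ring has o-rank at least one} (after choosing $F$ linearly disjoint from $(\overline{F^+})^{\ker\ad\rhobar}(\zeta_l)$ over $F^+$ so the unitary hypotheses descend), Lemma \ref{lem:symplectic finite over unitary} for finiteness over $\bigO$, and Proposition \ref{symplectic def ring has dimension at least one.} for the Krull-dimension (hence rank) bound. Your extra bookkeeping about base change and matching local conditions is exactly the content the paper leaves implicit in its parenthetical remark, so the two arguments coincide.
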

\begin{proof}This follows from Lemma \ref{lem:symplectic finite over
    unitary}, Corollary \ref{cor: universal unitary ring has o-rank at
    least one} and Proposition \ref{symplectic def ring has dimension
    at least one.} (note we are free to choose $F$ linearly disjoint
  from $(\overline{F^+})^{\ker \ad \rhobar}(\zeta_l)$ over $F^+$).
\end{proof}

Suppose that $\rho:G_{F^+}\to\GSp_4(\mc{O}_{\Qlbar})$ is crystalline. Then the
similitude factor $\psi$ of $\rho$ is a crystalline character of
$G_{F^+}$, so there is an integer $n$ such that for all places $v|l$,
$\psi|_{I_{F^+_v}}=\epsilon^n$. Suppose now that
$\rho':G_{F^+}\to\GSp_4(\mc{O}_{\Qlbar})$ is another crystalline representation
with similitude factor $\psi'$, and that $\rhobar=\rhobar'$. Then
$\psibar=\psibar'$, and there is an integer $n'$ such that for all places $v|l$,
$\psi|_{I_{F^+_v}}=\epsilon^{n'}$. Thus $\epsilon^{n'-n}$ is a
crystalline character of $G_{F^+}$ whose reduction mod $l$ is
everywhere unramified. This motivates the choice of similitude factor
in the following theorem (in particular, it shows that our choice of
similitude factor does not exclude any possibilities for the
Hodge-Tate weights of the Galois representations we construct).

\begin{thm}\label{companion forms symplectic and gl4} Let $F^+$ be a totally real field. Let $l\ge 5$ be a prime
  number such that $[F^+(\zeta_l):F^+]>2$. Suppose that \[\rhobar:G_{F^+}\to\GSp_4(\Flbar)\] is an
  irreducible representation, and let $n$ be an integer such that
  $\overline{\epsilon}^n$ is an unramified character of $G_{F^+}$. Suppose that $\rhobar$ satisfies the
following assumptions.
  \begin{enumerate}
   \item There are finite fields $\F_l \subset k\subset k'$ such that
     $\Sp_4(k)\subset\rhobar(G_{F^+})\subset (k')^\times\GSp_4(k)$.
  \item The representation $\rhobar$ is symplectic ordinarily
    automorphic of level prime to $l$; say $\rhobar\cong\rhobar_{l,\iota}(\pi)$, and write
    $\psi$ for the similitude factor of
    $\rho_{l,\iota}(\pi)$.
  \item Define $\psi_n:=\psi\epsilon^n\tilde{\omega}^{-n}$, where
    $\tilde{\omega}$ is the Teichm\"{u}ller lift of the mod $l$
    cyclotomic character (so $\overline{\psi}_n=\overline{\psi}$, and
    $\psi_n$ is crystalline). There is an element $\lambda\in(\Z^4)^{\Hom(F^+,\Qlbar)}$ such that
    \begin{itemize}
   \item for all $\tau\in\Hom(F^+,\Qlbar)$, we have
      $\lambda_{\tau,1}\geq\dots\geq\lambda_{\tau,4}$,
    \item for every place $v|l$ of $F^+$, $\rhobar|_{G_{F^+_v}}$ has an 
      ordinary crystalline symplectic lift of weight
      $(\lambda_{\tau})_{\tau}$ (where the
      indexing set runs over the embeddings $\tau\in\Hom(F^+,\Qlbar)$
      inducing $v$) and similitude factor $\psi_n$.
    \end{itemize}
  \end{enumerate}
  Then $\rhobar$ has an ordinary crystalline symplectic lift $\rho$ of weight
  $\lambda$ which is ordinarily automorphic of level prime to $l$.

 Given any finite set of places $S$ of $F^+$, and an inertial type $\tau_v$ for each $v\in S$ not dividing $l$ such that
$\rhobar|_{G_{F^+_v}}$ has a symplectic lift of type $\tau_v$ and
similitude factor $\psi_n$, $\rho$ can be
chosen to be of similitude factor $\psi_n$ and of type $\tau_v$ at $v$ for all places $v\in S$,
$v\nmid l$. More precisely, given a choice of a component of each ring
$R^{sympl,\tau_v,\psi_n}$ ($v\in S$, $v\nmid
l$) and  $R_{\rhobar|_{G_{F^+_v}}}^{sympl,\triangle_\lambda,cr,\psi_n}$
($v|l$), $\rho$ may be chosen so as to
give a point on each of these components.
\end{thm}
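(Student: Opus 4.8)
The plan is to adapt the argument of Theorem \ref{thm: main result for unitary groups} to the symplectic setting: first I would build a global symplectic deformation ring carrying the prescribed local conditions, then show it has Krull dimension at least one and is finite over $\mc{O}$, then extract a $\Qlbar$-point, and finally check that the resulting Galois representation is automorphic by passing to the associated $\G_4$-valued representation and restricting to an auxiliary CM field.

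It suffices to prove the final statement. I would fix $\iota$ and $\pi$ as in hypothesis (2) and, enlarging $S$, arrange that $S$ contains every place of $F^+$ above $l$, every place at which $\rhobar$ ramifies, and an auxiliary place $v_1\nmid l$ (as in the proof of Theorem \ref{thm: main result for unitary groups}) at which $\rhobar$ is unramified, chosen so that $v_1$ does not split completely in $F^+(\zeta_l)$, $\ad\rhobar(\Frob_{v_1})=1$, and $v_1$ lies over a rational prime $p$ with $[F^+(\zeta_p):F^+]>4$; such $v_1$ exists since $\Sp_4(k)\subset\rhobar(G_{F^+})$. These conditions give $H^0(G_{F^+_{v_1}},\ad^0\rhobar(1))=0$, hence $H^0(G_{F^+(S)/F^+},\ad^0\rhobar(1))=0$, and as $l\ge 5$ the Killing form of $\Sp_4$ identifies $\ad^0\rhobar$ with $(\ad^0\rhobar)^*$, so $H^0(G_{F^+(S)/F^+},(\ad^0\rhobar)^*(1))=0$. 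The representation $\rhobar$ is odd, being symplectic ordinarily automorphic; moreover $\rhobar(G_{F^+(\zeta_l)})$ is big and $(\overline{F^+})^{\ker\ad\rhobar}$ does not contain $F^+(\zeta_l)$, both consequences of hypothesis (1) together with $l\ge 5$ and $[F^+(\zeta_l):F^+]>2$ (the latter because $\ad\rhobar(G_{F^+})$ has abelianisation of order at most $2$ and so cannot surject onto the cyclic group $\Gal(F^+(\zeta_l)/F^+)$). Since $\overline{\epsilon}^n$ is unramified one has $\psi_n=\psi\epsilon^n$, which is crystalline with $\overline{\psi_n}=\mu\circ\rhobar$ and agrees on inertia away from $l$ with the similitude factor of $\rho_{l,\iota}(\pi)$; so for each place $v\nmid l$ just added to $S$, the representation $\rho_{l,\iota}(\pi)|_{G_{F^+_v}}$ determines an inertial type $\tau_v$ (with $\rhobar|_{G_{F^+_v}}$ admitting a symplectic lift of type $\tau_v$ and similitude $\psi_n$) and an irreducible component of $R^{sympl,\tau_v,\psi_n}_{\rhobar|_{G_{F^+_v}}}$. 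For each $v\in S$ I fix $R_v$ to be the chosen component of $R^{sympl,\tau_v,\psi_n}_{\rhobar|_{G_{F^+_v}}}$ ($v\nmid l$) or of $R^{sympl,\triangle_{\lambda_v},cr,\psi_n}_{\rhobar|_{G_{F^+_v}}}$ ($v\mid l$), taking the component cut out by $\rho_{l,\iota}(\pi)$ at the newly added places.

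Form the resulting global deformation ring $R^{sympl,\psi_n,\tau}_{F^+,S}$. Choosing a totally imaginary CM field $F/F^+$ linearly disjoint from $(\overline{F^+})^{\ker\ad\rhobar}(\zeta_l)$ over $F^+$ in which all places of $S$ split, the restriction $\rhobar|_{G_F}$ is ordinarily automorphic over $F$ (take $\pi_F$, which is RACSDC and $\iota$-ordinary with $\rbar_{l,\iota}(\pi_F)\cong\rhobar|_{G_F}$), so the hypotheses of Corollary \ref{cor: o rank one for symplectic deformation ring} hold with $\psi=\psi_n$; combining Lemma \ref{lem:symplectic finite over unitary}, Corollary \ref{cor: universal unitary ring has o-rank at least one} and Proposition \ref{symplectic def ring has dimension at least one.} as in that corollary, $R^{sympl,\psi_n,\tau}_{F^+,S}$ is a finite $\mc{O}$-module of rank at least one. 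In particular $R^{sympl,\psi_n,\tau}_{F^+,S}[1/l]\neq 0$; I would pick a closed point and, after enlarging $K$, obtain a lift $\rho:G_{F^+}\to\GSp_4(\mc{O})$ of $\rhobar$ with similitude factor $\psi_n$, of type $\tau_v$ at each $v\in S$ with $v\nmid l$, crystalline and (by Lemma \ref{lem: dimension of symplectic ordinary crystalline rings}) ordinary of weight $\lambda$ at each $v\mid l$, and giving a point on each prescribed component.

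It remains to prove $\rho$ is ordinarily automorphic of level prime to $l$. I would form the associated $r:G_{F^+}\to\G_4(\mc{O})$ as in the subsection on the relationship to unitary representations; via the finite morphism $\theta$ of Lemma \ref{lem:symplectic finite over unitary} our point maps to a closed point of $R^{\univ}_{\mc{S}}[1/l]$, and arguing as in the proof of Theorem \ref{thm: main result for unitary groups} (via Theorem \ref{thm: R=T}, Proposition \ref{prop: T is finite and faithful over Lambda}, Lemma 1.4 of \cite{BLGHT} and Lemma 5.1.6 of \cite{ger}) the corresponding $G_F$-representation $\rho|_{G_F}$ is ordinarily automorphic, say $\rho|_{G_F}\cong r_{l,\iota}(\Pi)$. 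Since $\rho$ takes values in $\GSp_4$ it is essentially self-dual over $F^+$, whence $\Pi^c\cong\Pi$ by strong multiplicity one, and quadratic base change for $\GL_4$ lets me descend $\Pi$ to a RAESDC representation $\pi'$ of $\GL_4(\A_{F^+})$ with $\rho\cong\rho_{l,\iota}(\pi')$ (twisting the descent by $\delta_{F/F^+}$ if needed to match similitude factors); $\pi'$ may be taken $\iota$-ordinary, and the level-prime-to-$l$ assertion follows as in the proofs of Theorems \ref{thm:serre wts; char 0 version} and \ref{thm: main result for unitary groups} (Theorem 5.3.2 of \cite{ger}). Essentially all of the deformation-theoretic and finiteness inputs are quoted from the earlier sections, so the substantive work lies in the preliminaries — transferring the bigness, oddness and $H^0$-vanishing hypotheses and keeping track of the similitude character $\psi_n$ (which is precisely what the choice made just before the theorem arranges) — and, above all, in the final descent of $\Pi$ from $F$ to $F^+$; the descent is the step I expect to require the most care, since besides quadratic base change for $\GL_4$ and strong multiplicity one it needs the bookkeeping to ensure the descended representation has similitude factor $\psi_n$ rather than a quadratic twist of it.
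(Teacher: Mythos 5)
Your proposal is correct and follows essentially the same route as the paper: reduce to the final statement, verify bigness and the $F^+(\zeta_l)$ condition using the simplicity of $\operatorname{PSp}_4(k)$, arrange types with similitude factor $\psi_n$ at the added places via an unramified twist of $\rho_{l,\iota}(\pi)$, apply Corollary \ref{cor: o rank one for symplectic deformation ring} to produce a point of the symplectic deformation ring on the chosen components, and deduce automorphy from the unitary $R=T$ machinery. The only deviations are cosmetic: the paper absorbs the oddness and $H^0$-vanishing inputs into the quoted corollary rather than adjoining an auxiliary prime $v_1$ to $S$, and it cites Lemmas 1.4 and 1.5 of \cite{BLGHT} for the descent from $F$ back to $F^+$ that you re-derive by hand via quadratic base change, strong multiplicity one and the $\delta_{F/F^+}$-twist bookkeeping.
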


\begin{proof}It suffices to prove the last statement. Enlarge $S$ if
  necessary so that $S$ contains all places of $F^+$ dividing $l$ and
  all places at which $\rhobar$ is ramified. Choose a totally
  imaginary quadratic extension $F/F^+$ such that all places in $S$ split in
  $F$, and such that $F$ is linearly disjoint from
  $(\overline{F^+})^{\ker\rhobar}$. Note that we can choose a type
  $\tau_v$ at any place not dividing $l$ such that
  $\rhobar|_{G_{F^+_v}}$ has a symplectic lift of type $\tau_v$ and
  similitude factor $\psi_n$; $\rho_{l,\iota}(\pi)|_{G_{F^+_v}}$
  provides such a lift if $n=0$, and we may twist this lift in the
  general case (note that $\psi_n\psi^{-1}$ is unramified at $v$, and
  there is no obstruction to taking a square root of an unramified
  character). We then consider deformation problems as in the previous
  section. By Lemma 2.5.5 of \cite{cht}, and the fact that
  $\operatorname{PSp}_4(k)$ is simple, we see that
  $\rhobar(G_{F^+(\zeta_l)})$ is big. Again, because
  $\operatorname{PSp}_4(k)$ is simple, the abelianisation of
  $\ad\rhobar(G_{F^+})$ is a subgroup
  of \[\operatorname{PGSp}_4(k)/\operatorname{PSp}_4(k)\isoto
  k^\times/(k^\times)^2.\]As this latter group has cardinality $2$ and
  $[F^+(\zeta_l):F^+]>2$, we see that $\overline{F^+}^{\ker\ad\rhobar}$ does not
  contain $F^+(\zeta_l)$. Then Corollary \ref{cor: o rank one for
    symplectic deformation ring} gives the existence of a Galois
  representation satisfying every property except possibly
  automorphicity, which follows from Theorem \ref{thm: R=T} and Lemmas
  1.4 and 1.5 of \cite{BLGHT}.
\end{proof}

\subsection{Companion forms for $\GSp_4$}We now prove results for
automorphic representations for $\GSp_4$ over totally real fields by
making use of known cases of functoriality between $\GSp_4$ and
$\GL_4$. The main result we need is the following.

\begin{thm}\label{transfer from gsp4 to gl4}
  Let $M$ be a number field. There is an injective map $\pi\mapsto\Pi\boxtimes\theta$ from the set
  of globally generic cuspidal representations $\pi$ of $\GSp_4$ over $M$ to the set
  of globally generic representations $\Pi\boxtimes\theta$ of
  $\GL_4\times\GL_1$ over $M$. This map has the following properties:
  \begin{enumerate}
  \item $\theta=\omega_\pi$ (the central character of $\pi$), and the central character of $\Pi$ is $\omega_\pi^2$.
  \item $\Pi\cong\Pi^\vee\otimes\omega_\pi$.
  \item For each place $v$ of $M$ there is an equality of
   Weil-Deligne representations $\rec(\pi_v)=\rec(\Pi_v)$,
   where we also denote the local Langlands correspondence of
   \cite{gantakeda} by $\rec$, and consider $\GSp_4$ as a subgroup of
   $\GL_4$.
  \item If $\Pi\boxtimes\theta$ is such that $\Pi$ is cuspidal, then
    $\Pi\boxtimes\theta$ is in the image of the map if and only if the
    partial $L$-function $L^S(s,\Pi,\bigwedge^2\otimes\theta^{-1})$ has a pole at
    $s=1$ (where $S$ is any finite set of places of $M$).
  \item If $\Pi\boxtimes\theta$ is in the image of the map and $\Pi$
    is not cuspidal, then $\Pi$ is an isobaric direct sum of two
    cuspidal representations of $\GL_2$.
  \end{enumerate}
\end{thm}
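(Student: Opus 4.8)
The plan is to assemble this statement from the generic functorial transfer from $\GSp_4$ to $\GL_4\times\GL_1$ of Asgari--Shahidi together with the local Langlands correspondence for $\GSp_4$ of Gan--Takeda; there is no new mathematical input, and the content of the proof is checking that these fit together. First I would recall the Asgari--Shahidi construction: given a globally generic cuspidal $\pi$ of $\GSp_4(\A_M)$ with central character $\omega_\pi$, the Langlands--Shahidi method controls the $L$- and $\epsilon$-factors of the pairs $\pi\times\sigma$ for $\sigma$ cuspidal on $\GL_r$, $r=1,2,3$, and feeding these into the converse theorem for $\GL_4$ produces an automorphic representation $\Pi$ of $\GL_4(\A_M)$ which is the local functorial transfer of $\pi_v$ (under $\std:{}^L\GSp_4\to\GL_4(\C)$) at every archimedean place and every finite place at which $\pi_v$ is unramified. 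Since isobaric automorphic representations of $\GL_4$ are rigid, $\Pi$ is pinned down by these constraints, so setting $\theta=\omega_\pi$ gives a well-defined map, injective because a globally generic cuspidal representation of $\GSp_4$ is determined by its components at almost all places. Properties (1) and (2) are then the elementary identities $\det\circ\std=\mu^{2}$ and $\std^{\vee}\cong\std\otimes\mu^{-1}$ for $\GSp_4$, where $\mu$ is the similitude character (which corresponds to $\omega_\pi$ under the transfer).

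For properties (4) and (5) I would again cite Asgari--Shahidi. The key representation-theoretic fact is that $\bigwedge^{2}\std\cong W\oplus\mu$, where $W$ is five-dimensional (the standard representation of the dual of $\mathrm{PGSp}_4\cong\mathrm{SO}_5$, pulled back to $\GSp_4(\C)$); hence for $\Pi$ in the image the representation $\bigwedge^{2}\Pi\otimes\theta^{-1}$ contains the trivial representation, which forces a pole of $L^{S}(s,\Pi,\bigwedge^{2}\otimes\theta^{-1})$ at $s=1$. Conversely, the descent and converse-theorem machinery shows that every cuspidal $\Pi$ satisfying the self-duality (2) whose exterior-square $L$-function has such a pole is a transfer, giving (4). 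For (5), if $\Pi$ in the image is not cuspidal then it is a proper isobaric sum; the constraints (1)--(3) and genericity rule out $\GL_3\times\GL_1$ and $\GL_2\times\GL_1\times\GL_1$ type decompositions, leaving only $\Pi\cong\Pi_1\boxplus\Pi_2$ with $\Pi_1,\Pi_2$ cuspidal on $\GL_2$.

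The step needing the most care, and the main obstacle, is property (3) at \emph{every} place rather than only at the unramified and archimedean ones. Here I would invoke the Gan--Takeda correspondence $\rec$ for $\GSp_4$ and the fact that it is characterized by the $L$- and $\epsilon$-factor identities for pairs with supercuspidals of $\GL_1,\GL_2,\GL_3$, normalized compatibly with $\rec$ for $\GL_4$ under $\GSp_4\subset\GL_4$. At each finite place $v$, both $\rec(\pi_v)$ (by the defining properties of the Gan--Takeda correspondence) and the local component $\Pi_v$ of the global transfer (because $\Pi$ was built via the converse theorem from exactly these $\gamma$-factors) have the same twisted $\gamma$-factors against all such supercuspidals; the local converse theorem for $\GL_4$ then forces $\rec(\Pi_v)=\rec(\pi_v)$. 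Thus the only real difficulty is bookkeeping: matching the normalizations of the Langlands--Shahidi factors used by Asgari--Shahidi with those built into the Gan--Takeda correspondence and with the normalization of the local Langlands correspondence fixed earlier in this paper.
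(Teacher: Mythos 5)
The paper does not reprove this statement at all: its proof is a one-line citation, observing that the theorem is a special case of Theorem 13.1 of \cite{gantakeda}. Your proposal instead tries to reassemble the result from the Asgari--Shahidi generic transfer plus the Gan--Takeda local correspondence, and the reassembly has a genuine gap exactly at the hardest point, namely property (3) at the ramified places. The converse-theorem construction of Asgari--Shahidi only produces a \emph{weak} transfer: it pins down $\Pi_v$ at the archimedean places and at the finite places where $\pi_v$ is unramified, but at the finitely many bad places the converse theorem (applied with twists unramified outside a finite set) gives no control of $\Pi_v$ whatsoever, so your parenthetical claim that $\Pi_v$ has the prescribed twisted $\gamma$-factors ``because $\Pi$ was built via the converse theorem from exactly these $\gamma$-factors'' does not follow. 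To run your local converse argument you would first need to recover the $\gamma$-factors of $\Pi_v$ at the bad places (e.g.\ by global functional equation and stability arguments), and you would also need to know that the Gan--Takeda correspondence preserves the Langlands--Shahidi $\gamma$-factors for twists by supercuspidals of $\GL_r$ for all $r$ needed by the local converse theorem for $\GL_4$ -- neither of which is supplied by your sketch. Closing precisely this gap (Gan--Takeda do it via the global theta correspondence with $\mathrm{GSO}(3,3)$ and strong multiplicity one for $\GL_4$, not via a local converse theorem) is the substance of their Theorem 13.1, which is why the paper cites it rather than rederiving it.

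Two smaller points: injectivity is not ``elementary''; your justification that a globally generic cuspidal representation of $\GSp_4$ is determined by almost all of its local components is itself a nontrivial rigidity statement which in \cite{gantakeda} is part of the package being quoted (and, given (3), it also needs uniqueness of generic members of the relevant local packets). Likewise (4) and (5) are quoted properties of the Asgari--Shahidi/Gan--Takeda theorem; your heuristics for them (the decomposition of $\bigwedge^2\circ\std$ and ruling out other isobaric shapes) are reasonable but are sketches of results you would in any case be citing. In short, the approach of deducing the statement from the literature is the same as the paper's, but the paper cites a single theorem that already contains the local--global compatibility at all places, whereas your version re-proves it from weaker inputs and the step supplying that compatibility is missing.
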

\begin{proof}
  This is a special case of Theorem 13.1 of \cite{gantakeda}.
\end{proof}

\begin{defn}Let $F^+$ be a totally real field, and let $\pi$ be a
  cuspidal automorphic representation of $\GSp_4$ over
  $F^+$. Assume further that $\pi$ is automorphic of weight
  $\mu=(\mu_{v,1},\mu_{v,2};\alpha_v)_{v|\infty} \in (\bb{Z}^3)^{\Hom(F^+,\bb{R})}$, in the sense that for each $v|\infty$, $\pi_v$ is a
  discrete series representation with the same central and
  infinitesimal characters as the finite-dimensional irreducible
  algebraic representation of highest weight given
  by \[t=\diag(t_1,t_2,t_3,t_4)\mapsto
  t_1^{\mu_{v,1}}t_2^{\mu_{v,2}}\mu(t)^{-(\mu_{v,1}+\mu_{v,2}+\alpha_v)/2}.\]Here
  $\mu_{v,1}\ge\mu_{v,2}\ge 0$ and $\mu_{v,1}+\mu_{v,2}$ has the same parity
  as $\alpha_v$. Fix an isomorphism $\iota:\Qlbar\isoto\C$. Then we say that
  there is a Galois representation associated to $\pi$ if there is a continuous semisimple
  representation \[\rho_{\pi,\iota}:G_{F^+}\to \GSp_4(\Qlbar)\]such that:
  \begin{itemize}
  \item  for each finite place $v\nmid l$, \[\iota
  WD(\rho_{\pi,\iota}|_{W_{F^+_v}})^{ss}\cong\rec(\pi_v\otimes|\cdot|^{-3/2})^{ss},\]where
  $\rec$ is the local Langlands correspondence of \cite{gantakeda} and $|\cdot|$ is the composition of the similitude character and the norm character.
  \item If $\pi_v$ is unramified at a place $v|l$ then
    $\rho_{\pi,\iota}$ is crystalline at $v$, and in any case it is de Rham.
  \item Define $\lambda_{\iota,\mu}\in (\Z^4_+)^{\Hom(F^+,\Qlbar)}$ by
    letting \[\lambda_{\iota,\mu,\tau}=(\delta_{\iota\circ\tau}+\mu_{\iota\circ\tau,1}+\mu_{\iota\circ\tau,2},\delta_{\iota\circ\tau}+\mu_{\iota\circ\tau,1},\delta_{\iota\circ\tau}+\mu_{\iota\circ\tau,2},\delta_{\iota\circ\tau})\]for
    each embedding $\tau:F^+\into\Qlbar$,
    where \[\delta_v:=-\frac{1}{2}(\mu_{v,1}+\mu_{v,2}+\alpha_v)\]
    for each $v|\infty$.
    Then for each  $\tau:F^+\into\Qlbar$ lying over a place $v$ of
    $F^+$, the Hodge-Tate weights of $\rho_{\pi,\iota}|_{G_{F^+_v}}$ with respect
    to $\tau$ are the $\lambda_{\iota,\mu,\tau,j}+4-j$.
  \end{itemize}
\end{defn}
We now define what it means for a cuspidal automorphic representation
of $\GSp_4$ to be ordinary. We could do this directly in terms of
Hecke operators on $\GSp_4$, but for the sake of brevity we use the
local Langlands correspondences for $\GL_4$ and $\GSp_4$ and the
definition of ordinarity for $\GL_4$.
\begin{defn}
  Let $\iota:\Qlbar\isoto\C$ be an isomorphism, and let $\pi$ be a
  cuspidal automorphic representation of $\GSp_4(\A_{F^+})$ which is
  of weight $\mu=(\mu_{v,1},\mu_{v,2};\alpha_v)_{v|\infty}$ in the
  above sense. Let $\lambda \in (\bb{Z}^4_+)^{\Hom(F^+,\bb{R})}$ be
  defined by $\lambda_v=(\delta_v +
  \mu_{v,1}+\mu_{v,2},\delta_v+\mu_{v,1},\delta_v+\mu_{v,2},\delta_v)$. We
  say that $\pi$ is $\iota$-ordinary if for each $v|l$, the
  irreducible admissible representation $\Pi_v$ of $\GL_4(F^+_v)$
  with \[\rec(\pi_v)=\rec(\Pi_v)\]satisfies
  $(\iota^{-1}\Pi_v)^{\ord}\neq 0$, where the space $(\iota^{-1}\Pi_v)^{\ord}$ is
  defined as in section \ref{subsec: ordinary
    automorphic representations}.
\end{defn}

\begin{defn}
  We say that a continuous irreducible
  representation \[\rho:G_{F^+}\to \GSp_4(\Qlbar)\] is
  $\GSp_4$-automorphic (of weight $\lambda\in (\Z^4_+)^{\Hom(F^+,\Qlbar)}$) if there is
  a $\pi$ and an
  $\iota:\Qlbar\isoto\C$ with $\rho\cong\rho_{\pi,\iota}$, and for
  each $\tau:F^+\into\Qlbar$ lying over a place $v$ of $F^+$, the
  Hodge-Tate weights of $\rho_{\pi,\iota}$ with respect to $\tau$ are
  the $\lambda_{\tau,j}+4-j$. By the above definitions, we see that
  this is equivalent to $\pi$ being automorphic of weight $\mu$ with
  $\lambda_{\iota,\mu}=\lambda$. We say that $\rho$ is $\GSp_4$-automorphic
  and holomorphic if $\pi$ can be chosen to be a holomorphic discrete
  series at all infinite places, and that $\rho$ is
  $\GSp_4$-automorphic and generic if $\pi$ can be chosen to be
  globally generic (note that it is possible for $\rho$ to be both
  holomorphic and generic, corresponding to different choices of $\pi$
  in the same global $L$-packet). We say that $\rho$ is
  $\GSp_4$-ordinarily automorphic if $\pi$ can be chosen to be
  $\iota$-ordinary. We say that $\rho$ is
  $\GSp_4$-ordinarily automorphic and holomorphic (respectively generic)
  if $\pi$ may be chosen to be simultaneously $\iota$-ordinary and holomorphic discrete series at all infinite places
  (respectively globally generic). Finally, we say in addition that
  $\rho$ is automorphic of level prime to $l$ if $\pi_l$ is
  unramified.
\end{defn}

In recent work (\cite{sorensen}) Sorensen has used Theorem
\ref{transfer from gsp4 to gl4} and the constructions of
\cite{MR1876802} to associate Galois representations to certain
globally generic cuspidal representations of $\GSp_4$ over totally
real fields. In particular, he obtains the following theorem, which
gives a ready supply of Galois representations associated to
automorphic representations of $\GSp_4$.

\begin{thm}\label{sorensen main theorem on associating galois representations}Let $F^+$ be a totally real field, and let $\pi$ be a
  globally generic cuspidal automorphic representation of $\GSp_4$
  over $F^+$ of weight $\mu$ for some $\mu$. Assume that for some finite place $v$ the local
  component $\pi_v$ is an unramified twist of the Steinberg
  representation. Then there is a Galois representation associated to
  $\pi$.
\end{thm}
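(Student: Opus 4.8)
The result is due to Sorensen \cite{sorensen}; the plan is to transfer $\pi$ to an automorphic representation of $\GL_4$, construct the associated Galois representation there by known results, and descend it to $\GSp_4$, using the Steinberg hypothesis to pin down the sign of a self-duality pairing.

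\emph{Transfer and cuspidality.} First I would apply Theorem \ref{transfer from gsp4 to gl4} to obtain a globally generic automorphic representation $\Pi\boxtimes\theta$ of $\GL_4\times\GL_1$ over $F^+$, with $\theta=\omega_\pi$, with $\Pi\cong\Pi^\vee\otimes\omega_\pi$, and with $\rec(\pi_v)=\rec(\Pi_v)$ at every place $v$. Next I would check that $\Pi$ is cuspidal: by hypothesis $\pi_v$ is an unramified twist of the Steinberg representation of $\GSp_4(F^+_v)$, so $\rec(\pi_v)$ is an unramified twist of the special $4$-dimensional Weil--Deligne representation $\mathrm{Sp}(4)$, whose monodromy operator $N$ is a single Jordan block of rank $3$. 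If $\Pi$ were not cuspidal, then by Theorem \ref{transfer from gsp4 to gl4}(5) it would be an isobaric sum of two cuspidal representations of $\GL_2$, so $\rec(\Pi_v)=\rec(\pi_v)$ would decompose as a sum of two $2$-dimensional Weil--Deligne representations, whose $N$ has rank at most $2$; this is a contradiction.

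\emph{Galois representation on $\GL_4$.} Now $\Pi$ is a RAESDC representation of $\GL_4(\A_{F^+})$ (regular algebraic from the cohomological weight $\mu$ of $\pi$, essentially self-dual and cuspidal by the above), so Theorem 1.1 of \cite{BLGHT} produces a representation $\rho_{\Pi,\iota}:G_{F^+}\to\GL_4(\Qlbar)$ which is de Rham at all $v\mid l$ (crystalline where $\pi$ is unramified), has Hodge--Tate weights prescribed by $\lambda_{\iota,\mu}$, and satisfies local--global compatibility with $\rec(\Pi_v)$ at $v\nmid l$ (up to Frobenius semisimplification, and fully at the Steinberg place, which is all that is needed). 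The isomorphism $\Pi\cong\Pi^\vee\otimes\omega_\pi$ gives a nondegenerate $G_{F^+}$-equivariant pairing on $\rho_{\Pi,\iota}$ with similitude character a fixed twist of $\omega_\pi$. The Steinberg hypothesis and purity force $\rho_{\Pi,\iota}$ to be irreducible, so this pairing is unique up to scalar, hence either symmetric or alternating.

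\emph{Descent and the sign.} The crux is to show the pairing is alternating. Its parity is detected locally at the Steinberg place, where $\rho_{\Pi,\iota}$ restricts to a twist of the special representation $\mathrm{Sp}(4)\cong\mathrm{Sym}^3$ of the Arthur $\mathrm{SL}_2$, which is symplectic. Hence the global pairing is alternating and $\rho_{\Pi,\iota}$ is $\GL_4(\Qlbar)$-conjugate into $\GSp_4(\Qlbar)$; call the result $\rho_{\pi,\iota}$. It then has the properties required by the definition: the de Rham and Hodge--Tate conditions are inherited from $\rho_{\Pi,\iota}$, and local--global compatibility at $v\nmid l$ with $\rec(\pi_v\otimes|\cdot|^{-3/2})$ follows from $\rec(\pi_v)=\rec(\Pi_v)$ and the normalization of Theorem 1.1 of \cite{BLGHT}. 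The main obstacle is exactly this sign computation --- showing that essential self-duality together with genuine local--global compatibility at a Steinberg place forces the symplectic rather than the orthogonal case; this is why the Steinberg hypothesis cannot be removed in this approach, and it also requires full (not merely up-to-semisimplification) local--global compatibility at that place.
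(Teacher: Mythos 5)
The paper does not prove this statement at all: it is quoted verbatim from Sorensen \cite{sorensen}, with only the remark that his proof uses Theorem \ref{transfer from gsp4 to gl4} and the constructions of \cite{MR1876802}. Your sketch follows exactly the strategy attributed to the cited source --- transfer $\pi$ to a representation $\Pi\boxtimes\omega_\pi$ of $\GL_4\times\GL_1$ via Gan--Takeda, use the Steinberg place to see that $\Pi$ is cuspidal (your rank-of-$N$ argument against the isobaric $\GL_2\times\GL_2$ alternative is correct), attach a Galois representation on the $\GL_4$ side, and descend into $\GSp_4$ by showing the essential self-duality pairing is alternating --- so there is nothing in the paper to contrast it with in detail.

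Two remarks on emphasis rather than correctness. First, in Sorensen's actual argument the Steinberg hypothesis is needed before one ever gets to the sign question: the constructions of \cite{MR1876802} (Harris--Taylor, together with Taylor--Yoshida for compatibility with the monodromy operator) require a square-integrable local component, and this is also what guarantees cuspidality of the transfer; your substitution of Theorem 1.1 of \cite{BLGHT} is legitimate in the present paper's context but shifts the role of the hypothesis. Second, your sign argument is the delicate step and you have correctly identified its inputs: one needs irreducibility of the $\GL_4$-valued representation and \emph{full} local--global compatibility (including $N$) at the Steinberg place, so that the restriction there is an unramified twist of $\mathrm{Sp}(4)\cong\mathrm{Sym}^3$ of the $\SL_2$, whose space of invariant pairings with the given similitude character is one-dimensional and alternating; granting those inputs, the global pairing is forced to be symplectic and Lemma-free descent into $\GSp_4(\Qlbar)$ follows. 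As a blind reconstruction of the cited proof this is satisfactory.
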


It is now straightforward to use the results of the previous sections
to deduce a theorem about companion forms for automorphic
representations of $\GSp_4$ over $F^+$.

\begin{thm}\label{Companion forms for GSp_4} Let $F^+$ be a totally real field. Let $l\ge 5$ be a prime
  number such that $[F^+(\zeta_l):F^+]>2$. Fix $\iota:\Qlbar\isoto\C$. Suppose that \[\rhobar:G_{F^+}\to\GSp_4(\Flbar)\] is an
  irreducible representation, and let $n$ be an integer such that
  $\overline{\epsilon}^n$ is an unramified character of $G_{F^+}$. Suppose that $\rhobar$ satisfies the
following assumptions.
  \begin{enumerate}
   \item There are finite fields $\F_l \subset k\subset k'$ such that
     $\Sp_4(k)\subset\rhobar(G_{F^+})\subset (k')^\times\GSp_4(k)$.
  \item The representation $\rhobar$ has a lift which is $\GSp_4$-ordinarily
    automorphic and generic of level prime to $l$, with similitude factor $\psi$, say.
  \item Define $\psi_n:=\psi\epsilon^n\tilde{\omega}^{-n}$, where
    $\tilde{\omega}$ is the Teichm\"{u}ller lift of the mod $l$
    cyclotomic character (so $\overline{\psi}_n=\overline{\psi}$, and
    $\psi_n$ is crystalline). There is a
    $\lambda\in(\Z^4_+)^{\Hom(F^+,\Qlbar)}$ such that
    \begin{itemize}
    \item for every place $v|l$ of $F^+$, $\rhobar|_{G_{F^+_v}}$ has an 
      ordinary crystalline symplectic lift of weight
      $(\lambda_\tau)_{\tau}$ (where the
      indexing set runs over the embeddings $\tau\in\Hom(F^+,\Qlbar)$
      inducing $v$)
      and similitude factor $\psi_n$.
    \end{itemize}
  \end{enumerate}
  Then $\rhobar$ has an ordinary crystalline symplectic lift $\rho$ of weight
  $\lambda$ and similitude factor $\psi_n$, which is
  $\GSp_4$-ordinarily automorphic of level prime to $l$ and generic. If $F^+=\Q$
  then $\rho$ is also $\GSp_4$-ordinarily automorphic of level prime to $l$ and holomorphic.

 Given any set of places $S$ of $F^+$, and an inertial type $\tau_v$ for each $v\in S$ not dividing $l$ such that
$\rhobar|_{G_{F^+_v}}$ has a symplectic lift of type $\tau_v$ and
similitude factor $\psi_n$, $\rho$ can be
chosen to have type $\tau_v$ at $v$ for all places $v\in S$,
$v\nmid l$. More precisely, given a choice of a component of each ring
$R^{sympl,\tau_v,\psi_n}$ ($v\in S$, $v\nmid
l$) and  $R_{\rhobar|_{G_{F^+_v}}}^{sympl,\triangle_\lambda,cr,\psi_n}$
($v|l$), $\rho$ may be chosen so as to
give a point on each of these components.
\end{thm}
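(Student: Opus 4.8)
The plan is to deduce Theorem~\ref{Companion forms for GSp_4} from Theorem~\ref{companion forms symplectic and gl4} by transferring between $\GSp_4$ and $\GL_4$ via Theorem~\ref{transfer from gsp4 to gl4}. As usual it suffices to prove the final assertion, so I first enlarge $S$ so that it contains all places dividing $l$ and all places at which $\rhobar$ ramifies. Then I would pass to $\GL_4$: by hypothesis~(2) there is a globally generic $\iota$-ordinary cuspidal $\pi_0$ of $\GSp_4(\A_{F^+})$, unramified at $l$, with $\rho_{\pi_0,\iota}$ a lift of $\rhobar$ of similitude factor $\psi$. Transferring, Theorem~\ref{transfer from gsp4 to gl4} produces $\Pi_0\boxtimes\theta_0$ with $\theta_0=\omega_{\pi_0}$ and $\rec(\pi_{0,v})=\rec(\Pi_{0,v})$ for all $v$, whence $\rho_{l,\iota}(\Pi_0)\cong\rho_{\pi_0,\iota}$ by Chebotarev and the local--global compatibility of Theorem~1.1 of \cite{BLGHT}, so that $\rhobar_{l,\iota}(\Pi_0)=\rhobar$. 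Hypothesis~(1) forces $\rhobar$ to be absolutely irreducible (as $\Sp_4(k)$ acts absolutely irreducibly on $k^4$), hence $\Pi_0$ is not an isobaric sum of two cuspidal representations of $\GL_2$, and so is cuspidal by Theorem~\ref{transfer from gsp4 to gl4}(5). The Hodge--Tate weights of $\rho_{\pi_0,\iota}$ are automatically pairwise distinct (since $\mu_{v,1}\ge\mu_{v,2}\ge 0$ for a discrete series weight), so $\Pi_0$ is RAESDC; it is unramified at $l$, $\iota$-ordinary at each $v\mid l$ (which is exactly the meaning of $\iota$-ordinarity of $\pi_0$), and $\rho_{l,\iota}(\Pi_0)$ is symplectic of similitude factor $\psi$. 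Thus $\rhobar$ is symplectic ordinarily automorphic of level prime to $l$ in the sense of Section~\ref{sec:gsp4}.

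Now the hypotheses of Theorem~\ref{companion forms symplectic and gl4} hold for $\rhobar$ with the given $n$, $\lambda$, $\psi$, $\psi_n$, $S$, the types $\tau_v$ for $v\in S$, $v\nmid l$, and the chosen irreducible components. It yields an ordinary crystalline symplectic lift $\rho$ of $\rhobar$ of weight $\lambda$ and similitude factor $\psi_n$, of type $\tau_v$ at each $v\in S$ with $v\nmid l$, giving a point on each chosen component, and with $\rho\cong\rho_{l,\iota'}(\Pi')$ for some isomorphism $\iota'$ and some $\iota'$-ordinary RAESDC $\Pi'$ of $\GL_4(\A_{F^+})$, unramified at $l$, with $\rho_{l,\iota'}(\Pi')$ symplectic.

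The remaining task is to descend $\Pi'$ to $\GSp_4$. Since $\rho$ is $\GSp_4$-valued it is irreducible, so $\Pi'$ is cuspidal by the argument above. Let $\chi'$ be the algebraic Hecke character of $F^+$ corresponding under $\iota'$ and class field theory to $\psi_n$; then $\Pi'^{\vee}\cong\Pi'\otimes\chi'$ and the central character of $\Pi'$ matches $\chi'^2$, since $\det\rho=\psi_n^2$. Because the $G_{F^+}$-stable pairing on $\rho$ is alternating, $\wedge^2(\rho)\otimes\psi_n^{-1}$ contains the trivial character, and translating to $L$-functions (using that the simple pole of $L^S(s,\Pi'\times\Pi'^{\vee})$ at $s=1$ sits in the exterior rather than the symmetric square factor precisely when $\Pi'$ is of symplectic type) shows that $L^S(s,\Pi',\wedge^2\otimes\chi'^{-1})$ has a pole at $s=1$. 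By Theorem~\ref{transfer from gsp4 to gl4}(4), $\Pi'\boxtimes\chi'$ is therefore the transfer of a globally generic cuspidal $\pi'$ of $\GSp_4(\A_{F^+})$; as $\rec(\pi'_v)=\rec(\Pi'_v)$ for all $v$, one gets $\rho\cong\rho_{\pi',\iota'}$. The weight of $\pi'$ is the one giving back $\lambda$, $\pi'$ is unramified at $l$ because $\Pi'$ is, and $\pi'$ is $\iota'$-ordinary because $(\iota'^{-1}\Pi'_v)^{\ord}\ne 0$ for $v\mid l$. Hence $\rho$ is $\GSp_4$-ordinarily automorphic of level prime to $l$ and generic. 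When $F^+=\Q$, the known description of the archimedean $L$-packets of $\GSp_4(\R)$ together with the multiplicity statements for $\GSp_4/\Q$ (as used in \cite{sorensen}) provide a holomorphic member of the global $L$-packet of $\pi'$ with the same transfer to $\GL_4$, hence the same associated Galois representation $\rho$, giving the final clause.

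The hard part will be the descent: one must check that the essentially self-dual cuspidal $\Pi'$ output by Theorem~\ref{companion forms symplectic and gl4} is of symplectic (not orthogonal) type, so that $L^S(s,\Pi',\wedge^2\otimes\chi'^{-1})$ has the pole needed to place $\Pi'\boxtimes\chi'$ in the image of the Gan--Takeda transfer; this rests on matching the alternating Galois-theoretic pairing carried by $\rho$ with the analytic behaviour of this degree-six $L$-function, and, over $\Q$, on the archimedean refinement. The rest is bookkeeping with the local Langlands correspondence and local--global compatibility.
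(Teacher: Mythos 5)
Your proposal follows the paper's proof essentially verbatim: transfer the given generic lift to $\GL_4$ via Theorem \ref{transfer from gsp4 to gl4} (cuspidality of the transfer coming from irreducibility of $\rhobar$), apply Theorem \ref{companion forms symplectic and gl4}, and descend back to $\GSp_4$ using the pole of $L^S(s,\Pi',\wedge^2\otimes\chi'^{-1})$ at $s=1$, which the paper likewise deduces from the symplectic structure of the associated Galois representation — the step you flag as the hard part is asserted in the paper in exactly this way. For $F^+=\Q$, the paper obtains the holomorphic member by citing Proposition 1.5 of \cite{MR2234860}, using that the hypotheses on $\rhobar$ rule out CAP and weakly endoscopic $\pi$; this is the same input you invoke, slightly less precisely, via archimedean $L$-packets and multiplicity statements.
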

\begin{proof}
  This follows from Theorems \ref{transfer from gsp4 to gl4} and
  \ref{companion forms symplectic and gl4}. Note that if $\pi$ is a
  globally generic automorphic representation of $\GSp_4$ with
  $\rhobar_{\pi,\iota}\cong\rhobar$, then the transfer of $\pi$ to
  $\GL_4$ is cuspidal (because $\rhobar$ is irreducible). Conversely,
  if $\Pi$ is a RAESDC automorphic representation of $\GL_4(F^+)$ with
  $\Pi^\vee\cong\chi\Pi$, and $\rho_{l,\iota}(\Pi)$ is symplectic, it
  follows that $L^S(s,\Pi,\bigwedge^2\otimes\chi^{-1})$ has a pole at
  $s=1$ (because the corresponding statement is true for
  $\rho_{l,\iota}(\Pi)$).

In the case $F^+=\Q$, the fact that $\rho$ is also
$\GSp_4$-automorphic and holomorphic follows from Proposition 1.5 of \cite{MR2234860}
(because our assumptions on $\rhobar$ obviously imply that if
$\rhobar\cong\rhobar_{\pi,\iota}$ then $\pi$ is neither CAP nor weak endoscopic).
\end{proof}
In many cases we can make this rather more explicit, just as in the
unitary case.

\begin{lem}
  \label{lem:Ordinary always lifts in regular weight - symplectic
    version} Let $M$ be a finite extension of $\Ql$. Take
  $\lambda\in(\Z^4_+)^{\Hom(M,\Qlbar)}$. Let $E$ be a finite extension
  of $\Ql$ with residue field $k$. Let $\psi_i$, $1\le i\le 4$, be
  crystalline characters $G_M\to E^\times$, with
  $\psi_i|_{I_M}=\chi_i^\lambda|_{I_M}$ in the notation of Definition \ref{defn: chars associated to lambda}. Assume that
  $\psi_1\psi_4=\psi_2\psi_3$. Suppose that $\rhobar:G_M\to\GSp_4(k)$
  is of the form \[ \left(\begin{matrix} \overline{\mu}_{1} & * & * &*
      \cr 0 & \overline{\mu}_{2} & * &* \cr 0 & 0 & \overline{\mu}_3
      &*\cr 0 & 0 & 0& \overline{\mu}_4
    \end{matrix} \right)
  \]where $\overline{\psi}_i=\overline{\mu}_i$ for $1\le i\le
  4$. Suppose that none of the characters
  $\overline{\mu}_{i}\overline{\mu}_{j}^{-1}$, $i<j$, are equal to $\overline{\epsilon}$. Then
  $\rhobar$ has a lift to a crystalline representation
  $\rho:G_M\to\GSp_4(E)$ of the form \[ \left(\begin{matrix} \psi_{1}
      & * & * &* \cr 0 & \psi_{2} & * &* \cr 0 & 0 & \psi_3
      &*\cr 0 & 0 & 0& \psi_4
    \end{matrix} \right)
  \]
\end{lem}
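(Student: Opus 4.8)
The plan is to mimic the proof of Lemma~\ref{lem:Ordinary always lifts in regular weight}, replacing the Borel subgroup of $\GL_4$ by that of $\GSp_4$. Let $B\subset\GSp_4$ be the Borel subgroup of upper triangular matrices, $T\subset B$ the diagonal torus and $U\subset B$ its unipotent radical; explicitly $T$ consists of the matrices $\diag(t_1,t_2,t_3,t_4)$ with $t_1t_4=t_2t_3$. The hypothesis $\psi_1\psi_4=\psi_2\psi_3$ says precisely that $\delta:=\diag(\psi_1,\psi_2,\psi_3,\psi_4)$ is a homomorphism $G_M\to T(E)$ lifting $\overline\delta:=\diag(\overline\mu_1,\ldots,\overline\mu_4):G_M\to T(k)$, and by hypothesis $\rhobar$ already takes values in $B(k)$. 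Since $B$ is a closed subgroup scheme of $\GSp_4$, it therefore suffices to produce a lift $\rho:G_M\to B(E)$ of $\rhobar$ whose $T$-component is $\delta$.

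As in the proof of Lemma~\ref{lem:Ordinary always lifts in regular weight}, such a lift will be constructed one square-zero extension at a time, the obstruction to each step lying in $H^2(G_M,\mathfrak{u})$, where $\mathfrak{u}\subset\ad\rhobar$ is the nilradical of the Lie algebra of $B$. Now $\mathfrak{u}$ is the sum of the four positive root spaces of $\GSp_4$ relative to $B$, so as a $G_M$-representation it admits a filtration whose four graded pieces are one-dimensional; in the matrix realisation the corresponding root characters are, on $T$, given by $t_1/t_2$ (the positions $(1,2)$ and $(3,4)$, which define the same character on $T$ by the relation $t_1t_4=t_2t_3$), $t_1/t_3=t_2/t_4$ (the positions $(1,3)$ and $(2,4)$), $t_1/t_4$ and $t_2/t_3$, that is, after composing with $\overline\delta$, the characters $\overline\mu_1\overline\mu_2^{-1}$, $\overline\mu_1\overline\mu_3^{-1}=\overline\mu_2\overline\mu_4^{-1}$, $\overline\mu_1\overline\mu_4^{-1}$ and $\overline\mu_2\overline\mu_3^{-1}$ --- each of the shape $\overline\mu_i\overline\mu_j^{-1}$ with $i<j$. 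By Tate local duality $H^2(G_M,k(\overline\mu_i\overline\mu_j^{-1}))$ is dual to $H^0(G_M,k(\overline\epsilon\,\overline\mu_j\overline\mu_i^{-1}))$, which vanishes since $\overline\mu_i\overline\mu_j^{-1}\neq\overline\epsilon$; hence $H^2(G_M,\mathfrak{u})=0$ (cf.\ Lemma 3.2.3 of \cite{ger}), and the desired lift $\rho:G_M\to B(E)\subset\GSp_4(E)$ exists and has the prescribed upper triangular shape with diagonal $(\psi_1,\ldots,\psi_4)$.

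The final point to check is that $\rho$ is crystalline, and this will follow exactly as in the proofs of Lemma~\ref{OrdImpliesST} and Lemma~\ref{lem:Ordinary always lifts in regular weight}: each $\psi_i$ is crystalline by hypothesis, and since $\psi_i\psi_j^{-1}$ reduces to $\overline\mu_i\overline\mu_j^{-1}\neq\overline\epsilon$ we have $\psi_i\psi_j^{-1}\neq\epsilon$ for all $i<j$, which is precisely the condition ensuring that a successive extension of the $\psi_i$ in upper triangular form is crystalline. Since the whole argument is a transcription of the $\GL_4$ case, there is no serious obstacle; the one thing that needs care --- and the only place where the symplectic case genuinely differs --- is the explicit root structure of $\GSp_4$ with respect to the chosen form $J$, in particular the verification that every character occurring in $\mathfrak{u}$ is of the form $\overline\mu_i\overline\mu_j^{-1}$ with $i<j$, after which the cohomological vanishing, and hence the conclusion, goes through verbatim.
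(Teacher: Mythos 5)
Your proposal is correct and is essentially the paper's own argument: the paper proves this lemma verbatim as Lemma \ref{lem:Ordinary always lifts in regular weight}, i.e.\ by killing the obstruction in $H^2(G_M,\mathfrak{u})$ via Tate local duality (using $\overline{\mu}_i\overline{\mu}_j^{-1}\neq\overline{\epsilon}$) and deducing crystallinity as in Lemma \ref{OrdImpliesST}, with $\mathfrak{u}$ now the nilradical of the symplectic Borel. Your explicit check that the four root characters of $\mathfrak{u}$ (using $t_1t_4=t_2t_3$) are all of the form $\overline{\mu}_i\overline{\mu}_j^{-1}$ with $i<j$ is exactly the detail the paper leaves implicit.
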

\begin{proof}This is proved in exactly the same way as Lemma
  \ref{lem:Ordinary always lifts in regular weight}.
\end{proof}

Just as in section \ref{Refined
  liftings for l=p}, we can consider ordinary crystalline lifts of a
  particular form. Given $\rhobar$, $\lambda$ as in the previous lemma
  (but no longer requiring that the characters
  $\overline{\mu}_i\overline{\mu}_j^{-1}\ne \epsilon$), we can
  consider ordinary lifts where we demand that
  $\psi_i|_{I_M}=\chi_i^\lambda|_{I_M}$ and 
  $\overline{\psi}_i=\overline{\mu}_i$, $1\le i\le 4$.This gives a
  deformation ring
  $R_{\rhobar,\overline{\mu}}^{sympl,\triangle_\lambda,cr,\psi}$, and the
  following lemma may be proved in exactly the same way as Lemma \ref{lem: picking components of the ordinary crystalline
    deformation ring}.

\begin{lem}\label{lem: picking components of the ordinary crystalline
    deformation ring, symplectic case}
  After inverting $l$, the morphism  $\Spec
  R_{\rhobar,\overline{\mu}}^{sympl,\triangle_\lambda,cr,\psi}\to
  \Spec R_{\rhobar}^{sympl,\triangle_\lambda,cr,\psi}$ becomes a closed
  immersion identifying $\Spec
  R_{\rhobar,\overline{\mu}}^{sympl,\triangle_\lambda,cr,\psi}[1/l]$ with a union of irreducible
  components of $ \Spec R_{\rhobar}^{sympl,\triangle_\lambda,cr,\psi}[1/l]$.
\end{lem}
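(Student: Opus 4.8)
The plan is to mimic the proof of Lemma~\ref{lem: picking components of the ordinary crystalline deformation ring} line by line, carrying along the symplectic form and the fixed similitude character $\psi$. Write $X^{ord,cr}=\Spec R_{\rhobar}^{sympl,\triangle_\lambda,cr,\psi}$ and $X^{ord,cr}_{\overline{\mu}}=\Spec R_{\rhobar,\overline{\mu}}^{sympl,\triangle_\lambda,cr,\psi}$. As in loc.\ cit., it suffices to establish two facts: (i) over each closed point of $X^{ord,cr}[1/l]$ there is at most one closed point of $X^{ord,cr}_{\overline{\mu}}[1/l]$; and (ii) if $x$ is a closed point of $X^{ord,cr}_{\overline{\mu}}[1/l]$ with residue field $E$ and image $z$ in $X^{ord,cr}$, then the natural map on completed local rings $\mc{O}_{X^{ord,cr},z}^{\wedge}\to\mc{O}_{X^{ord,cr}_{\overline{\mu}},x}^{\wedge}$ is an isomorphism. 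Granting (i) and (ii), the assertion that after inverting $l$ the morphism is a closed immersion onto a union of irreducible components follows exactly as in the proof of Lemma~\ref{lem: picking components of the ordinary crystalline deformation ring}.

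To prove (ii), take a finite local $E$-algebra $B$ and an $E$-algebra homomorphism $\zeta:\mc{O}_{X^{ord,cr},z}^{\wedge}\to B$. By Lemma~\ref{lem: dimension of symplectic ordinary crystalline rings} (the symplectic analogue of Lemma~\ref{lem:ordinary crystalline ring for GL_n}) this corresponds to a crystalline symplectic lift $\rho_B:G_M\to\GSp_4(B)$ of similitude character $\psi$ which is ordinary of weight $\lambda$, so $\rho_B$ preserves a full flag $0\subset\Fil_1\subset\cdots\subset\Fil_4=B^4$ with $I_M$ acting on $\gr_j$ through $\chi_j^\lambda$. Exactly as in the $\GL_n$ case the restrictions $\chi_j^\lambda|_{I_M}$ are pairwise distinct, since the Hodge--Tate weight $(j-1)+\lambda_{\tau,n-j+1}$ of $\chi_j^\lambda$ with respect to any $\tau$ (here $n=4$) is strictly increasing in $j$, $\lambda$ being non-increasing; hence the flag is the unique $G_M$-stable full flag of this type, and the action of $G_M$ on $\gr_j$ is a well-defined character $\psi_j:G_M\to B^\times$. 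Because $\rho_B$ is symplectic with similitude $\psi$ one has $\psi_1\psi_4=\psi_2\psi_3=\psi$, so the flag is automatically self-dual (with $\Fil_2$ Lagrangian) and the datum is compatible with the symplectic structure. Just as in loc.\ cit.\ there is a local subring $A\subset B$, finite over $\mc{O}_E$, through which each $\psi_j$ factors and with $\psi_j\bmod\mf{m}_A=\overline{\mu}_j\otimes_k A/\mf{m}_A$; consequently $\zeta^{*}:\Spec B\to X^{ord,cr}$ lifts uniquely to the flag-variety subscheme $\mc{G}^\lambda_{\overline{\mu}}$ cutting out $R_{\rhobar,\overline{\mu}}^{sympl,\triangle_\lambda,cr,\psi}$. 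This shows the map on completed local rings is formally smooth of relative dimension $0$, and since both sides have residue field $E$ (the unique flag being defined over the residue field at $z$) it is an isomorphism.

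Fact (i) is then immediate: a closed point of $X^{ord,cr}_{\overline{\mu}}[1/l]$ lying over a given $z$ is determined by the unique flag together with the ordered tuple of residual diagonal characters, which is prescribed to be $(\overline{\mu}_1,\dots,\overline{\mu}_4)$, so there is at most one. I expect the only points needing attention are the uniqueness of the flag (which, as above, comes down to the strict monotonicity of the Hodge--Tate weights of the $\chi_j^\lambda$) and the consistency of fixing $\psi$ together with the $\overline{\mu}_j$ (forced by $\overline{\mu}_1\overline{\mu}_4=\overline{\mu}_2\overline{\mu}_3=\psibar$); neither is a genuine obstacle, and in fact the symplectic relation $\psi_1\psi_4=\psi_2\psi_3$ only helps by reducing the number of free characters in the analogue of the ring $R_{\overline{\mu}}$. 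Thus, as the paper indicates, the argument is formally identical to that of Lemma~\ref{lem: picking components of the ordinary crystalline deformation ring}.
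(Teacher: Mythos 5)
Your proposal is correct and follows exactly the route the paper intends: the paper's proof consists of the remark that the lemma ``may be proved in exactly the same way as Lemma \ref{lem: picking components of the ordinary crystalline deformation ring}'', and you carry out precisely that argument, with the right key observations (uniqueness of the flag from the strictly increasing Hodge--Tate weights of the $\chi_j^{\lambda}$, and the fact that fixing $\psi$ and the $\overline{\mu}_i$ causes no new difficulty). Nothing further is needed.
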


\begin{thm}\label{Companion forms for GSp_4 explicit} Let $F^+$ be a totally real field. Let $l\ge 5$ be a prime
  number such that $[F^+(\zeta_l):F^+]>2$. Fix $\iota:\Qlbar\isoto\C$. Suppose that \[\rhobar:G_{F^+}\to\GSp_4(\Flbar)\] is an
  irreducible representation. Suppose that the following conditions hold.
  \begin{enumerate}
   \item There are finite fields $\F_l \subset k\subset k'$ such that
     $\Sp_4(k)\subset\rhobar(G_{F^+})\subset (k')^\times\GSp_4(k)$.
  \item The representation $\rhobar$ has a lift which is
    $\GSp_4$-ordinarily automorphic and generic of level prime to $l$.
  \item There is a $\lambda\in(\Z^4_+)^{\Hom(F^+,\Qlbar)}$ such that
    \begin{itemize}
    \item $m:=\lambda_{\tau,1}+\lambda_{\tau,4}=\lambda_{\tau,2}+\lambda_{\tau,3}$ is independent of
      $\tau$,
and
    \item for every place $v|l$, $\rhobar|_{G_{F^+_v}}$ is isomorphic
      to a representation \[
  \left(\begin{matrix} \overline{\mu}_{v,1} & * & * &* \cr 0 &
      \overline{\mu}_{v,2} &  * &* \cr
0 & 0  &
      \overline{\mu}_{v,3} &*\cr 0 & 0 & 0&
     \overline{\mu}_{v,4}
    \end{matrix} \right)
  \]where none of $\overline{\mu}_{v,i}\overline{\mu}_{v,j}^{-1}$,
  $i<j$, are equal to
  $\overline{\epsilon}$. Furthermore,
  $\overline{\mu}_{v,i}|_{I_{F^+_v}}=\overline{\chi}_i^{\lambda_v}|_{I_{F^+_v}}$
  for each $i$ (in the notation of Definition \ref{defn: chars associated to lambda}).
    \end{itemize}
  \end{enumerate}
  Then $\rhobar$ has an ordinary crystalline symplectic lift $\rho$ of weight
  $\lambda$, which is
  $\GSp_4$-ordinarily automorphic of level prime to $l$ and generic, with
  similitude factor $\psi$, say. Furthermore
  $\psi\epsilon^{m+3}$ is a finite
  order character, and for every place $v|l$, $\rho|_{G_{F^+_v}}$
  is isomorphic to a representation of the form \[ \left(\begin{matrix} \psi_{v,1}
      & * & * &* \cr 0 & \psi_{v,2} & * &* \cr 0 & 0 & \psi_{v,3}
      &*\cr 0 & 0 & 0& \psi_{v,4}
    \end{matrix} \right)
  \]where the $\psi_{v,i}$ are crystalline characters such
  that 
  $\overline{\psi}_{v,i}=\overline{\mu}_{v,i}$ and
  $\psi_{v,i}|_{I_{F^+_v}}=\chi_{i}^{\lambda_v}|_{I_{F^+_v}}$. Finally, if $F^+=\Q$
  then $\rho$ is also $\GSp_4$-ordinarily automorphic  and
  holomorphic (of level prime to l).
\end{thm}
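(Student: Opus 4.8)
The plan is to deduce this from Theorem \ref{Companion forms for GSp_4}, Lemma \ref{lem:Ordinary always lifts in regular weight - symplectic version} and Lemma \ref{lem: picking components of the ordinary crystalline deformation ring, symplectic case}, exactly as Corollary \ref{cor: explicit main unitary theorem in regular weight} is deduced from Theorem \ref{thm: main result for unitary groups} in the unitary case. As in Theorem \ref{Companion forms for GSp_4} it suffices to prove the last assertion, so fix $S$, the types $\tau_v$ and the choices of components. By hypothesis (2) there is a $\GSp_4$-ordinarily automorphic generic lift $\rho_0$ of $\rhobar$ of level prime to $l$; write $\psi_0$ for its similitude factor. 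Then $\psi_0$ is the Galois character attached to an algebraic Hecke character of the totally real field $F^+$, and since $\rho_0$ is unramified, hence crystalline, at every place above $l$ we have $\psi_0|_{I_{F^+_v}}=\epsilon^{-h_0}|_{I_{F^+_v}}$ for a single integer $h_0$, namely the (common) Hodge--Tate weight of $\psi_0$.

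The first real step is to identify the integer $n$ and the crystalline character $\psi_n=\psi_0\epsilon^n\tilde\omega^{-n}$ to which Theorem \ref{Companion forms for GSp_4} should be applied. A direct computation from Definition \ref{defn: chars associated to lambda}, using that $m=\lambda_{\tau,1}+\lambda_{\tau,4}=\lambda_{\tau,2}+\lambda_{\tau,3}$ is independent of $\tau$, gives $\chi_1^{\lambda_v}\chi_4^{\lambda_v}|_{I_{F^+_v}}=\chi_2^{\lambda_v}\chi_3^{\lambda_v}|_{I_{F^+_v}}=\epsilon^{-(m+3)}|_{I_{F^+_v}}$. As $\rhobar|_{G_{F^+_v}}$ is symplectic with similitude factor $\overline{\psi}_0|_{G_{F^+_v}}$ its diagonal characters satisfy $\overline{\mu}_{v,1}\overline{\mu}_{v,4}=\overline{\mu}_{v,2}\overline{\mu}_{v,3}=\overline{\psi}_0|_{G_{F^+_v}}$, and the hypothesis $\overline{\mu}_{v,i}|_{I_{F^+_v}}=\overline{\chi}_i^{\lambda_v}|_{I_{F^+_v}}$ forces $\overline{\psi}_0|_{I_{F^+_v}}=\overline{\epsilon}^{-(m+3)}|_{I_{F^+_v}}$, hence $h_0\equiv m+3\pmod{l-1}$. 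I would therefore take $n=h_0-m-3$; this is divisible by $l-1$, so $\overline{\epsilon}^n$ is unramified and $\tilde\omega^{-n}=1$, and $\psi_n=\psi_0\epsilon^n$ satisfies $\psi_n|_{I_{F^+_v}}=\epsilon^{-(m+3)}|_{I_{F^+_v}}$ for all $v\mid l$.

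Next, for each $v\mid l$ choose crystalline characters $\psi_{v,1},\psi_{v,2}$ of $G_{F^+_v}$ with $\overline{\psi}_{v,i}=\overline{\mu}_{v,i}$ and $\psi_{v,i}|_{I_{F^+_v}}=\chi_i^{\lambda_v}|_{I_{F^+_v}}$ (the two prescriptions are compatible on reductions), and put $\psi_{v,4}=(\psi_n|_{G_{F^+_v}})\psi_{v,1}^{-1}$, $\psi_{v,3}=(\psi_n|_{G_{F^+_v}})\psi_{v,2}^{-1}$. Then all $\psi_{v,i}$ are crystalline with the correct reductions and inertial restrictions, and $\psi_{v,1}\psi_{v,4}=\psi_{v,2}\psi_{v,3}=\psi_n|_{G_{F^+_v}}$, so Lemma \ref{lem:Ordinary always lifts in regular weight - symplectic version} (applicable since no $\overline{\mu}_{v,i}\overline{\mu}_{v,j}^{-1}$ equals $\overline{\epsilon}$) produces an ordinary crystalline symplectic lift of $\rhobar|_{G_{F^+_v}}$ of weight $\lambda_v$ and similitude factor $\psi_n|_{G_{F^+_v}}$. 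Thus hypothesis (3) of Theorem \ref{Companion forms for GSp_4} holds for this $\lambda$ and this $n$, while hypotheses (1) and (2) are those of the theorem; applying it, and using Lemma \ref{lem: picking components of the ordinary crystalline deformation ring, symplectic case} to select the component at each $v\mid l$ inside the locus parametrising lifts with graded pieces $\psi_{v,i}$, yields a $\rho$ with similitude factor $\psi:=\psi_n$, of weight $\lambda$, $\GSp_4$-ordinarily automorphic of level prime to $l$ and generic, and with $\rho|_{G_{F^+_v}}$ of the asserted upper-triangular shape. Finally $\psi\epsilon^{m+3}=\psi_0\epsilon^{h_0}$ is the finite-order part of the algebraic Hecke character $\psi_0$, hence of finite order, and the holomorphicity statement when $F^+=\Q$ is inherited verbatim from Theorem \ref{Companion forms for GSp_4}.

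The whole argument is bookkeeping layered on top of Theorem \ref{Companion forms for GSp_4}; the one step needing genuine care is the choice of the twist $n$ and the verification that $\psi_n$ restricts correctly to inertia at each place above $l$ --- this is exactly where the crystallinity of $\psi_0$ at $l$ and the independence of $m$ from $\tau$ enter.
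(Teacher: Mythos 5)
Your proposal is correct and takes essentially the same route as the paper, whose proof of this theorem is exactly the combination of Theorem \ref{Companion forms for GSp_4}, Lemma \ref{lem:Ordinary always lifts in regular weight - symplectic version} and Lemma \ref{lem: picking components of the ordinary crystalline deformation ring, symplectic case}; your write-up simply supplies the routine verifications (the choice of $n$, the crystalline characters $\psi_{v,i}$ with $\psi_{v,1}\psi_{v,4}=\psi_{v,2}\psi_{v,3}=\psi_n$, and the finite-order statement). The only slight overstatement is the claim that $l-1$ divides $n=h_0-m-3$ (the image of $\overline{\epsilon}$ on $I_{F^+_v}$ can be a proper subgroup of $\F_l^\times$ when $F^+_v/\Q_l$ is ramified), but all that is needed is that $\overline{\epsilon}^n$ be unramified, which your computation does give, and $\tilde{\omega}^{-n}$ is in any case trivial on inertia above $l$ and of finite order, so the argument is unaffected.
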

\begin{proof}
  This follows from Theorem \ref{Companion forms for GSp_4}, together with Lemma \ref{lem:Ordinary always lifts in regular weight - symplectic
    version} and Lemma \ref{lem: picking components of the ordinary crystalline
    deformation ring, symplectic case}.
\end{proof}

\begin{rem}
  It is expected that whenever $\pi$ is a cuspidal automorphic
  representation of $\GSp_4(\A_M)$, $M$ a number field, and $\pi$ is
  neither CAP nor weak endoscopic, then $\pi$ is stable. In the
  special case that $\pi$ is a discrete series representation at each
  infinite place, this means that if $\pi=\pi_f\otimes\pi_{\infty}$
  (with $\pi_f$, $\pi_\infty$ respectively denoting the finite and
  infinite factors of $\pi$) then $\pi_f\otimes\pi_\infty'$ is also
  automorphic for any $\pi'_\infty$ in the same $L$-packet as
  $\pi_\infty$, i.e. we are free to change between holomophic and
  generic discrete series at any infinite place. Assuming this result,
  which is expected to follow from Arthur's work on the trace formula
  (cf. \cite{MR2058604}), one could conclude that the representation
  $\rho$ in the above theorems is also $\GSp_4$-automorphic and
  holomorphic, even if $F^+\neq \Q$.
\end{rem}

\bibliographystyle{amsalpha} \bibliography{geegeraghty} 

\providecommand{\bysame}{\leavevmode\hbox to3em{\hrulefill}\thinspace}
\providecommand{\MR}{\relax\ifhmode\unskip\space\fi MR }
\providecommand{\MRhref}[2]{%
  \href{http://www.ams.org/mathscinet-getitem?mr=#1}{#2}
}
\providecommand{\href}[2]{#2}
\begin{thebibliography}{BLGHT09}

\bibitem[ADP02]{MR1896473}
Avner Ash, Darrin Doud, and David Pollack, \emph{Galois representations with
  conjectural connections to arithmetic cohomology}, Duke Math. J. \textbf{112}
  (2002), no.~3, 521--579. \MR{MR1896473 (2003g:11055)}

\bibitem[Art04]{MR2058604}
James Arthur, \emph{Automorphic representations of {${\rm GSp(4)}$}},
  Contributions to automorphic forms, geometry, and number theory, Johns
  Hopkins Univ. Press, Baltimore, MD, 2004, pp.~65--81. \MR{MR2058604
  (2005d:11074)}

\bibitem[BLGHT09]{BLGHT}
Tom Barnet-Lamb, David Geraghty, Michael Harris, and Richard Taylor, \emph{A
  family of {C}alabi-{Y}au varieties and potential automorphy {II}}, Preprint,
  2009.

\bibitem[Car94]{MR1279611}
Henri Carayol, \emph{Formes modulaires et repr\'esentations galoisiennes \`a
  valeurs dans un anneau local complet}, {$p$}-adic monodromy and the {B}irch
  and {S}winnerton-{D}yer conjecture ({B}oston, {MA}, 1991), Contemp. Math.,
  vol. 165, Amer. Math. Soc., Providence, RI, 1994, pp.~213--237. \MR{MR1279611
  (95i:11059)}

\bibitem[CH09]{chenevierharris}
Ga{\"e}tan Chenevier and Michael Harris, \emph{Construction of automorphic
  {G}alois representations, {II}}, preprint, 2009.

\bibitem[CHT08]{cht}
Laurent Clozel, Michael Harris, and Richard Taylor, \emph{Automorphy for some
  $l$-adic lifts of automorphic mod $l$ {G}alois representations}, Pub. Math.
  IHES \textbf{108} (2008), 1--181.

\bibitem[CV92]{MR1185584}
Robert~F. Coleman and Jos{\'e}~Felipe Voloch, \emph{Companion forms and
  {K}odaira-{S}pencer theory}, Invent. Math. \textbf{110} (1992), no.~2,
  263--281. \MR{MR1185584 (93i:11063)}

\bibitem[Gee06]{gee061}
Toby Gee, \emph{Automorphic lifts of prescribed types}, preprint, 2006.

\bibitem[Gee07]{gee051}
\bysame, \emph{Companion forms over totally real fields. {II}}, Duke Math. J.
  \textbf{136} (2007), no.~2, 275--284.

\bibitem[Ger09]{ger}
David Geraghty, \emph{Modularity lifting theorems for ordinary {G}alois
  representations}, \url{http://www.math.harvard.edu/~geraghty/}, 2009,
  preprint.

\bibitem[Gro90]{MR1074305}
Benedict~H. Gross, \emph{A tameness criterion for {G}alois representations
  associated to modular forms (mod {$p$})}, Duke Math. J. \textbf{61} (1990),
  no.~2, 445--517. \MR{MR1074305 (91i:11060)}

\bibitem[GT05]{MR2234862}
Alain Genestier and Jacques Tilouine, \emph{Syst\`emes de {T}aylor-{W}iles pour
  {$\rm GSp\sb 4$}}, Ast\'erisque (2005), no.~302, 177--290, Formes
  automorphes. II. Le cas du groupe ${\rm{G}}Sp(4)$. \MR{MR2234862
  (2007k:11086)}

\bibitem[GT07]{gantakeda}
Wee~Teck Gan and Shuichiro Takeda, \emph{The local {L}anglands conjecture for
  $\operatorname{GSp}(4)$}, preprint, 2007.

\bibitem[Her09]{herzigthesis}
Florian Herzig, \emph{The weight in a {S}erre-type conjecture for tame
  $n$-dimensional {G}alois representations}, Duke Math. J. \textbf{149} (2009),
  no.~1, 37--116.

\bibitem[HT01]{MR1876802}
Michael Harris and Richard Taylor, \emph{The geometry and cohomology of some
  simple {S}himura varieties}, Annals of Mathematics Studies, vol. 151,
  Princeton University Press, Princeton, NJ, 2001, With an appendix by Vladimir
  G. Berkovich. \MR{MR1876802 (2002m:11050)}

\bibitem[HT08]{herzigtilouine}
Florian Herzig and Jacques Tilouine, \emph{Conjectures de type {S}erre et
  formes compagnons pour $\operatorname{GSp}(4)$}, preprint, 2008.

\bibitem[Jan03]{MR2015057}
Jens~Carsten Jantzen, \emph{Representations of algebraic groups}, second ed.,
  Mathematical Surveys and Monographs, vol. 107, American Mathematical Society,
  Providence, RI, 2003. \MR{MR2015057 (2004h:20061)}

\bibitem[Kis07a]{MR2459302}
Mark Kisin, \emph{Modularity of 2-dimensional {G}alois representations},
  Current developments in mathematics, 2005, Int. Press, Somerville, MA, 2007,
  pp.~191--230. \MR{MR2459302}

\bibitem[Kis07b]{kis04}
\bysame, \emph{Moduli of finite flat group schemes, and modularity}, to appear
  in Annals of Mathematics (2007).

\bibitem[Kis08]{kisinpst}
\bysame, \emph{Potentially semi-stable deformation rings}, J. Amer. Math. Soc.
  \textbf{21} (2008), no.~2, 513--546. \MR{MR2373358 (2009c:11194)}

\bibitem[KW08]{kw}
Chandrashekhar Khare and Jean-Pierre Wintenberger, \emph{On {S}erre's
  conjecture for 2-dimensional mod $p$ representations of the absolute {G}alois
  group of the rationals}, to appear in Annals of Mathematics (2008).

\bibitem[Lab09]{labesse}
Jean-Pierre Labesse, \emph{Changement de base {C}{M} et s\'eries discr\`etes},
  preprint, 2009.

\bibitem[Maz89]{MR1012172}
B.~Mazur, \emph{Deforming {G}alois representations}, Galois groups over {${\bf
  Q}$} ({B}erkeley, {CA}, 1987), Math. Sci. Res. Inst. Publ., vol.~16,
  Springer, New York, 1989, pp.~385--437. \MR{MR1012172 (90k:11057)}

\bibitem[Nek93]{nekovar}
Jan Nekov{\'a}{\v{r}}, \emph{On {$p$}-adic height pairings}, S\'eminaire de
  {T}h\'eorie des {N}ombres, {P}aris, 1990--91, Progr. Math., vol. 108,
  Birkh\"auser Boston, Boston, MA, 1993, pp.~127--202. \MR{MR1263527
  (95j:11050)}

\bibitem[Ser87]{MR885783}
Jean-Pierre Serre, \emph{Sur les repr\'esentations modulaires de degr\'e {$2$}
  de {${\rm Gal}(\overline{\bf Q}/{\bf Q})$}}, Duke Math. J. \textbf{54}
  (1987), no.~1, 179--230. \MR{MR885783 (88g:11022)}

\bibitem[Sor08]{sorensen}
Claus Sorensen, \emph{Galois representations attached to {H}ilbert-{S}iegel
  modular forms}, preprint, 2008.

\bibitem[Tay08]{tay06}
Richard Taylor, \emph{Automorphy for some $l$-adic lifts of automorphic mod $l$
  {G}alois representations. {II}}, Pub. Math. IHES \textbf{108} (2008),
  183--239.

\bibitem[Til96]{MR1643682}
Jacques Tilouine, \emph{Deformations of {G}alois representations and {H}ecke
  algebras}, Published for The Mehta Research Institute of Mathematics and
  Mathematical Physics, Allahabad, 1996. \MR{MR1643682 (99i:11038)}

\bibitem[Til09]{tilouinecompanionforms}
\bysame, \emph{Formes compagnons et complexe {BGG} dual pour
  $\operatorname{GSp}(4)$}, preprint, 2009.

\bibitem[Wei05]{MR2234860}
Rainer Weissauer, \emph{Four dimensional {G}alois representations},
  Ast\'erisque (2005), no.~302, 67--150, Formes automorphes. II. Le cas du
  groupe ${\rm{G}}Sp(4)$. \MR{MR2234860 (2007f:11057)}

\end{thebibliography}
\end{document}